\let\Dia\Diamond
\newcommand*\graph{\mathrm{graph}}
\newcommand*\core{\mathrm{core}}
\newcommand*\Sh{\!{Sh}}
\newcommand*\Hom{\mathrm{Hom}}
\newcommand*\Iso{\mathrm{Iso}}
\newcommand*\End{\mathrm{End}}
\newcommand*\Mod{\mathrm{Mod}}
\DeclarePairedDelimiter\sqsqbr{\llbracket}{\rrbracket}
\newcommand\relmiddle[1]{\nonscript\;\middle#1\nonscript\;}
\let\defn\textbf
\let\olddisplaystyle\displaystyle
\let\displaystyle\textstyle
\begin{document}

\title{Representing Polish groupoids via metric structures}
\author{Ruiyuan Chen}
\date{}
\maketitle

\begin{abstract}
We prove that every open $\sigma$-locally Polish groupoid $G$ is Borel equivalent to the groupoid of models on the Urysohn sphere $\#U$ of an $\@L_{\omega_1\omega}$-sentence in continuous logic.  In particular, the orbit equivalence relations of such groupoids are up to Borel bireducibility precisely those of Polish group actions, answering a question of Lupini.  Analogously, every non-Archimedean (i.e., every unit morphism has a neighborhood basis of open subgroupoids) open quasi-Polish groupoid is Borel equivalent to the groupoid of models on $\#N$ of an $\@L_{\omega_1\omega}$-sentence in discrete logic.

The proof in fact gives a topological representation of $G$ as the groupoid of isomorphisms between a ``continuously varying'' family of structures over the space of objects of $G$, constructed via a topological Yoneda-like lemma of Moerdijk for localic groupoids and its metric analog.  Other ingredients in our proof include the Lopez-Escobar theorem for continuous logic, a uniformization result for full Borel functors between open quasi-Polish groupoids, a uniform Borel version of Katětov's construction of $\#U$, groupoid versions of the Pettis and Birkhoff--Kakutani theorems, and a development of the theory of non-Hausdorff topometric spaces and their quotients.
\end{abstract}

\tableofcontents

\section{Introduction}
\label{sec:intro}

This paper is a contribution to the classification theory of definable equivalence relations and group actions.
Our main result is a representation theorem for certain topological groupoids which generalize continuous actions of \defn{Polish groups} (separable, completely metrizable topological groups); our result shows that in fact, these groupoids are no more general, by representing them as groupoids of isomorphisms between metric structures, hence induced by the logic action.

It is by now well-known that Polish group actions correspond in a precise sense to isomorphism problems for definable classes of countable first-order structures and their separable metric analogs.  Classically, this holds for \defn{non-Archimedean} Polish groups $G$ (i.e., $G$ admits a countable neighborhood basis of open subgroups at the identity, or equivalently $G$ embeds as a closed subgroup of the infinite symmetric group $\#S_\infty$).  Such $G$ are, up to isomorphism, precisely the automorphism groups of countable first-order structures (e.g., countable groups, graphs, linear orders), while the orbit equivalence relations of their actions on Polish spaces are precisely the isomorphism relations between classes of countable structures which are Borel-definable in the sense of being axiomatized by a countable theory in the countably infinitary first-order logic $\@L_{\omega_1\omega}$.  Moreover, natural topological and descriptive properties of the action itself correspond to natural model-theoretic properties of the class of structures.  See e.g., \cite{Gao}, \cite{BK}, \cite{HK}, \cite{Bec} for details.

The extension of this correspondence to general Polish groups is naturally phrased in terms of \defn{continuous logic for metric structures}, systematically formalized and studied by Ben~Yaacov, Henson, and others over the past decade as the ``correct'' analog to classical first-order logic suitable for studying structures based on an underlying complete metric space (e.g., Banach spaces).  See \cite{BBHU} for an introduction to and survey of first-order continuous logic, and \cite{BDNT}, \cite{CL} for the $\@L_{\omega_1\omega}$ theory.  As noted implicitly already by Gao--Kechris \cite{GK}, and explicated in more detail by Melleray \cite{Me2}, Ivanov--Majcher-Iwanow \cite{IMI}, and others, the above correspondence between non-Archimedean Polish group actions and classical model theory extends largely as expected to arbitrary Polish groups and continuous model theory.  In particular, every Polish group is the automorphism group of a separable metric structure; and arbitrary Polish group actions correspond to isomorphism problems for $\@L_{\omega_1\omega}$-definable classes of metric structures.

More precisely, for a countable first-order language $\@L$, isomorphism on classes of $\@L$-structures can be induced by a Polish group action as follows.  For a fixed countable set (resp., Polish metric space) $X$, the space $\Mod_X(\@L)$ of (resp., metric) $\@L$-structures on underlying universe $X$ has a natural Polish topology, on which the symmetric group $\#S_X$ (resp., the isometry group $\Iso(X)$) acts continuously via pushforward of structures; this is called the \defn{logic action}.  The orbit equivalence relation is exactly the isomorphism relation between $\@L$-structures on $X$.  Similarly, for a countable $\@L_{\omega_1\omega}$-theory (which we may assume is a single sentence) $\phi$, the action of $\#S_X$ (resp., $\Iso(X)$) on the Borel subspace $\Mod_X(\@L, \phi) \subseteq \Mod_X(\@L)$ of models of $\phi$ induces the isomorphism relation between models of $\phi$ on $X$.  By the universality of the logic action (see \cite[2.7.3]{BK}, \cite[1.3]{IMI}) and the Lopez-Escobar theorem (\cite{LE}, \cite{CL}), every non-Archimedean (resp., general) Polish group action arises in this way.

It is natural to consider what happens when we allow the underlying universe $X$ of the structures to vary.  This leads to the setting of \defn{groupoids}, generalizations of groups and group actions consisting of a set of objects and a set of abstract ``morphisms'' between each pair of objects together with multiplication, unit, and inverse operations satisfying the usual axioms.  A \defn{topological groupoid} is a groupoid whose sets of objects and (all) morphisms are topological spaces and whose groupoid operations are continuous; it is common to restrict attention to topological groupoids which are \defn{open}, meaning the multiplication operation is an open map.  An example of an open Polish groupoid is given by the \defn{action groupoid} $G \ltimes X$ of a Polish group action $G \curvearrowright X$, with object space $X$ and morphisms $g : x -> y$ consisting of all $g \in G$ such that $g \cdot x = y$.
Open Polish groupoids were introduced by Ramsay \cite{Ram} and extensively studied by Lupini \cite{Lup}, who proved groupoid analogs of many descriptive set-theoretic results for Polish group actions, and asked:

\begin{question}[Lupini {\cite[9.1]{Lup}}]
\label{qu:lupini}
Does there exist an open Polish groupoid $G$ whose induced orbit equivalence relation $\#E_G$ is not Borel bireducible with that of a Polish group action?
\end{question}

Here by the \defn{orbit equivalence relation} $\#E_G$ of a groupoid, we mean the connectedness relation between its objects.  In invariant descriptive set theory, the natural ranking of complexity between definable equivalence relations $E, F$ on standard Borel spaces $X, Y$ respectively is given by \defn{Borel reducibility} (see e.g., \cite{Gao}): we say that $E$ \defn{Borel reduces} to $F$ if there is a Borel map $X -> Y$ descending to an injection $X/E -> Y/F$; and $E, F$ are \defn{Borel bireducible} if each Borel reduces to the other.  Note that Borel bireducibility between orbit equivalence relations of groupoids is implied by Borel isomorphism (or more generally equivalence) of the groupoids.

For example, for any countable language $\@L$, there is a natural open Polish groupoid $\!{Mod}_{\le\#N}(\@L)$ consisting of $\@L$-structures on an initial segment of $\#N$ and all isomorphisms between them, which captures isomorphisms between all countable $\@L$-structures (including finite ones) and contains the action groupoid of the logic action $\#S_\infty \curvearrowright \Mod_\#N(\@L)$.  Since a standard procedure of adding constants replaces every finite structure with a countably infinite one, while preserving (non-)isomorphism, it is easily seen that $\!{Mod}_{\le\#N}(\@L)$ is in fact Borel equivalent to $\#S_\infty \ltimes \Mod_\#N(\@L', \phi)$ for some sentence $\phi$.  Thus, allowing varying universes for models does not change the possible complexities of the isomorphism relation; in particular, no $G = \!{Mod}_{\le\#N}(\@L)$ can positively answer \cref{qu:lupini}.

In a similar vein, there is a natural open Polish groupoid $\!{Mod}_{\le\#U}(\@L)$ parametrizing all metric $\@L$-structures (of bounded diameter $\le 1$, say) and isomorphisms between them.  The role of $\#N$ in the discrete setting is here played by the \defn{Urysohn sphere} $\#U$, the unique-up-to-isometry ultrahomogeneous Polish metric space of diameter $\le 1$ containing an isometric copy of every other Polish metric space of diameter $\le 1$; see \cite{Me1}, \cite{Usp}, and \cref{sec:urysohn} for details.  Objects of $\!{Mod}_{\le\#U}(\@L)$ consist of a closed subspace $X \subseteq \#U$ together with a metric $\@L$-structure on $X$;%
\footnote{The topology on $\!{Mod}_{\le\#U}(\@L)$ can be defined via the embedding $F |-> d(F, -)$ from the Effros Borel space of $\#U$ to $\#R^\#U$; we omit the details, which are irrelevant for our purposes.}
by universality of $\#U$, these include an isomorphic copy of every separable metric $\@L$-structure.
Here it is less clear how every structure in $\!{Mod}_{\le\#U}(\@L)$ can be uniformly replaced with a structure on some fixed metric space in a Borel way.  That this can be done was shown by Elliott--Farah--Paulsen--Rosendal--Toms--Törnquist \cite{EFPRTT}, who (essentially) used Katětov's \cite{Kat} construction of $\#U$ to extend every separable metric $\@L$-structure to a structure on $\#U$ (see \cref{sec:katetov}).  As a result, just like in the discrete setting, considering groupoids of isomorphisms between metric structures does not in fact confer any extra generality over the simpler context of the logic action for a fixed universe, at least as far as the orbit equivalence relations are concerned.

The main results of this paper show that the above reductions to group actions hold abstractly for any open Polish groupoid, and not just for the orbit equivalence relation but for the groupoid itself.  In particular, we answer \cref{qu:lupini} in the negative.

\begin{theorem}[\cref{thm:locpolgpd-rep}]
\label{thm:intro-locpolgpd-rep}
For any open Polish groupoid $G$, there is a countable language $\@L$ and a metric $\@L_{\omega_1\omega}$-sentence $\phi$ such that $G$ is Borel equivalent to the groupoid $\Iso(\#U) \ltimes \Mod_\#U(\@L, \phi)$ of models of $\phi$ on $\#U$.  In particular, $\#E_G$ is Borel bireducible with the orbit equivalence relation of the logic action $\Iso(\#U) \curvearrowright \Mod_\#U(\@L, \phi)$.
\end{theorem}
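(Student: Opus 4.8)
The plan is to reduce the abstract groupoid-representation problem to a "concrete" one via a sequence of intermediate representations, each narrowing the class of groupoids under consideration until we land on groupoids of metric-structure-isomorphisms, at which point Lopez-Escobar for continuous logic lets us cut down to models of a single sentence. Concretely, the chain of reductions is: (1) realize an arbitrary open Polish groupoid $G$ topologically as the groupoid of isomorphisms between a "continuously varying" family of structures indexed by the object space $G^0$; (2) observe that such a family is classified, via the Lopez-Escobar/Lopez-Escobar-type theorem, by a Borel-definable class over $G^0$, and transport everything to the Urysohn sphere; (3) invoke universality of the logic action to land inside $\Iso(\#U) \ltimes \Mod_\#U(\@L)$; (4) apply the metric Lopez-Escobar theorem to replace $\Mod_\#U(\@L)$ by $\Mod_\#U(\@L,\phi)$ for a suitable $\@L_{\omega_1\omega}$-sentence $\phi$. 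The output is Borel equivalence of groupoids, which by the remark after \cref{qu:lupini} gives Borel bireducibility of the orbit equivalence relations, so the second sentence of the theorem is immediate once the first is established.

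The key technical engine is step (1), the topological Yoneda-type representation. Here I would use Moerdijk's Yoneda-like lemma for localic groupoids (and its metric analog, developed in the body of the paper) to write $G$ as $\Iso(\@F)$ for a sheaf-of-structures $\@F$ over $G^0$: the idea is that the action of $G$ on itself by translation, together with the source/target maps, packages into a $G$-equivariant "structure bundle," and Yoneda says morphisms in $G$ are exactly the fiberwise isomorphisms of this bundle. To make this land in the metric/continuous-logic setting rather than a purely localic one, one needs the groupoid versions of the Pettis and Birkhoff--Kakutani theorems (to get compatible metrics on the fibers varying Borel-measurably in the base), a uniform Borel version of Katětov's construction (to extend each fiber to a copy of $\#U$ coherently), and the uniformization result for full Borel functors between open quasi-Polish groupoids (to turn the a priori merely "definable" equivalence of groupoids into an honest Borel functor with Borel inverse). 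The development of non-Hausdorff topometric spaces and their quotients is what makes the fibers and their assembly behave well when $G$ is not étale/Hausdorff.

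**The main obstacle** I anticipate is step (1) — specifically, making the Yoneda representation work \emph{Borel-uniformly and topologically at once}. The localic Yoneda lemma is clean but lives in the wrong category; pushing it down to actual Polish (or quasi-Polish) spaces with the structure varying continuously over a possibly badly-behaved, non-Hausdorff object space $G^0$ is where all the hard analysis sits. In particular, one must (a) choose the metrics on the fibers and the extensions to $\#U$ simultaneously for all objects in a Borel way, which is exactly the content of the uniform Katětov construction and the groupoid Birkhoff--Kakutani theorem, and (b) verify that the resulting assignment $x \mapsto (\text{structure on } \#U)$ is a Borel function $G^0 \to \Mod_\#U(\@L)$ whose induced map on morphisms is a Borel functor realizing $G \simeq \Iso(\#U)\ltimes \Mod_\#U(\@L,\phi)$ as a Borel equivalence, which is where the full-functor uniformization theorem is used. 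Once this bundle is in hand, steps (2)--(4) are comparatively routine: the continuous Lopez-Escobar theorem directly produces the sentence $\phi$ axiomatizing the Borel-invariant image, universality of $\Iso(\#U)\curvearrowright\Mod_\#U(\@L)$ handles the group-action comparison, and Borel bireducibility of $\#E_G$ follows formally.
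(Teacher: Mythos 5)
Your outline matches the paper's proof in both decomposition and key ingredients: the topological Yoneda-type representation $G \cong \Iso_{G^0}(\@M)$ for a canonical metric-étale structure built from left completions $\^{G/U}$ (using the groupoid Birkhoff--Kakutani theorem via Ramsay's lemma, and a Pettis-type density argument), followed by Borel uniformization to $\#U$ via the uniform Katětov construction, the full-and-faithful-functor uniformization theorem to get an honest Borel equivalence onto the essential image, and finally the Coskey--Lupini metric Lopez-Escobar theorem to produce $\phi$. The one slip is in your step (2): the Lopez-Escobar theorem plays no role in classifying the family over $G^0$ or in transporting it to the Urysohn sphere (that is done entirely by the uniform Borel Katětov construction and the Borel uniqueness of $\#U$); it is invoked exactly once, at the end, to axiomatize the $\Iso(\#U)$-invariant Borel essential image.
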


We also have a version of this result in the setting of non-Archimedean groupoids/discrete structures.  We say that a topological groupoid $G$ is \defn{non-Archimedean} if every unit morphism in $G$ has a neighborhood basis (in the space of morphisms) consisting of open subgroupoids; this includes the action groupoid of every continuous action of a non-Archimedean group.

\begin{theorem}[\cref{thm:nonarchgpd-rep}]
\label{thm:intro-nonarchgpd-rep}
For any open non-Archimedean Polish groupoid $G$, there is a countable language $\@L$ and a (discrete) $\@L_{\omega_1\omega}$-sentence $\phi$ such that $G$ is Borel equivalent to the groupoid $\#S_\infty \ltimes \Mod_\#N(\@L, \phi)$ of models of $\phi$ on $\#N$.  In particular, $\#E_G$ is Borel bireducible with the orbit equivalence relation of the logic action $\#S_\infty \curvearrowright \Mod_\#N(\@L, \phi)$.
\end{theorem}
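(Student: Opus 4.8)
The plan is to attach to each object $x$ of $G$ a countable $\@L$-structure $M_x$, depending in a Borel way on $x$, and to show that the assignment $x \mapsto M_x$, $g \mapsto (\text{the isomorphism } M_{s(g)} \to M_{t(g)}\text{ induced by } g)$ is a fully faithful Borel functor from $G$ into the model groupoid $\#S_\infty \ltimes \Mod_\#N(\@L)$, whose essential image is the full subgroupoid $\mathcal{B}$ on the $\Iso$-saturation of $\{M_x : x \in G^0\}$, and that this saturation is Borel. Granting this, $G$ is Borel equivalent to $\mathcal{B}$, and since $\mathcal{B}$ is invariant and Borel the (discrete) Lopez-Escobar theorem gives $\mathcal{B} = \#S_\infty \ltimes \Mod_\#N(\@L, \phi)$ for some $\@L_{\omega_1\omega}$-sentence $\phi$---after, if necessary, the routine reduction from varying countable universes to the fixed universe $\#N$ recalled in the introduction. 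So the real content is the construction of $x \mapsto M_x$ together with the verification of fullness, faithfulness, Borelness of the image, and existence of a Borel quasi-inverse.

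The structures $M_x$ will be read off from a topological representation of $G$. By the (localic) groupoid form of Moerdijk's Yoneda-type lemma, the classifying topos $BG$ of the open quasi-Polish groupoid $G$ carries a generic ``continuously varying family of structures'' $(M_x)_{x \in G^0}$ over the object space, from which $G$ itself is recovered as the groupoid of isomorphisms of the family. Non-Archimedeanness---a neighborhood basis of open subgroupoids at each unit---is precisely what forces this family to consist of discrete countable structures: one fixes a countable basis $(H_n)_n$ of open subgroupoids near the units and builds the underlying set of $M_x$ out of the associated countable coset spaces (the index sets an $H_n$ cuts out on the fiber over $x$), equipped with the canonical relations---the $G$-invariant relations among these---which force every $\@L$-isomorphism $M_x \to M_y$ to be the action of a unique morphism $x \to y$ of $G$. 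The groupoid analogs of the Pettis and Birkhoff--Kakutani theorems are the engine here: they make the relevant coset maps open, the morphism spaces Polish with the expected basis of opens, and the topology on $G$ reconstructed from the family coincide with the original one.

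To turn this topological representation into a Borel equivalence with a genuinely standard Borel model groupoid, I would invoke the uniformization theorem for full Borel functors between open quasi-Polish groupoids: the tautological functor of $G$ into the model groupoid is full by construction of the canonical relations, hence can be straightened to a Borel functor---in particular, a Borel selection argument exploiting the freeness built into the coset spaces puts each $M_x$ on a fixed countable universe---and moreover admits a Borel quasi-inverse, which is exactly a Borel equivalence of groupoids. Identifying the essential image as an invariant Borel set of models, so that Lopez-Escobar applies, is then bookkeeping.

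The step I expect to be the main obstacle is the first one: assembling the continuously varying family of discrete structures with the correct (second-countable, open-subgroupoid-generated) topology, and proving that the original point-set quasi-Polish groupoid $G$---not merely its classifying topos up to equivalence---is faithfully and fully recovered from it. This means running the localic Yoneda lemma, reconciling its output with the point-set groupoid through the developed theory of non-Hausdorff quasi-Polish spaces and their quotients, and keeping the groupoid Pettis and Birkhoff--Kakutani input under control, all at once. By comparison, the descent to the fixed universe $\#N$ via the uniformization theorem and the closing application of Lopez-Escobar should be comparatively routine given the tools the excerpt makes available.
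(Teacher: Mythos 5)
Your proposal follows essentially the same route as the paper: the canonical structure built from the coset spaces $G/U$ for a countable basis $\@U$ of open subgroupoids (with the right-multiplication maps as the ``canonical relations''), the Moerdijk-style topological Yoneda argument plus the groupoid Pettis theorem to obtain $G \cong \Iso_{G^0}(\@M)$, then uniformization onto the fixed universe $\#N$, the full-Borel-functor uniformization theorem to produce the Borel quasi-inverse, and finally Lopez-Escobar. The only slips are cosmetic: Birkhoff--Kakutani plays no role in the non-Archimedean case (it is needed only for the metric analogue, the open subgroupoids themselves doing all the work here), and the passage to the universe $\#N$ uses adjoining countably many constants plus Lusin--Novikov rather than any ``freeness'' of the coset spaces, which need not even be fiberwise infinite.
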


In fact, we will prove these results for slightly more general classes of groupoids: \cref{thm:intro-locpolgpd-rep} holds for open topological groupoids which are \defn{$\sigma$-locally Polish} (i.e., admit a countable cover by open Polish subspaces),%
\footnote{Note that Lupini \cite{Lup} uses ``Polish groupoid'' to refer to a notion intermediate between what we are calling ``open Polish groupoid'' and ``open $\sigma$-locally Polish groupoid''.}
while \cref{thm:intro-nonarchgpd-rep} holds for de~Brecht's \cite{deB} even more general (not necessarily even $T_1$) \defn{quasi-Polish spaces} (see \cref{sec:qpol} for the definition).

We also note that the proofs of \cref{thm:intro-locpolgpd-rep,thm:intro-nonarchgpd-rep} will show that the Borel equivalence can be taken to be an injective-on-objects functor from $G$ in each case (see \cref{rmk:locpolgpd-rep-io,rmk:nonarchgpd-rep-io}), whence the reductions from $\#E_G$ to the logic actions can be taken to be embeddings (i.e., injective).

\subsection{Topological representation}
\label{sec:intro-toprep}

In the rest of this Introduction, we outline the proofs of \cref{thm:intro-locpolgpd-rep,thm:intro-nonarchgpd-rep}, which use several intermediate results which we believe to be interesting in their own right.  Both proofs naturally factor into two parts: a \emph{topological representation} theorem of the groupoid in question as the groupoid of isomorphisms between a ``continuously varying family'' of structures (whose universes also ``vary continuously''), followed by a \emph{Borel uniformization} argument to turn the groupoid into the logic action for a fixed universe.

The topological representation theorems are inspired by ideas from topos theory, in particular the theory of localic groupoid representations (see \cite[C5]{Jeleph}, \cite{Mo1}).  In the non-Archimedean/discrete case (\cref{thm:intro-nonarchgpd-rep}), the notion of ``continuously varying family'' of discrete $\@L$-structures over a topological space $X$ is well-known (under the name ``sheaf of structures'') in topos theory and categorical logic.  We prefer one of several equivalent definitions, that of an \defn{étale $\@L$-structure} over $X$, consisting informally of a topological $\@L$-structure $\@M$ together with a projection map $p : \@M -> X$ such that the assignment of fibers $x \in X |-> \@M_x := p^{-1}(x)$ is ``continuous''; see \cref{sec:etale} for details.  Given an étale $\@L$-structure $\@M$ over $X$, one can define its \defn{isomorphism groupoid} $\Iso_X(\@M)$, with object space $X$ and morphisms $x -> y$ consisting of isomorphisms $\@M_x \cong \@M_y$, and with topology imitating the usual definition of the pointwise convergence topology on the automorphism group $\Aut(\@M)$ of a single $\@L$-structure $\@M$.  All of these notions are essentially standard in topos theory,%
\footnote{We will relegate discussion of connections with topos theory to the footnotes.}
although we give a concrete, self-contained exposition in \crefrange{sec:etalesp}{sec:etalestr}, partly to make this paper more accessible, and partly to emphasize the analogy with the metric setting.

In the metric setting, as there does not appear to be a well-developed theory of continuous categorical logic (see \cite{AH}, \cite{Cho} for some recent work in this area), we define in \cref{sec:metale} an \emph{ad hoc} analog of étale $\@L$-structures, called \defn{metric-étale $\@L$-structures}, to capture the idea of a ``continuously varying family'' of metric $\@L$-structures over a space $X$.  The definition is naturally phrased in the language of \defn{grey sets} and \defn{topometric spaces} due to Ben~Yaacov, Melleray, and others (see \cite{BYM}), commonly used in continuous logic; we need a non-Hausdorff version of the theory, which we develop in \cref{sec:greytopmet}.  As before, we then define for each metric-étale $\@L$-structure over $X$ its \defn{isomorphism groupoid} $\Iso_X(\@M)$, imitating the usual definition of $\Aut(\@M)$ for a metric structure $\@M$.

We now have the following topological representation theorems, forming the first halves of the proofs of \cref{thm:intro-nonarchgpd-rep,thm:intro-locpolgpd-rep} respectively:

\pagebreak

\begin{theorem}[\cref{thm:nonarchgpd-yoneda-iso}]
\label{thm:intro-nonarchgpd-yoneda-iso}
For any open non-Archimedean (quasi-)Polish groupoid $G$, there is a countable language $\@L$ and a (second-countable) étale $\@L$-structure $\@M$ over the space of objects $G^0$ of $G$ such that we have an identity-on-objects isomorphism of topological groupoids $G \cong \Iso_{G^0}(\@M)$.
\end{theorem}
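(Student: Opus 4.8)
The plan is to produce the structure $\@M$ by a topological Yoneda-style argument, following Moerdijk's localic groupoid representation theorem. The key insight is that the category of "continuously varying discrete sets" over the object space $G^0$ (i.e.\ étale spaces over $G^0$) on which $G$ acts continuously — equivalently, "$G$-equivariant sheaves" — is itself the category of models of a geometric theory, and the representation $G \cong \Iso_{G^0}(\@M)$ should fall out by feeding $G$ its own canonical $G$-action. Concretely: since $G$ is non-Archimedean, every unit $1_x$ has a neighborhood basis of open subgroupoids $U \le G$; each such $U$ gives, via the coset construction, a $G$-equivariant étale space $G/U \to G^0$ (the "orbit" $\{gU\}$ fibered over targets/sources), and these are exactly the "basic" or "orbit-type" equivariant sheaves. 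First I would fix a countable collection $(U_n)$ of open subgroupoids that is cofinal at every unit (using second countability plus the non-Archimedean hypothesis, together with the fact that $G^0$ is covered by countably many Polish/quasi-Polish pieces), and assemble the disjoint union $\@M := \coprod_n G/U_n$ as an étale space over $G^0$; this $\@M$ carries a canonical continuous $G$-action by left translation $g\cdot(hU_n) = (gh)U_n$.

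Second I would equip $\@M$ with an $\@L$-structure for a suitable countable language $\@L$. The relations and functions of $\@L$ are chosen to encode, in a $G$-invariant way, the "incidence data" among the cosets: for each pair $m,n$ and each open subgroupoid $V$ with $U_m \cap U_n \le V$ one gets a partial map or relation between $G/U_m$ and $G/U_n$, and one records enough of these (there are only countably many by second countability) so that the full structure $\@M_x$ on a fiber recovers, as its automorphism group, exactly the isotropy group $G_x = \{g : x \to x\}$ with its subspace topology, and so that an isomorphism $\@M_x \cong \@M_y$ of $\@L$-structures is forced to come from a (unique) morphism $g : x \to y$ in $G$. This is the heart of the Yoneda lemma: a morphism $\@M_x \to \@M_y$ in the relevant category is the same as a natural transformation between the representable functors, hence an element of $G$. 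I would verify this fiberwise first (reconstructing $G_x$ from $\@M_x$ as in the classical Polish-group case, where $\Aut$ of the canonical structure on $\mathbb N$ built from open subgroups is the original group), then globalize.

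Third I would check that the induced bijection $G \to \Iso_{G^0}(\@M)$, which is the identity on objects by construction and sends $g : x \to y$ to the fiber isomorphism $\@M_x \to \@M_y$ it induces, is a homeomorphism. Continuity in one direction is built into the continuity of the $G$-action on the étale structure $\@M$; for the other direction I would use that the topology on $\Iso_{G^0}(\@M)$ is generated by "basic" sets of the form "$\sigma$ sends this local section to that local section," and pull these back to a subbasis of open subgroupoid cosets in $G$, invoking openness of $G$ and a Pettis/Baire-category argument (valid since $G$ is quasi-Polish and each $G/U_n$ is étale hence locally homeomorphic to a quasi-Polish piece of $G^0$) to see the two topologies agree. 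The second-countability of $\@M$ is immediate from second-countability of $G$ and the countability of the index set.

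The main obstacle I anticipate is the bookkeeping in the second step: choosing $\@L$ and the structure on $\@M$ so that $\Iso_{G^0}(\@M)$ is \emph{exactly} $G$ and not something larger — i.e.\ ruling out "fake" isomorphisms between fibers that respect all the chosen relations but do not lift to a groupoid morphism. In the classical Polish-group setting this is handled by a clever canonical-structure construction (encoding the full lattice of open subgroups and their coset incidences); here one must do this uniformly and continuously over the varying base $G^0$, and in the merely quasi-Polish (non-$T_1$, non-Hausdorff) case one must be careful that the étale-space formalism and the reconstruction argument still go through without separation axioms. I expect the non-$T_1$ subtleties to be mostly absorbed by working with étale spaces over quasi-Polish spaces as developed in the earlier sections, but the "no fake isomorphisms" verification will require the full strength of the cofinality of $(U_n)$ at every unit and a careful choice of which coset-incidence relations to include in $\@L$.
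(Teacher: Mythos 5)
Your overall architecture matches the paper's: a canonical multi-sorted structure whose sorts are the coset spaces $G/U$ for a countable basis $\@U$ of open subgroupoids, a topological Yoneda computation of the morphisms between fibers, and a Baire-category ingredient. One secondary imprecision: the language is not made of ``incidence relations between $G/U_m$ and $G/U_n$ indexed by subgroupoids $V \supseteq U_m \cap U_n$,'' but of the \emph{right multiplication} maps $f_{U,V,S} = (-)\cdot S : G/U \to G/V$, indexed by ($V$-invariant, $V$-small) open $S \subseteq \sigma^{-1}(V)$ descending to open sections of $G/V$ with $U \subseteq S \cdot S^{-1}$ (\cref{thm:nonarchgpd-rightmult}); it is the $S$-indexing that forces a homomorphism $h : \@M_x \to \@M_y$ to be determined by its values $h_U(U_x)$ at the unit cosets (\cref{thm:nonarchgpd-yoneda}). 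Your reference to the classical group case suggests you have this in mind, so I count it as vagueness rather than error.

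The genuine gap is in your resolution of the ``no fake isomorphisms'' problem, which you correctly identify as the crux but propose to solve by ``a careful choice of which coset-incidence relations to include in $\@L$.'' No such choice exists: every element of the inverse limit $\projlim_{x \in U \in \@U}(G/U)_y$ of coherent families of left cosets acts on the fibers by right-multiplication-equivariant maps, so the homomorphism category $\Hom_{G^0}(\@M)$ is unavoidably the \emph{left completion} of $G$, which properly contains $G$ in general (already for $G = \#S_\infty$, where the endomorphism monoid of the canonical structure consists of all injections $\#N \to \#N$). Adding further left-multiplication-invariant relations cannot cut this down, since the extra homomorphisms preserve them all. What rescues the theorem is that the \emph{isomorphism} groupoid nevertheless collapses to $G$, and the proof of that is the groupoid Pettis theorem (\cref{thm:pettis}, \cref{thm:subgpd-dense}): $\eta(G)$ is a $\tau$-fiberwise dense quasi-Polish subgroupoid of the quasi-Polish groupoid $\Iso_{G^0}(\@M)$, hence equals it. You do mention a Pettis/Baire argument, but only in your third step to compare topologies --- and there it is not needed, since the topology comparison is a direct computation with subbasic open sets (\cref{thm:nonarchgpd-yoneda-top}). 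The Baire-category input belongs in your second step, where it carries the entire burden of showing that every isomorphism $\@M_x \cong \@M_y$ lifts to a morphism of $G$; as written, that claim has no proof.
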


\begin{theorem}[\cref{thm:locpolgpd-yoneda-iso}]
\label{thm:intro-locpolgpd-yoneda-iso}
For any open ($\sigma$-locally) Polish groupoid $G$, there is a countable language $\@L$ and a (second-countable) metric-étale $\@L$-structure $\@M$ over the space of objects $G^0$ of $G$ such that we have an identity-on-objects isomorphism of topological groupoids $G \cong \Iso_{G^0}(\@M)$.
\end{theorem}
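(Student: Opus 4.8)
The plan is to build $\@M$, essentially, as the left $G$-space $G$ itself, fibered over $G^0$ by the target map $t\colon G \to G^0$, so that the fiber $\@M_x$ is the space $G^x := t^{-1}(x)$ of morphisms into $x$, carrying the left $G$-action $L_g\colon \@M_x \to \@M_y$, $h \mapsto gh$, for $g\colon x\to y$; one then equips it with just enough $\@L$-structure to pin the action down, and identifies $G$ with $\Iso_{G^0}(\@M)$ via a metric analogue of Moerdijk's localic Yoneda lemma \cite{Mo1}. First I would record the topological preliminaries. Since $G$ is open and $\sigma$-locally Polish, $G$ and $G^0$ are second-countable metrizable, the structure maps $s,t$ and the multiplication are open, and the groupoid version of the Pettis theorem supplies the usual Baire-category control on products of non-meager Baire-property sets. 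Feeding this into the groupoid version of the Birkhoff--Kakutani theorem yields a bounded continuous metric $d$ on $G$ that is invariant under the left $G$-action (that is, $d(gh, gh') = d(h, h')$ whenever defined), that restricts on each $G^x$ to a metric inducing the subspace topology, and with $x \mapsto (G^x, d)$ metric-étale over $G^0$ in the sense of \cref{sec:metale}; the lower-semicontinuity and the supply of continuous local sections needed here are where openness enters, and the non-Hausdorff topometric and grey-set machinery of \cref{sec:greytopmet} is what carries the construction through at the generality of $\sigma$-locally Polish groupoids.

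Next I would fix the (countable) language and the structure. On each fiber $\@M_x = G^x$ I would interpret: a constant as the unit $1_x$ (the unit section $G^0 \to G$ being continuous); for each $\rho$ in a fixed countable family of bounded continuous functions on $G^0$ separating its points, a unary predicate $h \mapsto \rho(s(h))$ recording the source map; and --- the essential ingredient --- the partial groupoid multiplication, suitably restricted and encoded in the metric-étale formalism (via the grey-set graph device of \cref{sec:greytopmet}), so that $\@M_x$ remembers the left-isotropy action of $G(x,x)$ on $G^x$ together with enough of the composition of nearby morphisms. This data is describable countably, so $\@M$ is a second-countable metric-étale $\@L$-structure for a countable $\@L$. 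Each $L_g$ is then a bijective isometry of fibers preserving the source predicates (since $s(gh) = s(h)$) and the multiplication data, hence lies in $\Iso_{G^0}(\@M)$, and $\Phi\colon g \mapsto L_g$ is an identity-on-objects continuous functor $G \to \Iso_{G^0}(\@M)$ --- continuity being immediate from continuity of the multiplication and from the way the topology on $\Iso_{G^0}(\@M)$ imitates pointwise convergence.

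It remains to see that $\Phi$ is an isomorphism of topological groupoids. Faithfulness is trivial: $L_g$ and $L_{g'}$ agree at $1_x$, so $g = g 1_x = g' 1_x = g'$. Fullness is the metric Yoneda lemma, and where almost all the work lies: given an isomorphism $\phi\colon \@M_x \cong \@M_y$ of metric-étale structures, put $g := \phi(1_x) \in G^y$; preservation of the source predicates gives $s(g) = s(1_x) = x$, so $g\colon x\to y$, and preservation of the multiplication data (applied along $h = 1_x h$ for $h \in G^x$) together with $\phi$ being an isometry for the left-invariant $d$ forces $\phi(h) = gh$ on a dense subset of $G^x$, hence everywhere; thus $\phi = L_g = \Phi(g)$. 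Finally $\Phi$ is a homeomorphism on morphism spaces: one direction is the continuity above, and for the other, $\Phi(g_i) \to \Phi(g)$ yields $s(g_i) \to s(g)$ and $g_i = L_{g_i}(1_{s(g_i)}) \to L_g(1_{s(g)}) = g$ using continuity of the unit section. Being bijective on objects and on morphisms and a homeomorphism, $\Phi$ is the desired identity-on-objects topological-groupoid isomorphism.

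The main obstacle is the construction of $\@M$ together with establishing fullness: one must design the $\@L$-structure so that $\@M$ is simultaneously (i) a genuine second-countable metric-étale $\@L$-structure in the technical sense of \cref{sec:metale} --- which, since $t\colon G \to G^0$ is not étale and the fibers need not be metrized by the subspace topology they inherit inside $G$, forces careful use of the grey-set and non-Hausdorff topometric apparatus of \cref{sec:greytopmet} --- and (ii) rigid enough that a fiberwise structure-isomorphism is necessarily left multiplication by a unique morphism. A secondary point of care is verifying that openness and $\sigma$-local Polishness deliver precisely the inputs (Pettis, Birkhoff--Kakutani, separability of the fibers, and coincidence of the two topologies) that the argument consumes.
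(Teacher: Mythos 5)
Your overall architecture---Cayley/Yoneda via left multiplication on a canonical structure, with Birkhoff--Kakutani supplying left-invariant metrics and a Pettis-type argument in the background---is the right one, but the specific structure you propose does not exist in the stated generality, and your fullness argument does not close. The paper's canonical structure is not the single fiber space $\tau : G \to G^0$ with one left-invariant metric: it is multi-sorted, with one sort $\^{G/U}$ (the fiberwise \emph{completion} of the quotient of $\sigma^{-1}(U_{=0})$ by the left-invariant pseudometric $d_U$) for each member $U$ of a countable family $\@U$ of open strict \emph{grey subgroupoids}, together with binary grey relations $\^R_{U,V,U \odot S \odot V}$ between different sorts encoding approximate right multiplication by basic open sets $S$ with $U \sqle S \odot V \odot S^{-1}$. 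This multiplicity is forced: \cref{thm:birkhoff-kakutani} only produces a grey subgroupoid supported over a prescribed paracompact Hausdorff open piece of $G$ (and a $\sigma$-locally Polish $G$ need not be Hausdorff, so no single continuous left-invariant metric is available), exactly as in the non-Archimedean case one needs a whole basis of open subgroupoids rather than one. Moreover, your ``partial groupoid multiplication, suitably restricted and encoded'' is precisely where the content lies: within a single fiber $G^x = \tau^{-1}(x)$ there is no internal multiplication (two morphisms with target $x$ do not compose), so ``right multiplication'' must be indexed by approximate open sections $S$ and must relate \emph{different} quotients $\^{G/U} \to \^{G/V}$; specifying this, and proving the inverse-limit Yoneda computation (\cref{thm:locpolgpd-yoneda}) that every homomorphism is determined by its values at the unit sections, is the bulk of \cref{sec:locpolgpd}.

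Second, a metric-étale space must have complete fibers, so the fibers of $\@M$ are completions, not $G^x$ itself. Hence for an isomorphism $\phi : \@M_x \cong \@M_y$, the element $\phi(\pi_U(x))$ lives in the completed fiber $(\^{G/U})_y$ and is \emph{not} a priori (the class of) a morphism of $G$; your step ``$g := \phi(1_x) \in G^y$, so $g : x \to y$'' is exactly the gap. What evaluation at the units actually shows is $\Hom_{G^0}(\@M)(x,y) \cong \projlim_U (\^{G/U})_y$, i.e.\ that $\Hom_{G^0}(\@M)$ is the ``left completion'' of $G$, into which $\eta : g \mapsto L_g$ is only a $\tau$-fiberwise \emph{dense} topological embedding (\cref{thm:locpolgpd-yoneda-emb-dense}). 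Only at that point does the Pettis argument do its work: by \cref{thm:pettis} and \cref{thm:subgpd-dense}, a fiberwise dense quasi-Polish subgroupoid of a quasi-Polish groupoid is the whole groupoid, whence $\eta(G) = \Iso_{G^0}(\@M)$. You cite Pettis as background input, but your fullness proof never invokes it; without it the argument only identifies $G$ with a dense subgroupoid of $\Iso_{G^0}(\@M)$.
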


In the rest of this subsection, we briefly discuss the proofs of these results.  In some sense, both are ultimately generalizations of \defn{Cayley's theorem} from elementary group theory, which in its strong form asserts that the left multiplication action of a group $G$ on itself gives an isomorphism $G \cong \Aut(G \text{ as a right $G$-set})$.  The generalization of Cayley's theorem to groupoids is the fundamental \defn{Yoneda lemma}.  For a groupoid $G$, a \defn{right $G$-set} can be defined as a multi-sorted structure $\@M = (M^y)_{y \in G^0}$ (where $G^0 := \{$objects of $G\}$) with action maps $(-) \cdot g : M^y -> M^z$ for each $g : z -> y \in G$ satisfying the usual associativity and unitality laws.  To each $x \in G^0$, we may associate the right $G$-set $\@M_x = (M_x^y)_{y \in G^0}$ given by the hom-sets
\begin{align*}
M_x^y := G(y, x) := \{g : y -> x \in G\}
\end{align*}
with the right multiplication $G$-action.  Then $\@M := (\@M_x)_{x \in G^0}$ is a family of right $G$-sets over $G^0$; and the Yoneda lemma (phrased in terms of right $G$-sets) says that left multiplication gives a bijection between $G(x, y)$ and the set of isomorphisms $\@M_x \cong \@M_y$ for each $x, y \in G^0$, whence we have represented $G$ as the isomorphism groupoid of $\@M$.

In a different direction, Cayley's theorem is generalized to Polish groups by the representations as automorphism groups mentioned in the first few paragraphs of this Introduction:

\begin{theorem}[classical]
\label{thm:intro-nonarchgrp-rep}
For every non-Archimedean Polish group $G$, there is a countable language $\@L$ and countable $\@L$-structure $\@M$ such that $G \cong \Aut(\@M)$.
\end{theorem}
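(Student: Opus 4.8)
The plan is to realize $G$ as a closed subgroup of $\#S_\infty$ by a left-translation (Cayley-type) action, and then to invoke the standard fact that every closed subgroup of $\#S_\infty$ is the automorphism group of a countable first-order structure. This is the non-Archimedean, single-object special case of the Yoneda/Cayley philosophy behind \cref{thm:intro-nonarchgpd-yoneda-iso}, which it serves to motivate.

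First I would use non-Archimedeanness to fix a decreasing neighborhood basis $(V_n)_{n \in \#N}$ at $1_G$ consisting of open subgroups, and set $X := \bigsqcup_{n \in \#N} G/V_n$, a countable set on which $G$ acts by left translation. Each point stabilizer $gV_ng^{-1}$ is open, so the induced homomorphism $\rho : G \to \#S_X$ is continuous, and it is injective since a $g$ fixing every coset lies in $\bigcap_n V_n$, which is $\{1_G\}$ because $G$ is Hausdorff and $(V_n)$ is a neighborhood basis. Moreover $\rho$ is a topological embedding: each $V_n$ equals $\rho^{-1}$ of the $\#S_X$-stabilizer of the trivial coset $V_n \in G/V_n$, which is basic open, so the preimage topology agrees with the original one at $1_G$. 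Since a continuous injective topological-embedding homomorphism out of a Polish group has Polish, hence $G_\delta$, hence closed image, $H := \rho(G)$ is a closed subgroup of $\#S_X \cong \#S_\infty$ topologically isomorphic to $G$.

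It then remains to produce a countable language $\@L$ and an $\@L$-structure $\@M$ with universe $X$ such that $\Aut(\@M) = H$. Let $\@L$ contain, for each $n \ge 1$ and each orbit $O$ of the coordinatewise action $H \curvearrowright X^n$, an $n$-ary relation symbol $R_O$; as $X$ is countable there are only countably many such orbits, so $\@L$ is countable. Interpret $R_O$ in $\@M$ as $O$. Every element of $H$ preserves each $R_O$, so $H \subseteq \Aut(\@M)$. Conversely, given $\sigma \in \Aut(\@M)$ and a finite $F \subseteq X$ listed as a tuple $\bar a \in X^n$, the automorphism $\sigma$ preserves $R_O$ for $O$ the $H$-orbit of $\bar a$, so $\sigma(\bar a) \in O$, i.e.\ $h\restriction F = \sigma\restriction F$ for some $h \in H$. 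Hence $\sigma$ lies in the closure of $H$ in $\#S_X$ for the topology of pointwise convergence, and since $H$ is closed, $\sigma \in H$. Therefore $\Aut(\@M) = H \cong G$.

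The only ingredients beyond routine checking are two soft facts: $\bigcap_n V_n = \{1_G\}$ (immediate from Hausdorffness), and that a Polish subgroup of a Polish group is closed (being $G_\delta$). I expect the conceptual crux to be the last paragraph's verification that $\Aut(\@M)$ is \emph{no larger} than $H$, since this is exactly where closedness of $H$ in $\#S_X$ is essential; it is the prototype of the ``no spurious automorphisms'' argument that will recur, in a topologically parametrized form over the object space, in the proof of the étale representation theorem \cref{thm:intro-nonarchgpd-yoneda-iso} and its metric analogue.
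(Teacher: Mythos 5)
Your argument is correct, but it is essentially the ``usual proof'' (as in \cite[2.4.4]{Gao}) that the paper explicitly sets aside in favor of a different variant, so it is worth comparing the two. You realize $G$ as a closed subgroup $H \le \#S_X$ via the left-translation action on $X = \bigsqcup_n G/V_n$, and then take the canonical \emph{relational} structure whose symbols name the $H$-orbits on each $X^n$; the crux (``no spurious automorphisms'') is that any $\sigma \in \Aut(\@M)$ agrees with elements of $H$ on every finite set and hence lies in $\-H = H$, which ultimately rests on the Baire-category fact that a Polish subgroup of a Polish group is closed. The paper instead works with the multi-sorted structure whose sorts are the coset spaces $G/U$ themselves, equipped with the right-multiplication maps $f_{U,V,S} = (-) \cdot S : G/U \to G/V$ as function symbols, and identifies $\End(\@M)$ with the left completion $\projlim_U G/U$ by a Yoneda-type naturality computation; the crux there is that an \emph{invertible} element of the left completion already lies in $G$, again a Pettis/Baire argument. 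The two proofs thus agree on where Baire category enters but differ in what the structure is: yours is defined \emph{a posteriori} from the orbits of the already-embedded group, whereas the paper's is built functorially from the coset data of $G$ alone. The payoff of the paper's version is that it groupoidifies: there is no single symmetric group into which a groupoid with varying object space embeds, and the orbit-relations trick presupposes a fixed universe, whereas the sorts $G/U$ become the étale coset spaces over $G^0$ and the naturality computation becomes \cref{thm:nonarchgpd-yoneda}, which is exactly the engine of \cref{thm:intro-nonarchgpd-yoneda-iso}. Your closing remark correctly identifies the recurring ``no spurious automorphisms'' theme, but note that in the groupoid proof it is implemented by \cref{thm:subgpd-dense} (density of $G$ in $\Iso_{G^0}(\@M)$ plus Pettis) rather than by closedness of an image in a symmetric group.
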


\begin{theorem}[essentially {\cite[3.1]{GK}}; see also {\cite[5]{Me2}}]
\label{thm:intro-polgrp-rep}
For every Polish group $G$, there is a countable language $\@L$ and a separable metric $\@L$-structure $\@M$ such that $G \cong \Aut(\@M)$.
\end{theorem}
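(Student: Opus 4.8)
The plan is to realize $G$ by its left-translation action on a suitable completion of itself. I would first fix a countable dense subgroup $G_0 \le G$ containing the identity (e.g.\ the subgroup generated by a countable dense set) and enumerate $G_0 = \{h_n : n \in \#N\}$ with $h_0 = e$. Because each right translation $\rho_h \colon x \mapsto xh$ ($h \in G_0$) is going to become a uniformly continuous function symbol in a metric structure, I would choose a compatible left-invariant metric $d \le 1$ on $G$ for which each such $\rho_h$ is $d$-uniformly continuous; such $d$ exists, taking e.g.\ $d(x,y) := \sum_n 2^{-n}\min(1, d_0(xh_n, yh_n))$ for any compatible left-invariant $d_0 \le 1$ (left-invariance and compatibility are immediate from $h_0 = e$, and uniform continuity of $\rho_h$ follows since $hG_0 = G_0$ merely reindexes the sum, while each term satisfies $\min(1, d_0(xh_n, yh_n)) \le 2^n d(x,y)$). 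Let $\hat X$ be the completion of $(G,d)$, a Polish space of diameter $\le 1$; then every left translation $\lambda_g$ ($g \in G$) is an isometry of $(G,d)$ and so extends to a bijective isometry of $\hat X$, every $\rho_h$ ($h \in G_0$) extends to a homeomorphism of $\hat X$ with $\rho_h^{-1} = \rho_{h^{-1}}$, and $\lambda_g \rho_h = \rho_h \lambda_g$ (true on the dense $G$ by associativity, hence on $\hat X$). I would then take $\@M := (\hat X; (\rho_h)_{h \in G_0})$, the separable metric structure in the countable language $\@L$ with one unary function symbol per element of $G_0$.

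Next I would show that $\theta \colon g \mapsto \lambda_g$ is an isomorphism $G \cong \Aut(\@M)$ of topological groups. It is an injective homomorphism into $\Aut(\@M)$ since each $\lambda_g$ is a bijective isometry of $\hat X$ commuting with every $\rho_h$ and $\lambda_g(e) = g$; it is continuous because the isometries $\lambda_{g_i}$ are equicontinuous and $\lambda_{g_i}(x) = g_i x \to gx$ for $x$ in the dense subspace $G$ (continuity of multiplication); and $\theta^{-1}$ is continuous because $G \hookrightarrow \hat X$ is a topological embedding and $\lambda_{g_i} \to \lambda_g$ pointwise forces $g_i = \lambda_{g_i}(e) \to g$. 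For surjectivity, given $\phi \in \Aut(\@M)$ I would set $p := \phi(e) \in \hat X$; commutation with the $\rho_h$ gives $\phi(h) = \rho_h(p)$ for $h \in G_0$, and using left-invariance of $d$ together with continuity one checks $h \mapsto \rho_h(p)$ is an isometry on $G_0$, so it extends continuously to a map $\lambda_p \colon \hat X \to \hat X$ equal to $\phi$ (agreement on the dense $G_0$), hence a bijective isometry. It then remains to prove $p \in G$, so that $\phi = \lambda_p = \theta(p)$.

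The hard part will be this last point: ruling out automorphisms of $\@M$ that ``leak'' into $\hat X \setminus G$. I would handle it by Baire category. Since $(G,d)$ is Polish and dense in its completion $\hat X$, it is a dense $G_\delta$, hence comeager, in $\hat X$; as $\lambda_p$ is a homeomorphism of $\hat X$, the set $\lambda_p(G)$ is also comeager, so $G \cap \lambda_p(G) \ne \emptyset$, say $\lambda_p(g) = g'$ with $g, g' \in G$. Commuting with $\rho_h$ then gives $\lambda_p(gh) = \rho_h(g') = g'h$ for all $h \in G_0$, so $\lambda_p$ agrees with the left translation $\lambda_{g'g^{-1}}$ on the dense set $gG_0 \subseteq \hat X$; both being continuous, $\lambda_p = \lambda_{g'g^{-1}}$, whence $p = \lambda_p(e) = g'g^{-1} \in G$. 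This finishes surjectivity, and since $\theta$ is a continuous bijective homomorphism of Polish groups with continuous inverse, it is an isomorphism of topological groups. (The only other mildly delicate point is the choice of $d$ above, needed so that the $\rho_h$ are honest uniformly continuous function symbols; the rest is routine.)
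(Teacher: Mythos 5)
Your proof is correct, and it follows the classical route of the sources the paper cites for this statement (\cite[3.1]{GK}, \cite{Me2}) rather than the specialization of the paper's own groupoid machinery. Both arguments share the same skeleton --- represent $G$ by left translations on a left completion, rigidify with right translations, and close with a Baire-category argument identifying the dense copy of $G$ with the full automorphism group (your step ``$G$ and $\phi(G)$ are both comeager in $\hat X$, hence meet'' is exactly the group case of the Pettis argument in \cref{thm:subgpd-dense}) --- but they differ in how the structure is built. You use a single sort, the completion of $G$ under one left-invariant metric reweighted so that right translation by each element of a countable dense subgroup $G_0$ becomes uniformly continuous (the reindexing $hG_0 = G_0$ is the key point, and your estimate is right), which yields honest unary function symbols. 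The paper instead uses a multi-sorted structure whose sorts are completed quotients of $G$ by countably many left-invariant pseudometrics (open grey subgroups), with \emph{approximate} right-multiplication relations indexed by small open sets, as carried out for groupoids in \cref{sec:locpolgpd}. Your version is more elementary and self-contained, but it leans on the identity element and on a single global dense subgroup, neither of which survives the passage to groupoids with varying objects; the paper trades exact function symbols for approximate relations precisely so that the construction generalizes. Two cosmetic points: your $d$ has diameter $\le 2$ (rescale if you want $\le 1$), and the paragraph introducing $\lambda_p$ is redundant, since $\phi$ is already a bijective isometry of $\hat X$ determined on the dense set $G_0$ by $p$, so you can pass directly to the Baire-category step.
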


For comparison, we now sketch a proof of \cref{thm:intro-nonarchgrp-rep}, a variation of the usual proof in e.g., \cite[2.4.4]{Gao}, which is more analogous to the usual proof of Cayley--Yoneda for discrete group(oid)s:

\begin{proof}[Proof sketch of \cref{thm:intro-nonarchgrp-rep}]
Let $\@U$ be a countable basis of open subgroups of $G$.  Consider the multi-sorted \defn{canonical structure} $\@M = (G/U)_{U \in \@U}$, where $G/U$ is the set of left cosets of $U$, equipped with the right multiplication maps
\begin{align*}
f_{U,V,S} := (-) \cdot S : G/U -> G/V
\end{align*}
for each $U, V \in \@U$ and $S \in G/V$ satisfying $U \subseteq S \cdot S^{-1}$ (which precisely guarantees that $f_{U,V,S}$ is well-defined).  By a topological version of the usual associativity/naturality argument in the proof of Cayley--Yoneda (see e.g., \cite[III~\S2, p.~61]{Mac}), one computes that every homomorphism $h : \@M -> \@M$ is uniquely determined by the coherent system of left cosets $(h(U))_{U \in \@U} \in \projlim_{U \in \@U} G/U$; moreover, the pointwise convergence topology on the endomorphism monoid $\End(\@M)$ coincides with the inverse limit topology on $\projlim_{U \in \@U} G/U$.  In other words, $\End(\@M)$ is the \defn{left completion} of $G$, whence the subgroup of invertible elements $\Aut(\@M) \subseteq \End(\@M)$ is isomorphic to $G$.
\end{proof}

The heart of the proof of \cref{thm:intro-nonarchgpd-yoneda-iso} (see \cref{sec:nonarchgpd-hom}) is a groupoid version of the ``topological naturality argument'' in the proof sketch above, first given by Moerdijk \cite[\S3.5]{Mo2} in the more abstract context of open localic groupoids, which when applied to an étale version of the canonical structure $\@M$ for a non-Archimedean groupoid $G$ identifies the \defn{homomorphism category} $\Hom_{G^0}(\@M)$ of $\@M$ (whose subgroupoid of invertible elements is $\Iso_{G^0}(\@M)$) as the ``left completion'' of $G$.  The proof is then completed by a Pettis-type Baire category argument (\cref{sec:pettis}) which shows that density of $G$ in $\Hom_{G^0}(\@M)$ implies $G \cong \Iso_{G^0}(\@M)$, analogously to the last sentence in the above proof sketch.  Thus, the proof of \cref{thm:intro-nonarchgpd-yoneda-iso} can be seen as the amalgamation of the proofs of \cref{thm:intro-nonarchgrp-rep} and the Yoneda lemma, as depicted in the following table:

\begin{center}
\renewcommand\arraystretch{1.5}
\begin{tabular}{c|ccc}
& discrete & non-Archimedean & Polish \\
\hline
group & Cayley's theorem & \cref{thm:intro-nonarchgrp-rep} & \cref{thm:intro-polgrp-rep} \\
groupoid & Yoneda lemma & \cref{thm:intro-nonarchgpd-yoneda-iso} & \cref{thm:intro-locpolgpd-yoneda-iso}
\end{tabular}
\end{center}

The proofs of \cref{thm:intro-polgrp-rep,thm:intro-locpolgpd-yoneda-iso} are conceptually similar to those of \cref{thm:intro-nonarchgrp-rep,thm:intro-nonarchgpd-yoneda-iso}, with quotients $G/U$ by open subgroup(oid)s $U$ replaced by left completions with respect to left-invariant metrics (or \defn{grey subgroup(oid)s}, in the terminology of \cite{BYM}) and the right multiplication maps $f_{U,V,S}$ replaced by approximate versions; see \cref{sec:locpolgpd-hom} for details.  For \cref{thm:intro-locpolgpd-yoneda-iso}, a key role is played by a groupoid version (\cref{thm:birkhoff-kakutani}, essentially noted by Buneci \cite{Bun}) of the Birkhoff--Kakutani metrization theorem guaranteeing the existence of enough left-invariant metrics; this depends on a nontrivial lemma (\cref{thm:ramsay}) of Ramsay \cite{Ram} on halving neighborhoods of unit morphisms in paracompact groupoids.

\subsection{Borel uniformization}
\label{sec:intro-unif}

Given \cref{thm:intro-nonarchgpd-yoneda-iso,thm:intro-locpolgpd-yoneda-iso}, we obtain \cref{thm:intro-nonarchgpd-rep,thm:intro-locpolgpd-rep} respectively via two steps, both of which take place in the Borel rather than topological context.  First, we replace the (metric\Hyphdash*)étale structure $\@M$ with a ``Borel family'' of structures with underlying universe $\#N$ (resp., $\#U$); see \cref{sec:cbstr-unif,sec:sbmstr-unif}.  This is done via a uniformly Borel version of adding constants, resp., Katětov's construction of $\#U$, essentially as described in the paragraphs preceding \cref{thm:intro-locpolgpd-rep}.  This technique is well-known in the literature, although there does not appear to be either a detailed exposition or a prior application with the level of uniformity we need.  We therefore give a careful statement and proof:

\begin{theorem}[\cref{thm:cbstr-isogpd-ff}]
\label{thm:intro-cbstr-isogpd-ff}
For any second-countable étale structure $\@M$ over a quasi-Polish space $X$, there is a countable (single-sorted) language $\@L$ (not necessarily that of $\@M$) and a full and faithful Borel functor $\Iso_X(\@M) -> \#S_\infty \ltimes \Mod_\#N(\@L)$.
\end{theorem}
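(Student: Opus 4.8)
The plan is to realize $\Iso_X(\@M)$ as (a full subgroupoid of) the logic action groupoid by two successive moves: first, fixing the universe of each fiber structure to be a copy of $\#N$; and second, reparametrizing the object space $X$ so that the structure varies in a Borel way. For the first move, note that for each object $x \in X$, the fiber $\@M_x$ is a countable (possibly finite) $\@L$-structure, and one wants to choose a bijection $\#N \cong \@M_x$ in a uniformly Borel fashion. Since $\@M$ is second-countable étale over a quasi-Polish space, the projection $p : \@M \to X$ has a Borel section-enumerating the fibers: one can find countably many Borel partial sections $s_n : X \rightharpoonup \@M$ whose union of ranges meets every fiber in a dense (in the discrete fiber topology, i.e.\ exhaustive) set. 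By a standard selection/refinement argument (cf.\ the usual proof that every Borel equivalence relation with countable classes has a Borel transversal-structure enumeration), these can be arranged so that for each $x$ the sequence $(s_n(x))_n$ enumerates $\@M_x$ without repetition on an initial segment of $\#N$ (or all of $\#N$). To handle finite fibers uniformly and land in $\Mod_\#N(\@L)$ rather than a space of structures on varying initial segments, I would enlarge the language by a fresh constant $c$ and, following the ``adding constants'' device described before \cref{thm:intro-locpolgpd-rep}, pad each finite $\@M_x$ out to a countably infinite structure $\@M_x^+$ in a canonical way (e.g.\ adjoin a disjoint copy of the padding structure with $c$ naming a point of the original, so that the padding is recoverable and isomorphisms extend uniquely); this is the extra sort-collapsing into a single-sorted $\@L$, also done via the standard multi-sorted-to-single-sorted reduction with unary predicates naming the old sorts.

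Carrying this out, the chosen enumerations give for each $x$ a pullback structure $\sigma(x) := (s_\bullet(x))^* \@M_x^+ \in \Mod_\#N(\@L)$, and I would verify that $x \mapsto \sigma(x)$ is Borel: this amounts to checking that for each atomic formula $\theta$ and each tuple $\bar n$ from $\#N$, the set $\{x : \sigma(x) \models \theta(\bar n)\}$ is Borel, which follows because it is the preimage under the Borel maps $s_{n_i}$ of the (topologically, hence Borel) definable subsets of $\@M$ witnessing the étale structure relations and functions. Next, the functor on morphisms: a morphism $g : x \to y$ in $\Iso_X(\@M)$ is an isomorphism $\@M_x \cong \@M_y$, which extends canonically to $\@M_x^+ \cong \@M_y^+$, and then conjugating by the chosen enumerations yields a permutation $\hat g := (s_\bullet(y))^{-1} \circ g \circ s_\bullet(x) \in \#S_\infty$ with $\hat g \cdot \sigma(x) = \sigma(y)$, i.e.\ a morphism $\sigma(x) \to \sigma(y)$ in $\#S_\infty \ltimes \Mod_\#N(\@L)$. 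Functoriality is immediate from the definitions; fullness is the statement that every isomorphism $\sigma(x) \cong \sigma(y)$ in $\Mod_\#N(\@L)$ comes from a (necessarily unique, since the enumerations are injective and the padding is canonical) isomorphism $\@M_x \cong \@M_y$ — here the ``adding constants'' padding is what guarantees that an abstract isomorphism of the padded structures must respect the original part and the padding part separately, hence restricts; faithfulness is likewise clear since $g$ is recovered as $s_\bullet(y) \circ \hat g \circ (s_\bullet(x))^{-1}$ on the image of the enumeration, which exhausts $\@M_x$. Borelness of $g \mapsto \hat g$ reduces to Borelness of each map $g \mapsto \hat g(n)$, which in turn follows from Borelness of the $s_n$ together with the fact that evaluation of $g$ on a given fiber element is Borel in $g$ (this is part of what it means for the topology on $\Iso_X(\@M)$ to imitate pointwise convergence).

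The step I expect to be the main obstacle is producing the uniformly Borel family of fiber enumerations $s_n$ with the required properties — in particular, doing so with enough care that the resulting map $x \mapsto \sigma(x)$ is honestly Borel and not merely fiberwise defined, and simultaneously handling finite fibers so the target is $\Mod_\#N(\@L)$ on the fixed universe $\#N$. The cleanest route is probably to first establish that the ``universe bundle'' $p : \@M \to X$ admits a countable Borel trivialization: since $\@M$ is a second-countable space with $p$ a local homeomorphism onto a quasi-Polish base (an étale map), $\@M$ itself is quasi-Polish and $p$ is countable-to-one Borel, so by Lusin--Novikov uniformization $\@M = \bigsqcup_n \graph(s_n)$ for Borel partial sections $s_n$; a routine disjointification and re-indexing then yields injective fiberwise enumerations, and the constant-adding padding fills in finite fibers. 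Everything downstream — Borelness, functoriality, fullness, faithfulness — is then bookkeeping with atomic formulas and the definition of the étale structure, as sketched above. I would present the padding/single-sorting construction explicitly (it is where the choice of $\@L$ enters) and then verify the four functor conditions in turn, relying on Lusin--Novikov for the one genuinely non-formal ingredient.
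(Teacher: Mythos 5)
Your proposal is correct and follows essentially the same route as the paper: Lusin--Novikov uniformization to produce Borel fiberwise enumerations, padding with countably many \emph{individually named} new points (the paper adjoins $X \times \{i\}$ to \emph{every} fiber, with a unary predicate $C_i$ naming the $i$-th new point) together with unary predicates for the old sorts to reduce to a single-sorted relational language with infinite fibers, and then pushforward/conjugation along the enumerations. The one place your sketch is looser than the paper is the padding device: a single fresh constant $c$ would leave the remaining padding points freely permutable and destroy fullness, so each added point must carry its own name --- which is exactly the condition you correctly identify (``isomorphisms extend uniquely'' / the padding must be respected pointwise).
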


\begin{theorem}[\cref{thm:sbmstr-isogpd-ff}]
\label{thm:intro-sbmstr-isogpd-ff}
For any second-countable metric-étale structure $\@M$ over a quasi-Polish space $X$, there is a countable (single-sorted) language $\@L$ (not necessarily that of $\@M$) and a full and faithful Borel functor $\Iso_X(\@M) -> \Iso(\#U) \ltimes \Mod_\#U(\@L)$.
\end{theorem}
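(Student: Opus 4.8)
The plan is to mirror, in the metric/topometric setting, the construction that proves \cref{thm:intro-cbstr-isogpd-ff}, substituting the uniformly Borel version of Katětov's construction of $\#U$ (from \cref{sec:katetov}) for the ``adding constants'' trick. First I would fix a second-countable metric-étale $\@L$-structure $\@M$ over a quasi-Polish space $X$, together with a countable ``basis'' of sections witnessing second-countability; concretely, I want a countable family of Borel sections $s_n \colon X \dashrightarrow \@M$ whose values are, fiberwise, dense in each sort of $\@M_x$ and whose induced functions $x \mapsto (\text{grey relations among } s_{n_1}(x), \dots, s_{n_k}(x))$ are Borel. This exhibits the family $x \mapsto \@M_x$ as a Borel-parametrized family of separable metric $\@L$-structures of diameter $\le 1$, together with a chosen dense sequence in each. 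The point of the chosen dense sequences is that an isomorphism $\@M_x \cong \@M_y$ is completely determined by where it sends the countable dense set, so the morphism space of $\Iso_X(\@M)$ Borel-embeds into $X \times X \times \#U^{\#N}$ (or a suitable such space); this is the metric analog of coding a structure by its quantifier-free diagram on $\#N$.

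Next I would apply the uniform Borel Katětov construction: there is a Borel map assigning to each separable metric space $Y$ of diameter $\le 1$ with a distinguished dense sequence an isometric embedding $Y \hookrightarrow \#U$ whose image is, in a Borel-uniform way, reconstructible from the distance matrix of the dense sequence — and crucially, the construction is \emph{functorial} enough that an isometry $Y \cong Y'$ of such spaces extends canonically to an isometry $\#U \cong \#U$. (This is exactly the content the paper attributes to \cite{EFPRTT} / \cite{Kat}, upgraded to the level of Borel uniformity proved in \cref{sec:katetov}.) Pushing the $\@L$-structure forward along this embedding, and interpreting the predicates/functions of $\@L$ on the rest of $\#U$ by, say, the canonical Katětov/McShane extension of uniformly continuous functions, produces for each $x \in X$ a metric $\@L$-structure $\hat{\@M}_x$ on $\#U$; the assignment $x \mapsto \hat{\@M}_x$ is a Borel map $X \to \Mod_\#U(\@L)$. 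On morphisms, an isomorphism $f \colon \@M_x \cong \@M_y$ maps the dense sequence of $\@M_x$ to \emph{a} dense sequence of $\@M_y$ (not necessarily the distinguished one), so to get an element of $\Iso(\#U)$ I compose the Katětov embedding for the distinguished dense sequence of $\@M_x$ with $f$ with the inverse of the Katětov embedding for the image sequence, and extend canonically; this is a Borel map on the morphism space of $\Iso_X(\@M)$, and one checks directly that it intertwines composition and units, i.e.\ is a Borel functor $F \colon \Iso_X(\@M) \to \Iso(\#U) \ltimes \Mod_\#U(\@L)$.

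It remains to verify fullness and faithfulness. Faithfulness is built in: since the distinguished dense sequence of $\@M_x$ sits inside $\#U$ via the Katětov embedding, an isometry of $\#U$ in the image of $F$ determines $f$ on that dense set, hence on all of $\@M_x$, so $F$ is injective on each hom-set $\Iso_X(\@M)(x,y) \to (\Iso(\#U) \ltimes \Mod_\#U(\@L))(\hat{\@M}_x, \hat{\@M}_y)$. For fullness, suppose $g \in \Iso(\#U)$ is an isomorphism $\hat{\@M}_x \cong \hat{\@M}_y$ of the pushed-forward structures; restricting $g$ to the image of $\@M_x$ in $\#U$, I must check that $g$ carries it \emph{onto} the image of $\@M_y$ and is an $\@L$-isomorphism there. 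The image of $\@M_x$ is definable from $\hat{\@M}_x$ — e.g.\ as the closure of the distinguished constants, or, better, arranged (by including the metric-étale predicate $d(\cdot, \@M_x)$ as part of the language, as the footnote on $\!{Mod}_{\le\#U}(\@L)$ suggests) to be literally a predicate of $\hat{\@M}_x$ — so any $\@L$-isomorphism must preserve it; then $g$ restricted to the images is an isometric $\@L$-isomorphism $\@M_x \cong \@M_y$, i.e.\ a morphism of $\Iso_X(\@M)$, and unwinding the definition of $F$ shows $F$ of it equals $g$. The main obstacle, I expect, is making the Katětov construction sufficiently \emph{canonical and Borel simultaneously}: we need the embedding $Y \hookrightarrow \#U$ and the extension of isometries and of uniformly continuous predicates all to be given by explicit Borel formulas in the distance data, with no choices, so that functoriality holds on the nose rather than up to some noncanonical identification — and so that the ``definability of the image of $\@M_x$'' used for fullness really goes through. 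This is where the careful uniform development in \cref{sec:katetov} (and the non-Hausdorff topometric machinery of \cref{sec:greytopmet}) does the real work, exactly paralleling how the bookkeeping in the discrete case of \cref{thm:intro-cbstr-isogpd-ff} handles adding constants.
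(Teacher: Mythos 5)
Your overall architecture matches the paper's: embed each fiber $\@M_x$ into a copy of $\#U$ via a uniformly Borel Katětov construction, push the structure forward (the ``Katětov/McShane extension'' you describe is exactly the grey saturation $[\delta(R^\@M)]$ the paper uses), and send an isomorphism $\@M_x \cong \@M_y$ to its canonical functorial extension. The step you flag as the main obstacle --- making the identification of the fiberwise Katětov tower with the literal $\#U$ Borel --- is indeed a real piece of work (\cref{thm:sbmsp-urysohn-unique}, via the uniform approximate extension property \cref{thm:sbmsp-urysohn-approx}), but it goes through essentially as you expect.

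The genuine gap is in your fullness argument. Recording only the predicate $d(\cdot, \im(\@M_x))$ (the paper's $P^{\@M'} = [\delta^0(P^\@M)]$) guarantees that any isomorphism $g$ of the pushed-forward structures restricts to an isometric $\@L$-isomorphism $h : \@M_x \cong \@M_y$ between the embedded copies; it does \emph{not} force $g$ to equal the canonical extension $F(h)$, and ``unwinding the definition of $F$'' will not show that it does. The pushed-forward predicates are saturations of invariant grey sets, hence are determined by their restrictions to the embedded copies; so they constrain $g$ only through its restriction, and an isometry between proper closed subspaces of $\#U$ admits many extensions to elements of $\Iso(\#U)$, all of which are then morphisms $\hat{\@M}_x \to \hat{\@M}_y$ in the target groupoid. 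Since by your own faithfulness argument $F(h)$ is the only possible preimage of such a $g$, the functor you build is faithful but not full. The paper's fix is to remember the \emph{entire} Katětov tower, not just its bottom level: $\@M'$ carries unary predicates $E_i := [\im(\delta^i)]$ for every stage $\@E^{\prime i}$, and the rigidity lemmas \cref{lm:katetov-ff} and \cref{thm:katetov*-ff} (resting on the enriched Yoneda identity $d_{\@E(X)}(\delta(x), u) = u(x)$) show that an isometry $\^{\@E^{\prime\infty}}((M'_0)_x) \cong \^{\@E^{\prime\infty}}((M'_0)_y)$ carrying each $\im(\delta^n)$ onto $\im(\delta^n)$ is necessarily of the form $\^{\@E^{\prime\infty}}(h)$ for a unique $h$. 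This is what makes \cref{thm:sbmstr-ext} an identity-on-objects \emph{isomorphism} of isomorphism groupoids, after which \cref{thm:sbmstr-unif} is a mere relabeling along a Borel fiberwise isometry to $\#U$ and full faithfulness is automatic.
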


Recall that a functor $F : G -> H$ between categories is \defn{full and faithful} if it is bijective on each hom-set $G(x, y)$.  \Cref{thm:intro-nonarchgpd-yoneda-iso,thm:intro-locpolgpd-yoneda-iso,thm:intro-cbstr-isogpd-ff,thm:intro-sbmstr-isogpd-ff} yield a full and faithful Borel functor $F : G -> \#S_\infty \ltimes \Mod_\#N(\@L)$ (resp., $F : G -> \Iso(\#U) \ltimes \Mod_\#U(\@L)$).  By abstract category theory, such a functor is an equivalence between $G$ and its \defn{essential image} (saturation of its image); however, the proof of this fact requires the axiom of choice to construct the inverse.  We show that for a full (i.e., surjective on hom-sets) Borel functor whose domain is an open quasi-Polish groupoid, the inverse may be constructed in a Borel way (using a large section uniformization argument):

\begin{theorem}[\cref{thm:functor-borel-full}]
\label{thm:intro-functor-borel-full}
For any full Borel functor $F : G -> H$ from an open quasi-Polish groupoid $G$ to a standard Borel groupoid $H$, the essential image of $F$ is a Borel subset of $H^0$, and there is a Borel map assigning to each $y$ in the essential image some $x_y \in G^0$ and $h_y : F(x_y) -> y$.  In particular, if $F$ is full and faithful, then $F$ is a Borel equivalence to its essential image.
\end{theorem}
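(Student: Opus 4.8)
The plan is to reduce everything to a single large-section uniformization for the target fibration of $F$ over $G^0$, combined with a standard Baire-category argument in the open quasi-Polish groupoid $G$ to pass from ``full on each hom-set'' to ``full on a comeager-in-the-right-sense set of morphisms''. First I would consider the set $R \subseteq G^0 \times H^0$ of pairs $(x, y)$ such that there exists some $h : F(x) \to y$ in $H$; its fiberwise projection to $H^0$ has image exactly the essential image $E$ of $F$, and the fiber $R_y = \{x : F(x) \cong_H y\}$ is, by fullness, a union of connectedness components of $G^0$ under $\#E_G$ (if $F(x) \cong y$ and $x \cong_G x'$ then $F(x') \cong y$, and conversely any $y' \cong_H y$ in the essential image is hit iff $y$ is). So $R_y$ is an $\#E_G$-invariant Borel subset of $G^0$, nonempty precisely when $y \in E$.

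The key point is that $\#E_G$, being the orbit equivalence relation of an open quasi-Polish groupoid, is (by Lupini's groupoid analog of the Becker--Kechris/Effros machinery, or directly by an open-map Baire-category argument) an \emph{idealistic} and in particular ``Borel-uniformizable on invariant Borel sets'' equivalence relation: any nonempty $\#E_G$-invariant Borel set admits a Borel selection of one point per orbit after suitable reductions — more precisely, there is a Borel map $y \mapsto x_y$ with $x_y \in R_y$ whenever $R_y \neq \varnothing$, because the relation ``$x \in R_y$'' is Borel with $\sigma$-bounded (equivalently, ``not too large'') complement-to-orbit structure that the Jankov--von Neumann / large-section uniformization theorem handles once we observe the fibers are orbit-invariant. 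Concretely: pick a countable family of Borel partial transversals $B_n$ of $\#E_G$ (available since the Borel cardinality of $G^0/\#E_G$, while possibly not standard, still admits a countable separating family of invariant Borel sets when $G$ is quasi-Polish and open — this is exactly the content needed), and define $x_y$ by the first $B_n$ meeting $R_y$. This yields the Borel map $y \mapsto x_y \in G^0$ on $E$, and simultaneously shows $E = \{y : R_y \neq \varnothing\} = \bigcup_n \{y : B_n \cap R_y \neq \varnothing\}$ is Borel, since each $\{y : B_n \cap R_y \neq \varnothing\}$ is the projection of the Borel set $(B_n \times H^0) \cap R$ along a map whose fibers over $B_n$ are singletons modulo $\#E_G$ — and being a Borel injection image (after transversalizing), it is Borel.

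Having produced $x_y$, I then need the witnessing morphism $h_y : F(x_y) \to y$. The set $\{(y, h) : y \in E,\ h \in H(F(x_y), y)\}$ is Borel in $E \times H^1$ (using that $x_y$ depends Borel-measurably on $y$ and that composition and the source/target maps of $H$ are Borel), and each section over $y$ is nonempty by construction. Here I would invoke Jankov--von Neumann uniformization for Borel sets with nonempty sections in a standard Borel target to extract a Borel $y \mapsto h_y$. Finally, when $F$ is full and faithful, faithfulness makes $F$ injective on each hom-set, so together with the Borel section $(x_y, h_y)$ we can define the inverse functor $H|_E \to G$ on objects by $y \mapsto x_y$ and on a morphism $k : y \to y'$ in $E$ by the unique $g : x_y \to x_{y'}$ with $F(g) = h_{y'}^{-1} \circ k \circ h_y$; this $g$ exists by fullness and is unique by faithfulness, and $g$ depends Borel-measurably on $k$ because inverting $F$ on a hom-set is Borel (the graph of $F$ restricted to $G(x_y, x_{y'}) \times H(F(x_y), F(x_{y'}))$ is Borel with singleton sections, hence its flip is a Borel function). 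Functoriality is then a routine check, giving the desired Borel equivalence onto $E$.

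The main obstacle I expect is the uniformization producing $x_y$: one must be careful that ``$\#E_G$ admits a countable separating family of invariant Borel sets'', or some equally strong selection principle, genuinely holds for \emph{open quasi-Polish} groupoids and not merely for Polish group actions — this is where the openness of $G$ and quasi-Polishness of $G^0$ and $G^1$ must be used, presumably via a groupoid Effros-type theorem (openness of the orbit maps) exactly as in Lupini's work. Once that structural fact is in hand, everything else is Jankov--von Neumann and bookkeeping; but absent it, the naive approach of uniformizing $R \subseteq G^0 \times H^0$ directly over $H^0$ fails because the $R_y$ need not be ``small'' in any sense that classical uniformization theorems exploit — their invariance under $\#E_G$ is the only handle, and it must be leveraged through a selection theorem adapted to that equivalence relation.
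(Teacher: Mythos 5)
There is a genuine gap, and it sits exactly where you suspected it would. Your selection of $x_y$ rests on the claim that $\#E_G$ admits a countable separating family of invariant Borel sets, equivalently a countable family of Borel partial transversals from which one can pick ``the first $B_n$ meeting $R_y$''. That property is precisely smoothness of $\#E_G$, and it fails already for the simplest nontrivial open Polish groupoid: the action groupoid $\#Z \ltimes 2^{\#N}$ of the odometer has orbit equivalence relation $E_0$, which is not smooth, so no such family $(B_n)$ exists. No Effros-type theorem for groupoids will supply it; openness of $G$ gives you Baire-category information about orbits, not a Borel selector for them. Your second step has a separate problem: the Jankov--von~Neumann theorem uniformizes Borel (indeed analytic) sets with nonempty sections only by a $\sigma(\*\Sigma^1_1)$-measurable map, not a Borel one, so even granting $x_y$ you have not produced a \emph{Borel} choice of $h_y$.

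The paper avoids both issues by uniformizing $y \mapsto (h_y, x_y)$ in a single step, via the large-section uniformization theorem applied to the set $P$ of triples $(y,(h,x))$ with $h : F(x) \to y$. The $\sigma$-ideal on the fiber $P_y$ declares $R \subseteq P_y$ small iff for every $(h,x) \in P_y$ the pullback $\{g : z \to x \mid (h \cdot F(g), z) \in R\}$ is meager in $\tau^{-1}(x)$; fullness of $F$ is used exactly once, to show that ``for every'' may be replaced by ``for some'' (translate between base points by a morphism $k$ of $G$ with $F(k) = h_2^{-1} h_1$), which is what makes the ideal membership condition both $\*\Sigma^1_1$ and $\*\Pi^1_1$ as required by the large-section theorem. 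Openness and quasi-Polishness of $G$ enter through the Baire category quantifier for the open map induced by $\tau : G \to G^0$, which keeps the relevant sets Borel. This is the handle on the ``invariance of $R_y$ under $\#E_G$'' that your sketch correctly identifies as the only available leverage, but it must be exploited fiberwise by category rather than by a global selector. Your final paragraph --- constructing the inverse functor from $(x_y, h_y)$ using fullness and faithfulness --- is fine and matches the paper's \cref{thm:functor-borel-equiv}.
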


Finally, the Lopez-Escobar theorem, due in the metric case to Coskey--Lupini \cite{CL}, yields an $\@L_{\omega_1\omega}$-sentence $\phi$ axiomatizing the essential image of $F$, whence we have a Borel equivalence $F : G -> \#S_\infty \ltimes \Mod_\#N(\@L, \phi)$ (resp., $F : G -> \Iso(\#U) \ltimes \Mod_\#U(\@L, \phi)$).

\subsection{Connection with strong conceptual completeness}
\label{sec:intro-scc}

One natural motivation for \cref{thm:intro-nonarchgpd-rep} comes from the strong conceptual completeness theorem we proved in \cite{Cscc}, which shows that every countable discrete $\@L_{\omega_1\omega}$-theory $\@T$ may be ``completely recovered'' from its standard Borel groupoid $\!{Mod}_{\le\#N}(\@L, \@T)$ of countable models on initial segments of $\#N$, in the form of its \defn{syntactic Boolean $\omega_1$-pretopos} $\-{\ang{\@L \mid \@T}}^B_{\omega_1}$, a categorical ``Lindenbaum--Tarski algebra'' capturing all logically relevant aspects of the syntax of $\@T$; see \cite{Cscc} for the precise definitions.  In other words, the syntax-to-semantics map $\!{Mod}_{\le\#N} : (\@L, \@T) |-> \!{Mod}_{\le\#N}(\@L, \@T)$ is ``injective'' (in the suitable 2-categorical sense, i.e., a full and faithful 2-functor).  In this context, \cref{thm:intro-nonarchgpd-rep} can be seen as characterizing the (essential) image of $\!{Mod}_{\le\#N}$, yielding in combination with \cite{Cscc}

\begin{theorem}[\cref{thm:2interp-equiv}]
\label{thm:intro-2interp-equiv}
$\!{Mod}_{\le\#N}$ is a contravariant equivalence between the 2-category of countable (discrete) $\@L_{\omega_1\omega}$-theories and the 2-category of open non-Archimedean (quasi-)Polishable standard Borel groupoids.
\end{theorem}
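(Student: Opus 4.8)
The plan is to obtain this formally from \cref{thm:intro-nonarchgpd-rep} together with the strong conceptual completeness theorem of \cite{Cscc}, using the standard criterion that a (contravariant) 2-functor is an equivalence of 2-categories precisely when (a) it induces an equivalence of categories on each hom-category, and (b) it is essentially surjective on objects, meaning every object of the target is equivalent --- via a 1-cell invertible up to 2-isomorphism --- to an object in its image. For the 2-functor $\!{Mod}_{\le\#N}$, condition (a) is exactly the main theorem of \cite{Cscc} in its 2-categorical form: on each hom-category it restricts to an equivalence between the category of interpretations of one countable discrete $\@L_{\omega_1\omega}$-theory in another (with their homomorphisms) and the category of Borel functors and Borel natural isomorphisms between the corresponding model groupoids $\!{Mod}_{\le\#N}(\@L, \@T)$. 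We also take from \cite{Cscc} that $\!{Mod}_{\le\#N}$ genuinely lands in the target 2-category --- i.e., each $\!{Mod}_{\le\#N}(\@L, \@T)$ is an open non-Archimedean quasi-Polishable standard Borel groupoid --- witnessed by the evident quasi-Polish topology on the space of models on initial segments of $\#N$ together with all isomorphisms, on which $\#S_\infty$ and the finite symmetric groups act.

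So the only thing to add is condition (b). Let $G$ be an open non-Archimedean quasi-Polishable standard Borel groupoid. By definition of quasi-Polishability, fix a compatible quasi-Polish topology making $G$ an open non-Archimedean quasi-Polish groupoid, and apply \cref{thm:intro-nonarchgpd-rep} to get a countable language $\@L$ and a discrete $\@L_{\omega_1\omega}$-sentence $\phi$ with a Borel equivalence $G \simeq \#S_\infty \ltimes \Mod_\#N(\@L, \phi)$. Such a Borel equivalence comes with a Borel quasi-inverse and Borel natural isomorphisms (cf.\ \cref{thm:intro-functor-borel-full}), hence is an equivalence in the target 2-category.

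It then suffices to identify $\#S_\infty \ltimes \Mod_\#N(\@L, \phi)$ with $\!{Mod}_{\le\#N}(\@L', \@T)$ for a suitable $\@T$; the only discrepancy is that $\!{Mod}_{\le\#N}$ additionally records structures on \emph{proper} initial segments of $\#N$, in particular finite ones, which $\Mod_\#N(\@L, \phi)$ lacks. This is handled exactly as in the discussion preceding \cref{thm:intro-locpolgpd-rep}: adding constants naming an enumeration of a generated substructure turns $(\@L, \phi)$ into some $(\@L', \phi')$ all of whose models are infinite, preserving (non-)isomorphism; then $\!{Mod}_{\le\#N}(\@L', \{\phi'\})$ has no finite models and its morphism space reduces to the $\#S_\infty$-action, so $\!{Mod}_{\le\#N}(\@L', \{\phi'\}) \cong \#S_\infty \ltimes \Mod_\#N(\@L', \phi')$, and the passage is a Borel equivalence of groupoids (indeed realized by an interpretation). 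Composing the equivalences gives $G \simeq \!{Mod}_{\le\#N}(\@L', \{\phi'\})$, which is condition (b). Together with (a), this proves $\!{Mod}_{\le\#N}$ is a contravariant equivalence of 2-categories.

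I do not anticipate a deep obstacle: the theorem is essentially the amalgamation of two substantial results already established. The care needed is entirely in the 2-categorical bookkeeping --- pinning down the 1-cells and 2-cells of the target 2-category so that the ``Borel equivalence'' delivered by \cref{thm:intro-nonarchgpd-rep} (with a Borel quasi-inverse from \cref{thm:intro-functor-borel-full}) really does exhibit an equivalence of objects, and checking that the adding-constants step $\phi \mapsto \phi'$ is harmless at the 2-categorical level (it is an interpretation, hence respected by $\!{Mod}_{\le\#N}$), though for essential surjectivity on objects only the underlying groupoid equivalence is actually needed.
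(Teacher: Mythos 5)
Your proof is correct and takes essentially the same route as the paper: full faithfulness on each hom-category is quoted from \cite{Cscc} (together with \cref{rmk:isogpd-nonarchopen} for well-definedness of the 2-functor), and essential surjectivity on objects is exactly \cref{thm:nonarchgpd-rep}. The one wrinkle you elaborate --- identifying $\#S_\infty \ltimes \Mod_\#N(\@L, \phi)$ with a groupoid in the image of $\!{Mod}_{\le\#N}$ --- the paper leaves implicit, and since models on $\#N$ are automatically infinite, adjoining the $\@L_{\omega_1\omega}$-axiom ``the universe is infinite'' to $\phi$ already yields $\!{Mod}_{\le\#N}(\@L, \@T) = \#S_\infty \ltimes \Mod_\#N(\@L, \phi)$ on the nose, so your adding-constants detour is harmless but not needed.
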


This gives a precise formulation of the idea that the model theory of discrete $\@L_{\omega_1\omega}$ ``completely corresponds'' to the Borel theory of open non-Archimedean Polish groupoids.

Since \cref{thm:intro-locpolgpd-rep} provides the analog to \cref{thm:intro-nonarchgpd-rep} in the metric setting, an obvious question is whether there is a strong conceptual completeness theorem for metric $\@L_{\omega_1\omega}$, analogous to \cite{Cscc}, which together with \cref{thm:intro-locpolgpd-rep} would yield a complete correspondence between metric $\@L_{\omega_1\omega}$ and open Polish groupoids analogous to \cref{thm:intro-2interp-equiv}.  The main obstacle to such a result is not any particular added technical difficulty in the metric setting, but rather (as mentioned above) the lack of a well-developed theory of categorical logic for metric $\@L_{\omega_1\omega}$.  In particular, one would hope for a good notion of \defn{syntactic category} for a metric $\@L_{\omega_1\omega}$-theory (especially for the $\omega_1$-coherent, or $\Sigma_1$, fragment, as in \cite{Cscc}), along with analogs of key results in the discrete setting such as the Joyal--Tierney representation theorem.  This is a subject of our ongoing work.

\subsection{Contents of paper}

This paper is split into two roughly parallel parts, dealing respectively with the discrete and metric contexts.  The second part refers to the first in some places where definitions and proofs are similar.

In \cref{sec:sp+gpd}, we review basic definitions and facts involving topological spaces, groupoids, and actions which are needed in the rest of the paper, including \cref{thm:intro-functor-borel-full}.

In \cref{sec:disclog}, we review definitions and conventions regarding discrete $\@L_{\omega_1\omega}$, including the space of structures on $\#N$, the logic action, and the Lopez-Escobar theorem.

In \cref{sec:etale}, we develop the theory of étale spaces and structures and their homomorphism categories and isomorphism groupoids; the main technical results here are that everything remains in the quasi-Polish context under suitable assumptions.  We then define the Borel-context analogs of these notions, and prove \cref{thm:intro-cbstr-isogpd-ff} (more generally for a Borel family of structures $\@M$).

In \cref{sec:nonarchgpd}, we define the canonical étale structure $\@M$ of an open non-Archimedean quasi-Polish groupoid and prove the topological representation \cref{thm:intro-nonarchgpd-yoneda-iso}, thereby completing the proof of \cref{thm:intro-nonarchgpd-rep}.  In combination with the results of \cite{Cscc}, we then deduce \cref{thm:intro-2interp-equiv}.

In \cref{sec:greytopmet}, we review the theory of grey sets and topometric spaces, including a proposed definition of the latter in the non-Hausdorff case.  The goal of introducing this terminology is to help make evident the analogy between the discrete and metric contexts.

In \cref{sec:ctslog}, we review definitions and conventions regarding continuous $\@L_{\omega_1\omega}$.

In \cref{sec:metale}, we develop the theory of metric-étale spaces and structures and their Borel-context analogs, similar to the étale setting.  We then review Katětov's construction of $\#U$, and use it to prove \cref{thm:intro-sbmstr-isogpd-ff} (in the uniformly Borel context).

Finally, in \cref{sec:locpolgpd}, we define the canonical metric-étale structure of an open $\sigma$-locally Polish groupoid and prove \cref{thm:intro-locpolgpd-yoneda-iso}, thereby proving \cref{thm:intro-locpolgpd-rep}.

\paragraph*{Acknowledgments}

I would like to thank Martino Lupini for several helpful discussions and references, and Alexander Kechris and Anush Tserunyan for providing some comments on an earlier draft of this paper.

%
%
%
%
%
%

\section{Spaces and groupoids}
\label{sec:sp+gpd}

This section contains some basic definitions and facts related to topological spaces and groupoids which are needed in the rest of the paper.

\subsection{Quasi-Polish and $\sigma$-locally Polish spaces}
\label{sec:qpol}

We find it convenient to work in the category of \defn{quasi-Polish spaces} \cite{deB},%
\footnote{Quasi-Polish spaces are equivalent to locales whose frames of opens are countably presented \cite{Hec}; this largely explains why concepts and results in topos theory have descriptive set-theoretic analogs.}
which are homeomorphic copies of $\*\Pi^0_2$ subspaces of $\#S^\#N$, where $\#S$ is the \defn{Sierpinski space} $\#S = \{0 < 1\}$ with $\{1\}$ open but not closed, and where a $\*\Pi^0_2$ set is one of the form
\begin{align*}
\bigcap_{i \in \#N} (F_i \cup U_i) \qquad\text{for $F_i$ closed, $U_i$ open}.
\end{align*}
This coincides with the usual definition of $\*\Pi^0_2$ if every closed set is $G_\delta$ (e.g., in a metrizable space), but is more general otherwise.  We will use the following basic properties of quasi-Polish spaces without mention (see \cite{deB} or \cite{Cqpol}):
\begin{itemize}

\item  Polish spaces are quasi-Polish.

\item  Quasi-Polish spaces are standard Borel, and can be made Polish by adjoining countably many closed sets to their topology.

\item  A topological space is Polish iff it is quasi-Polish and regular.

\item  Quasi-Polish spaces are Baire.

\item  Countable products and disjoint unions of quasi-Polish spaces are quasi-Polish.

\item  $\*\Pi^0_2$ subspaces of quasi-Polish spaces are quasi-Polish.

\item  Continuous open $T_0$ images of quasi-Polish spaces are quasi-Polish.

\end{itemize}

A topological space is \defn{$\sigma$-locally (quasi-)Polish} if it has a countable cover by open (quasi-)Polish subspaces.  Every $\sigma$-locally quasi-Polish space is quasi-Polish (see \cite[3.6]{Cqpol}).

\subsection{Fiber spaces}
\label{sec:fiber}

For a set $X$, a \defn{set over $X$} (also variously known as a \defn{bundle}, \defn{fiber space}, etc.)\ is simply a set $A$ equipped with a map $p : A -> X$ (called the \defn{projection map}), thought of as the family of sets $(p^{-1}(x))_{x \in X}$ indexed by $X$.  The \defn{($p$-)fiber of $A$ over $x \in X$} is then
\begin{align*}
A_x := p^{-1}(x).
\end{align*}
We often refer to $A$ itself as a set over $X$, with the projection map $p$ clear from context (and explicitly specified if not).  For some property $\Phi$, we say that $A$ is \defn{($p$-)fiberwise $\Phi$} if each $A_x$ is $\Phi$.

Given sets $p : A -> X$ and $q : B -> X$ over $X$, a \defn{map over $X$} between them is a map $f : A -> B$ commuting with the projections:
\begin{equation*}
\begin{tikzcd}
A \drar["p"'] \ar[rr, "f"] && B \dlar["q"] \\
& X
\end{tikzcd}
\end{equation*}
Equivalently, for each $x \in X$, $f$ restricts to a map between fibers
\begin{align*}
f_x := f|A_x : A_x -> B_x.
\end{align*}

Given $p : A -> X$ and $q : B -> X$, their \defn{fiber product} is
\begin{align*}
A \times_X B := \{(a, b) \in A \times B \mid p(a) = q(b)\},
\end{align*}
equipped with the projection map
\begin{align*}
p \times_X q := p \circ (A \times_X q) = q \circ (p \times_X B) : A \times_X B -> X,
\end{align*}
where $A \times_X q : A \times_X B -> A$ and $p \times_X B : A \times_X B -> B$ are the product projections, also called \defn{pullbacks} of $q, p$ (resp.)\ across $p, q$ (resp.).
\begin{equation*}
\begin{tikzcd}
A \times_X B \dar["A \times_X q"'] \rar["p \times_X B"] \drar["p \times_X q"] &[1em] B \dar["q"] \\
A \rar["p"'] & X
\end{tikzcd}
\end{equation*}
Again, the maps $p, q$ in the notation $A \times_X B$ must be inferred from context.

The \defn{$n$-fold fiber product} of $p : A -> X$ is
\begin{align*}
A^n_X := A \times_X A \times_X \dotsb \times_X A = \{(a_0, \dotsc, a_{n-1}) \in A^n \mid p(a_0) = \dotsb = p(a_{n-1})\},
\end{align*}
with projection $p^n_X := p \times_X \dotsb \times_X p : A^n_X -> X$.  When $n = 0$, we take $p^0_X : A^0_X -> X$ to be the identity $1_X : X -> X$.

By a \defn{(partial) section} of $p : A -> X$, we mean a map $s : U -> A$ from some $U \subseteq X$ such that $p \circ s = 1_U$.  Such $s$ is determined by its image $S := \im(s) \subseteq A$, which is a subset such that $p|S : S -> X$ is injective; we often refer to $S$ also as a \defn{section}.

For a topological space $X$, a \defn{topological space over $X$} is a topological space $A$ equipped with a continuous map $p : A -> X$.  We say that $p : A -> X$ is \defn{quasi-Polish}, \defn{$\sigma$-locally Polish}, etc., if $A$ is, and \defn{open} if $p$ is.  For topological spaces $p : A -> X$ and $q : B -> X$ over $X$, we give $A \times_X B$ the subspace topology from $A \times B$, with basic open sets $U \times_X V$ for open $U \subseteq A$ and $V \subseteq B$.  Then $A \times_X q$ and $p \times_X B$ are continuous.
If $p$ is open, then so is $p \times_X B$, with
\begin{align*}
(p \times_X B)(S \times_X T) = q^{-1}(p(S)) \cap T
\end{align*}
for open $S \subseteq A$ and $T \subseteq B$.

Similarly, for a standard Borel space $X$, a \defn{standard Borel space over $X$} is a standard Borel space $A$ equipped with a Borel map $p : A -> X$.  The fiber product of standard Borel spaces over $X$ is standard Borel, and the projection maps are Borel.

\subsection{Lower powerspaces}
\label{sec:lowpow}

Let $X$ be a topological space.  The \defn{lower powerspace} $\@F(X)$ is the space of closed subsets of $X$, equipped with the topology generated by the subbasic open sets
\begin{align*}
\Dia U := \{F \in \@F(X) \mid F \cap U \ne \emptyset\} \qquad\text{for open $U \subseteq X$}.
\end{align*}

\begin{proposition}[de~Brecht--Kawai \cite{dBK}]
If $X$ is quasi-Polish, then so is $\@F(X)$.
\end{proposition}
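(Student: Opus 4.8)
The plan is to exhibit $\@F(X)$ as a $\*\Pi^0_2$ subspace of $\#S^\#N$, which by the characterization of quasi-Polish spaces suffices. Fix a countable basis $\{U_n\}_{n \in \#N}$ of $X$ and define $e \colon \@F(X) \to \#S^\#N$ by $e(F)_n = 1 \iff F \cap U_n \ne \emptyset$. First I would check that $e$ is a topological embedding: it is injective, since a point of the symmetric difference of two distinct closed sets, say $x \in F \setminus F'$, lies in a basic open $U_n$ disjoint from $F'$, whence $e(F)_n = 1 \ne 0 = e(F')_n$; and it is a homeomorphism onto its image because $\Dia$ takes unions to unions, so that $\{\Dia U_n\}_n$ is a subbasis for $\@F(X)$ while each $\Dia U_n = e^{-1}(\{x : x_n = 1\})$ is the $e$-preimage of a subbasic open of $\#S^\#N$.

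Next I would pin down the image. For $a \in \#S^\#N$ put $F_a := X \setminus \bigcup\{U_m : a_m = 0\}$, a closed set. If $e(F) = a$, then $X \setminus F$ is open, hence the union of the basic opens it contains, namely $\bigcup\{U_n : a_n = 0\}$, so that $F = F_a$; thus $a \in \im(e)$ iff $e(F_a) = a$. As $a_m = 0$ automatically forces $F_a \cap U_m = \emptyset$, this amounts to requiring that $F_a$ meet $U_n$ whenever $a_n = 1$; that is,
\[
  \im(e) \;=\; \bigcap_{n \in \#N}\bigl(\{a : a_n = 0\} \cup D_n\bigr),
  \qquad
  D_n := \bigl\{a : U_n \not\subseteq \textstyle\bigcup\{U_m : a_m = 0\}\bigr\}.
\]
Since each $\{a : a_n = 0\}$ is closed and the $\*\Pi^0_2$ sets include the closed sets and are stable under finite union and countable intersection, it remains only to prove that each $D_n$ is $\*\Pi^0_2$.

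This last step is the heart of the matter — and the only place where quasi-Polishness of $X$, as opposed to mere second-countability, is needed: indeed $\@F(X)$ can fail to be quasi-Polish for second-countable $X$; for instance $\#Q$ embeds into $\@F(\#Q)$ as the $\*\Pi^0_2$ subspace of singletons, so $\@F(\#Q)$ is not quasi-Polish. Here I would invoke de~Brecht's theorem \cite{deB} that $X$ admits a compatible Smyth-complete quasi-metric $d$, take the basic opens $U_n$ to be open $d$-balls, and record the fixed relation $U_k \prec U_n$ (independent of $a$) witnessing that $U_k$ is well-inside $U_n$ with positive margin, so that $U_n = \bigcup\{U_k : U_k \prec U_n\}$. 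The assertion $a \in D_n$ says precisely that the closed set $F_a$ meets $U_n$, and the aim is to re-express this as the existence of an infinite branch through the countably branching tree $T_a$ whose nodes are the finite $\prec$-decreasing, radius-shrinking sequences of basic balls inside $U_n$ that ``avoid'' every $U_m$ with $a_m = 0$: for each fixed finite sequence $\sigma$, membership $\sigma \in T_a$ should be a countable conjunction of conditions ``$a_m = 1$'', and hence a $G_\delta$ in $a$; and Smyth-completeness, via the formal-ball completion of $d$, should convert any branch of $T_a$ into a point of the intersection of its balls, necessarily lying in $F_a \cap U_n$, with any point of $F_a \cap U_n$ spawning a branch in turn.

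Granting that $T_a$ is ``$a$-pruned'' in the appropriate sense — every coherent finite sequence extends to a longer one — the statement ``$T_a$ has a branch'' collapses to a $\*\Pi^0_2$ condition on $a$, yielding $D_n \in \*\Pi^0_2$ and completing the proof. The delicate point, and the main obstacle, is to arrange the coherence conditions on $T_a$ so that the $G_\delta$-complexity of each level and the $a$-prunedness hold simultaneously; the prunedness is what forces an essential (rather than merely formal) use of completeness of $X$, since without it ``$T_a$ has a branch'' is a priori only $\*\Sigma^1_1$ in $a$.
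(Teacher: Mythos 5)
The paper does not actually prove this proposition---it is imported from de~Brecht--Kawai \cite{dBK}---so your argument has to stand entirely on its own. The reduction in your first two paragraphs does: $e$ is indeed an embedding into $\#S^\#N$, the image is correctly computed as $\bigcap_n (\{a \mid a_n = 0\} \cup D_n)$ with $D_n = \{a \mid U_n \not\subseteq \bigcup\{U_m \mid a_m = 0\}\}$, and your $\@F(\#Q)$ example correctly isolates where completeness must enter. But the one step that carries all the content---$D_n \in \*\Pi^0_2$---is not proved. As written it is a plan: you say what $T_a$ ``should'' satisfy, then ``grant'' exactly the two properties (low-complexity node membership and $a$-prunedness) that would make the argument go through, while conceding that arranging them simultaneously is ``the main obstacle.'' That is the gap.

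Moreover, the complexity bookkeeping for the sketched route does not come out to $\*\Pi^0_2$ under its natural formalization. A node of $T_a$ cannot record that its ball is \emph{disjoint} from every $U_m$ with $a_m = 0$ (points of $F_a$ need not have such neighborhoods); it can only record that the ball is not covered by finitely many of them, which is already a countable intersection of open conditions on $a$, hence $\*\Pi^0_2$ rather than open. The statement ``every node of $T_a$ has a proper extension in $T_a$'' then has the shape $\bigcap_\sigma (\*\Sigma^0_2 \cup \bigcup_\tau \*\Pi^0_2)$, which a priori sits around $\*\Pi^0_4$; prunedness alone does not collapse this, so the quantifiers must be reorganized into a countable intersection of (closed $\cup$ open) conditions indexed by data fixed in advance, independently of $a$, and nothing in the proposal explains how. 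A cleaner way to finish---and essentially the route of \cite{dBK}, which also mirrors the paper's own proof of the fiberwise generalization in the very next proposition---is to first verify that $\@F(\#S^\#N)$ itself is quasi-Polish (the closed subsets of the $\omega$-algebraic domain $\#S^\#N$ form an $\omega$-continuous complete lattice whose Scott topology agrees with the lower Vietoris topology, and $\omega$-continuous dcpos are quasi-Polish), and then, for $\*\Pi^0_2$ $X \subseteq \#S^\#N$, embed $\@F(X) -> \@F(\#S^\#N)$ by $F |-> \-F$ and check that the image $\{F' \mid F' = \-{F' \cap X}\}$ is $\*\Pi^0_2$ using the Baire property of $X$, exactly as the paper does for $\@F_X(A)$. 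This confines the use of completeness to a single cited fact and avoids the tree construction entirely.
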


We will also need the following generalization of $\@F(X)$.  Let $p : A -> X$ be a continuous map between topological spaces.  The \defn{fiberwise lower powerspace} of $A$ over $X$ is
\begin{align*}
\@F_X(A) := \{(x, F) \mid x \in X \AND F \in \@F(A_x)\},
\end{align*}
equipped with the first projection map $\@F_X(p) : \@F_X(A) -> X$, and with subbasic open sets
\begin{align*}
\begin{aligned}
\@F_X(p)^{-1}(U) &= \{(x, F) \in \@F_X(A) \mid x \in U\} &&\text{for (subbasic) open $U \subseteq X$}, \\
\Dia_X S &:= \{(x, F) \in \@F_X(A) \mid F \in \Dia S_x\} &&\text{for (basic) open $S \subseteq A$}
\end{aligned}
\end{align*}
(where $S_x := S \cap A_x = S \cap p^{-1}(x) = (p|S)^{-1}(x)$).

\begin{proposition}
If $X, A$ are quasi-Polish, then so is $\@F_X(A)$.
\end{proposition}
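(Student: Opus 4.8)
The plan is to realize $\@F_X(A)$ as a $\*\Pi^0_2$ subspace of the (already-known-to-be-quasi-Polish) plain lower powerspace $\@F(A)$ together with $X$, and then invoke the closure properties listed in \cref{sec:qpol}: products and $\*\Pi^0_2$ subspaces of quasi-Polish spaces are quasi-Polish. Concretely, I would define a map
\begin{align*}
\@F_X(A) &-> X \times \@F(A), &
(x, F) &|-> (x, F),
\end{align*}
where on the right $F$ is viewed as a closed subset of $A$ (it is closed in $A$ because $F$ is closed in the fiber $A_x = p^{-1}(x)$ and the fiber is closed in $A$, as $p$ is continuous and points of $X$ are... — wait, $X$ need not be $T_1$, so a fiber need not be closed). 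So the identification is not literally onto the closed subsets of $A$; instead $\@F_X(A)$ sits inside $X \times \@F(A)$ as the set of pairs $(x, F)$ with $F \subseteq p^{-1}(x)$ and $F$ closed \emph{in} $A_x$. The first step is therefore to pin down this embedding correctly and check it is a homeomorphism onto its image with the subspace topology from $X \times \@F(A)$; this amounts to matching the subbasic opens $\@F_X(p)^{-1}(U)$ and $\Dia_X S$ against $U \times \@F(A)$ and $X \times \Dia S$ restricted to the image, which is a routine unravelling of definitions (using that basic opens $S \subseteq A$ generate the topology and $S_x = S \cap p^{-1}(x)$).

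The main work, then, is to show the image is $\*\Pi^0_2$ in $X \times \@F(A)$. I would describe the image by two conditions on $(x, F)$: (i) $F \subseteq p^{-1}(x)$, equivalently $F \cap p^{-1}(X \setminus \{x\})$... — again $T_1$-failure makes ``$X\setminus\{x\}$'' the wrong object. The cleaner route: fix a countable basis $\{V_n\}$ of $X$ and a countable basis $\{S_m\}$ of $A$ closed under finite intersections, with $S_m$ chosen (since $A$ is quasi-Polish, hence has a basis of this form, or just work with arbitrary basic opens) so that one has a handle on $p(S_m)$. Condition (i), ``$F$ is contained in a single fiber $p^{-1}(x)$'', I would express as: for every basic open $S_m$ with $F \cap S_m \neq \emptyset$ (i.e. $F \in \Dia S_m$) we have $x \in \overline{p(S_m)}$ in a suitable sense — more precisely, for all $n$, if $x \in V_n$ then $F \subseteq p^{-1}(V_n)$, i.e. $F \subseteq \bigcap\{p^{-1}(V_n) : x \in V_n\}$; and conversely any point of this intersection that is hit. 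Writing ``$F \subseteq p^{-1}(V_n)$'' as ``$F \notin \Dia(A \setminus p^{-1}(V_n))$'' is problematic because $A \setminus p^{-1}(V_n)$ is closed, not open, but $\Dia$ of a closed set is not open. The standard fix is the identity $\{F : F \subseteq C\} = \bigcap_{U}\{F : F \cap U = \emptyset\}$ over a countable family of opens $U$ exhausting the complement of $C$ — available when $A$ is quasi-Polish, since then $A \setminus p^{-1}(V_n)$, being closed, is $\*\Pi^0_2$ hence an intersection... no: closed sets in quasi-Polish spaces need not be $G_\delta$. I would instead use: $\{F \in \@F(A) : F \subseteq C\}$ is \emph{closed} in $\@F(A)$ for $C$ closed (it is $\@F(A) \setminus \bigcup_{U \cap C = \emptyset}... $) — actually $\{F : F \cap U \neq\emptyset\}$ is open for $U$ open, so $\{F : F \subseteq C\} = \bigcap_{U \text{ open}, U \subseteq A\setminus C}\{F : F\cap U=\emptyset\}$ is closed, and since $\@F(A)$ is second countable this intersection can be taken countable. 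Good: so ``$F \subseteq p^{-1}(V_n)$'' defines a closed subset of $\@F(A)$, uniformly, and the full condition (i) is $\bigcap_n (\{x \notin V_n\} \cup \{F \subseteq p^{-1}(V_n)\})$, an intersection of (open $\cup$ closed) sets, i.e. $\*\Pi^0_2$. Condition (ii), ``$F$ is closed in $A_x$'', is automatic once (i) holds and $F$ is closed in $A$ — because then $F = F \cap A_x$ is relatively closed; but we also need every relatively-closed subset of a fiber to arise, i.e. if $G \subseteq A_x$ is closed in $A_x$ then $\overline{G}^{A} \cap A_x = G$, which holds, and $\overline G^A \subseteq p^{-1}(x)$ need NOT hold — so the pair $(x, G)$ in $\@F_X(A)$ corresponds to $(x, \overline G^A)$ only if $\overline G^A$ stays in the fiber. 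Hence the embedding must send $(x,G)$ to $(x, \overline{G}^A)$, and surjectivity onto the $\*\Pi^0_2$ set described by (i) needs: for $(x,F)$ with $F$ closed in $A$ and $F \subseteq p^{-1}(x)$, the preimage is $(x, F)$ itself with $F$ viewed in $A_x$ — fine, and $\overline F^A = F$. So the image is exactly the $\*\Pi^0_2$ set cut out by (i), and we are done.

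The step I expect to be the genuine obstacle is handling the \textbf{non-$T_1$-ness of $X$} throughout: fibers are not closed, ``$F$ lies in one fiber'' cannot be phrased via complements of points, and one must carefully check that the $\Dia_X S$ topology really is the subspace topology coming from the embedding $(x,G) \mapsto (x, \overline{G}^A)$ into $X \times \@F(A)$ — in particular that $\Dia_X S$ pulls back from $X \times \Dia S$, which uses $S_x = S \cap p^{-1}(x)$ and $\overline{G}^A \cap S \neq \emptyset \iff G \cap S \neq \emptyset$ for $G$ closed in the fiber. Once the embedding and the subspace-topology identification are nailed down, the $\*\Pi^0_2$ computation via the countable bases $\{V_n\}$, $\{S_m\}$ and the ``$\{F : F\subseteq C\}$ is closed'' lemma is mechanical, and the conclusion follows from the bullet points in \cref{sec:qpol}. (Alternatively, one could deduce this from the quasi-Polishness of $\@F(X)$ plus the relevant properties of the fiberwise construction in \cite{dBK} directly, but the self-contained $\*\Pi^0_2$ argument above seems cleaner and matches the paper's style.)
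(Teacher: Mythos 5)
Your overall strategy---embed $\@F_X(A)$ into $X \times \@F(A)$ via $(x, F) \mapsto (x, \-{F})$ and show that the image is $\*\Pi^0_2$---is exactly the paper's, and your verification that this map is a homeomorphism onto its image (using $\-{G} \cap S \ne \emptyset \iff G \cap S \ne \emptyset$ for open $S$) is fine. The gap is in your description of the image. You propose that it is cut out by the condition ``$F \subseteq p^{-1}(V_n)$ for every basic open $V_n \ni x$'', i.e.\ $F \subseteq \bigcap_{x \in V_n} p^{-1}(V_n)$. This is wrong in both directions, and the simplest non-$T_1$ example already shows it: take $X = A = \#S = \{0 < 1\}$ with $p$ the identity. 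The pair $(1, \#S)$ \emph{is} in the image (it is the image of $(1, \{1\})$, since $\-{\{1\}} = \#S$), yet it fails your condition because $\#S \not\subseteq p^{-1}(\{1\}) = \{1\}$; meanwhile $(0, \#S)$ satisfies your condition vacuously (the only open neighborhood of $0$ is $\#S$ itself) but is \emph{not} in the image, since $\-{\#S \cap A_0} = \-{\{0\}} = \{0\} \ne \#S$. The underlying point is that for $G$ closed in the fiber $A_x$, its closure $\-{G}$ in $A$ need not stay inside $p^{-1}(U)$ for opens $U \ni x$ (it spills into fibers over points of $\-{\{x\}}$), so ``containment in $\bigcap_{x \in V_n} p^{-1}(V_n)$'' is the wrong shape of condition, and your claim that closedness in the fiber ``is automatic once (i) holds'' does not survive.

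The correct characterization of the image is $F = \-{F \cap A_x}$, equivalently: $F \cap A_x$ is dense in $F$. Writing $A_x = p^{-1}(\-{\{x\}}) \cap \bigcap_{x \in U} p^{-1}(U)$ (valid by $T_0$), this splits into two conditions: (i) $F \subseteq p^{-1}(\-{\{x\}})$---a closed set dense in $F$ must contain $F$---which one expresses as $\-{p(F)} \subseteq \-{\{x\}}$ and handles via continuity of $\-{p(-)}$, $\-{\{-\}}$ and the $\*\Pi^0_2$-ness of the specialization order on $\@F(X)$; and (ii) each $p^{-1}(U)$ with $x \in U$ is dense in $F$, which is where \emph{Baire category} enters (to pass from each $p^{-1}(U)$ being dense in $F$ to their countable intersection being dense) and which is the $\*\Pi^0_2$ condition $\forall U\, \forall S\, (x \in U \AND F \in \Dia S \implies F \in \Dia(S \cap p^{-1}(U)))$. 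Neither the density idea nor the $\-{\{x\}}$ half of the constraint appears in your argument, so as written the proof does not go through.
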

\begin{proof}
We claim that we have a homeomorphism
\begin{align*}
e : \@F_X(A) -->{}& \{(x, F') \in X \times \@F(A) \mid F' = \-{F' \cap A_x}\} \tag{$*$} \\
(x, F) |-->{}& (x, \-F).
\end{align*}
$e$ is a bijection, with inverse $(x, F') |-> (x, F' \cap A_x)$.
Subbasic open sets in $X \times \@F(A)$ are $U \times \@F(A)$ for open $U \subseteq X$ and $X \times \Dia S$ for open $S \subseteq A$; we have $e^{-1}(U \times \@F(A)) = \@F_X(p)^{-1}(U)$ and $e^{-1}(X \times \Dia S) = \Dia_X S$, so $e$ is a homeomorphism.

So it is enough to show that the right-hand side of ($*$) is $\*\Pi^0_2$ in $X \times \@F(A)$.
Let $\@S, \@U$ be countable bases of open sets in $A, X$ respectively.  For $(x, F') \in X \times \@F(A)$, we have
\begin{align*}
A_x = p^{-1}(x) = p^{-1}(\-{\{x\}} \cap \bigcap_{x \in U \in \@U} U)
= p^{-1}(\-{\{x\}}) \cap \bigcap_{x \in U \in \@U} p^{-1}(U);
\end{align*}
thus $F' = \-{F' \cap A_x}$ iff
\begin{enumerate}
\item[(i)]  $F' \subseteq p^{-1}(\-{\{x\}})$, and
\item[(ii)]  for each $x \in U \in \@U$, $p^{-1}(U)$ is dense in $F'$ (whence by Baire category, so is $\bigcap_{x \in U \in \@U} p^{-1}(U)$).
\end{enumerate}
Condition (i) can be expressed as $\-{p(F')} \subseteq \-{\{x\}}$, which is $\*\Pi^0_2$ since $\-{p(-)} : \@F(A) -> \@F(X)$ and $\-{\{-\}} : X -> \@F(X)$ are continuous and the specialization order $\subseteq$ on $\@F(X)$ is $\*\Pi^0_2$ (see e.g., \cite[\S3]{dBK}, \cite[\S9]{Cqpol}).  Condition (ii) can be expressed as
\begin{align*}
\forall U \in \@U\, \forall S \in \@S\, (x \in U \AND F' \in \Dia S \implies F' \in \Dia(S \cap p^{-1}(U))).
&\qedhere
\end{align*}
\end{proof}

\subsection{Categories and groupoids}
\label{sec:gpd}

A \defn{(small) category} $G$ consists of sets $G^0$ (\emph{objects}) and $G^1$ (\emph{morphisms}) together with maps
\begin{itemize}
\item  $\sigma, \tau : G^1 -> G^0$ (\emph{source} and \emph{target}); usually $g \in G^1$ with $\sigma(g) = x$ and $\tau(g) = y$ is denoted $g : x -> y$, and the set of all such $g$ denoted
\begin{align*}
G(x, y) := \{g : x -> y \in G\} = \sigma^{-1}(x) \cap \tau^{-1}(y) \subseteq G^1;
\end{align*}
\item  $\iota : G^0 -> G^1$ (\emph{unit});
\item  $\mu : G^1 \times_{G^0} G^1 := \{(g, h) \in G^1 \times G^1 \mid \sigma(g) = \tau(h)\} -> G^1$ (\emph{multiplication}), usually denoted $g \cdot h := \mu(g, h)$;
\end{itemize}
satisfying the usual axioms (e.g., $\sigma(g \cdot h) = \sigma(h)$, $g \cdot \iota(\sigma(g)) = g$, etc.).

We adopt the convention that objects $x \in G^0$ in a category are identified with their unit morphisms $\iota(x) \in G^1$, so that we may simply regard $G = G^1$ as the underlying set of the category, and $\iota : G^0 -> G$ as the inclusion of objects (i.e., unit morphisms).

Unless otherwise specified, we always regard a category $G$ (= $G^1$) as a set over $G^0$ via the target map $\tau$.  Thus, for example,
\begin{align*}
G_x = \tau^{-1}(x) = \{g : y -> x \in G\}
\end{align*}
refers to the set of morphisms to $x$.

A morphism $g : x -> y$ in a category $G$ is an \defn{isomorphism} if it has an \defn{inverse} $g^{-1} : y -> x \in G$ satisfying $g \cdot g^{-1} = y$ and $g^{-1} \cdot g = x$.

A \defn{(small) groupoid} is a (small) category $G$ equipped with an additional map $\nu : G -> G$ (\emph{inverse}, usually denoted $g^{-1} := \nu(g)$) taking each morphism to its inverse.  Thus, a category can be made into a groupoid iff every morphism in it is an isomorphism.  The \defn{core} of a category $G$ is the groupoid $\core(G)$ of isomorphisms in $G$.

We identify a group $G$ with the corresponding one-object groupoid (where, under our convention, the object is the identity $1 \in G$).

The \defn{orbit} of an object $x \in G^0$ in a groupoid $G$ is its connected component
\begin{align*}
[x]_G := \{y \in G^0 \mid G(x, y) \ne \emptyset\}.
\end{align*}
The \defn{orbit equivalence relation} $\#E_G \subseteq G^0 \times G^0$ of a groupoid $G$ is given by
\begin{align*}
x \mathrel{\#E_G} y \coloniff y \in [x]_G \iff G(x, y) \ne \emptyset.
\end{align*}
For a subset $S \subseteq G^0$, we let $[S]_G := [S]_{\#E_G} = \bigcup_{x \in S} [x]_G$ denote its $\#E_G$-saturation.

A subset $A \subseteq G$ of a groupoid $G$ is \defn{symmetric} if $A = A^{-1}$, \defn{unital} if $\sigma(A), \tau(A) \subseteq A$, and a \defn{subgroupoid} if it is symmetric and closed under multiplication (hence unital, since $\sigma(g) = g^{-1} \cdot g$ and $\tau(g) = g \cdot g^{-1}$).  Every subset $A \subseteq G$ contains a largest symmetric unital subset, namely
\begin{align*}
\{g \mid g, g^{-1}, \sigma(g), \tau(g) \in A\},
\end{align*}
and is contained in a smallest subgroupoid, namely
\begin{align*}
\ang{A} := \bigcup_{n \ge 1} (A \cup A^{-1})^n
\end{align*}
(where $B^n := B \cdot B \cdot \dotsb \cdot B$), the \defn{subgroupoid generated by $A$}.

A \defn{topological category} (resp., \defn{topological groupoid}) is a small category (groupoid) $G$ such that $G^0, G^1$ are topological spaces and the maps $\sigma, \tau, \iota, \mu$ ($, \nu$) are continuous.  Note that then $\sigma, \iota$ exhibit $G^0$ as a retract of $G^1$, whence the topology on $G^0$ must be the subspace topology, and so we may continue to regard $G = G^1$ as the underlying space of the groupoid.

\begin{remark}
\label{rmk:gpd-top-noninv}
It is possible for a topological category to be a groupoid without being a \emph{topological} groupoid, i.e., for the inverse map to be discontinuous.  For example, consider the group $(\#Z, +)$ where $\#Z$ has the topology consisting of all upward-closed sets, regarded as a one-object groupoid.  In particular, the core of a topological category is not necessarily a topological groupoid in the subspace topology.
\end{remark}

For a class of spaces $\Phi$, we say that a topological category or groupoid $G$ is $\Phi$ if $G^0, G^1 \in \Phi$.  When $\Phi$ is closed under retracts, it is enough to require $G = G^1 \in \Phi$.  Thus, we have the notions of \defn{Polish category}, \defn{quasi-Polish groupoid}, etc.

A topological groupoid $G$ is \defn{open} if $\sigma : G -> G^0$ (equivalently $\tau : G -> G^0$ or $\mu : G \times_{G^0} G -> G$) is an open map, or equivalently for any open $U, V \subseteq G$, the product set $U \cdot V = \mu(U \times_{G^0} V)$ is open.  Note that in an open topological groupoid, every open set $U \subseteq G$ generates an open subgroupoid $\ang{U}$ (since $\mu, \nu$ are open).

A topological groupoid $G$ is \defn{non-Archimedean} if every unit morphism $x \in G^0$ has a neighborhood basis (in $G$) consisting of open subgroupoids $U \subseteq G$.  A \defn{basis of open subgroupoids} in $G$ is a family $\@U$ of open subgroupoids $U \subseteq G$ from which every $x \in G^0$ has a neighborhood basis.

%

\begin{lemma}
\label{thm:nonarchgpd-basis-ctble}
If $G$ is second-countable and non-Archimedean, then $G$ has a countable basis of open subgroupoids.
\end{lemma}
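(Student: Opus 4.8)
The plan is to extract a countable basis of open subgroupoids from an arbitrary basis of open subgroupoids by choosing, for each pair of basic open sets of the ambient space, a single subgroupoid sandwiched between them. First, fix a countable basis $\@V$ for the topology on $G$. By non-Archimedeanness, the collection $\@U$ of \emph{all} open subgroupoids of $G$ has the property that every $x \in G^0$ has a neighborhood basis in $\@U$; that is, for every $x \in G^0$ and every open $W \ni x$, there is $U \in \@U$ with $x \in U \subseteq W$. I want to produce a countable subfamily of $\@U$ with the same property.

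The key step is the selection. For each ordered pair $(V, V') \in \@V \times \@V$ such that there exists an open subgroupoid $U$ with $V \subseteq U \subseteq V'$, use the axiom of choice to pick one such $U_{V,V'} \in \@U$; let $\@U_0 := \{U_{V,V'}\}$ be the resulting countable family. I claim $\@U_0$ is a basis of open subgroupoids. Given $x \in G^0$ and an open $W \ni x$, first choose (by non-Archimedeanness) an open subgroupoid $U$ with $x \in U \subseteq W$; then choose $V' \in \@V$ with $x \in V' \subseteq U$; then, applying non-Archimedeanness again to the open neighborhood $V'$ of $x$, choose an open subgroupoid $U'$ with $x \in U' \subseteq V'$; finally choose $V \in \@V$ with $x \in V \subseteq U'$. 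Then $V \subseteq U' \subseteq V'$, so the pair $(V, V')$ is one for which a choice was made, and $U_{V,V'}$ satisfies $x \in V \subseteq U_{V,V'} \subseteq V' \subseteq U \subseteq W$. Since $x \in V \subseteq U_{V,V'}$, we have produced a member of $\@U_0$ that is a neighborhood of $x$ contained in $W$, as required.

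I do not expect any serious obstacle here; the only subtlety is making sure the interpolation has enough room, which is why the argument passes through two open subgroupoids $U \supseteq U'$ and two basic open sets $V' \supseteq V$ rather than one of each — a single interpolation $x \in V \subseteq U \subseteq W$ would not tell us that the pair $(V, \cdot)$ was among those for which a choice was made, so we need a basic open set on the \emph{outside} of a subgroupoid as well, which forces the extra layer. (This is the standard trick for cutting down a basis of a topological structure to a countable sub-basis, adapted to subgroupoids.)
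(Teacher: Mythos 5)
Your proof is correct. The double interpolation $x \in V \subseteq U' \subseteq V' \subseteq U \subseteq W$ is set up properly, and you are right that the outer basic set $V'$ is what pins the selected subgroupoid $U_{V,V'}$ inside $W$; a single sandwich would not suffice. The route is genuinely different from the paper's, which is a one-liner: fix a basis $\@U$ of open subgroupoids and a countable basis $\@V$ of open sets, and take $\{\ang{V} \mid V \in \@V \AND \exists U \in \@U\, (V \subseteq U)\}$, where $\ang{V}$ is the subgroupoid generated by $V$. Given open $W \ni x$, one picks $U \in \@U$ with $x \in U \subseteq W$ and then $V \in \@V$ with $x \in V \subseteq U$, so that $x \in V \subseteq \ang{V} \subseteq \ang{U} = U \subseteq W$. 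The trade-offs: the paper's construction is canonical (no choice is needed to produce the countable family), but it leans on the standing remark that in an \emph{open} topological groupoid every open set generates an \emph{open} subgroupoid, i.e.\ it implicitly uses that multiplication is an open map --- true in every application in the paper, but not among the stated hypotheses of the lemma. Your selection argument only ever chooses from subgroupoids that are already open, so it needs no openness of $G$ at all; the price is a countable application of choice, which is harmless. Either argument suffices for the paper's purposes.
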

\begin{proof}
Let $\@U$ be a basis of open subgroupoids in $G$ and $\@V$ be a countable basis of open sets in $G$.  Take $\{\ang{V} \mid V \in \@V \AND \exists U \in \@U\, (V \subseteq U)\}$.
\end{proof}

A \defn{standard Borel category} (resp., \defn{standard Borel groupoid}) is a small category (groupoid) $G$ such that $G^0, G^1$ are standard Borel spaces and the maps $\sigma, \tau, \iota, \mu$ are Borel.  Every quasi-Polish category (groupoid) has an underlying standard Borel category (groupoid).  Unlike in the topological context (\cref{rmk:gpd-top-noninv}), if a standard Borel category $G$ is a groupoid, then the inverse map $\nu : G -> G$ is automatically Borel (since its graph is Borel).

\subsection{Pettis's theorem}
\label{sec:pettis}

The following are based on the well-known corresponding results for Polish groups (see e.g., \cite[2.2.1, 2.3.2]{Gao}) as well as the localic ``closed subgroupoid theorem'' of Johnstone \cite{Jclsubgpd}.

\begin{proposition}[Pettis's theorem]
\label{thm:pettis}
Let $G$ be a $\sigma$-fiberwise Baire topological groupoid, and let $A, B, U, V \subseteq G$ with $U$ $\tau$-fiberwise open, $V$ $\sigma$-fiberwise open, $U \setminus A$ $\tau$-fiberwise meager, and $V \setminus B$ $\sigma$-fiberwise meager.  Then $U \cdot V \subseteq A \cdot B$.
\end{proposition}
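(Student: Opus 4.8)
The plan is to mimic the classical Pettis argument for Polish groups, carried out fiberwise. Fix a morphism $g \cdot h \in U \cdot V$ with $g : x -> y$, $h : z -> x$, so $g \in U$ and $h \in V$ and $\sigma(g) = \tau(h) = x$; I want to produce $a \in A$ and $b \in B$ with $a \cdot b = g \cdot h$. The natural candidates come from a Baire category argument in the fiber $G_x = \tau^{-1}(x)$ over the common ``middle object'' $x$: I will look for $k \in G_x$ (i.e. $k : ? -> x$... actually with appropriate source) lying in the intersection of a translate of $A$ and a translate of $B$.

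Here is the key step. Consider the set $g^{-1} \cdot U \cap G_x$ (morphisms into $x$, after left-translating $U$ by $g^{-1}$) — more precisely, since $U$ is $\tau$-fiberwise open, $g^{-1} \cdot U$ is an open neighborhood of $\iota(x)$ inside the fiber $\tau^{-1}(x)$, and $g^{-1} \cdot A$ is comeager in it (as $U \setminus A$ is $\tau$-fiberwise meager and left translation by the homeomorphism $g^{-1} \cdot (-)$ between fibers preserves meagerness and openness). Symmetrically, using that $V$ is $\sigma$-fiberwise open and $V \setminus B$ is $\sigma$-fiberwise meager: right-translating by $h$, inverting, I arrange that $(V \cdot h^{-1})^{-1} = h \cdot V^{-1}$ meets $\tau^{-1}(x)$ in an open set containing $\iota(x)$ in which $h \cdot B^{-1}$ is comeager. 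Now both $g^{-1}\cdot U$ and $h \cdot V^{-1}$ are nonempty open subsets of the fiber $\tau^{-1}(x)$, each containing the unit $\iota(x)$, hence their intersection is a nonempty open set; since the fiber is Baire (the groupoid is $\sigma$-fiberwise Baire), the two comeager subsets $g^{-1}\cdot A$ and $h \cdot B^{-1}$ of this open set have a common point $k$. Then $k \in g^{-1} \cdot A$ gives $a := g \cdot k \in A$, and $k \in h \cdot B^{-1}$ gives $k^{-1} \in B \cdot h^{-1}$ — wait, I need to be careful with the bookkeeping — so $b := k^{-1} \cdot h \in B$, and then $a \cdot b = g \cdot k \cdot k^{-1} \cdot h = g \cdot h$, as desired. (One checks the source/target conditions line up so that all these products are defined.)

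I would organize the proof by first recording the general fact that for a fixed isomorphism $g : x -> y$, left multiplication $g \cdot (-) : \tau^{-1}(x) -> \tau^{-1}(y)$ is a homeomorphism of fibers, and similarly right multiplication and inversion relate fibers; this reduces all the ``$\tau$-fiberwise'' and ``$\sigma$-fiberwise'' hypotheses to statements inside the single Baire space $\tau^{-1}(x)$. Then the argument is exactly the three-line Baire category computation above: two comeager subsets of a common nonempty open set meet. The main thing to get right — and the place where the bookkeeping is fiddly rather than deep — is matching up which fiber each of $U$, $V$, $A$, $B$ contributes to after translation, i.e. verifying that $g^{-1} \cdot U$ and $h \cdot V^{-1}$ genuinely live in the same fiber $\tau^{-1}(x)$ and both contain $\iota(x)$, so that their intersection is open and nonempty. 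I do not anticipate any serious obstacle: there is no completeness or separability needed beyond the stated $\sigma$-fiberwise Baire hypothesis, and unlike the Polish-group case we do not even need to intersect with a translate on both sides simultaneously in a clever way — the groupoid structure makes the ``middle'' object $x$ the obvious place to run the argument.
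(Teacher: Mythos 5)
Your proof is correct and is essentially the paper's own argument: both reduce everything to a single Baire fiber via the translation/inversion homeomorphisms between fibers, and then observe that the set of witnesses to a factorization is comeager in a nonempty open subset of that fiber, hence nonempty. The only difference is bookkeeping --- the paper shows directly that $(U^{-1} \cdot g) \cap V$ is non-meager in $\sigma^{-1}(\sigma(g))$ and survives the meager modifications to $(A^{-1} \cdot g) \cap B$, whereas you fix a factorization $g \cdot h$ and perturb by a $k$ near the unit of the middle object; the two formulations correspond under the homeomorphism $w \mapsto h \cdot w^{-1}$.
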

\begin{proof}
Let $g : x -> y \in G$.  We have
\begin{align*}
g \in U \cdot V
&\iff (U^{-1} \cdot g) \cap V \ne \emptyset \\
&\implies (U^{-1} \cdot g) \cap V \text{ non-meager in $\sigma^{-1}(x)$}
&&\text{since $\sigma^{-1}(x)$ is Baire} \\
&\implies (A^{-1} \cdot g) \cap B \text{ non-meager in $\sigma^{-1}(x)$}
&&\text{since $(U^{-1} \cdot g) \setminus (A^{-1} \cdot g), V \setminus B$ are} \\[-.4ex] &&&\text{meager in $\sigma^{-1}(x)$} \\
&\implies (A^{-1} \cdot g) \cap B \ne \emptyset \\
&\iff g \in A \cdot B.
&&\qedhere
\end{align*}
\end{proof}

\begin{corollary}
\label{thm:subgpd-dense}
Let $G$ be a quasi-Polish groupoid and $H \subseteq G$ be a quasi-Polish subgroupoid which is $\tau$-fiberwise (equivalently, $\sigma$-fiberwise) dense.  Then $H = G$.
\end{corollary}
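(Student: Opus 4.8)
The plan is to apply Pettis's theorem (\cref{thm:pettis}) with the subgroupoid $H$ playing the role of both $A$ and $B$, and the whole space $G$ playing the role of both $U$ and $V$. First I would verify the hypotheses. A quasi-Polish space is Baire, and more is true: a quasi-Polish groupoid $G$ is $\sigma$-fiberwise Baire, since each fiber $\tau^{-1}(x)$ is a closed (hence quasi-Polish, hence Baire) subspace of $G$ — and likewise for the $\sigma$-fibers. For the sets: take $A = B = H$ and $U = V = G$. Then $U = G$ is trivially $\tau$-fiberwise open and $V = G$ is $\sigma$-fiberwise open. The remaining conditions are that $U \setminus A = G \setminus H$ is $\tau$-fiberwise meager and $V \setminus B = G \setminus H$ is $\sigma$-fiberwise meager. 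Since $H$ is quasi-Polish, each fiber $H \cap \tau^{-1}(x)$ is a quasi-Polish — hence $\*\Pi^0_2$, hence in particular Borel and "completely metrizable-like" — subspace of the quasi-Polish fiber $\tau^{-1}(x)$; a dense $\*\Pi^0_2$ subspace of a Baire space is comeager, so its complement is meager. This is exactly the $\tau$-fiberwise density hypothesis on $H$. The $\sigma$-fiberwise version follows symmetrically (and the parenthetical equivalence of $\tau$- and $\sigma$-fiberwise density for a subgroupoid is immediate by applying $\nu$, since $\nu$ swaps $\sigma$ and $\tau$ and is a homeomorphism of $G$).

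With the hypotheses in hand, Pettis gives $U \cdot V \subseteq A \cdot B$, i.e. $G \cdot G \subseteq H \cdot H$. Now I would observe that $G \cdot G = G$: for any $g : x \to y$ we have $g = g \cdot \iota(x)$ with $\iota(x) \in G$, and $(g, \iota(x))$ is a composable pair, so $g \in G \cdot G$ (using our convention that objects are unit morphisms, so $\iota(x) \in G^1 = G$). Similarly $H \cdot H = H$, since $H$ is closed under multiplication (being a subgroupoid) and contains all its unit morphisms $\iota(\sigma(h)), \iota(\tau(h))$ (a subgroupoid is unital), so every $h \in H$ lies in $H \cdot H$ via $h = h \cdot \iota(\sigma(h))$. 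Combining, $G = G \cdot G \subseteq H \cdot H = H \subseteq G$, hence $H = G$.

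The one point requiring a little care — and the step I would flag as the main obstacle, though it is minor — is the justification that a dense quasi-Polish subspace of a quasi-Polish fiber is comeager. Quasi-Polish spaces are not metrizable in general, so one cannot literally cite the Polish fact "dense $G_\delta$." Instead I would invoke the listed property that $\*\Pi^0_2$ subspaces of quasi-Polish spaces are quasi-Polish together with the characterization of quasi-Polish subspaces: a subspace of a quasi-Polish space is quasi-Polish iff it is $\*\Pi^0_2$ (this is part of the basic theory of \cite{deB}, \cite{Cqpol} cited in \cref{sec:qpol}). A $\*\Pi^0_2$ set $\bigcap_i (F_i \cup U_i)$ that is dense is comeager: each $F_i \cup U_i$ contains the dense open set $U_i \cup \operatorname{int}(F_i)$ together with a nowhere dense remainder $\partial F_i \setminus U_i$, so each $F_i \cup U_i$ is comeager, and a countable intersection of comeager sets is comeager in a Baire space. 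Hence $G \setminus H$ is fiberwise meager, as required. Everything else is bookkeeping with the groupoid conventions already fixed in \cref{sec:gpd}.
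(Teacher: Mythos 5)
Your proof is correct and follows the paper's own argument exactly: the paper likewise observes that $H$, being quasi-Polish, is $\*\Pi^0_2$ and hence (being fiberwise dense) fiberwise comeager, and then applies \cref{thm:pettis} with $A = B = H$ and $U = V = G$, concluding $G = G \cdot G \subseteq H \cdot H = H$. The one small inaccuracy is your claim that the fibers $\tau^{-1}(x)$ are closed: quasi-Polish spaces are only $T_0$, so singletons need not be closed, but $\tau^{-1}(x) = \tau^{-1}(\overline{\{x\}}) \cap \bigcap_{x \in U} \tau^{-1}(U)$ (intersection over basic open $U \ni x$) is still $\*\Pi^0_2$, hence quasi-Polish and Baire, so the hypothesis of Pettis that you need survives.
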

\begin{proof}
Since $H \subseteq G$ is quasi-Polish, it is $\*\Pi^0_2$, hence (being fiberwise dense) fiberwise comeager.  Take $A = B = H$ and $U = V = G$ in \cref{thm:pettis}.
\end{proof}

\subsection{Functors and equivalences}
\label{sec:functor}

A \defn{functor} $F : G -> H$ between two categories $G, H$ is a map preserving the category structure maps $\sigma, \tau, \iota, \mu$.  A functor $F : G -> H$ is
\begin{itemize}
\item  an \defn{embedding} if $F : G -> H$ is injective;
\item  \defn{faithful} if $F|G(x, y) : G(x, y) -> H(F(x), F(y))$ is injective for all $x, y \in G^0$;
\item  \defn{full} if $F|G(x, y) : G(x, y) -> H(F(x), F(y))$ is surjective for all $x, y \in G^0$;
\item  \defn{essentially surjective} if its \defn{essential image} $[F(G^0)]_{\core(H)}$ is all of $H^0$, i.e., for every $y \in H^0$, there is $x \in G^0$ such that $F(x) \cong y$.
\end{itemize}
A subcategory $G \subseteq H$ is \defn{full} if the inclusion $G -> H$ is full, i.e., $G$ consists of all morphisms in $H$ between some subset of objects $G^0 \subseteq H^0$; we denote this by $G = H|G^0$.

A functor $F : G -> H$ restricted to the objects gives a homomorphism $F : \#E_G -> \#E_H$ between the corresponding orbit equivalence relations, which is a \defn{reduction} (descends to an injection between the quotient spaces $G^0/G `-> H^0/H$) if $F$ is full.

Given two functors $F, F' : G -> H$, a \defn{natural transformation} $\alpha : F -> F'$ is a map $\alpha : G^0 -> H$ such that $\alpha(x) : F(x) -> F'(x)$ for each $x \in G^0$, and $\alpha(y) \cdot F(g) = F'(g) \cdot \alpha(x)$ for each $g : x -> y \in G$.  A natural transformation $\alpha : F -> F'$ is a \defn{natural isomorphism} if each $\alpha(x)$ is an isomorphism, in which case $\alpha^{-1} : F' -> F$ defined by $\alpha^{-1}(x) := \alpha(x)^{-1}$ is also a natural transformation.  If $H$ is a groupoid, then clearly every natural transformation is a natural isomorphism.  Two functors $F, F' : G -> H$ are \defn{(naturally) isomorphic}, written $F \cong F'$, if there is a natural isomorphism $\alpha : F -> F'$.

A functor $F : G -> H$ is an \defn{equivalence of categories} if it has an \defn{inverse equivalence} $F' : H -> G$ such that $F \circ F'$ and $F' \circ F$ are naturally isomorphic to the respective identity functors.  It is a standard fact in category theory (see e.g., \cite[IV~4.1]{Mac}) that

\begin{proposition}
\label{thm:functor-equiv}
A functor $F : G -> H$ is an equivalence iff it is full, faithful, and essentially surjective.
\end{proposition}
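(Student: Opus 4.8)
The plan is to prove the two implications separately: the forward direction ($F$ an equivalence $\implies$ $F$ full, faithful, essentially surjective) is purely formal, while the reverse direction requires constructing an inverse equivalence, which forces a use of the axiom of choice (as already flagged in the Introduction).

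For the forward direction, I would fix an inverse equivalence $F' : H -> G$ together with natural isomorphisms $\eta : 1_G \cong F' \circ F$ and $\epsilon : F \circ F' \cong 1_H$, and then proceed in four steps. (i) Essential surjectivity of $F$ is immediate, since $\epsilon(y) : F(F'(y)) -> y$ witnesses $F(F'(y)) \cong y$ for every $y \in H^0$. (ii) Naturality of $\eta$ at a morphism $g : x -> y$ gives $(F'F)(g) = \eta(y) \cdot g \cdot \eta(x)^{-1}$, so the map $(F'F) : G(x,y) -> G(F'F(x), F'F(y))$ is a bijection (conjugation by isomorphisms); since it factors through $F : G(x,y) -> H(F(x),F(y))$, the latter is injective, i.e.\ $F$ is faithful. (iii) Applying (ii) verbatim to $F'$, which is an equivalence with inverse $F$, shows $F'$ is faithful as well. (iv) Fullness of $F$ follows: given $h \in H(F(x),F(y))$, use the bijectivity of $F'F$ on the hom-set to get the unique $g \in G(x,y)$ with $(F'F)(g) = F'(h)$, and deduce $F(g) = h$ from faithfulness of $F'$. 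Notably this argument never invokes the triangle identities, so no passage to an adjoint equivalence is needed.

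For the reverse direction, assume $F$ is full, faithful, and essentially surjective. Using choice, for each $y \in H^0$ I would pick $F'(y) \in G^0$ and an isomorphism $\epsilon(y) : F(F'(y)) -> y$ in $H$. For a morphism $h : y -> y'$ in $H$, the composite $\epsilon(y')^{-1} \cdot h \cdot \epsilon(y) : F(F'(y)) -> F(F'(y'))$ equals $F(g)$ for a unique $g =: F'(h)$ by fullness and faithfulness; the functoriality equations for $F'$ then follow by applying the faithful functor $F$ to both sides, and $\epsilon$ is by construction a natural isomorphism $F \circ F' \cong 1_H$. To produce $\eta : 1_G \cong F' \circ F$, for each $x \in G^0$ the isomorphism $\epsilon(F(x))^{-1} : F(x) -> F(F'(F(x)))$ is $F(\eta(x))$ for a unique $\eta(x) : x -> F'(F(x))$ by fullness and faithfulness, and $\eta(x)$ is itself an isomorphism because a full and faithful functor reflects isomorphisms; naturality of $\eta$ is verified by applying $F$ and unwinding the definitions of $F'(F(g))$ and $\epsilon$. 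This exhibits $F'$ as an inverse equivalence.

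I do not expect a genuine obstacle here --- this is the textbook argument (cf.\ \cite[IV~4.1]{Mac}) --- so the ``hard part'' is only bookkeeping: keeping straight the paper's convention that objects are identified with their unit morphisms and that $g \cdot h$ composes $h$ then $g$, and, in the reverse direction, being explicit about the (unavoidable) appeal to choice.
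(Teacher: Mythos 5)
Your proposal is correct, and the reverse direction (the only one the paper actually proves) is essentially identical to the paper's argument: choose $x_y$ and $h_y : F(x_y) \to y$ for each $y \in H^0$, define $F'$ on morphisms via $(F|G(x_y,x_{y'}))^{-1}(h_{y'}^{-1} \cdot h \cdot h_y)$, and read off the two natural isomorphisms. The forward direction you spell out is omitted in the paper (it just cites \cite[IV~4.1]{Mac} for the standard fact), and your argument for it is the standard correct one.
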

\begin{proof}[Proof of $\Longleftarrow$]
Choose (using essential surjectivity) for each $y \in H^0$ some $x_y \in G^0$ and isomorphism $h_y : F(x_y) -> y \in \core(H)$.  Define the inverse equivalence
\begin{align*}
F' : H &--> G \\
(h : y -> y') &|--> ((F|G(x_y, x_{y'}))^{-1}(h_{y'}^{-1} \cdot h \cdot h_y) : x_y -> x_{y'}).
\end{align*}
The natural isomorphism $F \circ F' \cong 1_H$ is given by $h_y : F(F'(y)) = F(x_y) -> y$ for each $y \in H^0$, while the natural isomorphism $F' \circ F \cong 1_G$ is given by $(F|G(x_{F(x)}, x))^{-1}(h_{F(x)}) : F'(F(x)) = x_{F(x)} -> x$ for each $x \in G^0$.
\end{proof}

The notions of \defn{Borel functor} (between standard Borel categories)
and \defn{Borel natural transformation}
are defined in the obvious way.  By a \defn{Borel equivalence of categories}, we mean a Borel functor with a Borel inverse equivalence witnessed by Borel natural isomorphisms.  The use of the axiom of choice in the proof of \cref{thm:functor-equiv} means that this is a stronger notion than that of a full, faithful, and essentially surjective Borel functor.  However, in the case of open quasi-Polish groupoids, we recover the equivalence:

\begin{proposition}
\label{thm:functor-borel-full}
Let $G$ be an open quasi-Polish groupoid, $H$ be a standard Borel groupoid, and $F : G -> H$ be a full Borel functor.  Then $F$ has Borel essential image, and the map
\begin{align*}
\tau \circ (H \times_{H^0} F) : H \times_{H^0} G^0 := \{(h, x) \in H \times G^0 \mid \sigma(h) = F(x)\} &--> H^0 \\
(h, x) &|--> \tau(h)
\end{align*}
(whose image is the essential image $[F(G^0)]_H$ of $F$)
has a Borel section
\begin{align*}
(h_{(-)}, x_{(-)}) : [F(G^0)]_H -> H \times_{H^0} G^0,
\end{align*}
i.e., there is a Borel assignment to each $y \in [F(G^0)]_H$ of a $x_y \in G^0$ and $h_y : F(x_y) -> y \in H$.
\end{proposition}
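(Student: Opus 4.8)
The plan is to exhibit the essential image as the projection of a suitable Borel set, and then to pick a Borel section of that projection using the fact that an open quasi-Polish groupoid $G$ gives us fibers of $\sigma$ (equivalently $\tau$) that are quasi-Polish — hence standard Borel with a nice descriptive structure — and, crucially, that $\sigma$ is an \emph{open} continuous map, so the large-section (Arsenin--Kunugui / Saint-Raymond style) uniformization theorems apply. The first step is to form the standard Borel space $P := H \times_{H^0} G^0 = \{(h,x) \in H \times G^0 \mid \sigma(h) = F(x)\}$; this is a Borel subset of the standard Borel space $H \times G^0$ because $F$, $\sigma$ are Borel, hence standard Borel, and the projection $\pi := \tau \circ (H \times_{H^0} F) : P \to H^0$, $(h,x) \mapsto \tau(h)$, is Borel. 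By definition the image of $\pi$ is exactly $[F(G^0)]_H$, the essential image of $F$.

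The key observation is that the fibers of $\pi$ are \emph{not} small but have a definable structure inherited from $G$. Fix $y \in H^0$. A point of $\pi^{-1}(y)$ is a pair $(h,x)$ with $h : F(x) \to y$. Fullness of $F$ enters here: I want to use the surjectivity of $F|G(x,x') \to H(F(x),F(x'))$ to transport the whole fiber back into $G$. Concretely, the plan is: pick any one witness — but that already needs AC, which is what we are avoiding — so instead I will organize the argument so that the fiber $\pi^{-1}(y)$, \emph{once we know it is nonempty}, carries a Borel action (or at least a Borel ``translation'' structure) of $\tau^{-1}(y') \subseteq G$ for the relevant $y'$, turning it into the quotient-like object to which a large-section theorem applies. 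More precisely, consider instead the Borel map $\rho : G \times_{G^0, F, \sigma} H \to H^0$ sending $(g : x \to x', h : F(x') \to y) \mapsto \tau(h)$; fullness means that along each $G$-orbit the composite $h \cdot F(g)$ ranges over \emph{all} of $H(F(x), y)$, so the fibers of $\pi$ decompose as $G$-orbits of a single $H$-morphism, and the fiberwise structure is governed by $\tau_G^{-1}(x)$, which is quasi-Polish. This is the step where I expect to do real work: I need to check that the equivalence relation ``$(h,x) \sim (h',x')$ iff $\exists g \in G(x,x')$ with $h' \cdot F(g) = h$'' is a Borel equivalence relation on $P$ whose classes are exactly the fibers $\pi^{-1}(y)$, and that each class, being a Borel-quotient image of an open-map fiber of $G$, is in a suitable sense ``large'' (not meager in itself, or $\sigma$-compact-like in the Wadge hierarchy sense) — this is precisely where openness of $\sigma : G \to G^0$ and the Pettis-type machinery of \cref{sec:pettis} should give the needed regularity.

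Granting that, I finish by invoking a Borel large-section uniformization theorem: $\pi : P \to H^0$ is a Borel map between standard Borel spaces whose nonempty fibers are ``large'' in the required sense, so (i) $\pi(P) = [F(G^0)]_H$ is Borel, and (ii) $\pi$ admits a Borel section $(h_{(-)}, x_{(-)}) : [F(G^0)]_H \to P$, which is exactly the asserted Borel assignment $y \mapsto (x_y \in G^0, h_y : F(x_y) \to y)$. For the last sentence of the statement — that if $F$ is also faithful then $F$ is a Borel equivalence onto its essential image — I follow the proof of \cref{thm:functor-equiv} verbatim, but now all choices were made Borel-measurably: the inverse equivalence $F' : H|[F(G^0)]_H \to G$ defined by $(h : y \to y') \mapsto (F|G(x_y,x_{y'}))^{-1}(h_{y'}^{-1} \cdot h \cdot h_y)$ is Borel because $y \mapsto (x_y, h_y)$ is Borel, inversion and multiplication in the standard Borel groupoid $H$ are Borel, and $(F|G(x,x'))^{-1}$ is Borel since it is the inverse of a Borel bijection between standard Borel spaces; the witnessing natural isomorphisms $F \circ F' \cong 1$ and $F' \circ F \cong 1$ are given by the Borel maps $y \mapsto h_y$ and $x \mapsto (F|G(x_{F(x)},x))^{-1}(h_{F(x)})$ respectively. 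The main obstacle is thus entirely in the middle step: identifying the correct ``largeness'' property of the fibers of $\pi$ and matching it to an available Borel uniformization theorem, using openness of $G$ to rule out the pathological (e.g. meager-fibered) case.
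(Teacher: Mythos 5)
Your overall route is the paper's: form $P := H \times_{H^0} G^0$ with the projection $\pi := \tau \circ (H \times_{H^0} F)$, observe that fullness lets the $\tau$-fibers of $G$ surject onto the fibers of $\pi$ via $g \mapsto (h \cdot F(g), \sigma(g))$, and conclude by a large-section uniformization; your final paragraph deducing the Borel equivalence when $F$ is also faithful is exactly \cref{thm:functor-borel-equiv} and is fine. But the step you defer --- ``identifying the correct largeness property of the fibers and matching it to an available theorem'' --- is the entire technical content of the proposition, and the candidate notions you float would not work as stated: the fiber $P_y$ carries no canonical topology in which ``non-meager in itself'' makes sense (the transport map from $\tau^{-1}(x)$ onto $P_y$ is surjective by fullness but not injective unless $F$ is faithful), and the Pettis-type machinery of \cref{sec:pettis} plays no role here.

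What closes the gap is the following. For $y$ in the essential image, declare $R \subseteq P_y$ \emph{small} if for \emph{some} $(h, x) \in P_y$ the pullback $\{g : z \to x \in G \mid (h \cdot F(g), z) \in R\}$ is meager in $\tau^{-1}(x)$; fullness shows, via the homeomorphism $k \cdot (-) : \tau^{-1}(x_1) \to \tau^{-1}(x_2)$ for any $k$ with $F(k) = h_2^{-1} \cdot h_1$, that ``for some'' may be replaced by ``for every'', so this is a well-defined $\sigma$-ideal $I_y$ independent of the witness. The hypotheses of the large-section theorem \cite[18.6*]{Kcdst'} are then verified as follows: (a) $P_y \notin I_y$, because the pullback of $P_y$ is all of $\tau^{-1}(x)$, which is a nonempty quasi-Polish hence Baire space; and (b) the ``Borel-on-Borel'' condition, i.e., for Borel $R \subseteq P$ the set of $y$ with $R_y \in I_y$ is $\Delta^1_1$ on the essential image --- here the ``for all'' and ``there exists'' forms of $I_y$ give the $\Pi^1_1$ and $\Sigma^1_1$ bounds respectively, and the inner meagerness condition is Borel by the Baire category quantifier for continuous open maps, which is where openness of $G$ actually enters. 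With $I_y$ so specified, your outline becomes the paper's proof.
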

\begin{proof}
We will apply the large section uniformization theorem in the form \cite[18.6*]{Kcdst'} to the inverse graph of $\tau \circ (H \times_{H^0} F)$:
\begin{align*}
P &:= H^0 \times_{H^0} (H \times_{H^0} G^0) \\
&= \{(y, (h, x)) \in H^0 \times (H \times_{H^0} G^0) \mid y = (\tau \circ (H \times_{H^0} F))(h, x) = \tau(h)\}.
\end{align*}

Assign to each $y \in [F(G^0)]_H$ the $\sigma$-ideal
\begin{align*}
I_y := \left\{R \subseteq P_y \relmiddle|
\forall (h, x) \in P_y\;
(\{g : z -> x \in G \mid (h \cdot F(g), z) \in R\} \text{ is meager in } \tau^{-1}(x))
\right\}
\end{align*}
(where $P_y := \{(h, x) \mid (y, (h, x)) \in P\}$ is the fiber of $P$).  We claim that also
\begin{align*}
I_y = \left\{R \subseteq P_y \relmiddle|
\exists (h, x) \in P_y\;
(\{g : z -> x \in G \mid (h \cdot F(g), z) \in R\} \text{ is meager in } \tau^{-1}(x))
\right\}.
\end{align*}
To see this, let $x_1, x_2 \in G^0$ and $h_i : F(x_i) -> y$ ($i = 1, 2$).  Then $h_2^{-1} \cdot h_1 : F(x_1) -> F(x_2)$, so since $F$ is full, there is $k : x_1 -> x_2 \in G$ such that $F(k) = h_2^{-1} \cdot h_1$.  Then we have a homeomorphism $k \cdot (-) : \tau^{-1}(x_1) -> \tau^{-1}(x_2)$ which exchanges the two sets $\{g : z -> x_i \in G \mid (h_i \cdot F(g), z) \in R\}$ ($i = 1, 2$), so one is meager iff the other is.  This proves the claim.

Now let $R \subseteq P$ be Borel.  The set
\begin{align*}
J_R := \left\{(y, (h, x)) \in P \relmiddle| \{g : z -> x \in G \mid (h \cdot F(g), z) \in R_y\} \text{ is meager in } \tau^{-1}(x)\right\}
\end{align*}
is Borel, since its complement is the Baire category quantifier $\exists^*$ (see \cite[\S7]{Cqpol}) for the continuous open map $H^0 \times_{H^0} (H \times_{H^0} \tau) : H^0 \times_{H^0} (H \times_{H^0} G) -> H^0 \times_{H^0} (H \times_{H^0} G^0)$ applied to the Borel set
\begin{align*}
\{(y, (h, g)) \mid (h \cdot F(g), \sigma(g)) \in R_y\}
&\subseteq H^0 \times_{H^0} (H \times_{H^0} G) \\
&= \{(y, (h, g)) \in H^0 \times (H \times G) \mid y = \tau(h) \AND \sigma(h) = F(\tau(g))\}.
\end{align*}
It follows that for $y \in [F(G^0)]_H$, the condition
\begin{align*}
R_y \in I_y
\iff \forall (h, x) \in P_y\, ((y, (h, x)) \in J_R)
\iff \exists (h, x) \in P_y\, ((y, (h, x)) \in J_R)
\end{align*}
is $\*\Delta^1_1([F(G^0)]_H)$.  Also, each $P_y \not\in I_y$, since $J_P = \emptyset$.  Thus by \cite[18.6*]{Kcdst'}, $P$ has a Borel uniformization, which is the graph of the desired Borel section.
\end{proof}

\begin{corollary}
\label{thm:functor-borel-equiv}
Let $G$ be an open quasi-Polish groupoid, $H$ be a standard Borel groupoid, and $F : G -> H$ be a full and faithful Borel functor.  Then $F$ is a Borel equivalence $G -> H|[F(G^0)]_H$.
\end{corollary}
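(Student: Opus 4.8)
The plan is to combine \cref{thm:functor-borel-full} with the explicit construction of an inverse equivalence given in the proof of \cref{thm:functor-equiv}, checking at each step that the choices made can be made in a Borel fashion. First, by \cref{thm:functor-borel-full}, the essential image $[F(G^0)]_H$ is a Borel subset of $H^0$, so that $H|[F(G^0)]_H$ is a standard Borel groupoid; moreover we obtain a Borel section assigning to each $y \in [F(G^0)]_H$ an object $x_y \in G^0$ together with a morphism $h_y : F(x_y) -> y$ in $H$. This is precisely the data that was produced noncanonically (via the axiom of choice) in the proof of \cref{thm:functor-equiv}, except that now $h_y$ need not be an isomorphism a priori — but since $H$ is a groupoid, every morphism is an isomorphism, so $h_y \in \core(H) = H$ automatically.

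Next, I would define the inverse functor $F' : H|[F(G^0)]_H -> G$ by the same formula as in the proof of \cref{thm:functor-equiv}: on objects $F'(y) := x_y$, and on a morphism $h : y -> y'$,
\begin{align*}
F'(h) := (F|G(x_y, x_{y'}))^{-1}(h_{y'}^{-1} \cdot h \cdot h_y) : x_y -> x_{y'}.
\end{align*}
This is well-defined because $F$ is full and faithful, so each $F|G(x_y,x_{y'})$ is a bijection onto $H(F(x_y), F(x_{y'}))$, and $h_{y'}^{-1} \cdot h \cdot h_y$ indeed lies in the latter hom-set. For Borelness: the map $y \mapsto x_y$ is Borel by the section from \cref{thm:functor-borel-full}; the map $h \mapsto h_{y'}^{-1} \cdot h \cdot h_y$ (where $y = \tau(h)$, wait — one must be careful with source/target conventions, but using $y = \sigma(h)$, $y' = \tau(h)$ and the continuity/Borelness of the groupoid operations $\sigma, \tau, \mu, \nu$) is Borel; and finally the assignment $h' \mapsto (F|G(x,x'))^{-1}(h')$ is Borel because its graph is $\{(h', g) : F(g) = h',\ \sigma(g) = x_{\sigma(h')},\ \tau(g) = x_{\tau(h')}\}$, which is Borel since $F$, $\sigma$, $\tau$ and the section are all Borel. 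Functoriality of $F'$ is the same routine computation as in \cref{thm:functor-equiv}, using faithfulness of $F$ to cancel $F$ from both sides.

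Finally I would exhibit the two natural isomorphisms, again copying the proof of \cref{thm:functor-equiv} and checking Borelness. The natural isomorphism $\alpha : F \circ F' \cong 1_{H|[F(G^0)]_H}$ is given by $\alpha(y) := h_y : F(F'(y)) = F(x_y) -> y$, which is Borel since $y \mapsto h_y$ is. The natural isomorphism $\beta : F' \circ F \cong 1_G$ is given by $\beta(x) := (F|G(x_{F(x)}, x))^{-1}(h_{F(x)}) : F'(F(x)) = x_{F(x)} -> x$, which is Borel by composing the Borel maps $x \mapsto F(x)$, $y \mapsto h_y$, $y \mapsto x_y$, and $h' \mapsto (F|G(-,-))^{-1}(h')$. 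The naturality squares hold by the same diagram chase as before. I do not expect any of these steps to present a genuine obstacle; the only real content is already in \cref{thm:functor-borel-full} (the large-section uniformization producing a \emph{Borel} choice of $(x_y, h_y)$), and everything here is bookkeeping to propagate that Borelness through the standard construction of an inverse equivalence. The one point requiring a little care is verifying that $(F|G(x,x'))^{-1}$ is Borel uniformly in $x, x'$, which as noted follows from Borel-ness of the graph together with the fact that, on the relevant domain, this graph is the graph of a (total, single-valued) function.
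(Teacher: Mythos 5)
Your proposal is correct and follows exactly the paper's route: the paper's proof of \cref{thm:functor-borel-equiv} consists precisely of rerunning the construction from \cref{thm:functor-equiv} with the Borel section $(h_{(-)}, x_{(-)})$ supplied by \cref{thm:functor-borel-full}, which is what you do. Your additional bookkeeping (Borelness of $(F|G(x,x'))^{-1}$ via its Borel graph, Borelness of the natural isomorphisms) is left implicit in the paper but is the intended verification.
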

\begin{proof}
The proof of \cref{thm:functor-equiv} goes through, using $h_{(-)}, x_{(-)}$ provided by \cref{thm:functor-borel-full}.
\end{proof}

\subsection{Actions}
\label{sec:action}

Let $G$ be a category and $p : A -> G^0$ be a set over $G^0$.  An \defn{action} of $G$ on $A$ (more precisely, $(A, p)$) is a map
\begin{align*}
\alpha : G \times_{G^0} A = \{(g, a) \in G \times A \mid \sigma(g) = p(a)\} -> A,
\end{align*}
usually denoted $g \cdot a := \alpha(g, a)$, satisfying the usual axioms ($p(g \cdot a) = \tau(g)$, $p(a) \cdot a = a$, and $(g \cdot h) \cdot a = g \cdot (h \cdot a)$).  Thus, for $g : x -> y \in G$, the action of $g$ yields a map $A_x -> A_y$.  For $G$ a topological category, the action is \defn{continuous} if $A$ is a topological space and $p, \alpha$ are continuous.  Similarly, for $G$ a standard Borel category, the action is \defn{Borel} if $A$ is a standard Borel space and $p, \alpha$ are Borel.

Let $G$ be a category acting on $p : A -> G^0$.  The \defn{action category} (also known as \defn{category of elements}) $G \ltimes A$ has, informally, objects consisting of elements $a \in A$ and morphisms $g : a -> b$ consisting of morphisms $g : p(a) -> p(b) \in G$ such that $g \cdot a = b$.  Formally, we put
\begin{align*}
(G \ltimes A)^0 &:= A, \\
(G \ltimes A)^1 &:= G \times_{G^0} A = \{(g, a) \in G \times A \mid \sigma(g) = p(a)\},
\end{align*}
with $(g, a) \in (G \ltimes A)^1$ thought of as the morphism $g : a -> g \cdot a$; hence we define
\begin{align*}
\sigma(g, a) := a, &&
\tau(g, a) := g \cdot a, &&
\iota(a) := (\iota(p(a)), a), &&
(h, g \cdot a) \cdot (g, a) := (h \cdot g, a).
\end{align*}
Under our convention that objects are identified with unit morphisms, $a \in A = (G \ltimes A)^0$ is identified with $\iota(a) = (p(a), a) \in (G \ltimes A)^1$, and so $\sigma, \tau$ become
\begin{align*}
\sigma(g, a) = (p(a), a) = (\sigma(g), a), &&
\tau(g, a) = (\tau(g), g \cdot a).
\end{align*}

When $G$ is a groupoid acting on $p : A -> G^0$, then $G \ltimes A$ is also a groupoid, with
\begin{align*}
(g, a)^{-1} = (g^{-1}, g \cdot a);
\end{align*}
we call it the \defn{action groupoid}.  The \defn{orbit equivalence relation} of the action
\begin{align*}
\#E_G^A := \#E_{G \ltimes A} \subseteq A \times A
\end{align*}
is that of the action groupoid, i.e.,
\begin{align*}
a \mathrel{\#E_G^A} b \iff \exists g \in G(p(a), p(b))\, (g \cdot a = b).
\end{align*}

When $G$ is a topological category (groupoid) acting continuously on $p : A -> G^0$, then $G \ltimes A$ is also a topological category (groupoid).
If $G$ is an open topological groupoid, then so is $G \ltimes A$ (since $\sigma : G \ltimes A -> A$ is the pullback of $\sigma : G -> G^0$ across $p$).
If $G$ is non-Archimedean, then so is $G \ltimes A$ (since for a basic open subgroupoid $U \subseteq G$ and open $S \subseteq A$, $U \times_{G^0} S \subseteq G \ltimes A$ is a basic open subgroupoid).
If $G, A$ are (quasi-)Polish, then so is $G \ltimes A$.

Similarly, for a standard Borel category (groupoid) $G$ with a Borel action on $p : A -> G^0$, $G \ltimes A$ is also a standard Borel category (groupoid).

\section{Discrete infinitary logic}
\label{sec:disclog}

In this section, we review some concepts and conventions regarding discrete infinitary first-order logic.

We will be working with multi-sorted structures.  A \defn{(multi-sorted) first-order language} $\@L$ consists of a set $\@L_s$ of \defn{sorts}, a set $\@L_r$ of \defn{relation symbols}, each with an associated \defn{arity} which is a finite tuple of sorts $(P_0, \dotsc, P_{n-1}) \in \@L_s^n$ for some $n$, and a set $\@L_f$ of \defn{function symbols}, each with an associated \defn{arity} $(P_0, \dotsc, P_{n-1}) \in \@L_s^n$ as well as a \defn{value sort} $Q \in \@L_s$.  We let
\begin{gather*}
\@L_r(P_0, \dotsc, P_{n-1}) \subseteq \@L_r, \\
\@L_f(P_0, \dotsc, P_{n-1}; Q) \subseteq \@L_f
\end{gather*}
denote respectively the $(P_0, \dotsc, P_{n-1})$-ary relation symbols and the $(P_0, \dotsc, P_{n-1})$-ary $Q$-valued function symbols.  In the single-sorted case ($\@L_s = 1$), we instead write $\@L_r(n), \@L_f(n)$ respectively.

Given a language $\@L$, an \defn{$\@L$-structure} $\@M$ consists of:
\begin{itemize}
\item  for each sort $P \in \@L_s$, an underlying set $P^\@M$ (when $\@L$ is one-sorted, we denote $P^\@M$ for the unique sort $P$ by $M$);
\item  for each $(P_0, \dotsc, P_{n-1})$-ary relation symbol $R \in \@L_r$, a subset $R^\@M \subseteq P_0^\@M \times \dotsb \times P_{n-1}^\@M$;
\item  for each $(P_0, \dotsc, P_{n-1})$-ary $Q$-valued function symbol $f \in \@L_f$, a function $f^\@M : P_0^\@M \times \dotsb \times P_{n-1}^\@M -> Q^\@M$.
\end{itemize}

Given a (single-sorted relational, for simplicity) $\@L$-structure $\@M$ and a bijection $f : M \cong N$ for some other set $N$, we have the \defn{pushforward structure}
\begin{align*}
f_*(\@M) := (N, f(R^\@M))_{R \in \@L_r}.
\end{align*}
In other words, $f_*(\@M)$ is the unique $\@L$-structure on $N$ such that $f : M \cong N$ is an $\@L$-isomorphism $f : \@M \cong f_*(\@M)$.

For a countable single-sorted relational language $\@L$, we have the \defn{Polish space of $\@L$-structures on $\#N$}
\begin{align*}
\Mod_\#N(\@L) &\cong \prod_{n \in \#N; R \in \@L_r(n)} 2^{\#N^n} \\
\@M = (\#N, R^\@M)_{R \in \@L_r} &|-> (R^\@M)_{R \in \@L_r}
\end{align*}
where we are identifying $2^{\#N^n}$ with the power set of $\#N^n$.  Pushforward of structures gives the \defn{logic action} $\#S_\infty \curvearrowright \Mod_\#N(\@L)$ of the infinite symmetric group $\#S_\infty$ on $\Mod_\#N(\@L)$, a continuous action of a Polish group.  The action groupoid
\begin{align*}
\#S_\infty \ltimes \Mod_\#N(\@L)
\end{align*}
is the \defn{Polish groupoid of $\@L$-structures on $\#N$}.  Its objects are $\@L$-structures on $\#N$, while its morphisms are isomorphisms between structures.  Since $\#S_\infty$ is a group, hence open as a groupoid, $\#S_\infty \ltimes \Mod_\#N(\@L)$ is open; since $\#S_\infty$ is non-Archimedean, so is $\#S_\infty \ltimes \Mod_\#N(\@L)$.

The logic $\@L_{\omega_1\omega}$ is the extension of the usual finitary first-order logic with countably infinite conjunction $\bigwedge$ and disjunction $\bigvee$; see \cite[\S11.2]{Gao}.  We will only need to deal with $\@L_{\omega_1\omega}$ (as opposed to structures) for single-sorted languages $\@L$, and even then, only the results that follow in this section.
For an $\@L$-structure $\@M$ and $\@L_{\omega_1\omega}$-sentence (i.e., formula with no free variables) $\phi$, we write as usual $\@M \models \phi$ if $\@M$ satisfies $\phi$.

%

For an $\@L_{\omega_1\omega}$-sentence $\phi$, an easy induction (see \cite[16.7]{Kcdst}) shows that
\begin{align*}
\Mod_\#N(\@L, \phi) := \{\@M \in \Mod_\#N(\@L) \mid \@M \models \phi\} \subseteq \Mod_\#N(\@L)
\end{align*}
is Borel; it is clearly also $\#S_\infty$-invariant (under the logic action).  The converse is the well-known theorem of Lopez-Escobar \cite{LE} (see also \cite[16.8]{Kcdst}, \cite[11.3.6]{Gao}):

\begin{theorem}[Lopez-Escobar]
\label{thm:lopez-escobar}
For any $\#S_\infty$-invariant Borel $B \subseteq \Mod_\#N(\@L)$, there is an $\@L_{\omega_1\omega}$-sentence $\phi$ such that $B = \Mod_\#N(\@L, \phi)$.
\end{theorem}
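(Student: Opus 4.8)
The plan is to prove this by induction on the Borel complexity of $B$, producing for each Borel $\#S_\infty$-invariant set $B \subseteq \Mod_\#N(\@L)$ an $\@L_{\omega_1\omega}$-sentence $\phi$ with $B = \Mod_\#N(\@L,\phi)$; more precisely, I would prove the finer statement that for every Borel set $B \subseteq \Mod_\#N(\@L)$ which is invariant under the stabilizer $(\#S_\infty)_{\-n}$ of a finite initial segment $\-n = \{0,\dotsc,n-1\}$ and every tuple $\-a = (a_0,\dotsc,a_{n-1})$ listing $\-n$, there is an $\@L_{\omega_1\omega}$-formula $\phi(x_0,\dotsc,x_{n-1})$ such that for all $\@M \in \Mod_\#N(\@L)$, $\@M \in B \iff \@M \models \phi(\-a)$. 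Taking $n = 0$ recovers the theorem. The point of carrying the extra parameters is that the $\#S_\infty$-orbit of the open set $\{\@M : R^\@M(\-a)\}$ determined by a relation symbol $R$ and tuple $\-a$ is exactly the basic clopen set that generates the topology, and these are manifestly definable by the atomic formula $R(x_0,\dotsc,x_{n-1})$, so the induction has a clean base case once the parameters are present.

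First I would set up the base case: the topology on $\Mod_\#N(\@L)$ is generated by the clopen sets $U_{R,\-a} := \{\@M : \-a \in R^\@M\}$ for $R \in \@L_r(k)$ and $\-a \in \#N^k$, and each such set is defined (in the parametrized sense) by the atomic formula $R(x_{i_0},\dotsc,x_{i_{k-1}})$ once all coordinates of $\-a$ are among the listed parameters; a $(\#S_\infty)_{\-n}$-invariant open set is a union of such basic sets, which after grouping into $(\#S_\infty)_{\-n}$-orbits becomes a countable union (by second countability / Lindelöf), each orbit being defined by a formula $\exists$-quantifying over the coordinates not among $\-n$ — so an open invariant set is defined by a countable disjunction of formulas of the form $\exists \bar y\,(\bigwedge \text{atomics})$. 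For the inductive step I would use the standard fact (the ``Baire-category'' or ``$\exists^*$'' closure argument) that the collection of sets of the form $\Mod_\#N(\@L,\phi)$, relativized to carry finite tuples of parameters as above, is closed under: countable intersections and countable unions (use $\bigwedge$, $\bigvee$), complementation (use $\lnot$), and — crucially — the operation that turns a set invariant under $(\#S_\infty)_{\-{n+1}}$ into its $(\#S_\infty)_{\-n}$-saturation, which corresponds to quantifying $\exists x_n$ (or $\forall x_n$) over the last parameter. Since every Borel set arises from clopen sets by iterating countable unions, countable intersections, and complements (transfinitely up to $\omega_1$), and since at each stage a $(\#S_\infty)_{\-n}$-invariant Borel set can be written as an increasing union / countable combination of sets invariant under larger stabilizers, a transfinite induction on Borel rank pushes the representation through all countable levels, yielding an $\@L_{\omega_1\omega}$-formula (the countable infinitary conjunctions/disjunctions absorb the transfinite iteration, since each individual Borel set has a countable Borel code).

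The main obstacle — and the only genuinely nontrivial point — is verifying that the parametrized family is closed under the saturation/quantifier step, i.e. that if $B$ is $(\#S_\infty)_{\-{n+1}}$-invariant and defined by $\phi(x_0,\dotsc,x_n)$, then its $(\#S_\infty)_{\-n}$-saturation $[B]$ is defined by $\exists x_n\,\phi(x_0,\dotsc,x_n)$ (and dually for the ``kernel'' and $\forall$). One direction is immediate; for the other I need: if $\@M \models \exists x_n\,\phi(\-a)$, witnessed by some $b \in \#N$, then there is $g \in (\#S_\infty)_{\-n}$ with $g \cdot \@M \in B$ — which follows because I can choose $g$ fixing $\-n$ and sending $b$ to $n$, then $g\cdot\@M \models \phi(a_0,\dotsc,a_{n-1},n)$ by the substitution lemma, hence lies in $B$ by $(\#S_\infty)_{\-{n+1}}$-invariance of $B$ together with the already-established correctness of $\phi$ as a definition; care is needed because $b$ might already be one of $0,\dotsc,n-1$, in which case $\@M$ itself is in the relevant saturated class and one argues directly. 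I would also need the routine but slightly fiddly verification that the Borel hierarchy's closure ordinals interact correctly with the ``passing to a larger stabilizer'' decompositions — concretely, that every $(\#S_\infty)_{\-n}$-invariant $\*\Sigma^0_\xi$ set is a countable union of $(\#S_\infty)_{\-m}$-invariant $\*\Pi^0_{<\xi}$ sets for finite $m \geq n$ — which is where second countability of $\Mod_\#N(\@L)$ and the fact that $\#S_\infty$ acts with a countable basis of stabilizer-orbits on basic opens gets used. Everything else is bookkeeping with infinitary connectives.
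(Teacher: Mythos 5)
The paper does not actually prove this theorem: it is quoted from the literature, with the proof deferred to \cite{LE}, \cite[16.8]{Kcdst}, \cite[11.3.6]{Gao}. So your proposal should be measured against the standard proof in those references, which inducts on Borel rank while carrying the Vaught transforms $B^{*u}$, $B^{\Delta u}$ over basic open subsets $u \subseteq \#S_\infty$ through every stage. Your architecture --- induct on Borel complexity, carry finite tuples of parameters, identify saturation under a smaller pointwise stabilizer with an existential quantifier --- is the same architecture, except that you replace ``carry the transforms everywhere'' by a single decomposition lemma. Your base case, the closure of the parametrized definable family under $\bigvee$, $\bigwedge$, $\lnot$, and the saturation-equals-$\exists x_n$ step (including the collision issue for the witness) are all correct as described.

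The gap is the decomposition lemma itself, which you call ``routine but slightly fiddly'' and attribute to second countability: that every $\*\Sigma^0_\xi$ set invariant under the pointwise stabilizer $V_n$ of $\{0, \dotsc, n-1\}$ is a countable union of $\*\Pi^0_{<\xi}$ sets each invariant under some finite pointwise stabilizer. This is the crux of the whole theorem and it does not follow from second countability. If $B = \bigcup_k C_k$ with $C_k \in \*\Pi^0_{<\xi}$, the individual $C_k$ carry no invariance; their saturations under the (uncountable) open subgroup $V_m$ are uncountable unions of translates, hence not of bounded Borel rank, while the largest invariant subsets $\bigcap_{g \in V_m} g \cdot C_k$ are uncountable intersections and need not exhaust $B$. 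The correct fix is exactly the Vaught transform: for $x \in B$ the Borel sets $\{g \in V_n \mid g \cdot x \in C_k\}$ cover the Polish group $V_n$, so by Baire category one of them is comeager in some basic open coset $u = hV_m$, i.e., $x$ lies in $C_k^{*u} := \{y \mid \forall^* g \in u\ (g \cdot y \in C_k)\}$; one then needs that $C_k^{*u}$ is again $\*\Pi^0_{<\xi}$ (preservation of Borel pointclasses under category quantifiers --- this is a real theorem, not bookkeeping) and that it is $V_m$-invariant and contained in $B$. With that argument supplied, your induction closes up and reproduces the standard proof; without it, the key step is unproved, and the justification you indicate would not produce it.
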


For an $\@L_{\omega_1\omega}$-sentence $\phi$, the action groupoid
\begin{align*}
\#S_\infty \ltimes \Mod_\#N(\@L, \phi)
\end{align*}
is the \defn{standard Borel groupoid of models of $\phi$ on $\#N$}.

\begin{remark}
\label{rmk:isogpd-nonarchopen}
We may refine the topology on $\Mod_\#N(\@L)$ to make $\Mod_\#N(\@L, \phi)$ into a $\*\Pi^0_2$ subset, hence $\#S_\infty \ltimes \Mod_\#N(\@L, \phi)$ into an open non-Archimedean Polish groupoid, by using the Becker--Kechris theorem \cite[5.2.1]{BK}, or more simply by taking the topology generated by a countable fragment of $\@L_{\omega_1\omega}$ containing $\phi$ (see e.g., \cite[\S11.4]{Gao}).
\end{remark}


\section{Étale spaces and structures}
\label{sec:etale}

In this section, we define and study étale spaces, étale structures, and their topological categories of homomorphisms.\footnote{%
An \emph{étale space} over $X$ corresponds to a sheaf on $X$.
An \emph{étale $\@L$-structure} over $X$ is an $\@L$-structure in the topos $\Sh(X)$ of sheaves on $X$ (see \cite[D1.2]{Jeleph}).
The \emph{isomorphism groupoid} (resp., \emph{homomorphism category}) of an étale $\@L$-structure $\@M$ over $X$ is given by the pullback (resp., comma object) of the corresponding geometric morphism $X -> \!{Set}[\@L]$ to the classifying topos of $\@L$-structures (see \cite[D3]{Jeleph}) with itself.
}
The contents of this section will form half of the proof of \cref{thm:intro-nonarchgpd-rep}, which will be completed in \cref{sec:nonarchgpd}.

Informally, an \emph{étale space} $A$ over a topological space $X$ is a ``continuous'' assignment of a set (or discrete space) $A_x$ to each $x \in X$.  For a first-order language $\@L$, an \emph{étale $\@L$-structure} $\@M$ over $X$ is a ``continuous'' assignment of an $\@L$-structure $\@M_x$ to each $x \in X$, and consists of an underlying étale space $M$ over $X$ (more generally, one for each sort of $\@L$) together with ``continuous'' interpretations of the function and relation symbols in $\@L$.  For an étale $\@L$-structure $\@M$ over $X$, its \emph{homomorphism category} $\Hom_X(\@M)$ is a canonical topological category whose objects are points $x \in X$ and whose morphisms $x -> y$ are $\@L$-homomorphisms $\@M_x -> \@M_y$; its \emph{isomorphism groupoid} $\Iso_X(\@M)$ is defined similarly but with $\@L$-isomorphisms.  These are analogous to the endomorphism monoid $\End(\@M)$ and automorphism group $\Aut(\@M)$ of a single $\@L$-structure $\@M$.

The main results of this section are: first, that the homomorphism category and isomorphism groupoid of a quasi-Polish étale $\@L$-structure are quasi-Polish (\cref{thm:etalestr-isogpd-qpol}); and second, that the isomorphism groupoid admits a full and faithful Borel functor to a groupoid of structures on $\#N$ (\cref{thm:cbstr-isogpd-ff}).

\subsection{Étale spaces}
\label{sec:etalesp}

Let $X$ be a topological space.  An \defn{étale space over $X$} is a topological space $p : A -> X$ over $X$ such that $A$ admits a cover by \defn{open sections}, which are open sets $S \subseteq A$ to which $p$ restricts to an open embedding $p|S : S -> X$ (so that $s := (p|S)^{-1} : p(S) `-> A$ is a continuous partial section of $p$ with image $S$ and open domain $p(S) \subseteq X$).  Intuitively, the open sections uniformly witness that $A$ is fiberwise discrete, so that $A$ forms a ``continuous'' assignment of a set $A_x$ to each $x \in X$.

The following facts about étale spaces are standard:

\begin{proposition}
\label{thm:etale-props}
\leavevmode
\begin{enumerate}[label=(\alph*)]
\item  If $p : A -> X$ is étale, then $p$ is open.
\item  If $p : A -> X$ and $q : B -> A$ are étale, then so is $p \circ q : B -> X$.
\item  If $p : A -> X$ is étale, $q : B -> A$ is continuous, and $p \circ q : B -> A$ is étale, then $q$ is étale (hence in particular open).
\item  If $p : A -> X$ is étale and $f : Y -> X$ is continuous, then $Y \times_X p : Y \times_X A -> Y$ is étale.  Hence, fiber products of étale spaces over $X$ are étale over $X$.
\end{enumerate}
\end{proposition}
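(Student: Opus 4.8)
The plan is to verify the four items in turn, each reducing to straightforward manipulations with open sections; none of them should present a genuine obstacle, since these are the standard bookkeeping facts about local homeomorphisms.

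For (a): an étale map $p : A \to X$ is open because $A$ is covered by open sections $S$, and $p|S : S \to X$ is by definition an open embedding, hence an open map onto the open set $p(S)$; since $p$ restricted to each member of an open cover of $A$ is open, $p$ itself is open. (More explicitly: for open $W \subseteq A$, write $W = \bigcup_i (W \cap S_i)$ for the covering sections $S_i$; then $p(W) = \bigcup_i p(W \cap S_i)$, and each $p(W \cap S_i)$ is open in $X$ because $p|S_i$ is an open embedding and $W \cap S_i$ is open in $S_i$.)

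For (b) and (c): these are the usual composition/cancellation facts for local homeomorphisms, and I would prove them by manipulating covering open sections. For (b), given an open section $T \subseteq B$ of $q$ and an open section $S \subseteq A$ of $p$ with $q(T) \subseteq S$ (such a $T$ can be shrunk to arrange this, since the $p$-sections cover $A$ and $q$ is continuous), the set $T$ is an open section of $p \circ q$: indeed $(p \circ q)|T = (p|S) \circ (q|T)$ is a composite of open embeddings, hence an open embedding; and such $T$ cover $B$. For (c), fix $b \in B$, put $a := q(b)$ and $x := p(a)$. Choose an open section $S \subseteq A$ of $p$ with $a \in S$, and an open section $U \subseteq B$ of $p \circ q$ with $b \in U$; shrinking $U$ we may assume $(p\circ q)(U) \subseteq p(S)$, so that $q(U) \subseteq (p|S)^{-1}((p\circ q)(U)) = $ the part of $S$ lying over $(p \circ q)(U)$. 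On $U$ we then have $q|U = (p|S)^{-1} \circ (p \circ q)|U$ composed appropriately — i.e. $q|U$ equals the inclusion $S \hookrightarrow A$ precomposed with $(p|S)^{-1}\circ (p\circ q)|U$, which is an open embedding onto an open subset of $S$, hence onto an open subset of $A$. Thus $U$ is an open section of $q$ containing $b$, and such $U$ cover $B$, so $q$ is étale; openness then follows from (a).

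For (d): if $S \subseteq A$ is an open section of $p$ with $s = (p|S)^{-1} : p(S) \hookrightarrow A$ its associated partial section, then $Y \times_X S := \{(y,a) \in Y \times_X A \mid a \in S\}$ is open in $Y \times_X A$ (it is the restriction of the open set $Y\times S$), and the map $Y \times_X p$ restricted to it is the embedding $(y,a) \mapsto y$ with open image $f^{-1}(p(S))$ — its inverse is $y \mapsto (y, s(f(y)))$, continuous on the open set $f^{-1}(p(S)) \subseteq Y$. As the original sections $S$ cover $A$, the sets $Y \times_X S$ cover $Y \times_X A$, so $Y \times_X p$ is étale. The final sentence of (d) follows by taking $Y = B$ with $f = q$ the projection of a second étale space $q : B \to X$: then $B \times_X A \to B$ is étale by what we just showed, and composing with the étale map $B \to X$ and using (b) gives that $B \times_X A \to X$ is étale; iterating handles finite fiber products. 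The main thing to be careful about throughout is the routine shrinking of sections to make domains and images match up, but there is no real difficulty here.
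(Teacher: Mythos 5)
Your proof is correct in approach and, unlike the paper, actually supplies an argument: the paper disposes of \cref{thm:etale-props} with a one-line citation to Tennison and to \cite{Cscc}, so there is nothing to compare against except the standard section-juggling proof, which is exactly what you give. Parts (a), (b) and (d) are fine as written, including the reduction of the last sentence of (d) to the pullback statement plus (b).

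There is one small but real slip in (c). From ``shrinking $U$ we may assume $(p\circ q)(U) \subseteq p(S)$'' you conclude $q(U) \subseteq (p|S)^{-1}((p\circ q)(U)) \subseteq S$, but this does not follow: knowing $p(q(U)) \subseteq p(S)$ only locates $q(U)$ over the right part of $X$, not inside $S$ itself ($q(U)$ could sit in a different sheet of the same fibers --- e.g.\ $A = X \times \{0,1\}$, $S = X \times \{0\}$, $q(x) = (x,1)$). The fix is the further shrinking you allude to at the end: replace $U$ by $U \cap q^{-1}(S)$, which is open since $q$ is continuous, still contains $b$ because $q(b) = a \in S$, and is still a section of $p \circ q$. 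After that, $q|U = (p|S)^{-1} \circ (p\circ q)|U$ holds literally and the rest of your argument goes through. With that one correction the proof is complete.
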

\begin{proof}
See e.g., \cite[\S2.3]{Ten}, \cite[2.1 (arXiv version)]{Cscc}.
\end{proof}

\begin{proposition}
\label{thm:etale-diagopen}
An open topological space $p : A -> X$ over $X$ is étale iff the diagonal $\Delta_A \subseteq A \times_X A$ is open.
\end{proposition}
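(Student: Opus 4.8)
The plan is to prove both implications directly from the definition of étale, using the open sections as the main tool.

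For the forward direction, suppose $p : A \to X$ is étale. I would pick a point $(a,b) \in \Delta_A$, so $a = b$ and $p(a) = p(b)$. Choose an open section $S \subseteq A$ containing $a$; since $p|S : S \to p(S)$ is a homeomorphism onto an open subset of $X$, the set $S \times_X S \subseteq A \times_X A$ is open and contains $(a,a)$. The key point is that $S \times_X S \subseteq \Delta_A$: if $(a', b') \in S \times_X S$ with $p(a') = p(b')$, then since $p|S$ is injective we get $a' = b'$. Since such basic open sets cover $\Delta_A$, the diagonal is open.

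For the converse, suppose $p : A \to X$ is open and $\Delta_A \subseteq A \times_X A$ is open. I want to produce, around an arbitrary $a \in A$, an open section containing $a$. Since $\Delta_A$ is open and $(a,a) \in \Delta_A$, there is a basic open $U \times_X V \subseteq \Delta_A$ with $a \in U \cap V$; replacing $U$ by $U \cap V$, I may assume $U = V$, i.e.\ $U \times_X U \subseteq \Delta_A$, which says exactly that $p|U$ is injective. Since $p$ is open, $p|U : U \to p(U)$ is an open continuous bijection onto the open set $p(U)$, hence a homeomorphism; so $U$ is an open section containing $a$. As $a$ was arbitrary, these open sections cover $A$, so $p$ is étale.

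I do not anticipate a serious obstacle here — the argument is essentially a chase through the definitions. The one point requiring a small amount of care is the reduction $U \times_X V \subseteq \Delta_A \rightsquigarrow (U\cap V)\times_X(U\cap V)\subseteq\Delta_A$ in the converse, and the observation that "$U \times_X U \subseteq \Delta_A$" is literally the statement that $p$ restricted to $U$ is injective on each fiber, hence injective; combined with openness of $p$ this upgrades to the open-embedding condition. Everything else is routine, and one should also note that the basic open sets $S \times_X S$ used in the forward direction genuinely form an open cover of $\Delta_A$ because the open sections cover $A$.
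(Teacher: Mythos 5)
Your proof is correct and follows essentially the same route as the paper: both directions hinge on the observation that for open $S \subseteq A$, the condition $S \times_X S \subseteq \Delta_A$ is equivalent to $p|S$ being injective, which together with openness of $p$ upgrades to $S$ being an open section. The only difference is that you spell out the reduction $U \times_X V \rightsquigarrow (U \cap V) \times_X (U \cap V)$ and the injective-plus-open-implies-open-embedding step, which the paper leaves implicit.
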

\begin{proof}
For open $S \subseteq A$, $p|S$ is injective, i.e., $S$ is an open section, iff $S \times_X S \subseteq \Delta_A$.  If $A$ is étale over $X$, then $\Delta_A \subseteq A \times_X A$ is a union of $S \times_X S$ for open sections $S \subseteq A$, hence open.  Conversely, if $\Delta_A \subseteq A \times_X A$ is open, then for any $a \in A$, we have $(a, a) \in S \times_X S \subseteq \Delta_A$ for some basic open square $S \times_X S \subseteq \Delta_A$ with $S \subseteq A$ an open section containing $a$.
\end{proof}

\begin{lemma}
\label{thm:relopen}
Let $X, Y$ be two topological spaces and $R \subseteq X \times Y$ be a binary relation.  The second projection $q : R -> Y$ is open iff for every open $U \subseteq X$, the \defn{relational image}
\begin{align*}
R[U] := \{y \in Y \mid \exists x \in X\, ((x, y) \in R)\} \subseteq Y
\end{align*}
is open.
\end{lemma}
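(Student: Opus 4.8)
The plan is to work directly from the definition of the subspace topology on $R \subseteq X \times Y$ and trace basic open sets through the second projection $q : R \to Y$, $(x,y) \mapsto y$. Recall that a basis for the topology on $R$ consists of the sets $(U \times V) \cap R$ with $U \subseteq X$ and $V \subseteq Y$ open. The computation at the heart of the proof, valid for every such basic open set, is
\begin{align*}
q((U \times V) \cap R) = \{y \in V \mid \exists x \in U\, ((x,y) \in R)\} = V \cap R[U],
\end{align*}
where the second equality holds because the constraint ``$y \in V$'' involves only $y$ and so can be factored out of the existential quantifier over $x$.

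For the forward direction I would just specialize $V = Y$: if $q$ is open, then for each open $U \subseteq X$ the set $(U \times Y) \cap R$ is open in $R$, so $q((U \times Y) \cap R) = R[U]$ is open in $Y$. For the reverse direction, assume every $R[U]$ is open. Since openness of $q$ can be checked on a basis and $q$ commutes with unions, it suffices to note that each basic image $q((U \times V) \cap R) = V \cap R[U]$ is open by hypothesis; then the image under $q$ of an arbitrary open subset of $R$, being a union of such basic images, is open.

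There is essentially no obstacle here; the only point demanding a little care is the bookkeeping around the subspace topology on $R$ together with the fact that $q$ need not be injective, so one must argue with the set-image description rather than pretend $q$ is an embedding — and the displayed identity $q((U \times V) \cap R) = V \cap R[U]$ takes care of exactly that.
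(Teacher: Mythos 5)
Your proof is correct and follows the same route as the paper: both compute $q(R \cap (U \times V)) = R[U] \cap V$ on basic open rectangles, obtain the forward direction by taking $V = Y$, and the reverse direction by checking openness on the basis. No further comment is needed.
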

If the above conditions hold, we say $R$ is \defn{relationally open}.
\begin{proof}
For a basic open rectangle $U \times V \subseteq X \times Y$, with $U \subseteq X$ and $V \subseteq Y$ open, we have $q(R \cap (U \times V)) = q(R \cap (U \times Y)) \cap V = R[U] \cap V$, which gives $\Longleftarrow$; taking $V := Y$ gives $\Longrightarrow$.
\end{proof}

When $R$ in \cref{thm:relopen} is an equivalence relation on $X$, we have $R[U] = [U]_R = \pi^{-1}(\pi(U))$ where $\pi : X ->> X/R$ is the quotient map; thus in this case, $R$ is relationally open iff the quotient map is open.  In the étale context, this gives

\begin{proposition}
\label{thm:etale-quotient}
Let $p : A -> X$ be an open topological space over $X$, and let $E \subseteq A \times_X A$ be an open (in $A \times_X A$) equivalence relation.  Then $p$ descends to $p' : A/E -> X$ which is étale over $X$, and the quotient map $\pi : A ->> A/E$ is open.
\end{proposition}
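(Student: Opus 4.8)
The plan is to descend $p$ along $\pi$, show $\pi$ is open, and then derive étaleness of the quotient from the diagonal criterion \cref{thm:etale-diagopen}. Since $E \subseteq A \times_X A$, any two $E$-equivalent points of $A$ lie in the same $p$-fiber, so $p$ is constant on $E$-classes and factors uniquely as $p = p' \circ \pi$ through a map $p' : A/E \to X$; as $\pi$ is a quotient map, $p'$ is continuous.

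To see that $\pi$ is open: by the discussion following \cref{thm:relopen}, since $E$ is an equivalence relation it suffices to check that $E$ is relationally open, i.e.\ that the second projection $E \to A$ is open. But the second projection $A \times_X A \to A$ is a pullback of the open map $p$, hence open, and restricting an open map to the open subspace $E \subseteq A \times_X A$ keeps it open. Thus $\pi$ is open, and consequently so is $p'$: for open $V \subseteq A/E$ we get $p'(V) = p(\pi^{-1}(V))$, which is open.

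Being open, $p'$ is étale iff $\Delta_{A/E} \subseteq (A/E) \times_X (A/E)$ is open, by \cref{thm:etale-diagopen}. To show this I would use the comparison map $\rho := \pi \times_X \pi : A \times_X A \to (A/E) \times_X (A/E)$, which is the restriction of the open map $\pi \times \pi : A \times A \to (A/E) \times (A/E)$ to the subspace $A \times_X A = (\pi \times \pi)^{-1}\big((A/E) \times_X (A/E)\big)$, hence again open, and which is clearly surjective. Since $\rho^{-1}(\Delta_{A/E}) = E$ is open in $A \times_X A$ by hypothesis, $\Delta_{A/E} = \rho(E)$ is open, as needed.

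The step most deserving of care is the last one: one should verify that $\rho$ really is open and surjective after restricting $\pi \times \pi$ to the fiber products, and that $\rho^{-1}(\Delta_{A/E})$ equals $E$ exactly (and not something larger). Both facts rest on the standing hypothesis $E \subseteq A \times_X A$, which ensures that $\rho$ only identifies points already sharing a $p$-fiber; the remaining ingredients are just the stability of openness under pullback and under restriction to open subspaces.
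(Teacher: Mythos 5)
Your proof is correct and follows essentially the same route as the paper: openness of $\pi$ via relational openness of $E$ (using that the second projection $A \times_X A \to A$ is open since $p$ is), openness of $p'$ via $p'(V) = p(\pi^{-1}(V))$, and étaleness via \cref{thm:etale-diagopen} by exhibiting $\Delta_{A/E}$ as the image of the open set $E$ under the open map $\pi \times_X \pi$. The only difference is that you spell out the last step (openness of $\pi \times_X \pi$ on the fiber product and the identification $\rho(E) = \Delta_{A/E}$) in more detail than the paper does.
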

\begin{proof}
Since $p$ is open, the second projection $q : A \times_X A -> A$ is open, whence $q|E : E -> A$ is open, i.e., $E \subseteq A \times A$ is relationally open.  Thus $\pi$ is open.  We have $p'(S) = p(\pi^{-1}(S))$, whence $p'$ is open.  The diagonal $\Delta_{A/E} \subseteq A/E \times_X A/E$ is the projection of $E \subseteq A \times_X A$, hence open; so $p'$ is étale.
%
%
\end{proof}

In the situation of \cref{thm:etale-quotient}, we call open $S \subseteq A$ \defn{$E$-small over $X$} if $S \times_X S \subseteq E$.  Open sections $\pi(S) \subseteq A/E$ (with $S$ $E$-invariant open) are in 1--1 correspondence with their $E$-invariant open $E$-small lifts $S \subseteq A$.

We will primarily be interested in étale spaces over quasi-Polish spaces, which are furthermore ``uniformly fiberwise countable'' in the following equivalent senses:

\begin{proposition}
\label{thm:etale-ctble}
Let $p : A -> X$ be an étale space over a second-countable space $X$.  The following are equivalent:
\begin{enumerate}
\item[(i)]  $A$ is second-countable;
\item[(ii)]  $A$ has a countable cover by open sections;
\item[(iii)]  $A$ has a countable basis of open sections.
\end{enumerate}
If in addition $X$ is quasi-Polish, these are equivalent to
\begin{enumerate}
\item[(iv)]  $A$ is quasi-Polish.
\end{enumerate}
\end{proposition}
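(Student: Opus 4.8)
The plan is to prove the chain of implications $(iii) \Rightarrow (i) \Rightarrow (ii) \Rightarrow (iii)$ among the first three conditions, and then, under the extra hypothesis that $X$ is quasi-Polish, close the loop with $(i) \Leftrightarrow (iv)$.

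\medskip

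\emph{The discrete implications.} The implication $(iii) \Rightarrow (i)$ is immediate: a countable basis of open sections is in particular a countable basis of opens for $A$. For $(i) \Rightarrow (ii)$: since $A$ is étale, it is covered by \emph{some} family of open sections, and each open section is itself an open subspace of the second-countable space $A$, hence Lindelöf; but more directly, the collection of all open sections forms an open cover of $A$, and by second-countability (hence Lindelöfness) of $A$ this cover has a countable subcover — that is exactly $(ii)$. For $(ii) \Rightarrow (iii)$: let $(S_n)_{n \in \#N}$ be a countable cover by open sections and $(U_m)_{m \in \#N}$ a countable basis of opens for $X$ (using second-countability of $X$); I claim the sets $S_n \cap p^{-1}(U_m)$ form a countable basis of open sections for $A$. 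Each such set is an open section (an open subset of an open section, restricted over an open subset of $X$). Given any open $W \subseteq A$ and point $a \in W$, choose $n$ with $a \in S_n$; then $p|S_n$ is an open embedding, so $p(S_n \cap W)$ is open in $X$ and contains $p(a)$, hence contains some basic $U_m \ni p(a)$ with $U_m \subseteq p(S_n \cap W)$, and then $a \in S_n \cap p^{-1}(U_m) \subseteq S_n \cap W \subseteq W$, since on $S_n$ the section map inverts $p$. This gives $(iii)$.

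\medskip

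\emph{The quasi-Polish equivalence.} Assume now $X$ is quasi-Polish. For $(iv) \Rightarrow (i)$ there is nothing to do, as quasi-Polish spaces are second-countable (they embed in $\#S^\#N$). The substantive direction is $(i)/(ii) \Rightarrow (iv)$. Given a countable cover by open sections $(S_n)_{n \in \#N}$, each $S_n$ is homeomorphic via $p|S_n$ to the open subset $p(S_n) \subseteq X$; open subsets of quasi-Polish spaces are quasi-Polish ($\*\Pi^0_2$, in fact even open, subspaces), so each $S_n$ is quasi-Polish. Now $A = \bigcup_n S_n$ is a countable union of open quasi-Polish subspaces, i.e.\ $A$ is $\sigma$-locally quasi-Polish, and by the fact quoted in \cref{sec:qpol} (from \cite[3.6]{Cqpol}), every $\sigma$-locally quasi-Polish space is quasi-Polish. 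Hence $A$ is quasi-Polish, establishing $(iv)$.

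\medskip

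I do not expect any serious obstacle here: the whole statement is a bookkeeping exercise combining Lindelöfness of second-countable spaces, the local-homeomorphism property of open sections, and the already-cited closure of the quasi-Polish class under countable open covers. The one point requiring a little care is the verification in $(ii) \Rightarrow (iii)$ that the proposed family really is a basis (not just a cover), which hinges on using the open-embedding property of $p|S_n$ to transport a basic neighborhood of $p(a)$ in $X$ back up into $A$; I would write that argument out carefully but it is routine. The quasi-Polish direction is a one-line appeal to $\sigma$-local quasi-Polishness.
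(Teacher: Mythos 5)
Your proof is correct, and it is close to the paper's, but the decomposition is slightly different and one step uses a different tool. The paper proves (i)$\implies$(iii) directly by taking a countable basis $\@S$ of $A$ and observing that the subfamily of those $S \in \@S$ that happen to be open sections is still a basis (since every open set is a union of open sections, each of which is a union of basic sets, and any basic set contained in an open section is itself an open section); it then gets (ii)$\implies$(iii) by the same restriction trick you use, and notes (iii)$\implies$(ii),(i) is trivial. You instead run the cycle (iii)$\implies$(i)$\implies$(ii)$\implies$(iii), obtaining (i)$\implies$(ii) by invoking Lindel\"ofness of second-countable spaces to extract a countable subcover from the cover by all open sections. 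Both arguments are one-liners and both are valid (second-countable implies Lindel\"of with no separation axioms needed); the paper's filtering argument has the mild advantage of producing a basis of sections sitting inside any prescribed countable basis, whereas yours only immediately yields a cover and then needs the (ii)$\implies$(iii) step to upgrade it. Your verification of (ii)$\implies$(iii) (using injectivity of $p|S_n$ to conclude $S_n \cap p^{-1}(U_m) \subseteq S_n \cap W$) is the same computation the paper writes as a union identity. The quasi-Polish part is identical to the paper's: each open section is homeomorphic to an open (hence $\*\Pi^0_2$) subspace of $X$, so $A$ is $\sigma$-locally quasi-Polish and one cites the closure of quasi-Polish spaces under countable open covers.
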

\begin{proof}
(i)$\implies$(iii):  Let $\@S$ be a countable basis of open sets in $A$.  Then $\@S' := \{S \in \@S \mid S \text{ is an open section}\}$ is a countable basis of open sections.
Indeed, for open $T \subseteq A$, $T$ is a union of open sections, each of which is a union of $S \in \@S$ which (being contained in an open section) are open sections.

(ii)$\implies$(iii):  Let $\@S$ be a countable cover of $A$ by open sections, and let $\@U$ be a countable basis of open sets in $X$.  Then $\@S' := \{S \cap p^{-1}(U) \mid S \in \@S \AND U \in \@U\}$ is a countable basis of open sections.  Indeed, for open $T \subseteq A$, we have $T = \bigcup_{S \in \@S} (S \cap T) = \bigcup_{S \in \@S} (S \cap p^{-1}(p(S \cap T))) = \bigcup_{S \in \@S} \bigcup_{\@U \ni U \subseteq p(S \cap T)} (S \cap p^{-1}(U))$.

(iii)$\implies$(ii,i): obvious.

(iv)$\implies$(i): obvious.

(ii)$\implies$(iv), when $X$ is quasi-Polish:  clearly (ii) implies that $A$ is $\sigma$-locally quasi-Polish (see \cref{sec:qpol}), hence quasi-Polish.
\end{proof}


\subsection{Symmetric groupoids}
\label{sec:symgpd}

Let $p : A -> X$ be an étale space over a topological space $X$.  Define
\begin{align*}
\Hom_X(A) &:= \{(x, y, f) \mid x, y \in X \AND f : A_x -> A_y\}, \\
\Iso_X(A) &:= \{(x, y, f) \in \Hom_X(A) \mid f : A_x \cong A_y\},
\end{align*}
\vspace{-2em}
\begin{align*}
\sigma : \Hom_X(A) &--> X &
\tau : \Hom_X(A) &--> Y \\
(x, y, f) &|--> x, &
(x, y, f) &|--> y, \\[1em]
\iota : X &--> \Hom_X(A) &
\nu : \Iso_X(A) &--> \Iso_X(A) \\
x &|--> (x, x, 1_{A_x}), &
(x, y, f) &|--> (y, x, f^{-1}),
\end{align*}
\begin{align*}
\mu : \Hom_X(A) \times_X \Hom_X(A) &--> \Hom_X(A) \\
((y, z, g), (x, y, f)) &|--> (x, z, g \circ f)
\end{align*}
(where the fiber product is with respect to $\sigma$ on the left, $\tau$ on the right).  These maps equip $\Hom_X(A)$ with the structure of a category, the \defn{transformation category} of $A$, with objects $x \in X$ and morphisms
\begin{align*}
\Hom_X(A)(x, y) = \{(x, y, f) \mid f : A_x -> A_y\}.
\end{align*}
We will usually refer to morphisms as $f : A_x -> A_y$, or simply $f$, instead of $(x, y, f)$.  The core $\Iso_X(A) \subseteq \Hom_X(A)$ is the \defn{symmetric groupoid} of $A$, generalizing the symmetric group (when $X = 1$).

Equip $\Hom_X(A)$ with the topology generated by $\sigma, \tau$ and the subbasic open sets
\begin{align*}
\sqsqbr{S |-> T} := \{f : A_x -> A_y \in \Hom_X(A) \mid f(S_x) \cap T_y \ne \emptyset\}
\end{align*}
for open $S, T \subseteq A$.  Clearly the map $(S, T) |-> \sqsqbr{S |-> T}$ preserves unions in each variable separately; thus it suffices to consider $\sqsqbr{S |-> T}$ for $S, T$ in some basis $\@S$ of open sets in $A$.  Note that when $S, T$ are (basic) open sections, we have
\begin{align*}
\sqsqbr{S |-> T} = \{f : A_x -> A_y \in \Hom_X(A) \mid x \in p(S) \AND f(S_x) = T_y\}.
\end{align*}
Thus the topology on $\Hom_X(A)$ generalizes the pointwise convergence topology on $A^A$ for a set $A$ (when $X = 1$).  Equip $\Iso_X(A) \subseteq \Hom_X(A)$ with the subspace topology.

\begin{proposition}
\label{thm:symgpd-cts}
The maps $\sigma, \tau, \iota, \nu, \mu$ are continuous, hence equip $\Hom_X(A)$ (resp., $\Iso_X(A)$) with the structure of a topological category (resp., topological groupoid).
\end{proposition}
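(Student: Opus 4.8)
The plan is to verify continuity of each of the five structure maps separately by checking preimages of subbasic open sets, using only the description of the topology on $\Hom_X(A)$ (generated by $\sigma,\tau$ and the sets $\sqsqbr{S\mapsto T}$) together with \cref{thm:etale-ctble} or, more basically, the fact that $A$ is covered by open sections so that it suffices to test against $\sqsqbr{S\mapsto T}$ for $S,T$ in a basis of open sections.

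For $\sigma$ and $\tau$: these are built into the topology, so their continuity is immediate; one only needs to note that $\iota$ is a section of both. For $\iota\colon X\to\Hom_X(A)$: I would compute $\iota^{-1}(\sigma^{-1}(U))=U=\iota^{-1}(\tau^{-1}(U))$, and for an open section $S\subseteq A$, $\iota^{-1}(\sqsqbr{S\mapsto T})=\{x\mid 1_{A_x}(S_x)\cap T_x\neq\emptyset\}=p(S)\cap p(T\cap S)$ — wait, more simply $=\{x: S_x\cap T_x\neq\emptyset\}=p(S\cap T)$ when $S$ is a section, and in general $p(S)\cap\bigl(\text{relational image of }T\text{ under }``S_x\ni a\mapsto a"\bigr)$; in any case this is open since $p$ is an open map by \cref{thm:etale-props}(a). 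For $\nu\colon\Iso_X(A)\to\Iso_X(A)$: since $\nu$ is an involution it suffices to show $\nu$ is continuous, and one checks $\nu^{-1}(\sigma^{-1}(U))=\tau^{-1}(U)$, $\nu^{-1}(\tau^{-1}(U))=\sigma^{-1}(U)$, and $\nu^{-1}(\sqsqbr{S\mapsto T})=\{f\colon A_x\cong A_y: f^{-1}(S_y)\cap T_x\neq\emptyset\}=\sqsqbr{T\mapsto S}$, using that for a bijection $f$, $f^{-1}(S_y)\cap T_x\neq\emptyset\iff S_y\cap f(T_x)\neq\emptyset$.

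For $\mu\colon\Hom_X(A)\times_X\Hom_X(A)\to\Hom_X(A)$: the preimages of $\sigma^{-1}(U)$ and $\tau^{-1}(U)$ are handled by the projections and the fiber-product topology, so the real content is $\mu^{-1}(\sqsqbr{S\mapsto T})$. Here $((y,z,g),(x,y,f))\in\mu^{-1}(\sqsqbr{S\mapsto T})$ iff $g(f(S_x))\cap T_z\neq\emptyset$, i.e.\ iff there exists $a\in S_x$ with $g(f(a))\in T_z$, i.e.\ iff there exists a point in the image $f(S_x)$ landing in the $g$-preimage region of $T$. The key trick is to interpolate an open section: because $A$ is covered by open sections, $g(f(S_x))\cap T_z\neq\emptyset$ iff there is some basic open section $R\subseteq A$ with $f(S_x)\cap R_y\neq\emptyset$ and $g(R_y)\cap T_z\neq\emptyset$ — one direction is trivial, and for the other, given $a\in S_x$ with $g(f(a))\in T_z$, pick an open section $R$ through $f(a)$. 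Hence
\begin{align*}
\mu^{-1}(\sqsqbr{S\mapsto T})=\bigcup_{R}\bigl(\sigma^{-1}(\sqsqbr{S\mapsto R})\,\cap\,\text{(fiber product with)}\,\sqsqbr{R\mapsto T}\bigr),
\end{align*}
more precisely the union over open sections $R$ of $(\sqsqbr{R\mapsto T}\times_X\sqsqbr{S\mapsto R})\cap(\Hom_X(A)\times_X\Hom_X(A))$, which is open. The same computation restricts to $\Iso_X(A)$, and since $\Iso_X(A)$ carries the subspace topology, continuity of all five maps there follows by restriction; one should also remark that the codomain of $\nu$ really is $\Iso_X(A)$ and that $\mu$ sends composable pairs of isomorphisms to isomorphisms, so the groupoid operations are well-defined on the core. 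The main obstacle is precisely the interpolation step in $\mu^{-1}(\sqsqbr{S\mapsto T})$: one must exploit that étale-ness lets us "factor through a section", which is where \cref{thm:etale-ctble} (or just the defining cover by open sections) is used; everything else is bookkeeping.
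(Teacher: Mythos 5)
Your proof is correct and follows essentially the same route as the paper's: continuity of $\sigma,\tau,\iota,\nu$ by direct computation of preimages of subbasic open sets (with $\iota^{-1}(\sqsqbr{S \mapsto T}) = p(S\cap T)$ and $\nu^{-1}(\sqsqbr{S \mapsto T}) = \sqsqbr{T \mapsto S}$), and continuity of $\mu$ by interpolating an open section $R$ through the intermediate point $f(a)$, so that $\mu^{-1}(\sqsqbr{S \mapsto T})$ is covered by sets of the form $\sqsqbr{R \mapsto T}\times_X\sqsqbr{S \mapsto R}$. The only cosmetic difference is that the paper runs the $\mu$ computation with $S,T$ open sections (which suffices since these generate the topology) and argues pointwise by producing a basic neighborhood, whereas you handle arbitrary open $S,T$ and write the preimage as an explicit union; both hinge on the same observation that sections make the intermediate fiber $R_y$ a singleton.
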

\begin{proof}
$\sigma, \tau$ are continuous by definition.

Continuity of $\iota$: we have $\iota^{-1}(\sigma^{-1}(U)) = U = \iota^{-1}(\tau^{-1}(U))$ for open $U \subseteq X$, and $\iota^{-1}(\sqsqbr{S |-> T}) = p(S \cap T)$ for open $S, T \subseteq A$.

Continuity of $\nu$: we have $\nu^{-1}(\sigma^{-1}(U)) = \tau^{-1}(U)$, $\nu^{-1}(\tau^{-1}(U)) = \sigma^{-1}(U)$, and $\nu^{-1}(\sqsqbr{S |-> T}) = \sqsqbr{T |-> S}$.

Continuity of $\mu$: we have $\mu^{-1}(\sigma^{-1}(U)) = \Hom_X(A) \times_X \sigma^{-1}(U)$ and $\mu^{-1}(\tau^{-1}(U)) = \tau^{-1}(U) \times_X \Hom_X(A)$.  For open sections $S, T \subseteq A$, to check that $\mu^{-1}(\sqsqbr{S |-> T})$ is open: let $((y, z, g), (x, y, f)) \in \mu^{-1}(\sqsqbr{S |-> T})$, i.e., $g(f(S_x)) = T_z$, and let $R \subseteq A$ be any open section containing $f(S_x)$; then $((y, z, g), (x, y, f)) \in \sqsqbr{R |-> T} \times_X \sqsqbr{S |-> R} \subseteq \mu^{-1}(\sqsqbr{S |-> T})$.
\end{proof}

\begin{proposition}
\label{thm:symgpd-qpol}
If $X, A$ are quasi-Polish, then so are $\Hom_X(A), \Iso_X(A)$.
\end{proposition}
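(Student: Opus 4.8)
The plan is to realize $\Hom_X(A)$ as a $\*\Pi^0_2$ subspace of a quasi-Polish space built from $X$ and the fiberwise lower powerspaces of \cref{sec:lowpow}, and similarly for $\Iso_X(A)$. First I would fix a countable basis $\@S$ of open sections of $A$, which exists by \cref{thm:etale-ctble} since $A$ is quasi-Polish (hence second-countable). The key observation is that a morphism $f : A_x -> A_y$ in $\Hom_X(A)$ is completely determined by its \emph{graph}, which is a closed (indeed clopen, being a fiberwise discrete relation) subset of $(A \times_X A)_{(x,y)}$; more precisely, recalling that $A \times_X A$ here means the fiber product of two copies over $X$ but with the two projections remembering $x$ and $y$ separately, I would embed $\Hom_X(A)$ into $\@F_{X \times X}(A \boxtimes_X A)$, where $A \boxtimes_X A$ is the evident étale space over $X \times X$ whose fiber over $(x,y)$ is $A_x \times A_y$ (this is quasi-Polish by \crefrange{thm:etale-props}{} and the product/pullback facts, and then $\@F_{X\times X}(-)$ of it is quasi-Polish by the second Proposition of \cref{sec:lowpow}). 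The embedding sends $(x,y,f)$ to $(x,y,\graph(f))$.

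Next I would identify the image. A point $(x, y, F) \in \@F_{X \times X}(A \boxtimes_X A)$ lies in the image iff $F$ is the graph of a function $A_x -> A_y$, i.e.\ iff for every $a \in A_x$ there is exactly one $b \in A_y$ with $(a,b) \in F$. Using the basis $\@S$ of open sections, ``for every $a$ there is at least one $b$'' unwinds to: for each $S \in \@S$, if $x \in p(S)$ then $F \in \Dia_{X\times X}(S \boxtimes_X A)$ — more carefully, a conjunction over $S$ of implications $x \in p(S) \implies F$ meets the relevant tube — which is a Boolean combination of open conditions in the variables, hence $\*\Pi^0_2$ (open conditions like membership in $\Dia_{X\times X}(\cdot)$ and $x \in p(S)$, combined by countable $\forall$ and $\implies$). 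The ``at most one $b$'' (functionality) condition says: for $S, T \in \@S$ disjoint open sections (or more generally with $S_x, T_y$ disjoint over the relevant region), $F$ does \emph{not} meet both $S \boxtimes_X T$-type tubes witnessing two distinct values over the same source point; since $A$ is fiberwise discrete via the open sections, this is again expressible as a countable conjunction of ``$\neg(F \in \Dia(\cdots) \wedge F \in \Dia(\cdots))$'' plus a density/closure condition on $F$ of the kind that already appeared in the proof of quasi-Polishness of $\@F_X(A)$, so it is $\*\Pi^0_2$. One also needs to intersect with the $\*\Pi^0_2$ condition (from the proof that $\@F_X(A)$ is quasi-Polish) that $F$ is genuinely a closed subset of the single fiber $(A\boxtimes_X A)_{(x,y)}$ rather than a closed subset of the total space met with that fiber. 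Then I would check the subspace topology on this $\*\Pi^0_2$ set agrees with the topology on $\Hom_X(A)$ defined via the $\sqsqbr{S|->T}$: the point is that $\sqsqbr{S |-> T}$ pulls back to $\Dia_{X\times X}(S \boxtimes_X T)$ intersected with the image, and these $\Dia$ sets, together with the $\sigma$- and $\tau$-preimages, generate the topology on $\@F_{X\times X}(A\boxtimes_X A)$; since the $\sqsqbr{S|->T}$ for $S,T$ ranging over the countable basis $\@S$ generate the topology on $\Hom_X(A)$ (as noted before \cref{thm:symgpd-cts}), the two topologies coincide.

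For $\Iso_X(A)$, I would then show it is a further $\*\Pi^0_2$ (in fact, I'd expect, $\*\Pi^0_2$) subset of $\Hom_X(A)$: being an isomorphism means $f$ is surjective and injective. Surjectivity is the symmetric condition to the ``at least one source'' analysis above — for each $T \in \@S$ with $y \in p(T)$, $F$ must meet the corresponding tube — a countable conjunction of implications between open conditions, hence $\*\Pi^0_2$. Injectivity is the functionality condition applied to $F^{-1}$, handled as above. Alternatively, and perhaps more cleanly, I would use $\nu$: $\Iso_X(A)$ is the set of $f \in \Hom_X(A)$ admitting a two-sided inverse, and one can cut it out as $\{f : \exists g\, (fg = \tau(f) \wedge gf = \sigma(f))\}$ where the existential is over a fiber — but since the inverse of a bijection is unique and its graph is just the flip of $\graph(f)$, it is better to avoid the quantifier and simply impose the explicit surjectivity/injectivity $\*\Pi^0_2$ conditions directly. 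The main obstacle I anticipate is bookkeeping: getting the functionality and injectivity conditions to be genuinely $\*\Pi^0_2$ rather than merely Borel requires carefully exploiting the étale (fiberwise discrete) structure — the open sections let one replace ``there do not exist two distinct points'' by a countable conjunction of ``$F$ does not meet both of these two disjoint clopen tubes'' — and one must also be careful that $F$ is a closed subset of the \emph{correct fiber}, reusing the argument from the $\@F_X(A)$ quasi-Polishness proof verbatim. Everything else is a routine, if somewhat lengthy, verification.
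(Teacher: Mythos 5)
Your proposal follows the paper's proof essentially verbatim in outline: embed $\Hom_X(A)$ into the fiberwise lower powerspace $\@F_{X \times X}(A \times A)$ via $(x,y,f) |-> (x,y,\graph(f))$, check that the subbasic opens $\Dia_{X \times X}(S \times T)$ pull back to the $\sqsqbr{S |-> T}$, cut out the image by $\*\Pi^0_2$ conditions phrased in terms of a countable basis $\@S$ of open sections, and handle $\Iso_X(A)$ by adding the symmetric conditions with the roles of $x,y$ swapped. So the architecture is exactly the paper's.

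The one step that would fail as literally written is your functionality (``at most one value'') condition. You propose to express it as: for \emph{disjoint} open sections $T_1, T_2 \in \@S$ and $S \in \@S$, $F$ does not meet both $S \times T_1$ and $S \times T_2$. But quasi-Polish spaces are generally non-Hausdorff, and in a non-Hausdorff étale space two distinct points $b_1 \ne b_2$ of the same fiber $A_y$ need not lie in \emph{globally} disjoint open sections (standard example: in the espace étalé of the sheaf of continuous functions on $\#R$, the germs at $0$ of the zero function and of $x |-> \max(x,0)$ are distinct, yet every pair of open sections through them intersects over the negative reals). So a non-functional $F$ could satisfy your condition vacuously. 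What rescues the argument is fiberwise discreteness (openness of $\Delta_A \subseteq A \times_X A$): one can choose basic open sections with $(T_1)_y \cap (T_2)_y = \emptyset$ even though $T_1 \cap T_2 \ne \emptyset$. Your parenthetical ``(\dots disjoint over the relevant region)'' gestures at this, but then the condition is no longer a negated conjunction of opens; it must become the implication ``($F$ meets $S \times T_1$ and $S \times T_2$) $\implies y \in p(T_1 \cap T_2)$'', which is of the form open-implies-open and hence still $\*\Pi^0_2$, and does correctly characterize functionality. The paper's own repair is slightly different: it requires that whenever $F$ meets $S \times T_1$ and $S \times T_2$, it also meets $S \times T_3$ for some basic $T_3 \subseteq T_1 \cap T_2$, i.e., the basic opens meeting the fiber of $F$ over $S_x$ form a filter base. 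Either repair works; the global-disjointness version does not.
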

\begin{proof}
Recall the fiberwise lower powerspace construction from \cref{sec:lowpow}.  Let
\begin{align*}
e : \Hom_X(A) &--> \@F_{X \times X}(A \times A) \\
(x, y, f) &|--> (x, y, \graph(f))
\end{align*}
(where $A \times A$ is equipped with the projection map $p \times p : A \times A -> X \times X$).  Clearly $e$ is injective.  Subbasic open sets in $\@F_{X \times X}(A \times A)$ are $\@F_{X \times X}(p \times p)^{-1}(U \times V)$ for open $U, V \subseteq X$, and $\Dia_{X \times X}(S \times T)$ for open $S, T \subseteq A$; these have $e$-preimages $\sigma^{-1}(U) \cap \tau^{-1}(V)$ and $\sqsqbr{S |-> T}$ respectively.  So $e$ is an embedding.

Hence, to show that $\Hom_X(A)$ is quasi-Polish, it suffices to show that $\im(e)$ is $\*\Pi^0_2$.  Let $\@S$ be a countable basis of open sections in $A$.  We claim that for $(x, y, F) \in \@F_{X \times X}(A \times A)$,
\begin{align*}
(x, y, F) \in \im(e) \iff \left(\begin{aligned}
&\forall S \in \@S\, (x \in p(S) \implies \exists T \in \@S\, ((x, y, F) \in \Dia_{X \times X}(S \times T))) \AND \\
&\forall S \in \@S, T_1, T_2 \in \@S \left(\begin{aligned}
&(x, y, F) \in \Dia_{X \times X}(S \times T_1) \cap \Dia_{X \times X}(S \times T_2) \implies \\
&\exists \@S \ni T_3 \subseteq T_1 \cap T_2\, ((x, y, F) \in \Dia_{X \times X}(S \times T_3))
\end{aligned}\right)
\end{aligned}\right).
\end{align*}
The two conditions on the right-hand side are easily seen to say respectively that the fiber of $F \subseteq A_x \times A_y$ over each $a \in A_x$ is nonempty and has at most one element.

To show that $\Iso_X(A) \subseteq \Hom_X(A)$ is quasi-Polish, add the ``converses'' of the above conditions (with the roles of $x, y$ swapped).
\end{proof}

\subsection{Étale structures}
\label{sec:etalestr}

Let $\@L$ be a (multi-sorted) first-order language and $X$ be a topological space.  An \defn{étale $\@L$-structure over $X$} is, succinctly, an $\@L$-structure in the category of étale spaces over $X$ (see \cite[\S D1.2]{Jeleph}).  Explicitly, an étale $\@L$-structure $\@M = (P^\@M, R^\@M, f^\@M)_{P, R, f \in \@L}$ over $X$ consists of:
\begin{itemize}
\item  for each sort $P \in \@L_s$, an underlying étale space $p : P^\@M -> X$ over $X$ (when $\@L$ is one-sorted, we denote $P^\@M$ for the unique sort $P$ by $M$);
\item  for each $(P_0, \dotsc, P_{n-1})$-ary relation symbol $R \in \@L_r$, an open subset $R^\@M \subseteq P_0^\@M \times_X \dotsb \times_X P_{n-1}^\@M$;
\item  for each $(P_0, \dotsc, P_{n-1})$-ary $Q$-valued function symbol $f \in \@L_f$, a continuous map $f^\@M : P_0^\@M \times_X \dotsb \times_X P_{n-1}^\@M -> Q^\@M$ over $X$.
\end{itemize}
An étale $\@L$-structure $\@M$ over $X$ gives rise to an $\@L$-structure
\begin{align*}
\@M_x := (P^\@M_x, R^\@M_x, f^\@M_x)_{P,R,f \in \@L}
\end{align*}
for each $x \in X$, which varies ``continuously'' in $x$.

We say that an étale $\@L$-structure $\@M$ over $X$ is \defn{second-countable} if $\@L$ is countable, $X$ is second-countable, and the interpretation $P^\@M$ of each sort $P \in \@L_s$ is second-countable.

Let $\@M$ be an étale $\@L$-structure over $X$.  Its \defn{homomorphism category} is
\begin{align*}
\Hom_X(\@M) := \left\{(x, y, f) = (x, y, f_P)_{P \in \@L_s} \in \prod_{X \times X} (\Hom_X(P^\@M))_{P \in \@L_s} \relmiddle| f \text{ an $\@L$-homom.\ } \@M_x -> \@M_y\right\}
\end{align*}
where $\prod_{X \times X}$ denotes the fiber product over $X \times X$ with respect to the maps $\sigma, \tau : \Hom_X(P^\@M) -> X$, and we are identifying an element $((x, y, f_P))_P$ of the fiber product with $(x, y, f)$ where $f := (f_P)_P$.
The \defn{isomorphism groupoid} of $\@M$ is the core
\begin{align*}
\Iso_X(\@M) := \{(x, y, f) \in \Hom_X(\@M) \mid f \text{ an $\@L$-isomorphism } \@M_x \cong \@M_y\}.
\end{align*}
The category (resp., groupoid) operations of $\Hom_X(\@M), \Iso_X(\@M)$ are given sortwise.  As before, we will usually refer to morphisms in $\Hom_X(\@M), \Iso_X(\@M)$ as $f : \@M_x -> \@M_y$, or simply $f$, instead of $(x, y, f)$.

We equip $\Hom_X(\@M), \Iso_X(\@M)$ with the subspace topology inherited from $\prod_{X \times X} (\Hom_X(P^\@M))_{P \in \@L_s}$.  For a subbasic open set $\sqsqbr{S |-> T} \subseteq \Hom_X(P^\@M)$ (with $S, T \subseteq P^\@M$ open), we denote the corresponding subbasic open set of the product restricted to $\Hom_X(\@M)$ by
\begin{align*}
\sqsqbr{S |-> T}_P &= \{(x, y, f) \in \Hom_X(\@M) \mid f_P(S_x) \cap T_y \ne \emptyset\} \\
&= \{(x, y, f) \in \Hom_X(\@M) \mid x \in p(S) \AND f_P(S_x) = T_y\} \qquad\text{for open sections $S, T$}.
\end{align*}
Thus, the topology of $\Hom_X(\@M)$ is generated by $\sigma, \tau$ together with these sets (it is enough to consider $S, T$ in a basis).  We use the same notation for $\Iso_X(\@M)$.

\begin{proposition}
\label{thm:etalestr-isogpd-qpol}
If $\@M$ is second-countable, then $\Hom_X(\@M)$ (resp., $\Iso_X(\@M)$) is a quasi-Polish category (resp., groupoid).
\end{proposition}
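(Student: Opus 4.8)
The strategy is to realize $\Hom_X(\@M)$ (and then $\Iso_X(\@M)$) as a $\*\Pi^0_2$ subspace of a product of the spaces $\Hom_X(P^\@M)$, each of which is already known to be quasi-Polish by \cref{thm:symgpd-qpol} once we check they are built from second-countable étale spaces. Since $\@M$ is second-countable, each sort interpretation $P^\@M$ is a second-countable étale space over the second-countable space $X$, hence quasi-Polish by \cref{thm:etale-ctble}, and therefore each $\Hom_X(P^\@M)$ is quasi-Polish by \cref{thm:symgpd-qpol}. As $\@L_s$ is countable, the fiber product $\prod_{X \times X}(\Hom_X(P^\@M))_{P \in \@L_s}$ (over $X \times X$ via $\sigma,\tau$) is a $\*\Pi^0_2$ subspace of the countable product $\prod_P \Hom_X(P^\@M)$ — the fiber-product conditions $\sigma(f_P) = \sigma(f_{P'})$, $\tau(f_P) = \tau(f_{P'})$ are closed since $X$ is $T_0$ quasi-Polish (the diagonal, or equality of continuous maps to a quasi-Polish space, is $\*\Pi^0_2$) — so this fiber product is itself quasi-Polish.

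It then remains to show that $\Hom_X(\@M)$ is a $\*\Pi^0_2$ subset of that fiber product, i.e.\ that the condition ``$f = (f_P)_P$ is an $\@L$-homomorphism $\@M_x \to \@M_y$'' is $\*\Pi^0_2$. This is where I expect the bookkeeping to concentrate, though conceptually each clause is straightforward. Fix countable bases $\@S_P$ of open sections in each $P^\@M$. Preservation of a function symbol $f \in \@L_f(P_0,\dots,P_{n-1}; Q)$ asserts that for all tuples $(a_0,\dots,a_{n-1})$ in the appropriate fiber, $f_Q(f^\@M_x(a_0,\dots,a_{n-1})) = f^\@M_y(f_{P_0}(a_0),\dots,f_{P_{n-1}}(a_{n-1}))$; using open sections $S_i \in \@S_{P_i}$ containing the $a_i$ (and recalling that on open sections the relations $\sqsqbr{S_i |-> T_i}_{P_i}$ pin down $f_{P_i}(S_{i,x}) = T_{i,y}$), and an open section $T_Q$ tracking where $f^\@M$ sends the tuple, this becomes a countable conjunction over basic open sections of implications between the $\sqsqbr{-\,|->\,-}$ subbasic opens and the preimages under the continuous maps $f^\@M$ of other such opens — manifestly Borel, and in fact expressible at the $\*\Pi^0_2$ level since each clause is of the form ``(open) $\implies$ (open)'' closed under countable conjunction. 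Preservation of a relation symbol $R \in \@L_r(P_0,\dots,P_{n-1})$ asserts the biconditional $(a_0,\dots,a_{n-1}) \in R^\@M_x \iff (f_{P_0}(a_0),\dots,f_{P_{n-1}}(a_{n-1})) \in R^\@M_y$; since $R^\@M$ is open in the fiber product of étale spaces, both $R^\@M_x$ and $R^\@M_y$ are clopen in the relevant fibers (open in a fiberwise discrete space), so each direction of the biconditional is again a countable conjunction of ``open $\implies$ open'' clauses in terms of the basic open sections — hence $\*\Pi^0_2$. Intersecting over the countably many symbols in $\@L$ keeps us in $\*\Pi^0_2$.

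For $\Iso_X(\@M)$, I would argue that $\Iso_X(P^\@M) \subseteq \Hom_X(P^\@M)$ is $\*\Pi^0_2$ for each sort — this is exactly the content of the last line of the proof of \cref{thm:symgpd-qpol} (add the surjectivity/injectivity clauses with the roles of $x,y$ swapped) — and that moreover the condition ``$f$ is an $\@L$-isomorphism'' given ``$f$ is an $\@L$-homomorphism'' reduces to requiring each component $f_P$ to be a bijection $P^\@M_x \to P^\@M_y$, because the inverse of a bijective $\@L$-homomorphism between discrete structures is automatically an $\@L$-homomorphism. Thus $\Iso_X(\@M) = \Hom_X(\@M) \cap \prod_{X \times X}(\Iso_X(P^\@M))_P$, an intersection of two $\*\Pi^0_2$ subsets of the ambient quasi-Polish fiber product, hence $\*\Pi^0_2$, hence quasi-Polish. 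Finally, quasi-Polishness of the underlying spaces together with \cref{thm:symgpd-cts} (which already gives that the category/groupoid operations are continuous) shows $\Hom_X(\@M)$ is a quasi-Polish category and $\Iso_X(\@M)$ a quasi-Polish groupoid. The only genuinely delicate point is verifying that the homomorphism conditions for function symbols — which a priori look like equations between partially defined fiberwise maps — can be phrased using only countably many basic open sections so as to land in $\*\Pi^0_2$ rather than merely Borel; the key is that on open sections the sets $\sqsqbr{S |-> T}_P$ exactly encode the graph of $f_P$, so every fiberwise equality unwinds into a countable family of containments between opens.
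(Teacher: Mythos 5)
Your overall strategy is the same as the paper's: embed $\Hom_X(\@M)$ into the fiber product $\prod_{X \times X}(\Hom_X(P^\@M))_{P \in \@L_s}$, which is quasi-Polish by \cref{thm:symgpd-qpol} (together with \cref{thm:etale-ctble}), and then check that the homomorphism condition cuts out a $\*\Pi^0_2$ subset via countable conjunctions of ``open $\implies$ open'' clauses indexed by basic open sections. The architecture and the complexity bookkeeping are sound, and your observation that $\sqsqbr{S |-> T}_P$ pins down $f_P(S_x) = T_y$ on open sections is exactly the mechanism the paper uses.

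There is, however, one localized error. You take ``$f$ preserves $R$'' to be the biconditional $R^\@M_x(\vec{a}) \iff R^\@M_y(f(\vec{a}))$, and you then justify $\Iso_X(\@M) = \Hom_X(\@M) \cap \prod_{X\times X}(\Iso_X(P^\@M))_P$ by the claim that a bijective $\@L$-homomorphism between discrete structures automatically has a homomorphic inverse. Under the paper's (standard) definition a homomorphism satisfies only the forward implication $R^\@M_x(\vec{a}) \implies R^\@M_y(f(\vec{a}))$, and the quoted claim is false for relational languages: the identity map from the edgeless graph on a vertex set to the complete graph on the same vertex set is a bijective homomorphism that is not an isomorphism. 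Consequently your $\Hom_X(\@M)$ is a proper subcategory of the paper's (the ``strong'' homomorphisms), and your identification of $\Iso_X(\@M)$ with the sortwise-bijective homomorphisms fails for the paper's $\Hom_X(\@M)$. The repair is exactly what the paper does: define $\Hom_X(\@M)$ using only the forward implication (still $\*\Pi^0_2$, being half of your clauses), and for $\Iso_X(\@M)$ explicitly add the converse relation-preservation clauses with the roles of $x$ and $y$ swapped, on top of sortwise bijectivity; these are again $\*\Pi^0_2$ by symmetry, so the conclusion stands. (For function symbols your claim is fine, since a bijective map commuting with the operations has an inverse that also commutes with them.)
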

\begin{proof}
By \cref{thm:symgpd-qpol}, $\prod_{X \times X} (\Hom_X(P^\@M))_{P \in \@L_s}$ is quasi-Polish.
Let $(x, y, f) = (x, y, f_P)_P \in \prod_{X \times X} (\Hom_X(P^\@M))_{P \in \@L_s}$; we must show that $f$ being a homomorphism $\@M_x -> \@M_y$ is a $\*\Pi^0_2$ condition.  For each sort $P \in \@L_s$, let $\@S_P$ be a countable basis of open sections in $P^\@M$.  For a $(P_0, \dotsc, P_{n-1})$-ary relation symbol $R \in \@L_r$, $f$ preserves $R$ iff
\begin{align*}
\forall (a_0, \dotsc, a_{n-1}) \in (P_0^\@M)_x \times \dotsb \times (P_{n-1}^\@M)_x\; (R^\@M_x(a_0, \dotsc, a_{n-1}) \implies R^\@M_y(f_{P_0}(a_0), \dotsc, f_{P_{n-1}}(a_{n-1}))),
\end{align*}
which, by considering open sections $S_i \ni a_i$ and $T_i \ni f_{P_i}(a_i)$, can be expressed as
\begin{align*}
\forall S_0, T_0 \in \@S_{P_0},\; \dotsc,\; S_{n-1}, T_{n-1} \in \@S_{P_{n-1}} \left(\begin{aligned}
&x \in p((S_0 \times_X \dotsb \times_X S_{n-1}) \cap R^\@M)
\AND \forall i\, (f_i \in \sqsqbr{S_i |-> T_i}) \\
&\implies y \in p((T_0 \times_X \dotsb \times_X T_{n-1}) \cap R^\@M)
\end{aligned}\right)
\end{align*}
(where $p$ denotes the projections from the various étale spaces to $X$).  Preservation of functions is similar (or can be reduced to preservation of their graphs).

Similarly, $\Iso_X(\@M) \subseteq \prod_{X \times X} (\Iso_X(P^\@M))_{P \in \@L_s}$ is $\*\Pi^0_2$, by taking the ``converse'' of the above conditions.
\end{proof}

\subsection{Fiberwise countable Borel spaces and structures}
\label{sec:cbstr}

We have the following analogs of the above topological notions in the Borel context.

Let $p : A -> X$ be a \defn{fiberwise countable Borel space} over a standard Borel space $X$, i.e., $A$ is standard Borel, $p$ is Borel, and $p$ is countable-to-1.  Equivalently, by the Lusin--Novikov uniformization theorem \cite[18.10]{Kcdst}, $A$ has a countable cover by \defn{Borel sections} $S \subseteq A$, meaning Borel sets such that $p|S : S -> X$ is injective.  For example, every second-countable étale space $p : A -> X$ over quasi-Polish $X$ has an underlying fiberwise countable Borel space, with a countable cover of $A$ by open sections yielding a countable cover by Borel sections.

\begin{remark}
\label{rmk:cbsp-etale}
Conversely, for every fiberwise countable Borel space $p : A -> X$, there are compatible (quasi-)Polish topologies on $A, X$ turning $p : A -> X$ into an étale space.  Indeed, let $\@S$ be a countable cover of $A$ by Borel sections.  Take any compatible Polish topologies on $A, X$ making $p$ continuous, and making each $S \in \@S$ open.  Let $\@T$ be a countable basis of open sets in $A$, each contained in some $S \in \@S$.  Let $X'$ be $X$ with a finer Polish topology making $p(T) \subseteq X$ open for each $T \in \@T$, and let $A'$ be $A$ with $p^{-1}(U)$ adjoined to its topology for each open $U \subseteq X'$.  Then $A'$ is Polish (since $A' \cong A \times_X X'$), $p : A' -> X'$ is continuous and open (since $p(T \cap p^{-1}(U)) = p(T) \cap U$ for basic open $T \cap p^{-1}(U) \subseteq A'$ with $T \in \@T$ and open $U \subseteq X'$), and $p : A' -> X'$ is étale since $\@S$ is a countable cover of $A'$ by open sections.
\end{remark}

The \defn{transformation category} $\Hom_X(A)$ and \defn{isomorphism groupoid} $\Iso_X(A)$ of $p : A -> X$ are defined exactly as in \cref{sec:symgpd}, with Borel structure generated by the basic Borel sets $\sqsqbr{S |-> T} \subseteq \Hom_X(A)$ (defined as in \cref{sec:symgpd}) for Borel sections $S, T \subseteq A$.  By \cref{rmk:cbsp-etale} and \cref{thm:symgpd-qpol}, or by a direct coding argument similar to the proof of \cref{thm:symgpd-qpol}, $\Hom_X(A), \Iso_X(A)$ are standard Borel, with Borel structure compatible with the quasi-Polish topologies when $p : A -> X$ is a second-countable étale space over quasi-Polish $X$.

For a countable language $\@L$ and standard Borel space $X$, a \defn{fiberwise countable Borel $\@L$-structure} $\@M = (P^\@M, R^\@M, f^\@M)_{P, R, f \in \@L}$ over $X$ consists of:
\begin{itemize}
\item  for each sort $P \in \@L_s$, an underlying fiberwise countable Borel space $p : P^\@M -> X$ over $X$ ($P^\@M$ denoted $M$ for one-sorted $\@L$);
\item  for each $(P_0, \dotsc, P_{n-1})$-ary $R \in \@L_r$, a Borel subset $R^\@M \subseteq P_0^\@M \times_X \dots \times_X P_{n-1}^\@M$;
\item  for each $(P_0, \dotsc, P_{n-1})$-ary $Q$-valued $f \in \@L_f$, a Borel map $f^\@M : P_0^\@M \times_X \dots \times_X P_{n-1}^\@M -> Q^\@M$ over $X$.
\end{itemize}
We think of this as a ``Borel family'' of countable $\@L$-structures $\@M_x$ for each $x \in X$.  Every second-countable étale $\@L$-structure $\@M$ over quasi-Polish $X$ has an underlying fiberwise countable Borel structure.

The \defn{homomorphism category} $\Hom_X(\@M)$ and \defn{isomorphism groupoid} $\Iso_X(\@M)$ of a fiberwise countable Borel structure $\@M$ are defined exactly as in \cref{sec:etalestr}.  By the proof of \cref{thm:etalestr-isogpd-qpol}, $\Hom_X(\@M), \Iso_X(\@M)$ are standard Borel, compatible with the quasi-Polish topologies when $\@M$ is a second-countable étale structure over quasi-Polish $X$.

\subsection{Uniformizing fiberwise structures}
\label{sec:cbstr-unif}


We now have the main result of this section:

\begin{theorem}
\label{thm:cbstr-isogpd-ff}
For any countable (multi-sorted) language $\@L$, standard Borel space $X$, and fiberwise countable Borel structure $\@M$ over $X$, there is a countable single-sorted relational language $\@L'$ and a full and faithful Borel functor $\Iso_X(\@M) -> \#S_\infty \ltimes \Mod_\#N(\@L')$.
\end{theorem}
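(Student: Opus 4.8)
The plan is to realize each fiber structure $\@M_x$ — which is a countable multi-sorted $\@L$-structure together with a distinguished enumeration-like labelling — as a single-sorted relational structure on $\#N$, in a way that is Borel in $x$ and functorial in isomorphisms. First I would fix a countable cover of each sort space $P^\@M$ by Borel sections; by Lusin--Novikov this partitions each fiber $(P^\@M)_x$ into at most countably many singletons, which after a Borel reindexing gives a Borel family of injections $e_x^P : (P^\@M)_x \hookrightarrow \#N$ (say with image an initial segment, using the section indices). Pushing $\@M_x$ forward along the $e_x^P$ gives a Borel family of multi-sorted structures on subsets of $\#N$; the standard ``coding a multi-sorted relational structure as a single-sorted one'' device (add a unary predicate for each sort, relativize each relation symbol, and — to remember which copy of $\#N$ a given natural number came from, since the $e_x^P$ need not be jointly injective across sorts — work on a disjoint union $\#N = \coprod_{P \in \@L_s} \#N$, relabelled back to $\#N$ by a fixed bijection) yields a Borel map $X \to \Mod_\#N(\@L')$ for a suitable countable single-sorted relational $\@L'$. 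Finitely many extra predicates encode the graphs of the function symbols of $\@L$, so we may assume $\@L'$ purely relational.

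Next I would extend this to morphisms. A morphism $f : \@M_x \to \@M_y$ in $\Iso_X(\@M)$, transported through the chosen labellings $e_x, e_y$, becomes a partial bijection $\#N \rightharpoonup \#N$ which is an isomorphism of the coded structures between their (Borel-in-$x$, resp. $y$) domains of definition. The issue is that this partial bijection need not extend to an element of $\#S_\infty$ in any canonical or even Borel way when the fibers are finite. The fix is the classical ``add constants'' trick alluded to in the introduction: arrange the coding so that every coded structure is actually supported on all of $\#N$, and every bijection between supports is forced. Concretely, I would instead first replace $\@M$ by a Borel family $\@M'$ of \emph{genuinely countably infinite} structures that is functorially equivalent: e.g. pass to the étale structure of ``finite tuples'' or simply Skolemize/expand by countably many new sorts realized as $\#N$ itself with the section-index relation, so that the underlying set of each $\@M'_x$ is canonically $\#N$ and $\Iso_X(\@M') \cong \Iso_X(\@M)$ as standard Borel groupoids (full and faithful because the expansion is by structure that is preserved and reflected by every $\@L$-isomorphism). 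Then a morphism of $\@M'_x$'s is a bijection $\#N \cong \#N$, i.e. an element of $\#S_\infty$, and it acts on $\Mod_\#N(\@L')$ by the logic action by construction, so the assignment $(x,y,f) \mapsto (e_y \circ f \circ e_x^{-1} \in \#S_\infty,\ \text{code}(\@M'_x) \in \Mod_\#N(\@L'))$ is a Borel functor $\Iso_X(\@M') \to \#S_\infty \ltimes \Mod_\#N(\@L')$.

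Fullness and faithfulness on each hom-set are then formal: a morphism in $\#S_\infty \ltimes \Mod_\#N(\@L')$ from $\text{code}(\@M'_x)$ to $\text{code}(\@M'_y)$ is exactly a permutation $g$ of $\#N$ carrying the first coded structure to the second, which (by the faithfulness of the coding $\@M' \mapsto \Mod_\#N(\@L')$, i.e. that the coding map is injective on structures and its image determines the structure up to the coding isomorphism) transports back through $e_x^{-1}, e_y$ to a unique $\@L$-isomorphism $\@M'_x \to \@M'_y$, hence via $\Iso_X(\@M') \cong \Iso_X(\@M)$ to a unique morphism of $\Iso_X(\@M)$ with the prescribed image. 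Borelness of each step is routine from Lusin--Novikov and the fact that the section-based labellings $e_x$ vary Borel-measurably in $x$.

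The main obstacle I anticipate is precisely the finite-fiber problem: ensuring that the coded structures land on a \emph{fixed} copy of $\#N$ and that partial isomorphisms always extend (indeed uniquely) to honest permutations, so that the target can be taken to be the clean logic-action groupoid $\#S_\infty \ltimes \Mod_\#N(\@L')$ rather than a groupoid of partial isomorphisms. This is what forces the detour through an infinite ``companion'' structure $\@M'$, and getting that companion to be (a) Borel in $x$, (b) genuinely supported on all of $\#N$ with every bijection of supports an isomorphism, and (c) such that $\Iso_X(\@M') \to \Iso_X(\@M)$ is a full and faithful Borel equivalence, is the technical heart; everything else is bookkeeping with Lusin--Novikov sections and the standard multi-sorted-to-single-sorted relational encoding.
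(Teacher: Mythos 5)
Your proposal is correct and follows essentially the same route as the paper: first pass to a fiberwise infinite single-sorted relational companion $\@M'$ by adding countably many predicate-labelled constants (so that isomorphisms are forced to fix them and $\Iso_X(\@M') \cong \Iso_X(\@M)$), then use Lusin--Novikov to choose Borel bijections $M'_x \cong \#N$ and push forward, sending $f : \@M'_x \to \@M'_y$ to the conjugated permutation in $\#S_\infty$. The paper's proof is exactly this two-step decomposition (\cref{thm:cbstr-ext} followed by \cref{thm:cbstr-unif}), and your identification of the finite-fiber issue as the reason for the detour matches the paper's structure.
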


We split the proof into two parts:

\begin{proposition}
\label{thm:cbstr-ext}
For any countable (multi-sorted) language $\@L$, standard Borel space $X$, and fiberwise countable Borel $\@L$-structure $\@M$ over $X$, there is a countable single-sorted relational language $\@L'$, a fiberwise countable Borel $\@L'$-structure $\@M'$ over $X$ which is fiberwise infinite, and a Borel identity-on-objects isomorphism of groupoids $\Iso_X(\@M) \cong \Iso_X(\@M')$.
\end{proposition}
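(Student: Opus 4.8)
The plan is to encode a multi-sorted $\@L$-structure as a single-sorted relational structure in the standard way, while carrying out every step uniformly in the parameter $x \in X$ and checking that the resulting operations are Borel. First I would reduce to the relational case: each function symbol $f \in \@L_f$ can be replaced by a relation symbol $R_f$ for its graph, and a fiberwise countable Borel structure for the relational language is recovered from $\@M$ by taking $R_f^{\@M'} := \graph(f^{\@M}) \subseteq P_0^{\@M} \times_X \dotsb \times_X P_{n-1}^{\@M} \times_X Q^{\@M}$, which is Borel; since a homomorphism (resp.\ isomorphism) between relational structures in the new language is the same as a homomorphism between the original structures, this gives an identity-on-objects isomorphism of the groupoids that is trivially Borel. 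So from now on assume $\@L$ is relational.

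Next, to go from multi-sorted to single-sorted, I would take the disjoint union of the sort interpretations: $M' := \coprod_{P \in \@L_s} P^{\@M}$, equipped with the evident projection to $X$, which is again fiberwise countable Borel (a countable disjoint union of fiberwise countable Borel spaces over $X$). Add to $\@L'$ a unary relation symbol $U_P$ for each $P \in \@L_s$, interpreted as the summand $P^{\@M} \subseteq M'$, and for each $(P_0,\dotsc,P_{n-1})$-ary $R \in \@L_r$ add an $n$-ary relation symbol interpreted as $R^{\@M} \subseteq P_0^{\@M} \times_X \dotsb \times_X P_{n-1}^{\@M} \subseteq (M')^n_X$. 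A bijection $\@M_x \to \@M_y$ in the multi-sorted sense respecting sorts is exactly an isomorphism of the single-sorted structures (the $U_P$ force sort-preservation, which for \emph{iso}morphisms is automatic anyway), so again we get an identity-on-objects groupoid isomorphism; its Borelness is immediate since on morphisms it is literally the identity on the underlying bijections, which are tracked by the same basic Borel sets $\sqsqbr{S \mapsto T}$.

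Finally I would make the structure fiberwise infinite. The clean way is to adjoin to each fiber a copy of $\#N$ as a new, distinguished sort: set $M'' := M' \sqcup (X \times \#N)$ with the obvious projection, add a unary predicate $N$ interpreted as the summand $X \times \#N$, and add relation symbols coding a fixed successor-type structure on $X \times \#N$ (e.g.\ the graph of $(x,n) \mapsto (x,n+1)$, and a constant $0$ as a unary predicate singling out $(x,0)$) making each fiber $\{x\} \times \#N$ rigid — isomorphic to the standard model of a theory with no nontrivial automorphisms. Then every fiber $\@M''_x$ is infinite, and an isomorphism $\@M''_x \cong \@M''_y$ must restrict to a bijection $M'_x \to M'_y$ (it preserves $N$ and its complement) and, on the $N$-part, to the unique isomorphism of the rigid successor structures; hence $\Iso_X(\@M'') \cong \Iso_X(\@M')$ via restriction, identity on objects, and this is Borel because restricting along the fixed Borel sections into the summand $M' \subseteq M''$ carries basic Borel sets to basic Borel sets. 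Composing the three reductions yields the statement. The main obstacle, such as it is, is bookkeeping: one must confirm at each stage that the new interpretations are genuinely Borel (straightforward, being Borel images/preimages of the given Borel data under coordinate projections) and that the induced map on $\Iso_X$ is Borel, for which it suffices to note that in each reduction the map on underlying fiberwise bijections is an inclusion or restriction along Borel sections, hence pulls back the generating Borel sets $\sqsqbr{S \mapsto T}$ to finite Boolean combinations of such sets; there is no genuine difficulty, only the need to be careful that "fiberwise infinite" is achieved rigidly so that the groupoid is unchanged.
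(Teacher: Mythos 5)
Your proof is correct and essentially identical to the paper's: reduce to a relational language by replacing functions with their graphs, amalgamate the sorts into a disjoint union tagged by unary sort predicates, and adjoin a rigid copy of $\#N$ to each fiber (the paper names each new point $(x,i)$ with its own unary predicate $C_i$ rather than imposing a successor-plus-zero structure, but either rigidification works, and the induced identity-on-objects groupoid isomorphism is Borel for the reasons you give). One small caveat: your parenthetical claim that sort-preservation is ``automatic anyway'' for isomorphisms is false without the $U_P$ (a bijection of the disjoint union can mix two relation-free sorts), but since you do include the $U_P$ this does not affect the argument.
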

\begin{proof}
By replacing functions with their graphs, we may assume $\@L$ is relational.  Let
\begin{align*}
\@L'_r := \@L_r \sqcup \@L_s \sqcup \{C_0, C_1, \dotsc\}
\end{align*}
where each $P \in \@L_s$ is treated as a unary relation symbol, as is each $C_i$.  The underlying fiberwise countable Borel  space of $\@M'$ is
\begin{align*}
M' := \bigsqcup_{P \in \@L_s} P^\@M \sqcup (X \times \#N)
\end{align*}
where $X \times \#N$ is equipped with the first coordinate projection to $X$ (with $(x, i) \in X \times \#N$ thought of as a newly added constant in $\@M'_x$).  Each $(P_0, \dotsc, P_{n-1})$-ary $R \in \@L_r$ is interpreted as
\begin{align*}
R^{\@M'} := R^\@M \subseteq P_0^\@M \times_X \dotsb \times_X P_{n-1}^\@M \subseteq M^{\prime n}_X,
\end{align*}
each $P \in \@L_s$ is interpreted as
\begin{align*}
P^{\@M'} := P^\@M \subseteq M',
\end{align*}
and each $C_i$ is interpreted as
\begin{align*}
C_i^{\@M'} := X \times \{i\} \subseteq M'.
\end{align*}
Clearly $\@L$-isomorphisms $f = (f_P)_{P \in \@L_s} : \@M_x \cong \@M_y$ are in canonical bijection with $\@L'$-isomorphisms $\@M'_x \cong \@M'_y$ (given by $\bigsqcup_P f_P : \bigsqcup_P P^\@M_x \cong \bigsqcup_P P^\@M_y$ together with the identity on the newly added constants).
\end{proof}

\begin{proposition}
\label{thm:cbstr-unif}
For any countable single-sorted relational language $\@L$, standard Borel space $X$, and fiberwise countable Borel $\@L$-structure $\@M$ over $X$ which is fiberwise infinite, there is a full and faithful Borel functor $F : \Iso_X(\@M) -> \#S_\infty \ltimes \Mod_\#N(\@L)$.
\end{proposition}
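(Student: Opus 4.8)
The plan is to fix, in a Borel way, an enumeration of each fiber of $\@M$ and transport everything along it. Write $p : M -> X$ for the underlying fiberwise countable Borel space of the single-sorted structure $\@M$. First I would invoke Lusin--Novikov to cover $M$ by countably many Borel sections, then refine these to a sequence $(S_i)_{i \in \#N}$ of \emph{pairwise disjoint} Borel sections still covering $M$ (replace $S_i$ by $S_i \setminus \bigcup_{j < i} S_j$); each $D_i := p(S_i) \subseteq X$ is then Borel, being the injective Borel image of a Borel set. Since $\@M$ is fiberwise infinite, for every $x$ the fiber $M_x$ is partitioned into the singletons $(S_i)_x$ over $i$ with $x \in D_i$, so the index set $I_x := \{i \mid x \in D_i\}$ is infinite. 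Define $c : M -> \#N$ by setting, for $a \in (S_i)_x$, $c(a) := |\{j < i \mid x \in D_j\}|$ (the rank of $i$ in $I_x$); since $M = \bigsqcup_i S_i$ is a Borel partition and the $D_j$ are Borel, $c$ is Borel, and $g_x := c|M_x : M_x \cong \#N$ is a bijection for each $x$ (the rank function on the infinite set $I_x$). Let $\bar S_n := c^{-1}(n)$, a full Borel section with $(\bar S_n)_x = g_x^{-1}(n)$; the $\bar S_n$ are pairwise disjoint Borel sections covering $M$.

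Next I would define $F$ on objects by $F(x) := (g_x)_*(\@M_x) \in \Mod_\#N(\@L)$, the pushforward structure, so that $g_x : \@M_x \cong F(x)$. For each $k$-ary $R \in \@L_r$ we have $R^{F(x)} = \{(n_0, \dotsc, n_{k-1}) \mid ((\bar S_{n_0})_x, \dotsc, (\bar S_{n_{k-1}})_x) \in R^\@M\}$, and since the $\bar S_n$ and $R^\@M$ are Borel, $x |-> F(x)$ is Borel. For a morphism $f : \@M_x \cong \@M_y$ in $\Iso_X(\@M)$ I set $\pi_f := g_y \circ f \circ g_x^{-1} \in \#S_\infty$ and $F(f) := (\pi_f, F(x))$, regarded as a morphism $F(x) -> F(y)$ in $\#S_\infty \ltimes \Mod_\#N(\@L)$. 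A one-line pushforward computation gives $(\pi_f)_*(F(x)) = (\pi_f \circ g_x)_*(\@M_x) = (g_y \circ f)_*(\@M_x) = (g_y)_*(\@M_y) = F(y)$, so $F(f)$ is a legitimate morphism, and both functoriality ($\pi_{g \circ f} = \pi_g \circ \pi_f$ since $g_{\sigma(g)} = g_{\tau(f)}$) and preservation of units are immediate since the $g$'s cancel.

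The two places that take any thought are the uniform construction of the $g_x$ above — which is exactly where the fiberwise-infinite hypothesis supplied by \cref{thm:cbstr-ext} is used — and the Borelness of $f |-> F(f)$ on morphisms, and the latter is precisely where the description of the Borel structure on $\Iso_X(\@M)$ via the sets $\sqsqbr{S |-> T}$ for Borel sections $S, T$ pays off: for fixed $n, m \in \#N$ one checks directly that $\{f \mid \pi_f(n) = m\} = \sqsqbr{\bar S_n |-> \bar S_m}$ (using that $(\bar S_n)_{\sigma(f)}$, $(\bar S_m)_{\tau(f)}$ are singletons), whence $f |-> \pi_f \in \#S_\infty \subseteq \#N^{\#N}$ is Borel, and $f |-> F(\sigma(f))$ is Borel by the previous paragraph. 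Faithfulness holds because $f = g_y^{-1} \circ \pi_f \circ g_x$ is recovered from $F(f)$, and fullness because, given any $\pi \in \#S_\infty$ with $\pi \cdot F(x) = F(y)$, the bijection $f := g_y^{-1} \circ \pi \circ g_x : M_x -> M_y$ satisfies $f_*(\@M_x) = (g_y^{-1})_*(\pi_*(F(x))) = (g_y^{-1})_*(F(y)) = \@M_y$, so $f \in \Iso_X(\@M)(x, y)$ with $F(f) = (\pi, F(x))$. Beyond these two bookkeeping points I do not anticipate a genuine obstacle; the remainder is routine unwinding of the pushforward operation and of the Borel structures on $\Iso_X(\@M)$ and on $\#S_\infty \ltimes \Mod_\#N(\@L)$.
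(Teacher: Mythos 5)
Your proposal is correct and follows essentially the same route as the paper: fix a Borel family of fiberwise bijections $M_x \cong \#N$ (the paper obtains these by citing \cite[18.15]{Kcdst}, whereas you inline the standard disjointification-and-rank construction), push each $\@M_x$ forward along them, and verify Borelness of the morphism map via the generating sets $\sqsqbr{S \mapsto T}$ for Borel sections. The only difference is that you spell out the fullness/faithfulness verification that the paper leaves as ``straightforward to check,'' and your details are right.
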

\begin{proof}
Using a countable cover of $M$ by Borel sections, we may find a Borel map $f : M -> \#N$ which is a bijection $f_x := f|M_x : M_x \cong \#N$ for each $x$ (see e.g., \cite[18.15]{Kcdst}).  Use these bijections to pushforward each $\@M_x$ to a structure on $\#N$.  That is, define $F$ on objects by
\begin{align*}
F : X &--> \Mod_\#N(\@L) \\
x &|--> (f_x)_*(\@M_x),
\end{align*}
i.e., for each $n$-ary $R \in \@L$ and $\vec{a} \in \#N$,
\begin{align*}
R^{F(x)}(\vec{a}) \coloniff R^\@M(f_x^{-1}(\vec{a}));
\end{align*}
clearly this is Borel (in $x$).  Define $F$ on morphisms by
\begin{align*}
F : \Iso_X(\@M) &--> \#S_\infty \ltimes \Mod_\#N(\@L) \\
(g : \@M_x -> \@M_y) &|--> (f_y \circ g \circ f_x^{-1}, F(x)).
\end{align*}
This is Borel, since letting $\@S$ be a countable basis of open sections in $M$,
\begin{align*}
(f_y \circ g \circ f_x^{-1})(i) = j
&\iff \exists S, T \in \@S\; (f_x^{-1}(i) \in S \AND f_y^{-1}(j) \in T \AND g \in \sqsqbr{S |-> T}).
\end{align*}
It is straightforward to check that this works.
\end{proof}

\begin{proof}[Proof of \cref{thm:cbstr-isogpd-ff}]
Apply \cref{thm:cbstr-ext} to get $\@L', \@M'$ and $G : \Iso_X(\@M) -> \Iso_X(\@M')$, then apply \cref{thm:cbstr-unif} (to $\@L', \@M'$) to get $F : \Iso_X(\@M') -> \#S_\infty \ltimes \Mod_\#N(\@L')$; the desired functor is $F \circ G$.
\end{proof}

\begin{remark}
\label{rmk:cbstr-isogpd-ff-io}
We may arrange for the functor $\Iso_X(\@M) -> \#S_\infty \ltimes \Mod_\#N(\@L')$ produced by \cref{thm:cbstr-isogpd-ff} to be not only full and faithful, but also injective on objects (so that it induces not only a reduction but an embedding of orbit equivalence relations $\#E_{\Iso_X(\@M)} -> \#E_{\#S_\infty}^{\Mod_\#N(\@L')}$).  Indeed, let $\@L', \@M', G, F$ be as above, and let $f_x$ be as in the proof of \cref{thm:cbstr-unif}.  Note that for each $x \in X$, $D_x := \{f_x(x, i)\}_{i \in \#N}$ (where $(x, i) \in M'_x$ are the newly added constants) is an infinite subset of $\#N$; moreover, the map $x |-> D_x$ is Borel.  Let $u : X -> \#S_\infty$ be an injective Borel map, and for each infinite subset $D \subseteq \#N$, let $v_D : \#N -> D$ be a bijection, with $D |-> v_D$ Borel.  Define for each $x \in X$
\begin{align*}
f'_x : M_x &\cong \#N \\
(x, i) &|-> v_{D_x}(u(x)(i)) \\
a &|-> f_x(a) \qquad\text{for all other $a \in M_x$}.
\end{align*}
Let $F'$ be obtained from \cref{thm:cbstr-unif} by replacing $f_x$ with $f'_x$.  Then $F'$ is injective on objects, since from $F'(x)$ we may recover $f'_x(x, i) \in \#N$ as the unique element of $C_i^{F'(x)}$ for each $i$, hence also $D_x = \{f'_x(x, i)\}_i$ and so $u(x) = v_{D_x}^{-1} \circ f'_x(x, -)$.
\end{remark}

\section{Non-Archimedean groupoids}
\label{sec:nonarchgpd}

In this section, we show that every non-Archimedean open quasi-Polish groupoid $G$ is canonically isomorphic to the isomorphism groupoid of an étale structure over $G^0$ (whose elements are left cosets of open subgroupoids).  This can be seen as a ``Yoneda embedding'' for such groupoids (see \cref{rmk:gpd-yoneda}), which together with the results of the previous section immediately yields \cref{thm:intro-nonarchgpd-rep}.

\subsection{Étale spaces of left cosets}

Let $G$ be a non-Archimedean open topological groupoid.  For an open subgroupoid $U \subseteq G$, a \defn{left coset} of $U$ is a subset of the form $g \cdot U$ for some $g \in G$ with $\sigma(g) \in U$ (i.e., $g \cdot U \ne \emptyset$); thus, $g \cdot U$ is an open subset of $G_{\tau(g)}$ (recall from \cref{sec:gpd} that by default, we always regard a groupoid $G$ as a set over $G^0$ via $\tau$).  We have the \defn{left coset equivalence relation}
\begin{align*}
g \sim_U h \coloniff g \cdot U = h \cdot U \iff h \in g \cdot U \iff g^{-1} \cdot h \in U
\end{align*}
between elements of $\sigma^{-1}(U)$ in the same $\tau$-fiber.  Since $U \subseteq G$ is open, so is $({\sim_U}) \subseteq \sigma^{-1}(U) \times_{G^0} \sigma^{-1}(U)$, whence by \cref{thm:etale-quotient} the quotient \defn{space of left cosets}
\begin{align*}
G/U := \sigma^{-1}(U)/{\sim_U}
\end{align*}
is étale over $G^0$ via the descendant along the open projection $\pi = \pi_U : \sigma^{-1}(U) ->> G/U$ of $\tau : \sigma^{-1}(U) -> G^0$ which we continue to denote by $\tau : G/U -> G^0$.  By the remarks after \cref{thm:etale-quotient}, open sections $\pi(S) \subseteq G/U$ are in bijection with $\sim_U$-invariant $\sim_U$-small open $S \subseteq \sigma^{-1}(U)$.  We say \defn{$U$-invariant} instead of $\sim_U$-invariant, i.e.,
\begin{align*}
S \cdot U = S,
\end{align*}
and \defn{$U$-small} instead of $\sim_U$-small, i.e.,
\begin{align*}
S^{-1} \cdot S \subseteq U.
\end{align*}
Note that we always have a canonical open section
\begin{align*}
\pi(U) = \{U_x \mid x \in U^0\} \subseteq G/U,
\end{align*}
with $\tau(\pi(U)) = \tau(U) = U^0$; we call it the \defn{unit section}.

The left multiplication action of $G$ on itself descends to a continuous action $G \curvearrowright G/U$
\begin{align*}
{\cdot} : G \times_{G^0} G/U &--> G/U \\
(g, h \cdot U) &|--> g \cdot h \cdot U
\end{align*}
(fiber product with respect to $\sigma$ on the left).  Hence for an open section $\pi(S) \subseteq G/U$ (corresponding to $U$-invariant $U$-small open $S \subseteq \sigma^{-1}(U)$), the right multiplication map
\begin{align*}
(-) \cdot S : \sigma^{-1}(\tau(S)) &--> G/U \\
g &|--> g \cdot S = g \cdot \pi(S)_{\sigma(g)}
\end{align*}
is continuous.

Now consider two open subgroupoids $U, V \subseteq G$ and $V$-invariant $V$-small open $S \subseteq \sigma^{-1}(V)$.
Then $(-) \cdot S$ descends to the quotient $G/U = \sigma^{-1}(U)/{\sim_U}$ iff $\sigma^{-1}(U) \subseteq \sigma^{-1}(\tau(S))$, i.e., $U^0 \subseteq \tau(S)$, and whenever $g, h \in \sigma^{-1}(U)_x$ with $g^{-1} \cdot h \in U$, we have $g \cdot S = h \cdot S$, i.e., $g^{-1} \cdot h \cdot S \subseteq S$, i.e., $g^{-1} \cdot h \in S \cdot S^{-1}$; in other words, iff
\begin{align*}
U \subseteq S \cdot S^{-1}
\end{align*}
(which implies $U^0 \subseteq \tau(S)$).
To summarize:

\begin{lemma}
\label{thm:nonarchgpd-rightmult}
For any open subgroupoids $U, V \subseteq G$ and $V$-invariant $V$-small open $S \subseteq \sigma^{-1}(V)$ such that
\begin{gather*}
U \subseteq S \cdot S^{-1},
\end{gather*}
we have a well-defined continuous right multiplication map over $G^0$
\begin{align*}
f_{U,V,S} := (-) \cdot S : G/U &--> G/V.
\qed
\end{align*}
\end{lemma}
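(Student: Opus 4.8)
The plan is to verify directly that the formula $g \cdot S$ (for $g \in \sigma^{-1}(\tau(S))$, thought of as a left coset $g \cdot V$ of $V$) is independent of the representative $g$ modulo $\sim_U$, and that the resulting map $G/U \to G/V$ is continuous. The algebraic content is exactly the derivation already carried out in the paragraph preceding the lemma, so the proof is mostly a matter of assembling those observations. First I would note that the hypothesis $U \subseteq S \cdot S^{-1}$ forces $U^0 = \tau(U) \subseteq \tau(S \cdot S^{-1}) = \tau(S)$, so $\sigma^{-1}(U) \subseteq \sigma^{-1}(U^0) \subseteq \sigma^{-1}(\tau(S))$ and the right multiplication map $(-)\cdot S$ is at least defined on all of $\sigma^{-1}(U)$ (using $\sigma^{-1}(\tau(S)) \supseteq \sigma^{-1}(U)$; here I am using that $\sigma^{-1}(W)$ depends only on $W^0$ for a subgroupoid $W$, and more generally $\sigma^{-1}(\tau(S)) = \{g \mid \sigma(g) \in \tau(S)\}$). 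The key well-definedness check: if $g, h \in \sigma^{-1}(U)$ lie in the same $\tau$-fiber with $g \sim_U h$, i.e. $g^{-1} \cdot h \in U$, then since $U \subseteq S \cdot S^{-1}$ we get $g^{-1} \cdot h \in S \cdot S^{-1}$, so $g^{-1} \cdot h \cdot S \subseteq S \cdot S^{-1} \cdot S \subseteq S$ (the last inclusion because $S$ is $V$-small, $S^{-1}\cdot S \subseteq V$, and $V$-invariant, $S \cdot V = S$); symmetrically $h^{-1}\cdot g \cdot S \subseteq S$, giving $g \cdot S = h \cdot S$ as left cosets of $V$. Hence $(-)\cdot S$ factors through $\pi_U$ to give a map $f_{U,V,S} \colon G/U \to G/V$, which is clearly over $G^0$ since $\tau(g \cdot S) = \tau(g)$.

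For continuity, I would use that $\pi_U \colon \sigma^{-1}(U) \to G/U$ is an open quotient map (by \cref{thm:etale-quotient}, since $({\sim_U})$ is an open equivalence relation on the open $\sigma^{-1}(U)$), so a map out of $G/U$ is continuous iff its precomposition with $\pi_U$ is; and $f_{U,V,S} \circ \pi_U = (-)\cdot S$ on $\sigma^{-1}(U)$, which is continuous because it is the composite of the inclusion $\sigma^{-1}(U) \hookrightarrow G \times_{G^0} G/V$, $g \mapsto (g, \pi_V(S)_{\sigma(g)})$ — continuous since $\pi_V$ and the "unit-section-twisted-by-$S$" map $x \mapsto \pi_V(S)_x = S_x \cdot V$ on $\tau(S)$ are continuous, the latter because $\pi(S) \subseteq G/V$ is an open section — followed by the continuous descended action $G \times_{G^0} G/V \to G/V$ of \cref{sec:etale}'s left-multiplication action. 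Alternatively, and perhaps more cleanly, one checks openness of preimages on a subbasis: for an open section $\pi_V(T) \subseteq G/V$, $(-)\cdot S$ pulls $\Dia$-type opens back to opens because right multiplication by the open set $S$ is an open operation in the open groupoid $G$, so $f_{U,V,S}$ is continuous.

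The only genuine subtlety — and the step I expect to be the main obstacle to a clean writeup rather than a deep difficulty — is being careful that all the relevant "invariance" and "smallness" manipulations on subsets of $G$ are performed fiberwise over $G^0$ (all products $S \cdot S^{-1}$, $S^{-1}\cdot S$, $g^{-1}\cdot h \cdot S$ etc. implicitly use $\mu$ on $G \times_{G^0} G$), and that $S$ being a $V$-invariant $V$-small open lift of the open section $\pi_V(S)$ is exactly what makes $x \mapsto S_x \cdot V \in (G/V)_x$ a well-defined continuous section over $\tau(S)$ — this is the content of the remarks after \cref{thm:etale-quotient}, which I would invoke rather than reprove. Given that, the lemma follows by the two-paragraph argument above; I would keep the written proof to essentially the well-definedness computation plus one sentence citing openness of $\pi_U$ for continuity.
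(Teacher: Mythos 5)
Your proposal is correct and follows essentially the same route as the paper, which proves this lemma entirely in the discussion preceding its statement: the well-definedness reduces to the coset computation $g^{-1}\cdot h \in U \subseteq S\cdot S^{-1} \Rightarrow g\cdot S = h\cdot S$ (using $S\cdot S^{-1}\cdot S \subseteq S\cdot V = S$), and continuity follows by factoring $(-)\cdot S$ through the continuous descended left-multiplication action $G \times_{G^0} G/V \to G/V$ and the open section $\pi_V(S)$, then descending along the open quotient $\pi_U$. The only cosmetic difference is that you verify both containments $g^{-1}\cdot h\cdot S \subseteq S$ and $h^{-1}\cdot g\cdot S \subseteq S$ by symmetry, whereas the paper notes one containment suffices since both sides are left cosets of $V$.
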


In particular, when $U \subseteq V$ and $S := V$, we get the canonical projection map $G/U -> G/V$ induced by the inclusions $\sigma^{-1}(U) \subseteq \sigma^{-1}(V)$ and $({\sim_U}) \subseteq ({\sim_V})$, which we denote by
\begin{align*}
\pi_{U,V} := f_{U,V,V} = (-) \cdot V : G/U &--> G/V.
\end{align*}

\subsection{The canonical structure}
\label{sec:nonarchgpd-str}

Fix a basis $\@U$ of open subgroupoids in $G$, together with for each $U \in \@U$ a family $\@S_U$ of $U$-invariant $U$-small open subsets of $\sigma^{-1}(U)$ descending to a basis of open sections in $G/U$, with $U \in \@S_U$.
Note that when $G$ is second-countable, using \cref{thm:nonarchgpd-basis-ctble}, we may find $\@U$ and each $\@S_U$ countable.

\begin{lemma}
\label{thm:nonarchgpd-basis-section}
$\bigcup_{U \in \@U} \@S_U$ is a basis of open sets in $G$.
\end{lemma}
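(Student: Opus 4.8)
The plan is to show that every open subset $W \subseteq G$ is a union of sets from $\bigcup_{U \in \@U} \@S_U$, by exhibiting, for each $g \in W$, some $U \in \@U$ and $S \in \@S_U$ with $g \in S \subseteq W$. First I would locate a suitable open subgroupoid: since $\@U$ is a basis of open subgroupoids at the unit morphisms and $g \cdot (-) : G \to G$ is a homeomorphism (as $G$ is an open topological groupoid, so multiplication and inverse are continuous and open), the set $g^{-1} \cdot W$ is an open neighborhood of $\sigma(g) = g^{-1} \cdot g$, so there is $U \in \@U$ with $\sigma(g) \in U \subseteq g^{-1} \cdot W$, i.e. $g \cdot U \subseteq W$. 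Note $g \in \sigma^{-1}(U)$, so $g \cdot U = g \cdot U_{\sigma(g)}$ is the left coset of $U$ through $g$, a basic open subset of $G_{\tau(g)}$ with $g \in g \cdot U$.

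The next step is to pass from the coset $g \cdot U \subseteq W$ to a member of $\@S_U$. Recall $\@S_U$ was chosen so that $\{\pi_U(S) \mid S \in \@S_U\}$ is a basis of open sections in the étale space $G/U$ over $G^0$. Consider the point $g \cdot U \in G/U$: since $\@S_U$ descends to a basis of open sections, and $\pi_U : \sigma^{-1}(U) \twoheadrightarrow G/U$ is open and continuous, there is $S \in \@S_U$ with $g \cdot U \in \pi_U(S)$ and $\pi_U(S) \subseteq \pi_U(g \cdot U \cdot \text{(something)})$ — more carefully, I want $S \in \@S_U$ with $g \in S$ and $S \subseteq g \cdot U$. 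Here is the key observation: because $S$ is $U$-invariant ($S \cdot U = S$) and $U$-small ($S^{-1} \cdot S \subseteq U$), once $g \in S$ we automatically get $S = g \cdot U$: indeed $g \in S$ gives $g \cdot U \subseteq S \cdot U = S$, and conversely for $h \in S$ we have $g^{-1} \cdot h \in S^{-1} \cdot S \subseteq U$ so $h \in g \cdot U$. Thus it suffices to find $S \in \@S_U$ with $g \in S$; equivalently, $g \cdot U \in \pi_U(S)$, which holds for some $S \in \@S_U$ since the $\pi_U(S)$ cover $G/U$ (being a basis of open sections, in particular every point of $G/U$ lies in one). For that $S$ we then have $g \in S = g \cdot U \subseteq W$, as desired.

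Finally I would remark that each $S \in \@S_U$ is indeed open in $G$: by construction $S$ is an open subset of $\sigma^{-1}(U)$, which is open in $G$ since $\sigma$ is continuous and $U$ open (or simply: the $\@S_U$ were specified as open subsets of $\sigma^{-1}(U)$, hence open in $G$). This shows $\bigcup_{U \in \@U}\@S_U$ consists of open subsets of $G$ and every open $W \subseteq G$ is a union of such, i.e. it is a basis. I do not expect any serious obstacle; the only point requiring care is the identity $S = g\cdot U$ for $S \in \@S_U$ containing $g$, which is exactly the combination of $U$-invariance and $U$-smallness and is the reason the chosen families $\@S_U$ interact well with cosets.
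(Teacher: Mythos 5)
Your proof breaks down at the central claim that any $S \in \@S_U$ containing $g$ satisfies $S = g \cdot U$; only one inclusion holds. The condition $S^{-1} \cdot S \subseteq U$ is a \emph{fiberwise} condition over $G^0$: the product $h^{-1} \cdot k$ is only defined for $h, k \in S$ with $\tau(h) = \tau(k)$, so $U$-smallness only tells you that $S \cap \tau^{-1}(\tau(g)) = g \cdot U$. An element of $\@S_U$ descends to an open \emph{section} of $G/U$ over $G^0$, i.e., it consists of one left coset of $U$ for \emph{each} point of the open set $\tau(S) \subseteq G^0$; the cosets sitting over points $y \ne \tau(g)$ are completely uncontrolled by the hypothesis $g \in S$, and there is no reason for them to lie in $W$. (Your identity $S = g \cdot U$ is exactly what happens in the group case $G^0 = \{*\}$, where there is only one $\tau$-fiber; the whole content of the groupoid statement is the presence of the other fibers.) Consequently, knowing merely that the sets $\pi_U(S)$ \emph{cover} $G/U$ is not enough; you must use that they form a \emph{basis}. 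The paper's proof does this: by continuity of multiplication it finds an open $W' \ni g$ and $U \in \@U$ with $\sigma(g) \in U$ and $W' \cdot U \subseteq W$; then $W' \cdot U$ is a $U$-invariant open set containing $g$, so $\pi_U(W' \cdot U)$ is open in $G/U$ and hence a union of basic open sections $\pi_U(S)$ with $S \in \@S_U$, and pulling back along $\pi_U$ (using $U$-invariance, so that $\pi_U^{-1}(\pi_U(S)) = S$) exhibits $W' \cdot U$ as a union of members of $\@S_U$, one of which contains $g$ and all of which lie in $W$.

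A secondary inaccuracy: left multiplication $g \cdot (-)$ is not a homeomorphism of $G$; it is only a homeomorphism between the fibers $\tau^{-1}(\sigma(g))$ and $\tau^{-1}(\tau(g))$, so $g^{-1} \cdot W$ is open only in the fiber $\tau^{-1}(\sigma(g))$, not in $G$, and the neighborhood basis property of $\@U$ supplies subgroupoids inside open neighborhoods of $\sigma(g)$ in $G$, not inside relatively open subsets of a single fiber. This step is repairable (enlarge $g^{-1} \cdot W$ to an open subset of $G$ meeting the fiber correctly, or argue via continuity of $\mu$ at $(g, \sigma(g))$ as the paper does), but the coset identity above is not, so the proof as written does not establish the lemma.
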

\begin{proof}
Let $V \subseteq G$ be open and $g : x -> y \in V$.  By continuity of multiplication, there are open $g \in W \subseteq G$ and $x \in U \in \@U$ such that $W \cdot U \subseteq V$.  Then $W \cdot U \subseteq G$ is $U$-invariant open, whence $\pi_U(W \cdot U) \subseteq G/U$ is a union of $\pi_U(S)$ for $S \in \@U$, whence $W \cdot U = \pi_U^{-1}(\pi_U(W \cdot U)$ is a union of $S = \pi_U^{-1}(S)$ for $S \in \@U$, one of which contains $g$.
\end{proof}

Let $\@M$ be the étale structure over $G^0$ in the language $\@L$ consisting of a sort for each $U \in \@U$, interpreted in $\@M$ as $G/U$, and a function symbol for each $U, V \in \@U$ and $S \in \@S_V$ satisfying $U \subseteq S \cdot S^{-1}$ as in \cref{thm:nonarchgpd-rightmult}, interpreted in $\@M$ as the map $f_{U,V,S} = (-) \cdot S : G/U -> G/V$ defined there.  We call $\@M$ the \defn{canonical structure} of $G$ (with respect to the chosen $\@U, \@S_U$).  When $G$ is second-countable and $\@U, \@S_U$ are countable, $\@M$ is second-countable.

\subsection{The homomorphism category}
\label{sec:nonarchgpd-hom}

Following \cite[\S3.5]{Mo2}, we now give an explicit description of the homomorphism category $\Hom_{G^0}(\@M)$ of the canonical structure $\@M$.  This ``topological Yoneda lemma'' argument forms the heart of the proof of \cref{thm:intro-nonarchgpd-yoneda-iso}.

Fix objects $x, y \in G^0$.  Given a homomorphism $h : \@M_x -> \@M_y$, i.e., a morphism $x -> y$ in $\Hom_{G^0}(\@M)$, we have for each sort $U \in \@U$ a map $h_U : (G/U)_x -> (G/U)_y$, which when $x \in U$ we may evaluate at the unit coset $U_x = \pi_U(x) \in (G/U)_x$, yielding an element $h_U(U_x) \in (G/U)_y$.

As $U$ varies, the sets $(G/U)_y$ are related as follows: for $x \in U \subseteq V \in \@U$, we have the projection map $\pi_{U,V} = (-) \cdot V : (G/U)_y -> (G/V)_y$, which is functorial in $(U, V)$ (i.e., $\pi_{U,U} = 1$ and $\pi_{V,W} \circ \pi_{U,V} = \pi_{U,W}$).  We may thus form the inverse limit
\begin{align*}
\projlim_{x \in U \in \@U} (G/U)_y = \{\vec{a} = (a_U)_U \in \prod_{x \in U \in \@U} (G/U)_y \mid \forall U \subseteq V\, (\pi_{U,V}(a_U) = a_V)\}
\end{align*}
consisting of all coherent families of elements.

\begin{lemma}
\label{thm:nonarchgpd-yoneda}
For all $x, y \in G^0$, the above-described map
\begin{align*}
\Phi : \Hom_{G^0}(\@M)(x, y) &--> \prod_{x \in U \in \@U} (G/U)_y \\
(h : \@M_x -> \@M_y) &|--> (h_U(U_x))_U
\end{align*}
restricts to a bijection
\begin{align*}
\Phi : \Hom_{G^0}(\@M)(x, y) &\cong \projlim_{x \in U \in \@U} (G/U)_y
\end{align*}
with inverse
\begin{align*}
\Psi : \projlim_{x \in U \in \@U} (G/U)_y &--> \Hom_{G^0}(\@M)(x, y) \\
\vec{a} = (a_U)_U &|--> \left(\begin{aligned}
(G/U)_x &-> (G/U)_y \\
b &|-> a_V \cdot S \quad\text{for any $b \subseteq S \in \@S_U$, $x \in V \in \@U$, $V \subseteq S \cdot S^{-1}$}
\end{aligned}\right)_U.
\tag{$*$}
\end{align*}
\end{lemma}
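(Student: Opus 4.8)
This lemma is a topological, groupoid-theoretic form of the Yoneda lemma: a homomorphism $h\colon\@M_x\to\@M_y$ of the canonical structure is determined by the images of the unit cosets $U_x$, and every coherent family of such images occurs. The plan is to verify in turn that $\Phi$ lands in $\projlim_{x\in U\in\@U}(G/U)_y$, that $\Psi$ is a well-defined map into $\Hom_{G^0}(\@M)(x,y)$, and that $\Phi,\Psi$ are mutually inverse; the substantive content throughout is bookkeeping with left cosets together with repeated appeals to non-Archimedean-ness to shrink open neighborhoods of units to subgroupoids in $\@U$. For the first point: for $x\in U\subseteq V$ in $\@U$ the canonical projection $\pi_{U,V}=f_{U,V,V}$ is a function symbol of $\@L$ (since $U\subseteq V=V\cdot V^{-1}$ as $V$ is a subgroupoid), so any $h\colon\@M_x\to\@M_y$ commutes with its interpretation; evaluating at $U_x$ and using $\pi_{U,V}(U_x)=V_x$ gives $\pi_{U,V}(h_U(U_x))=h_V(V_x)$, which is precisely the coherence condition on $\Phi(h)=(h_U(U_x))_U$.

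Next, well-definedness of $\Psi$ breaks into three points. (a) \emph{Valid choices exist.} Given $b\in(G/U)_x$, since $\@S_U$ descends to a basis of open sections of $G/U$ there is $S\in\@S_U$ with $b\subseteq S$; then $S\cdot S^{-1}$ is open (openness of $\mu$) and contains $x=gg^{-1}$ for a representative $g\in b$, so non-Archimedean-ness yields $V\in\@U$ with $x\in V\subseteq S\cdot S^{-1}$, which by \cref{thm:nonarchgpd-rightmult} makes $f_{V,U,S}\colon G/V\to G/U$ a legitimate function symbol. (b) \emph{The value $a_V\cdot S$ is independent of the choices.} Given two triples $(S_i,V_i)$ with $b\subseteq S_i\in\@S_U$ and $x\in V_i\subseteq S_i\cdot S_i^{-1}$, pick $V'\in\@U$ with $x\in V'\subseteq V_1\cap V_2$; the functoriality $f_{V_i,U,S_i}\circ\pi_{V',V_i}=f_{V',U,S_i}$ of right multiplication (on cosets, $(c\cdot V_i)\cdot S_i=c\cdot S_i$) together with coherence $\pi_{V',V_i}(a_{V'})=a_{V_i}$ reduces this to the case $V_1=V_2$, and there $b\subseteq S_1\cap S_2$ forces the two left $U$-cosets $a_V\cdot S_1$ and $a_V\cdot S_2$ to agree, each being the translate by a representative of $a_V$ of the common value of the sections $\pi_U(S_1),\pi_U(S_2)$ over the relevant object. (c) \emph{$\Psi(\vec a)$ is an $\@L$-homomorphism}, i.e.\ the family of maps $(G/U)_x\to(G/U)_y$ commutes with every $f_{U,V,S}^{\@M}$: again this is associativity of right multiplication (a composite of right-multiplication maps is right multiplication by the appropriate product of sections), computed on both sides using the freedom granted by (b).

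Finally, to see $\Phi$ and $\Psi$ are inverse: for $\Phi\circ\Psi=\mathrm{id}$, in the formula defining $\Psi(\vec a)_U(U_x)$ one may take $S=U$ and $V=U$, giving $a_U\cdot U=f_{U,U,U}(a_U)=a_U$. For $\Psi\circ\Phi=\mathrm{id}$, given $h$ and $\vec a=\Phi(h)$, for $b\in(G/U)_x$ choose $b\subseteq S\in\@S_U$ and $x\in V\subseteq S\cdot S^{-1}$ and compute
\begin{align*}
\Psi(\vec a)_U(b)
&= a_V\cdot S = h_V(V_x)\cdot S = f_{V,U,S}^{\@M_y}(h_V(V_x)) \\
&= h_U\bigl(f_{V,U,S}^{\@M_x}(V_x)\bigr) = h_U(V_x\cdot S) = h_U(S_x) = h_U(b),
\end{align*}
using that $h$ is a homomorphism and that $V_x\cdot S=S_x=b$ is the value over $x$ of the section $\pi_U(S)$.

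The formal skeleton above is routine; the real work — and the only place the hypotheses beyond openness enter — is (i) the coset computations establishing the functoriality and associativity of the maps $f_{U,V,S}$ and that $a_V\cdot S$ depends only on $b$, and (ii) the repeated use of the non-Archimedean hypothesis to replace open neighborhoods of units by subgroupoids in $\@U$ and thereby form common refinements. I expect point (b) to be the most delicate piece to write out carefully.
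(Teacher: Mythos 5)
Your overall route is the same as the paper's: coherence of $\Phi(h)$ via the projections $f_{U,V,V}$, well\Hyphdash*definedness of $\Psi$ by comparing the various admissible choices, the homomorphism property via associativity/equivariance of right multiplication, and the two composite identities by the same direct computations (taking $S=V=U$ for $\Phi\circ\Psi$, and commuting $h$ past $f_{V,U,S}$ for $\Psi\circ\Phi$). The one place you diverge is step (b), and there your argument has a genuine gap. After reducing to a common $V$ with $x\in V\subseteq (S_1\cdot S_1^{-1})\cap(S_2\cdot S_2^{-1})$, you claim $a_V\cdot S_1=a_V\cdot S_2$ because $b\subseteq S_1\cap S_2$, ``each being the translate \dots of the common value of the sections $\pi_U(S_1),\pi_U(S_2)$ over the relevant object.'' But for $g\in a_V$ one has $a_V\cdot S_i=g\cdot (S_i)_{\sigma(g)}$ (the left coset of $U$ generated by $g\cdot s$ for $s\in (S_i)_{\sigma(g)}$), and the relevant object $\sigma(g)$ ranges over the sources of the left $V$-coset $a_V$, which lie in $V^0\subseteq\tau(S_1)\cap\tau(S_2)$ but are in general \emph{not} $x$. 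Two $U$-invariant $U$-small open sets $S_1,S_2$ through $b$ are two open sections of $G/U$ that agree over $x$ but may have distinct fibers over other objects, so $(S_1)_{\sigma(g)}$ and $(S_2)_{\sigma(g)}$ can be different left cosets of $U$; nothing in your reduction rules this out.

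The missing idea is that the $S$-coordinate must be refined as well, and the coherence of $\vec a$ must be invoked once more at a correspondingly smaller subgroupoid. Since $S_1\cap S_2$ is $U$-invariant $U$-small open and contains $b$, choose $S_3\in\@S_U$ with $b\subseteq S_3\subseteq S_1\cap S_2$, then $V_3\in\@U$ with $x\in V_3\subseteq V\cap(S_3\cdot S_3^{-1})$. Coherence gives $a_{V_3}\subseteq a_V$, hence $a_{V_3}\cdot S_3\subseteq a_V\cdot S_i$ for $i=1,2$; all three are nonempty left cosets of $U$, so all coincide. This is exactly the paper's ``the admissible $(V,S)$ are down-directed and $a_V\cdot S$ is monotone, hence (being a left coset) constant'' argument, and the point is that the subgroupoid doing the work, $V_3$, depends on $S_1\cap S_2$ and not merely on $V_1,V_2$. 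With this repair your step (b) closes, and the rest of your proposal matches the paper's proof.
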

\begin{proof}
%
First, we check that $\Phi(h) \in \projlim_U (G/U)_y$.  Let $x \in U \subseteq V \in \@U$.  Since $V \in \@S_V$ and $V \cdot V^{-1} \supseteq U \cdot U^{-1} = U$, the map $f_{U,V,V}$ is part of $\@M$.  So since $h$ is a homomorphism, we have $\pi_{U,V}(h_U(U_x)) = f_{U,V,V}(h_U(U_x)) = h_V(f_{U,V,V}(U_x)) = h_V(U_x \cdot V) = h_V(V_x)$, as desired.

Next, we check that $\Psi$ is well-defined.  Let $\vec{a} = (a_U)_U \in \projlim_{x \in U \in \@U} (G/U)_y$.  For each $b \in (G/U)_x$, note that the set of $(V, S)$ allowed in the right-hand side of ($*$) is down-directed.  For any such $(V, S)$, we have $a_V \cdot S \in (G/U)_y$ by \cref{thm:nonarchgpd-rightmult}; and $a_V \cdot S$ is monotone in $(V, S)$ (since $\vec{a} \in \projlim_{x \in U \in \@U} (G/U)_y$), hence (being a left coset) is constant for all $(V, S)$.  So ($*$) defines a map $\Psi(\vec{a})_U : (G/U)_x -> (G/U)_y$ for each $U$.  That these maps form a homomorphism $\Psi(\vec{a}) : \@M_x -> \@M_y$ follows from left-equivariance of the right multiplication functions $f_{U,V,S}$: if $\Psi(\vec{a})_U(b) = a_W \cdot T$ for $b \subseteq T \in \@S_U$, $x \in W \in \@U$, and $W \subseteq T \cdot T^{-1}$ as in ($*$),
then $f_{U,V,S}(\Psi(\vec{a})_U(b)) = a_W \cdot T \cdot S = a_W \cdot f_{U,V,S}(T) = \Psi(\vec{a})_V(f_{U,V,S}(b))$ since $f_{U,V,S}(b) \subseteq f_{U,V,S}(T) \in \@S_V$ and
$f_{U,V,S}(T) \cdot f_{U,V,S}(T)^{-1}
= (T \cdot S) \cdot (S^{-1} \cdot T^{-1})
\supseteq T \cdot U \cdot T^{-1}
\supseteq T \cdot T^{-1}
\supseteq W$ (using that $T \in \@S_U$ so $\sigma(T) \subseteq U$).

To check that $\Phi \circ \Psi = 1$: taking $b := U_x$, $S := U$, and $V := U$ in ($*$) yields $\Phi(\Psi(\vec{a}))_U = \Psi(\vec{a})_U(U_x) = a_U \cdot U = a_U$.

Finally, to check that $\Psi \circ \Phi = 1$: let $h : \@M_x -> \@M_y$, $U \in \@U$, and $a \in (G/U)_x$; we must check that $h_U(a) = \Psi(\Phi(h))_U(a)$.  Let $a \subseteq S \in \@S_U$, $x \in V \in \@U$, and $V \subseteq S \cdot S^{-1}$ as in ($*$), so that $\Psi(\Phi(h))_U(a) = \Psi((h_U(U_x))_U)_U(a) = h_V(V_x) \cdot S$.  Since $V \subseteq S \cdot S^{-1}$, the map $f_{V,U,S}$ is part of $\@M$, whence $h_V(V_x) \cdot S = f_{V,U,S}(h_V(V_x)) = h_U(f_{V,U,S}(V_x)) = h_U(V_x \cdot S) = h_U(S_x) = h_U(a)$.
\end{proof}

\begin{remark}
\label{rmk:gpd-yoneda}
When $G$ is a discrete groupoid, we may pick $\@U := \{\{x\} \mid x \in G^0\}$ and $\@S_{\{x\}} := \{\{g\} \mid g \in \sigma^{-1}(x)\}$.  Then each fiber of the canonical structure $\@M_x$ consists of the sets $G(y, x)$ for all $y \in G^0$, equipped with the right multiplication maps $(-) \cdot g : G(y, x) -> G(z, x)$ for $g : z -> y$; in other words, $\@M_x$ is the right multiplication $G$-action on morphisms with target $x$, or the representable presheaf at $x$.  \Cref{thm:nonarchgpd-yoneda} says that right $G$-equivariant maps $\@M_x -> \@M_y$ are in bijection (via evaluation at the unit morphism $x$) with morphisms $x -> y$, which is the usual Yoneda lemma for groupoids.
\end{remark}

\begin{remark}
When $G$ is a non-Archimedean topological group, $\@U$ is a neighborhood basis of its identity consisting of open subgroups, and $\@S_U$ for $U \in \@U$ is simply the set of left cosets of $U$.  Then (the unique fiber of) $\@M$ consists of, for each $U \in \@U$, the set $G/U$ of left cosets of $U$, together with the right multiplication maps $(-) \cdot S : G/U -> G/V$ for left cosets $S \in \@S_V$ with $U \subseteq S \cdot S^{-1}$; we recover in this case the proof of \cref{thm:intro-nonarchgrp-rep} sketched in the Introduction.
\end{remark}

We next transport the topology on $\Hom_{G^0}(\@M)$ (as defined in \cref{sec:etalestr}) across the bijection in \cref{thm:nonarchgpd-yoneda}:

\begin{lemma}
\label{thm:nonarchgpd-yoneda-top}
The topology on $\Hom_{G^0}(\@M)$ is generated by the maps $\sigma, \tau : \Hom_{G^0}(\@M) -> G^0$ together with the sets
\begin{align*}
\sqsqbr{\pi_U(U) |-> \pi_U(T)}_U = \{h : \@M_x -> \@M_y \in \Hom_{G^0}(\@M) \mid x \in U \AND h_U(U_x) = T_y\}
\end{align*}
for $U \in \@U$ and $T \in \@S_U$.
\end{lemma}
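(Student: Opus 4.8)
The plan is to read off the topology on $\Hom_{G^0}(\@M)$ from the general recipe in \cref{sec:etalestr} and then cut the resulting subbasis down using the Yoneda-type identity established in the proof of \cref{thm:nonarchgpd-yoneda}. Since $\{\pi_U(S) \mid S \in \@S_U\}$ is a basis for the topology of the sort $G/U$ of $\@M$, and $(A,B) |-> \sqsqbr{A |-> B}_U$ preserves unions in each variable, that recipe says the topology on $\Hom_{G^0}(\@M)$ is generated by $\sigma, \tau$ together with the sets $\sqsqbr{\pi_U(S) |-> \pi_U(T)}_U$ for $U \in \@U$ and $S, T \in \@S_U$ (unpacking $\sqsqbr{\,\cdot\,}_U$ on open sections gives the description in the statement when $S = U$). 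So it suffices to show each $\sqsqbr{\pi_U(S) |-> \pi_U(T)}_U$ is open in the (a priori coarser) topology $\@T'$ generated by $\sigma, \tau$ and the sets $\sqsqbr{\pi_U(U) |-> \pi_U(T)}_U$, $U \in \@U$, $T \in \@S_U$.

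So fix $U \in \@U$, $S, T \in \@S_U$, and a point $h_0 : \@M_{x_0} -> \@M_{y_0}$ of $\sqsqbr{\pi_U(S) |-> \pi_U(T)}_U$, i.e.\ $x_0 \in \tau(S)$ and $h_{0,U}(S_{x_0}) = T_{y_0}$. As $S \cdot S^{-1}$ is an open neighborhood of the unit morphism $x_0$ in $G$ and $G$ is non-Archimedean, I would pick $V \in \@U$ with $x_0 \in V \subseteq S \cdot S^{-1}$; then $V^0 \subseteq \tau(S)$, the symbol $f_{V,U,S}$ belongs to $\@M$ by \cref{thm:nonarchgpd-rightmult}, and a direct unwinding of the definition of $(-) \cdot S$ on unit cosets gives $f_{V,U,S}(V_x) = S_x$ for all $x \in V^0$. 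Since $h_0$ preserves $f_{V,U,S}$, we obtain $h_{0,V}(V_{x_0}) \cdot S = h_{0,U}(f_{V,U,S}(V_{x_0})) = h_{0,U}(S_{x_0}) = T_{y_0} \subseteq T$, so $h_{0,V}(V_{x_0})$ lies in $f_{V,U,S}^{-1}(\pi_U(T))$, which is open in $G/V$ by continuity of $f_{V,U,S}$.

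Using that $\{\pi_V(R) \mid R \in \@S_V\}$ is a basis of $G/V$, I can then choose $R \in \@S_V$ with $h_{0,V}(V_{x_0}) \in \pi_V(R) \subseteq f_{V,U,S}^{-1}(\pi_U(T))$, which unwinds to $h_{0,V}(V_{x_0}) = R_{y_0}$ together with $R \cdot S \subseteq T$ (for each $z \in \tau(R)$ the honest set-theoretic product $R_z \cdot S$ is exactly the coset $f_{V,U,S}(R_z)$, which lies in the section $\pi_U(T)$ and so equals $T_z$). Now $W := \sqsqbr{\pi_V(V) |-> \pi_V(R)}_V$ is a subbasic $\@T'$-open set; it contains $h_0$, since $x_0 \in V^0$ and $h_{0,V}(V_{x_0}) = R_{y_0}$; and for any $h : \@M_x -> \@M_y$ in $W$ we have $x \in V^0 \subseteq \tau(S)$ and $h_V(V_x) = R_y$, so $h_U(S_x) = h_U(f_{V,U,S}(V_x)) = f_{V,U,S}(h_V(V_x)) = R_y \cdot S$, a $U$-coset over $y$ contained in $R \cdot S \subseteq T$, hence equal to $T_y$; thus $h \in \sqsqbr{\pi_U(S) |-> \pi_U(T)}_U$. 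So $h_0 \in W \subseteq \sqsqbr{\pi_U(S) |-> \pi_U(T)}_U$, completing the argument.

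I expect the real work to be the coset bookkeeping rather than any conceptual step: carefully verifying $f_{V,U,S}(V_x) = S_x$ and that the set-theoretic product $R_z \cdot S$ coincides with the coset $f_{V,U,S}(R_z)$, both of which come down to juggling $U$-invariance, $U$-smallness of $S$, and the containment $V \subseteq S \cdot S^{-1}$. Once those identities are recorded, everything else — $\@T'$-openness of $W$, the basis claims, the homomorphism-preservation steps — is formal.
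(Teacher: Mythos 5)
Your proof is correct and takes essentially the same route as the paper's: reduce to the subbasic sets $\sqsqbr{\pi_U(S) |-> \pi_U(T)}_U$ with $S, T \in \@S_U$, choose $V \in \@U$ with $x_0 \in V \subseteq S \cdot S^{-1}$ so that $h_U(S_x) = h_V(V_x) \cdot S$, and exhibit a neighborhood of the form $\sqsqbr{\pi_V(V) |-> \pi_V(R)}_V$. The only (cosmetic) difference is that the paper produces its section $T'$ by picking points $g \in h_V(V_{x_0})$, $k \in S$ and invoking continuity of multiplication in $G$, whereas you pull $\pi_U(T)$ back along the continuous map $f_{V,U,S}$ and shrink to a basic open section of $G/V$, which also makes nonemptiness of $R_y \cdot S$ automatic.
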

\begin{proof}
We must show that any subbasic open set $\sqsqbr{\pi_U(S) |-> \pi_U(T)}_U$ for $U \in \@U$ and $S, T \in \@S_U$ can be generated by $\sigma, \tau$ and sets of the above form.  Let $h : \@M_x -> \@M_y \in \sqsqbr{\pi_U(S) |-> \pi_U(T)}_U$, i.e., $x \in \tau(S)$ and $h_U(S_x) = T_y$.  Let $V \in \@U$ with $x \in V \subseteq S \cdot S^{-1}$, so that $h_U(S_x) = h_V(V_x) \cdot S$ by \cref{thm:nonarchgpd-yoneda}.  Pick any $g \in h_V(V_x)$ and $k \in S_{\sigma(g)}$ (possible since $g \in h_V(V_x) \in G/V$ whence $g \in \sigma^{-1}(V) \subseteq \sigma^{-1}(S \cdot S^{-1}) = \sigma^{-1}(\tau(S))$), so that $g \cdot k \in h_V(V_x) \cdot S = h_U(S_x) = T_y$.  By continuity of multiplication, there are open $k \in S' \subseteq S$ and $g \in T' \subseteq \sigma^{-1}(\tau(S))$ with $T' \in \@S_V$ such that $T' \cdot S' \subseteq T$.  We claim that
\begin{align*}
h \in \sqsqbr{\pi_V(V) |-> \pi_V(T')}_V \subseteq \sqsqbr{\pi_U(S) |-> \pi_U(T)}_U.
\end{align*}
Since $h_V(V_x) \ni g \in T'$, $h \in \sqsqbr{\pi_V(V) |-> \pi_V(T')}_V$.  For any $h' : \@M_{x'} -> \@M_{y'} \in \sqsqbr{\pi_V(V) |-> \pi_V(T')}_V$, i.e., $x' \in V$ and $h'_V(V_{x'}) = T'_{y'}$, we have $x' \in V \subseteq S \cdot S^{-1}$, whence by \cref{thm:nonarchgpd-yoneda}, $h'_U(S_{x'}) = T'_{y'} \cdot S \supseteq T'_{y'} \cdot S' \subseteq T_{y'}$, and so $h'_U(S_{x'}) = T_{y'}$ since both are left cosets of $U$ and $T'_{y'} \cdot S' \ne \emptyset$ since $T' \subseteq \sigma^{-1}(\tau(S))$.
\end{proof}

Since the right multiplication maps $f_{U,V,S} : G/U -> G/V$ in $\@M$ are equivariant with respect to left multiplication, each morphism $g : x -> y \in G$ yields a homomorphism $g \cdot (-) : \@M_x -> \@M_y$; in other words, we have a canonical identity-on-objects functor
\begin{align*}
\eta : G &--> \Hom_{G^0}(\@M) \\
(g : x -> y) &|--> (g \cdot (-) : \@M_x -> \@M_y).
\end{align*}
Composed with the bijection in \cref{thm:nonarchgpd-yoneda}, this becomes for each $x, y \in G^0$
\begin{align*}
\Phi \circ \eta : G(x, y) &--> \projlim_{x \in U \in \@U} (G/U)_y \\
(g : x -> y) &|--> (g \cdot U)_U.
\end{align*}

\begin{proposition}
\label{thm:nonarchgpd-yoneda-emb-dense}
If $G$ is $T_0$, then $\eta : G -> \Hom_{G^0}(\@M)$ is a topological embedding with $\tau$-fiberwise dense image.
\end{proposition}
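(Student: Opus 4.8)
The goal is to show that $\eta \colon G \to \Hom_{G^0}(\@M)$ is a topological embedding onto a $\tau$-fiberwise dense subspace, assuming $G$ is $T_0$. I would break this into three tasks: (1) $\eta$ is injective, (2) $\eta$ is a homeomorphism onto its image, i.e.\ the topology on $\eta(G)$ pulls back to the topology of $G$, and (3) $\eta(G)$ is $\tau$-fiberwise dense in $\Hom_{G^0}(\@M)$.

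\emph{Injectivity.} Using the reformulation $\Phi \circ \eta \colon g \mapsto (g \cdot U)_{U}$, injectivity of $\eta$ is equivalent to: if $g \cdot U = g' \cdot U$ for all $x \in U \in \@U$ (with $g, g' \colon x \to y$), then $g = g'$. Now $g \cdot U = g' \cdot U$ means $g^{-1} g' \in U$; so $g^{-1} g'$ lies in every $U \in \@U$ containing $x = \sigma(g^{-1}g')$. Since $\@U$ is a basis of open subgroupoids and $G$ is $T_0$, the intersection $\bigcap_{x \in U \in \@U} U$ can contain only morphisms not topologically distinguishable from $x$; by $T_0$-ness this forces $g^{-1} g' = x$, i.e.\ $g = g'$. (This is where $T_0$ is used.)

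\emph{Embedding.} For the topology, I would use the explicit generating sets from \cref{thm:nonarchgpd-yoneda-top}: the topology on $\Hom_{G^0}(\@M)$ is generated by $\sigma$, $\tau$, and the sets $\sqsqbr{\pi_U(U) \mapsto \pi_U(T)}_U$ for $U \in \@U$, $T \in \@S_U$. Pulling back along $\eta$: we have $\sigma(\eta(g)) = \sigma(g)$, $\tau(\eta(g)) = \tau(g)$, and $\eta(g) \in \sqsqbr{\pi_U(U) \mapsto \pi_U(T)}_U$ iff $\sigma(g) \in U$ and $g \cdot U = T_{\tau(g)}$, i.e.\ iff $g \in T$ (since $T$ is $U$-invariant with $\sigma(T) \subseteq U$, the condition $g \cdot U = T_{\tau(g)}$ says exactly $g \in T$ and $\sigma(g) \in U$). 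Thus $\eta^{-1}(\sqsqbr{\pi_U(U) \mapsto \pi_U(T)}_U) = T$. By \cref{thm:nonarchgpd-basis-section}, $\bigcup_{U} \@S_U$ is a basis of open sets in $G$, so these pullbacks, together with $\sigma^{-1}(U), \tau^{-1}(U)$ (which are open in $G$ by continuity), generate the topology of $G$; hence $\eta$ is continuous and open onto its image, i.e.\ an embedding.

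\emph{Fiberwise density.} Fix $y \in G^0$ and a nonempty basic open subset of $\tau^{-1}(y) \subseteq \Hom_{G^0}(\@M)$; by \cref{thm:nonarchgpd-yoneda-top} it has the form $\tau^{-1}(y) \cap \bigcap_{i<n} \sqsqbr{\pi_{U_i}(U_i) \mapsto \pi_{U_i}(T_i)}_{U_i}$ for finitely many $U_i \in \@U$, $T_i \in \@S_{U_i}$. Suppose $h \colon \@M_x \to \@M_y$ lies in it, so $x \in U_i$ and $h_{U_i}(U_{i,x}) = T_{i,\tau} $ for each $i$; I must produce $g \colon x' \to y$ in $G$ with $\eta(g)$ in the same set, i.e.\ (by the computation above) $g \in T_i$ for all $i$ — but wait, the $T_i$ need not have a common source, so more carefully I should use that a single $h$ meeting all these conditions forces compatibility, and then extract $g$ from the coherent family $\Phi(h)$. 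The key point: by \cref{thm:nonarchgpd-yoneda} every $h \in \Hom_{G^0}(\@M)(x,y)$ corresponds to a coherent family $(a_U)_{x \in U \in \@U} \in \projlim (G/U)_y$ with $a_U \subseteq G_y$ a left coset; picking any $g \in a_{U_0}$ for a small enough $U_0$ (contained in all the relevant $U_i$), coherence gives $g \cdot U_i = a_{U_i}$, and I need $a_{U_i} = T_{i, y}$, which is exactly the hypothesis $h_{U_i}(U_{i,x}) = T_{i,y}$ combined with $\Phi(h)_{U_i} = h_{U_i}(U_{i,x}) = a_{U_i}$. So any $g$ in $a_{U_0}$ (for $U_0 \in \@U$ with $x \in U_0 \subseteq \bigcap_i U_i$, which exists since $\@U$ is a basis) has $\eta(g)$ in the given open set — in particular the set contains a point of $\eta(G) \cap \tau^{-1}(y)$. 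This shows $\tau$-fiberwise density.

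\emph{Main obstacle.} The bookkeeping in the density argument — reconciling the finitely many ``local'' conditions $\sqsqbr{\pi_{U_i}(U_i) \mapsto \pi_{U_i}(T_i)}_{U_i}$ with a single coherent family and then reading off an honest morphism $g \in G$ — is the fiddliest part, though \cref{thm:nonarchgpd-yoneda} does essentially all the work. The only genuinely ``new'' input is the use of $T_0$ in the injectivity step, where I must argue that $\bigcap_{\sigma(k) = x,\ k \text{-ish}} U \ni k$ forces $k$ to be a unit; this needs the standard fact that in a $T_0$ space with a basis at $x$, the intersection of basic neighborhoods of $x$ is $\{x\}$ provided nothing else is topologically indistinguishable from $x$ — and here one should note $\sigma(k) = x$ together with $k$ in every subgroupoid-neighborhood of $x$ means $k$ and $x$ lie in exactly the same basic opens, hence $k = x$ by $T_0$.
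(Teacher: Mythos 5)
Your overall route is the paper's: the preimage computation $\eta^{-1}(\sqsqbr{\pi_U(U) |-> \pi_U(T)}_U) = T$ together with \cref{thm:nonarchgpd-basis-section} and $T_0$-ness gives the embedding, and density is obtained by extracting an actual morphism $g$ from the left coset $h_U(U_x) = \Phi(h)_U$ for a sufficiently small $U \in \@U$. One remark on the embedding step: your separate injectivity argument is correct but redundant — once the $\eta$-preimages of open sets are known to include a basis of $G$, injectivity is immediate from $T_0$ (morphisms with the same image lie in exactly the same basic opens $T$), which is how the paper phrases it. If you do run the coset argument, the verification you defer — that $k \in \bigcap_{x \in U \in \@U} U$ with $\sigma(k)=\tau(k)=x$ forces $k$ and $x$ to share all basic neighborhoods — genuinely needs the $U$-invariance and $U$-smallness of each $T \in \@S_U$ (e.g.\ $k \in T \in \@S_U$ and $k^{-1} \in U$ give $x = k \cdot k^{-1} \in T \cdot U = T$, and conversely $x \in T$ gives $k = x \cdot k \in T \cdot U = T$); it is not a purely formal consequence of $T_0$.

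The one real gap is in the density step: a basic open subset of $\Hom_{G^0}(\@M)$ has the form $\sigma^{-1}(V) \cap \tau^{-1}(W) \cap \bigcap_{i<n}\sqsqbr{\pi_{U_i}(U_i) |-> \pi_{U_i}(T_i)}_{U_i}$, and your argument drops the factor $\sigma^{-1}(V)$. As written, the $g$ you pick from $a_{U_0}$ has $\sigma(g)$ an arbitrary point of $G^0 \cap U_0$, which need not lie in $V$, so $\eta(g)$ need not land in the given open set. The fix is the paper's: since $x \in V \cap \bigcap_i U_i$ and $\@U$ is a neighborhood basis at $x$, choose $U_0 \in \@U$ with $x \in U_0 \subseteq \bigcap_i U_i$ and $G^0 \cap U_0 \subseteq V$; then any $g \in h_{U_0}((U_0)_x)$ satisfies $\sigma(g) \in G^0 \cap U_0 \subseteq V$, and the rest of your computation goes through unchanged.
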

\begin{proof}
We use the description of the topology of $\Hom_{G^0}(\@M)$ from \cref{thm:nonarchgpd-yoneda-top}.  For $U \in \@U$ and $T \in \@S_U$, we have
\begin{align*}
\eta^{-1}(\sqsqbr{\pi_U(U) |-> \pi_U(T)}_U) = \{g : x -> y \in G \mid x \in U \AND g \cdot U = T_y\} = T.
\end{align*}
It follows from \cref{thm:nonarchgpd-basis-section} and $G$ being $T_0$ that $\eta$ is an embedding.

For density, consider a basic open set $A = \sigma^{-1}(V) \cap \tau^{-1}(W) \cap \bigcap_{i < n} \sqsqbr{\pi_{U_i}(U_i) |-> \pi_{U_i}(T_i)}_{U_i} \subseteq \Hom_{G^0}(\@M)$, where $V, W \subseteq G^0$ are open, $U_i \in \@U$, and $T_i \in \@S_{U_i}$.  Let $y \in G^0$; we must show that if $A_y \ne \emptyset$, then there is a $g \in G$ such that $\eta(g) \in A_y$.  Let $h : \@M_x -> \@M_y \in A_y$.  Then $y \in W$, and $x \in V \cap \bigcap_i U_i$, whence there is some $x \in U \in \@U$ with $U \subseteq \bigcap_i U_i$ and $G^0 \cap U \subseteq V$.
Pick any $g : \@M_z -> \@M_y \in h_U(U_x)$.  Then $\eta(g) \in \tau^{-1}(W)$ since $y \in W$, $\eta(g) \in \sigma^{-1}(U)$ since $h_U(U_x) \in G/U$, and for each $i$, $\eta(g)((U_i)_z) = g \cdot U_i \subseteq h_U(U_x) \cdot U_i = f_{U,U_i,U_i}(h_U(U_x)) = h_{U_i}(f_{U,U_i,U_i}(U_x)) = h_{U_i}((U_i)_x) = (T_i)_y$, i.e., $\eta(g) \in \sqsqbr{\pi_{U_i}(U_i) |-> \pi_{U_i}(T_i)}_{U_i}$.
\end{proof}

\subsection{The isomorphism groupoid}
\label{sec:nonarchgpd-iso}

Restricting the above to the case where $G$ is a non-Archimedean quasi-Polish groupoid and $\@U, \@S_U$ are chosen to be countable, we have the main results of this section:

\begin{theorem}
\label{thm:nonarchgpd-yoneda-iso}
For open non-Archimedean quasi-Polish $G$, and countable $\@U, \@S_U$, $\eta : G -> \Hom_{G^0}(\@M)$ is an isomorphism of topological groupoids $G \cong \Iso_{G^0}(\@M)$.
\end{theorem}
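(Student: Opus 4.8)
The plan is to combine the topological Yoneda computation already carried out—specifically \cref{thm:nonarchgpd-yoneda-emb-dense}, which gives that $\eta : G -> \Hom_{G^0}(\@M)$ is a topological embedding with $\tau$-fiberwise dense image (applicable since $G$, being quasi-Polish, is $T_0$)—with the Pettis-type density argument of \cref{thm:subgpd-dense}. Roughly: $\eta$ identifies $G$ with a $\tau$-fiberwise dense quasi-Polish subgroupoid of $\Iso_{G^0}(\@M)$, and any such subgroupoid must be all of $\Iso_{G^0}(\@M)$.

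In more detail, I would argue as follows. First, the image of $\eta$ lands in $\Iso_{G^0}(\@M)$: for $g : x -> y \in G$, the homomorphism $\eta(g) = g \cdot (-) : \@M_x -> \@M_y$ has two-sided inverse $g^{-1} \cdot (-) : \@M_y -> \@M_x$, which is again a homomorphism by left-equivariance of the maps $f_{U,V,S}$, so $\eta(g)$ is an $\@L$-isomorphism $\@M_x \cong \@M_y$. Thus $\eta : G -> \Iso_{G^0}(\@M)$ is an identity-on-objects functor, and since $G$ is a groupoid its image $\eta(G)$ is a subgroupoid of $\Iso_{G^0}(\@M)$ (it contains all unit morphisms, is closed under composition by functoriality, and is symmetric since $\eta(g)^{-1} = \eta(g^{-1})$). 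Next, $\eta$ restricted to morphisms is a topological embedding by \cref{thm:nonarchgpd-yoneda-emb-dense}, and the subspace topology it induces on $\eta(G)$ from $\Hom_{G^0}(\@M)$ agrees with the one from the subspace $\Iso_{G^0}(\@M)$; hence $\eta(G)$, being homeomorphic to $G$, is quasi-Polish as a subspace of $\Iso_{G^0}(\@M)$. Finally, $\eta(G) \subseteq \Iso_{G^0}(\@M) \subseteq \Hom_{G^0}(\@M)$ with $\eta(G)$ $\tau$-fiberwise dense in the largest of the three, hence also $\tau$-fiberwise dense in the intermediate $\Iso_{G^0}(\@M)$, since closure within a subspace is the intersection of the ambient closure with that subspace.

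Now $\Iso_{G^0}(\@M)$ is quasi-Polish by \cref{thm:etalestr-isogpd-qpol} (as $\@M$ is second-countable, $\@U$ and each $\@S_U$ being countable), and $\eta(G)$ is a $\tau$-fiberwise dense quasi-Polish subgroupoid, so \cref{thm:subgpd-dense} gives $\eta(G) = \Iso_{G^0}(\@M)$. Therefore $\eta : G -> \Iso_{G^0}(\@M)$ is a bijective identity-on-objects functor which is a topological embedding, hence a homeomorphism, i.e., an isomorphism of topological groupoids. The genuine content of this theorem lies entirely in the earlier steps—the Yoneda-type identification of $\Hom_{G^0}(\@M)$ in \cref{thm:nonarchgpd-yoneda,thm:nonarchgpd-yoneda-top} and the embedding/density statement \cref{thm:nonarchgpd-yoneda-emb-dense}, together with the Pettis argument behind \cref{thm:subgpd-dense}; the step here is just the bookkeeping that assembles them, the only mild subtlety being the transfer of fiberwise density from $\Hom_{G^0}(\@M)$ down to the subspace $\Iso_{G^0}(\@M)$.
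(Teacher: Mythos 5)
Your proposal is correct and follows exactly the paper's own argument: observe that $\eta$ lands in $\Iso_{G^0}(\@M)$ since $G$ is a groupoid, invoke \cref{thm:nonarchgpd-yoneda-emb-dense} for the fiberwise dense embedding, and conclude via the Pettis-type \cref{thm:subgpd-dense}. The extra bookkeeping you supply (quasi-Polishness of the image, density passing down to the subspace $\Iso_{G^0}(\@M)$) is exactly what the paper leaves implicit.
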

\begin{proof}
Since $G$ is a groupoid, $\eta$ lands in $\Iso_{G^0}(\@M) \subseteq \Hom_{G^0}(\@M)$.  By \cref{thm:nonarchgpd-yoneda-emb-dense}, $\eta : G -> \Iso_{G^0}(\@M)$ is a $\tau$-fiberwise dense embedding, hence by \cref{thm:subgpd-dense} a homeomorphism.
\end{proof}

\begin{theorem}
\label{thm:nonarchgpd-rep}
For any open non-Archimedean quasi-Polish groupoid $G$, there is a countable single-sorted relational language $\@L$, an $\@L_{\omega_1\omega}$-sentence $\phi$, and a Borel equivalence of groupoids $G -> \#S_\infty \ltimes \Mod_\#N(\@L, \phi)$.
\end{theorem}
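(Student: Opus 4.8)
The plan is to chain together the four pieces assembled earlier: the topological Yoneda representation \cref{thm:nonarchgpd-yoneda-iso}, the Borel uniformization \cref{thm:cbstr-isogpd-ff}, the Borel equivalence-onto-essential-image corollary \cref{thm:functor-borel-equiv}, and the Lopez-Escobar theorem \cref{thm:lopez-escobar}. Since $G$ is quasi-Polish it is in particular second-countable, so as noted in \cref{sec:nonarchgpd-str} we may choose a countable basis $\@U$ of open subgroupoids together with countable families $\@S_U$, and the resulting canonical étale structure $\@M$ over $G^0$ is second-countable. By \cref{thm:nonarchgpd-yoneda-iso} the canonical functor $\eta$ is an isomorphism of topological groupoids $G \cong \Iso_{G^0}(\@M)$, hence in particular a Borel equivalence. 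Passing to the underlying fiberwise countable Borel structure of $\@M$ over the standard Borel space $G^0$ and applying \cref{thm:cbstr-isogpd-ff}, we obtain a countable single-sorted relational language $\@L$ and a full and faithful Borel functor $\Iso_{G^0}(\@M) \to \#S_\infty \ltimes \Mod_\#N(\@L)$; precomposing with $\eta$ yields a full and faithful Borel functor $F : G \to \#S_\infty \ltimes \Mod_\#N(\@L)$.

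Next I would apply \cref{thm:functor-borel-equiv} with domain the open quasi-Polish groupoid $G$ and codomain the standard Borel groupoid $H := \#S_\infty \ltimes \Mod_\#N(\@L)$: since $F$ is full and faithful, it is a Borel equivalence from $G$ onto the full subgroupoid $H|[F(G^0)]_H$ on its essential image, and by \cref{thm:functor-borel-full} the essential image $[F(G^0)]_H \subseteq \Mod_\#N(\@L)$ is Borel. The essential image is saturated under $\#E_H$, which, $H$ being the action groupoid of the logic action, is precisely the orbit equivalence relation $\#E_{\#S_\infty}^{\Mod_\#N(\@L)}$ of that action; hence $[F(G^0)]_H$ is an $\#S_\infty$-invariant Borel subset of $\Mod_\#N(\@L)$. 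By Lopez-Escobar (\cref{thm:lopez-escobar}) there is then an $\@L_{\omega_1\omega}$-sentence $\phi$ with $[F(G^0)]_H = \Mod_\#N(\@L, \phi)$. Finally, restricting an action groupoid to an invariant subspace of objects gives again an action groupoid, so $H|[F(G^0)]_H = \#S_\infty \ltimes \Mod_\#N(\@L, \phi)$, and $F$ is the desired Borel equivalence $G \to \#S_\infty \ltimes \Mod_\#N(\@L, \phi)$.

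Essentially all of the difficulty here is already absorbed into the cited results — the topological Yoneda-type computation of \cref{thm:nonarchgpd-yoneda} with the Pettis-theorem density argument behind \cref{thm:nonarchgpd-yoneda-iso}, the uniformly Borel ``add constants, then relabel each fiber by $\#N$'' construction behind \cref{thm:cbstr-isogpd-ff}, and the large-section uniformization behind \cref{thm:functor-borel-full} — so that what remains is bookkeeping. The only points needing any care are the verification that the essential image is $\#S_\infty$-invariant and Borel, so that Lopez-Escobar applies (this is exactly what \cref{thm:functor-borel-full} delivers, together with the observation that the essential image is $\#E_H$-saturated and $\#E_H$ agrees with the orbit equivalence relation of the logic action), and the trivial remark that the full subgroupoid of $\#S_\infty \ltimes \Mod_\#N(\@L)$ on an invariant set of objects is itself of the form $\#S_\infty \ltimes (-)$. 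I would also record, using the injective-on-objects strengthening \cref{rmk:cbstr-isogpd-ff-io} of \cref{thm:cbstr-isogpd-ff}, that $F$ may be taken injective on objects, so that the induced reduction $\#E_G \to \#E_{\#S_\infty}^{\Mod_\#N(\@L,\phi)}$ is in fact an embedding.
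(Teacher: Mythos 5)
Your proposal is correct and follows exactly the same route as the paper's proof: combine \cref{thm:nonarchgpd-yoneda-iso} with \cref{thm:cbstr-isogpd-ff} to get a full and faithful Borel functor $F : G \to \#S_\infty \ltimes \Mod_\#N(\@L)$, invoke \cref{thm:functor-borel-equiv} to see that $F$ is a Borel equivalence onto its Borel, $\#S_\infty$-invariant essential image, and apply Lopez-Escobar to axiomatize that image by a sentence $\phi$. The additional details you spell out (invariance of the essential image, the identification of the restricted groupoid with $\#S_\infty \ltimes \Mod_\#N(\@L,\phi)$, and the injective-on-objects refinement via \cref{rmk:cbstr-isogpd-ff-io}) are all consistent with what the paper states or records in \cref{rmk:nonarchgpd-rep-io}.
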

\begin{proof}
Combining \cref{thm:nonarchgpd-yoneda-iso} with \cref{thm:cbstr-isogpd-ff} yields $\@L$ as above and a full and faithful Borel functor $F : G -> \#S_\infty \ltimes \Mod_\#N(\@L)$.  By \cref{thm:functor-borel-equiv}, the essential image $[F(G^0)]_{\#S_\infty} \subseteq \Mod_\#N(\@L)$ is Borel, hence by \cref{thm:lopez-escobar} equal to $\Mod_\#N(\@L, \phi)$ for some $\@L_{\omega_1\omega}$-sentence $\phi$, whence $F : G -> \#S_\infty \ltimes \Mod_\#N(\@L, \phi)$ is a Borel equivalence.
\end{proof}

\begin{corollary}
Up to Borel equivalence, the following classes of standard Borel groupoids coincide:
\begin{enumerate}
\item[(i)]  open non-Archimedean quasi-Polish groupoids;
\item[(ii)]  action groupoids of Borel $\#S_\infty$-actions;
\item[(iii)]  groupoids of models of $\@L_{\omega_1\omega}$-sentences on $\#N$.
\end{enumerate}
Hence, so do their classes of orbit equivalence relations, up to Borel bireducibility.
\end{corollary}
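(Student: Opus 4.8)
The plan is to establish a cycle of implications among the three classes --- (i) $\Rightarrow$ (iii) $\Rightarrow$ (ii) $\Rightarrow$ (i), each at worst up to Borel equivalence --- with essentially all of the work carried by \cref{thm:nonarchgpd-rep}. The implication (i) $\Rightarrow$ (iii) \emph{is} \cref{thm:nonarchgpd-rep}: every open non-Archimedean quasi-Polish groupoid $G$ is Borel equivalent to $\#S_\infty \ltimes \Mod_\#N(\@L, \phi)$ for some countable relational language $\@L$ and $\@L_{\omega_1\omega}$-sentence $\phi$, and such a groupoid of models on $\#N$ is by definition an instance of class (iii).

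For (iii) $\Rightarrow$ (ii) I would observe that no passage to Borel equivalence is needed at all: by the definitions in \cref{sec:disclog}, a groupoid of models $\#S_\infty \ltimes \Mod_\#N(\@L, \phi)$ is literally the action groupoid of the logic action $\#S_\infty \curvearrowright \Mod_\#N(\@L)$ restricted to the Borel $\#S_\infty$-invariant subset $\Mod_\#N(\@L, \phi)$, and the restriction of a continuous Polish group action to a Borel invariant subset is a Borel action; hence every groupoid in class (iii) already lies in class (ii).

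For (ii) $\Rightarrow$ (i) I would begin with a Borel action of $\#S_\infty$ on a standard Borel space $X$ and apply the Becker--Kechris theorem \cite[5.2.1]{BK} to endow $X$ with a compatible Polish topology for which the action is continuous. By the stability properties of action groupoids recorded at the end of \cref{sec:action} --- if a topological groupoid is open (resp.\ non-Archimedean, resp.\ (quasi-)Polish) and acts continuously on $p : A \to G^0$, then so is $G \ltimes A$ --- and since $\#S_\infty$ is an open non-Archimedean Polish group, $\#S_\infty \ltimes X$ is then an open non-Archimedean Polish (hence quasi-Polish) groupoid whose underlying standard Borel groupoid is the one we started with. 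Thus every groupoid in class (ii) is Borel isomorphic, in particular Borel equivalent, to one in class (i), closing the cycle and showing that the three classes coincide up to Borel equivalence.

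The final assertion then follows formally: a Borel equivalence of groupoids restricts on objects, together with the object part of a Borel inverse equivalence, to mutually inverse Borel reductions between the associated orbit equivalence relations (as recorded in \cref{sec:functor} and noted after \cref{qu:lupini}), so these are Borel bireducible; hence the classes of orbit equivalence relations arising from (i)--(iii) coincide up to Borel bireducibility. I do not expect a genuine obstacle here, since \cref{thm:nonarchgpd-rep} bears all the weight; the only point requiring a moment's care is the verification, via \cref{sec:action}, that the Becker--Kechris retopologization of a Borel $\#S_\infty$-space really does yield an \emph{open} and \emph{non-Archimedean} quasi-Polish groupoid, rather than merely a quasi-Polish one.
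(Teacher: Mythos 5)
Your proposal is correct and follows exactly the paper's route: the cycle (i)$\Rightarrow$(iii) by \cref{thm:nonarchgpd-rep}, (iii)$\Rightarrow$(ii) immediately from the definitions, and (ii)$\Rightarrow$(i) by Becker--Kechris together with the closure properties of action groupoids from \cref{sec:action}. The extra care you take in checking that the retopologized action groupoid is open and non-Archimedean is exactly what the paper's terse citation of \cite[5.2.1]{BK} is implicitly relying on.
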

\begin{proof}
(i)$\implies$(iii) is by \cref{thm:nonarchgpd-rep}, (iii)$\implies$(ii) is immediate, and (ii)$\implies$(i) is by the Becker--Kechris theorem \cite[5.2.1]{BK}.
\end{proof}


\begin{remark}
\label{rmk:nonarchgpd-rep-io}
As in \cref{rmk:cbstr-isogpd-ff-io}, we may arrange for the equivalence of groupoids $G -> \#S_\infty \ltimes \Mod_\#N(\@L, \phi)$ in \cref{thm:nonarchgpd-rep} to be injective on objects (hence for the reduction between the corresponding orbit equivalence relations to be an embedding).
\end{remark}

\begin{remark}
\label{rmk:nonarchgpd-rep-bo}
We cannot, however, further require the equivalence $G -> \#S_\infty \ltimes \Mod_\#N(\@L, \phi)$ to be \emph{bijective} on objects, i.e., an isomorphism.  Indeed, if $G$ is the disjoint union of two non-isomorphic non-Archimedean Polish groups (regarded as a groupoid with two objects), then $G$ cannot fully faithfully embed with invariant image into the action groupoid of a single group action.
\end{remark}

\begin{remark}
Moerdijk \cite[\S6]{Mo1} also gives a computation which, when adapted to our setting,%
\footnote{In topos-theoretic terms, Moerdijk \cite[\S6.4--5]{Mo1} computes a site for the classifying topos $\!BG$ of $G$; this is the classifying topos for a geometric theory such that $G^0$ parametrizes its models via the canonical structure $\@M$, meaning that the geometric morphism $\@M : G^0 -> \!BG$ associated to $\@M$ is an open surjection (whose pullback with itself is $G$).
By starting from a countable posite for $G$ (see \cite[\S8]{Cqpol}), we may take this site for $\!BG$ to be countable, corresponding to a countable $\omega_1$-coherent theory (as in \cite{Cscc}).

We also have an open geometric surjection $\@N : Y ->> \!BG$ from a quasi-Polish space $Y$ given by a standard parametrization of countable models of $\!BG$, e.g., $Y$ is the space of models of $\!BG$ on quotients of partial equivalence relations on $\#N$ (see \cite[C5.2.8(c)]{Jeleph}).  The pullback of $\@M$ along $\@N$ is a continuous open surjection $\Iso_{G^0,Y}(\@M,\@N) ->> Y$, the projection from the quasi-Polish space $\Iso_{G^0,Y}(\@M,\@N)$ of isomorphisms between a fiber of $\@M$ and a fiber of $\@N$.  In particular, every countable model of $\!BG$ is (isomorphic to a model parametrized by $Y$, hence) isomorphic to some $\@M_x$.  Hence, the site for $\!BG$ axiomatizes the fibers of $\@M$ up to isomorphism.  It is now straightforward to tweak this to an axiomatization of the fibers of $\@M'$ ($\@M$ + $\aleph_0$-many constants) from \cref{thm:cbstr-unif}.}
shows that the $\@L_{\omega_1\omega}$-sentence $\phi$ in \cref{thm:nonarchgpd-rep} can be computed ``explicity'' from a $\*\Pi^0_2$-definition of $G$ (witnessing quasi-Polishness), thus avoiding the use of Lopez-Escobar (\cref{thm:lopez-escobar}) in the proof of \cref{thm:nonarchgpd-rep}.  We will not give the details, which are long and involve tedious syntactic manipulations (especially when further adapted to work in the metric setting, \cref{thm:locpolgpd-rep}).
\end{remark}

By combining the above results with the results in \cite{Cscc}, we obtain a complete correspondence between countable discrete $\@L_{\omega_1\omega}$-theories and open non-Archimedean Polishable standard Borel groupoids.  This result is best stated in the language of 2-categories; we refer to \cite{Cscc} and any standard reference on 2-category theory (e.g., \cite[B1.1]{Jeleph}) for the definitions used below.  Briefly, to each countable $\@L_{\omega_1\omega}$-theory $\@T$, we associate its \defn{syntactic Boolean $\omega_1$-pretopos} $\-{\ang{\@L \mid \@T}}^B_{\omega_1}$, which is the category of \defn{$\@L_{\omega_1\omega}$-imaginary sorts} (definable quotients of countable disjoint unions of formulas) and definable functions for $\@T$, whose finite limits and countable colimits encode the syntax of $\@T$ modulo logically irrelevant details.  An \defn{$\@L_{\omega_1\omega}$-interpretation} $(\@L, \@T) -> (\@L', \@T')$ between two theories (over possibly different languages) is defined as usual for first-order theories, except the domain is allowed to be an $\@L'_{\omega_1\omega}$-imaginary sort; equivalently, an interpretation is a functor $\-{\ang{\@L \mid \@T}}^B_{\omega_1} -> \-{\ang{\@L' \mid \@T'}}^B_{\omega_1}$ preserving finite limits and countable colimits.

\begin{corollary}
\label{thm:2interp-equiv}
There is a contravariant equivalence between the 2-categories
\begin{itemize}
\item  $\&{\omega_1\omega Thy}_{\omega_1}$, of countable $\@L_{\omega_1\omega}$-theories (over varying languages), interpretations between them, and definable isomorphisms between interpretations, and
\item  $\&{ONAPolGpd}$, of standard Borel groupoids admitting a compatible open non-Archimedean Polish groupoid topology, Borel functors between them, and Borel natural isomorphisms,
\end{itemize}
taking a theory $(\@L, \@T) \in \&{\omega_1\omega Thy}_{\omega_1}$ to its groupoid $\!{Mod}(\@L, \@T)$ of countable models (on initial segments of $\#N$) and an interpretation $(\@L, \@T) -> (\@L', \@T')$ to the induced Borel functor $\!{Mod}(\@L', \@T') -> \!{Mod}(\@L, \@T)$.
\end{corollary}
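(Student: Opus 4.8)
The plan is to combine the strong conceptual completeness theorem of \cite{Cscc} with \cref{thm:nonarchgpd-rep}. Write $\!{Mod}$ for the assignment taking a countable $\@L_{\omega_1\omega}$-theory $(\@L,\@T)$ to its standard Borel groupoid $\!{Mod}(\@L,\@T)$ of countable models on initial segments of $\#N$, an interpretation $(\@L,\@T) \to (\@L',\@T')$ to the ``reduct along the interpretation'' Borel functor $\!{Mod}(\@L',\@T') \to \!{Mod}(\@L,\@T)$, and a definable isomorphism of interpretations to the induced Borel natural isomorphism. By \cite{Cscc}, this is a well-defined contravariant 2-functor which lands in $\&{ONAPolGpd}$ (each $\!{Mod}(\@L,\@T)$ carrying a compatible open non-Archimedean Polish topology by \cref{rmk:isogpd-nonarchopen}), and — this is the substance of \cite{Cscc} — it is \emph{2-fully-faithful}: for all countable $\@L_{\omega_1\omega}$-theories $\@T,\@T'$ it induces an equivalence of categories from the category of interpretations $\@T \to \@T'$ and definable isomorphisms onto the category of Borel functors $\!{Mod}(\@T') \to \!{Mod}(\@T)$ and Borel natural isomorphisms. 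Granting this, the standard 2-categorical fact that a 2-functor which is a local equivalence and is biessentially surjective on objects is a biequivalence (the 2-dimensional analog of \cref{thm:functor-equiv}; see e.g.\ \cite[B1.1]{Jeleph}) reduces the theorem to proving biessential surjectivity: every object of $\&{ONAPolGpd}$ is equivalent, in $\&{ONAPolGpd}$, to some $\!{Mod}(\@L,\@T)$.

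This last point is where \cref{thm:nonarchgpd-rep} enters. Let $H$ be a standard Borel groupoid admitting a compatible open non-Archimedean Polish groupoid topology; equipped with that topology, $H$ is an open non-Archimedean Polish, hence quasi-Polish, groupoid, so \cref{thm:nonarchgpd-rep} yields a countable single-sorted relational language $\@L$, an $\@L_{\omega_1\omega}$-sentence $\phi$, and a Borel equivalence of groupoids $H \to \#S_\infty \ltimes \Mod_\#N(\@L,\phi)$, which by the definition of Borel equivalence in \cref{sec:functor} is an equivalence $1$-cell in $\&{ONAPolGpd}$. It therefore suffices to realize $\#S_\infty \ltimes \Mod_\#N(\@L,\phi)$ as $\!{Mod}(\@L,\@T)$ for a suitable countable $\@L_{\omega_1\omega}$-theory $\@T$. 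Take $\@T$ to axiomatize the infinite models of $\phi$, say $\@T = \{\phi\} \cup \{\exists x_0 \dotsb \exists x_{n-1} \bigwedge_{i<j<n} x_i \ne x_j \mid n \ge 1\}$. Then the only initial segment of $\#N$ admitting a model of $\@T$ is $\#N$ itself, and a structure on $\#N$ satisfies $\@T$ iff it satisfies $\phi$; hence $\!{Mod}(\@L,\@T)$ coincides, as a standard Borel groupoid, with $\#S_\infty \ltimes \Mod_\#N(\@L,\phi)$ (both having as objects the $\@L$-structures on $\#N$ satisfying $\phi$ and as morphisms the isomorphisms between them). Thus $H$ is Borel equivalent to $\!{Mod}(\@L,\@T)$, establishing biessential surjectivity.

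Assembling a (contravariant) quasi-inverse biequivalence from 2-full-faithfulness and biessential surjectivity is then routine: choose for each $H \in \&{ONAPolGpd}$ a theory $\@T_H$ and an equivalence $\varepsilon_H : \!{Mod}(\@T_H) \to H$ as above, send $H$ to $\@T_H$, and define the quasi-inverse on $1$- and $2$-cells by conjugating with the $\varepsilon_H$ and lifting the resulting Borel functors and natural isomorphisms back to the syntactic side using 2-fullness and 2-faithfulness, with coherence data supplied by the essential uniqueness of such lifts; as in the proof of \cref{thm:functor-equiv} this invokes choice over the proper class of objects but introduces no definability issues, since all of the genuinely Borel data are produced by \cref{thm:nonarchgpd-rep} and \cite{Cscc}. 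I expect the only point really needing care to be a conventions check: one must verify that the $1$-cells (Borel functors, with no continuity hypothesis) and $2$-cells (Borel natural isomorphisms) of $\&{ONAPolGpd}$ as defined here are exactly the notions for which \cite{Cscc} proves 2-full-faithfulness, and that the reduct 2-functor agrees on the nose — pure bookkeeping, as all the mathematical content is the strong conceptual completeness of \cite{Cscc} together with \cref{thm:nonarchgpd-rep} and the elementary identification of $\#S_\infty \ltimes \Mod_\#N(\@L,\phi)$ with a groupoid of models of a theory.
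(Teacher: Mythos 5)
Your proposal is correct and follows essentially the same route as the paper: 2-full-faithfulness of $\!{Mod}$ comes from \cite{Cscc} together with \cref{rmk:isogpd-nonarchopen}, and (bi)essential surjectivity comes from \cref{thm:nonarchgpd-rep}. Your explicit identification of $\#S_\infty \ltimes \Mod_\#N(\@L,\phi)$ with $\!{Mod}(\@L,\@T)$ for $\@T$ axiomatizing the infinite models of $\phi$ is a bookkeeping step the paper leaves implicit, and it is handled correctly.
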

\begin{proof}
That the 2-functor is well-defined and fully faithful (i.e., restricts to an equivalence on each hom-category) is by \cref{rmk:isogpd-nonarchopen} and \cite[10.4 (11.4 in arXiv version)]{Cscc}.  That the 2-functor is essentially surjective (i.e., every object in the codomain is equivalent to an object in the image) is by \cref{thm:nonarchgpd-rep}.
\end{proof}

\section{Grey sets and topometric spaces}
\label{sec:greytopmet}

In this section, we review the basic theory of grey sets \cite{BYM} and topometric spaces \cite{BY08,BY10,BBM}.  Our treatment will be somewhat more systematic and general than in previous literature; notably, we will define a notion of possibly non-Hausdorff topometric spaces.

\subsection{Grey sets}
\label{sec:grey}

Let $\#I := [0, 1]$.  As usual in metric contexts, it is convenient to think of elements of $\#I$ as generalized ``truth values'', with $0 =$ absolute truth and $1 =$ absolute falsehood.  Thus, let $\sqle$ be the opposite of the usual ordering $\le$ on $\#I$:
\begin{align*}
r \sqle s  \coloniff  r \ge s.
\end{align*}
Let $\sqcup, \sqcap$ denote join and meet respectively in the $\sqle$ ordering, i.e., $\sqcup := \wedge$ and $\sqcap := \vee$.  Let
\begin{align*}
r \dotplus s := 1 \wedge (r + s), &&
r \dotminus s := 0 \vee (r - s)
\end{align*}
denote truncated addition and subtraction on $\#I$.

Let $X$ be a set.  A \defn{grey subset} $A \sqle X$ of $X$ is a function $A : X -> \#I$, thought of as a generalized ``indicator function''.  For an ordinary subset $A \subseteq X$, its \defn{zero-indicator} is the grey subset
\begin{align*}
\*0_A : X &--> \#I \\
x &|--> (0 \text{ if $x \in A$, else } 1).
\end{align*}
We often write $A$ for $\*0_A$ when no confusion can arise.
For a grey subset $A \sqle X$ and $r \in \#I$, define
\begin{align*}
A_{<r} := \{x \in X \mid A(x) < r\},
\end{align*}
and similarly for $A_{\ge r}, A_{=r}$, etc.

For an operation on $\#I$, we use the same symbol to denote the pointwise operation on grey subsets.  For example, we have the \defn{union} $A \sqcup B$, \defn{intersection} $A \sqcap B$, \defn{(truncated) sum} $A \dotplus B$, \defn{multiples} $r \cdot A$, and \defn{shifts} $A \dotplus r, A \dotminus r$ for $A, B \sqle X$ and $r \in \#I$.  Note that typically, the natural grey analog of intersection $\cap$ is $\dotplus$ rather than $\sqcap$.

Note that the poset embedding $(A |-> \*0_A) : (2^X, \subseteq) -> (\#I^X, \sqle)$ preserves arbitrary meets and joins.  Note also that
\begin{align*}
A_{<r} = \*0_{[0, r)} \circ A, &&
A_{\le r} = \*0_{[0, r]} \circ A
\end{align*}
(where we are writing $A_{<r}$ for $\*0_{A_{<r}}$); since $\*0_{[0, r)} : (\#I, \sqle) -> (\#I, \sqle)$ (resp., $\*0_{[0, r]}$) preserves arbitrary joins and finite meets (resp., finite joins and arbitrary meets), so does $(A |-> A_{<r}) : (\#I^X, \sqle) -> (2^X, \subseteq)$ (resp., $A |-> A_{\le r}$).  Finally, note that
\begin{align*}
(A \dotplus B)_{<r} = \bigcup_{r \ge s + t} (A_{<s} \cap B_{<t}), &&
(A \dotplus B)_{\le r} = \bigcup_{r \ge s + t} (A_{\le s} \cap B_{\le t});
\end{align*}
in the former union it is enough to consider $s, t$ rational (thus making the union countable).

Let $f : X -> Y$ be a function.  For $B \sqle Y$, its \defn{preimage} under $f$ is
\begin{align*}
f^{-1}(B) := B \circ f : X &--> \#I.
\end{align*}
Clearly, $f^{-1}(-)$ preserves pointwise operations like $\sqcup, \dotplus, (-)_{<r}$.  For $A \sqle X$, its \defn{image} under $f$ is
\begin{align*}
f(A) : Y &--> \#I \\
y &|--> \bigsqcup_{x \in f^{-1}(y)} A(x) = \bigwedge_{x \in f^{-1}(y)} A(x).
\end{align*}
We have the usual relations like $f(A) \sqle B \iff A \sqle f^{-1}(B)$, $f(-)$ preserves unions, etc.  Note also
\begin{align*}
f(A)_{<r} &= \{y \in Y \mid \exists x \in f^{-1}(y)\, (A(x) < r)\} = f(A_{<r}).
\end{align*}
For sets $X \subseteq Y$ and a grey subset $A \sqle X$, we identify $A$ with the grey subset of $Y$ given by its image under the inclusion $X `-> Y$; that is, $A(y) := 1$ for $y \in Y \setminus X$.

\subsection{Grey relations}
\label{sec:greyrel}

Let $X, Y$ be sets.  Given $A \sqle X$, define
\begin{align*}
A \times Y := p^{-1}(A) = A \circ p \sqle X \times Y
\end{align*}
where $p : X \times Y -> X$ is the projection.  Similarly, given $B \sqle Y$, define $X \times B \sqle X \times Y$.  For $A \sqle X$ and $B \sqle Y$, put
\begin{align*}
A \oplus B := (A \times Y) \dotplus (X \times B) \sqle X \times Y.
\end{align*}
Note that $A \times Y = A \oplus \*0_Y$; we write $\oplus$ instead of $\times$ in the general case to avoid confusion with any notion of product of $A, B$ as functions to $\#I$.

Let $R \sqle X \times Y$ be a \defn{grey binary relation}.  For $A \sqle X$, its \defn{relational image} under $R$ is
\begin{align*}
R[A] := q(R \dotplus (A \times Y)) \sqle Y
\end{align*}
where $q : X \times Y -> Y$ is the second projection.  Thus
\begin{align*}
R[A](y) = \bigwedge_{x \in X} (R(x, y) \dotplus A(x)).
\end{align*}
Note that for $A \subseteq X$, we have
\begin{align*}
R[A](y) &= \bigwedge_{x \in A} R(x, y), \\
R[A]_{<r} &= R_{<r}[A]
\end{align*}
(where $R[A]$ means $R[\*0_A]$).  For a general grey $A \sqle X$, we have
\begin{align*}
R[A]_{<r} &= \bigcup_{r \ge s+t} R_{<s}[A_{<t}].
\end{align*}

The \defn{inverse} of $R \sqle X \times Y$ is
\begin{align*}
R^\ominus := (\text{swap} : X \times Y --->[{(x, y) |-> (y, x)}]{\sim} Y \times X)(R) \sqle Y \times X.
\end{align*}
For another grey relation $S \sqle Y \times Z$, the \defn{composition} of $S$ with $R$ is
\begin{align*}
S \ocirc R := m((R \times Z) \dotplus (X \times S)) \sqle X \times Z
\end{align*}
where $m : X \times Y \times Z -> X \times Z$ is the projection.  (The symbols $\ominus, \ocirc$ are to avoid confusion with the usual inverse and composition of $R$ as a function $X \times Y -> \#I$.)  Thus
\begin{align*}
R^\ominus(y, x) &= R(x, y), \\
(S \ocirc R)(x, z) &= \bigwedge_{y \in Y} (R(x, y) \dotplus S(y, z)).
\end{align*}
Note that (the zero-indicator of) the diagonal $\Delta_X \subseteq X \times X$ is the identity for $\ocirc$.

\subsection{Metric spaces}
\label{sec:metsp}

By a \defn{(pseudo)metric}, we always mean of diameter $\le 1$.
A \defn{grey equivalence relation} $d \sqle X \times X$, i.e., a grey relation such that $\Delta_X \sqle d$, $d^\ominus \sqle d$, and $d \ocirc d \sqle d$, is the same thing as a pseudometric.  Given a pseudometric space $X = (X, d)$, we denote the quotient metric space by
\begin{align*}
X/d := (X/d_{=0}, d/d_{=0}).
\end{align*}
For a (grey) subset $A \sqle X$, its \defn{($d$-)saturation}\footnote{In \cite{BYM} this is called the \defn{$d$-thickening} of $A$.} is
\begin{align*}
[A] := d[A] \sqle X;
\end{align*}
for an ordinary subset $A$, this is simply
\begin{align*}
[A] = d(A, -) : X -> \#I.
\end{align*}
For $x \in X$, we write $[x] := [\{x\}]$; note that $[x]_{<r}$ is the open $r$-ball around $x$.  We say that $A \sqle X$ is \defn{($d$-)invariant} if $A = [A]$.

Even for a metric space $X = (X, d)$, it is often helpful to think of $X$ as analogous to a set $X$ equipped with a (nontrivial) equivalence relation $E$ representing the quotient space $X/E$.  In such a situation, when dealing with e.g., functions $Y -> X$ which represent functions between quotient spaces $Y/F -> X/E$, one would need to take saturations of sets at various places.  For example, the below definition of ``topometrically open'' is analogous to that of a map $Y -> X$ between topological spaces such that the composite map $Y -> X ->> X/E$ is open.

By a \defn{Lipschitz map} $f : X -> Y$ between (pseudo)metric spaces, we always mean a $1$-Lipschitz map, i.e., $f(d_X) \sqle d_Y$ (here $f$ refers to the induced product map $X^2 -> Y^2$).  In other words, $f$ is a homomorphism of grey equivalence relations.

On a product of metric spaces $X \times Y$, we always take the \defn{(truncated) sum metric}
\begin{align*}
d_{X \times Y}((x, y), (x', y')) := d_X(x, x') \dotplus d_Y(y, y');
\end{align*}
this ensures that a metric $d_X \sqle X \times X$ is itself an invariant grey set.  Note that under the canonical bijection $(X \times Y)^2 \cong X^2 \times Y^2$ swapping the middle two factors, we have
\begin{align*}
d_{X \times Y} \cong d_X \oplus d_Y.
\end{align*}

\subsection{Grey topology}
\label{sec:greytop}

Let $X$ be a topological space.  We say that $A \sqle X$ is \defn{open} if each $A_{<r}$ is open (i.e., $A : X -> \#I$ is upper semicontinuous), and \defn{closed} if each $A_{\le r}$ is closed, or equivalently $1-A \sqle X$ is open.

For a continuous map $f : X -> Y$ and open $B \sqle Y$, $f^{-1}(B) \sqle X$ is open (since $f^{-1}(B)_{<r} = f^{-1}(B_{<r})$).  If moreover $f$ is open, then for open $A \sqle X$, so is $f(A) \sqle Y$ (since $f(A)_{<r} = f(A_{<r})$).

We now consider the interaction between a topology and a (pseudo)metric on a set $X$, in particular the metric analog of quotient topologies.  Following the usual convention \cite{BY08}, in the presence of both a topology and a (\emph{a priori} unrelated) (pseudo)metric, we use general topological terminology (e.g., ``open'') to refer by default to the topology, and metric terminology (e.g., ``ball'') to refer by default to the metric.

\begin{proposition}
\label{thm:topomet-strong}
Let $(X_i)_{i \in I}$ be a family of topological spaces, $Y$ be a pseudometric space, and $f_i : X_i -> Y$ be a function for each $i \in I$.  Then
\begin{align*}
\@T &:= \{U_{<1} \mid U \sqle Y \text{ invariant} \AND \forall i \in I\, (f_i^{-1}(U) \sqle X_i \text{ open})\} \\
&= \{U_{<r} \mid U \sqle Y \text{ invariant} \AND \forall i \in I\, (f_i^{-1}(U) \sqle X_i \text{ open}) \AND r \in \#R\},
\end{align*}
and $\@T$ forms a topology on $Y$, such that an invariant grey set $U \sqle Y$ is $\@T$-open iff $f_i^{-1}(U) \sqle X_i$ is open for all $i$.
\end{proposition}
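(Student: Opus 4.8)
The plan is to dispatch the three assertions in order: the equality of the two displayed families of sets, the topology axioms for $\@T$, and the characterization of $\@T$-open invariant grey sets. Throughout I would freely use the elementary facts that an invariant grey subset of $Y$ is precisely a $1$-Lipschitz function $Y \to \#I$ (\cref{sec:metsp}), that such functions are closed under arbitrary pointwise $\sqcup = \bigwedge$ and $\sqcap = \bigvee$ as well as under the truncated shifts $U \mapsto U \dotplus c$ for $c \in \#I$, that $f_i^{-1}(-)$ commutes with all pointwise operations, and that (as recorded in \cref{sec:grey}) $(-)_{<r}$ turns arbitrary $\sqcup$-joins into unions and finite $\sqcap$-meets into intersections.

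For the equality of the two families, the forward inclusion is just the case $r = 1$. For the reverse, I would fix an invariant $U$ with every $f_i^{-1}(U)$ open and an arbitrary $r \in \#R$, and write $U_{<r}$ as some $V_{<1}$ with $V$ invariant and the $f_i^{-1}(V)$ open. The case $0 < r \le 1$ is handled by the shift $V := U \dotplus (1-r)$: it is again invariant, $f_i^{-1}(V) = f_i^{-1}(U) \dotplus (1-r)$ is open, and $V_{<1} = \{x \mid U(x) + (1-r) < 1\} = U_{<r}$. For $r \le 0$ one has $U_{<r} = \emptyset = (\*0_\emptyset)_{<1}$, and for $r > 1$ one has $U_{<r} = Y = (\*0_Y)_{<1}$, where the constant grey sets $\*0_\emptyset \equiv 1$ and $\*0_Y \equiv 0$ are $1$-Lipschitz, hence invariant, and have open preimages. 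This shows the two families coincide; write $\@T$ for the common one.

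To see $\@T$ is a topology: $\emptyset = (\*0_\emptyset)_{<1}$ and $Y = (\*0_Y)_{<1}$ lie in $\@T$; for invariant $U, V$ with open preimages, $U_{<1} \cap V_{<1} = (U \sqcap V)_{<1}$, with $U \sqcap V$ again invariant and $f_i^{-1}(U \sqcap V) = f_i^{-1}(U) \sqcap f_i^{-1}(V)$ open, so $\@T$ is closed under finite intersections; and for a family $(U^j)_j$ of invariant grey sets with open preimages, $\bigcup_j U^j_{<1} = (\bigsqcup_j U^j)_{<1}$, with $\bigsqcup_j U^j = \bigwedge_j U^j$ invariant and $f_i^{-1}(\bigsqcup_j U^j) = \bigsqcup_j f_i^{-1}(U^j)$ open (its sublevel sets being the corresponding unions), so $\@T$ is closed under arbitrary unions.

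Finally, let $U$ be invariant. If every $f_i^{-1}(U)$ is open, then for each $s$ the set $U_{<s}$ belongs to $\@T$ by the second description, so $U$ is $\@T$-open. Conversely, each $f_i \colon X_i \to (Y, \@T)$ is continuous, since a basic $\@T$-open set $U'_{<1}$ (with $U'$ invariant and the $f_j^{-1}(U')$ open) pulls back to $f_i^{-1}(U'_{<1}) = (f_i^{-1}(U'))_{<1}$, which is open; hence if $U$ is $\@T$-open, then each $f_i^{-1}(U_{<s}) = (f_i^{-1}(U))_{<s}$ is open, i.e.\ $f_i^{-1}(U)$ is open. I do not expect a genuine obstacle anywhere; the one point demanding care is the sign bookkeeping forced by the reversed ordering $\sqle$ together with the truncations $\dotplus$, $\dotminus$ when checking that the shift $U \dotplus (1-r)$ and the operations $\sqcup$, $\sqcap$ preserve invariance.
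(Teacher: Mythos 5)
Your proposal is correct and follows essentially the same route as the paper: the shift $U \mapsto U \dotplus (1-r)$ for the equality of the two families, the identities $U_{<1}\cap V_{<1}=(U\sqcap V)_{<1}$ and $\bigcup_j(U^j)_{<1}=(\bigsqcup_j U^j)_{<1}$ for the topology axioms, and the computation $f_i^{-1}(U)_{<r}=f_i^{-1}(U_{<r})$ for the characterization of $\@T$-open invariant grey sets. The only (harmless) additions are your explicit treatment of the degenerate cases $r\le 0$ and $r>1$ and your packaging of the converse direction as continuity of each $f_i$.
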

In this situation, we call $\@T$ the \defn{topometric strong topology} on $Y$ induced by the maps $f_i$.
\begin{proof}
That the two definitions of $\@T$ agree follows from the formula $U_{<r} = (U \dotplus (1-r))_{<1}$ for $r \in \#I$, and the fact that the operation $U |-> U \dotplus (1-r)$ preserves invariance and is preserved by taking $f_i$-preimage (hence preserves $f_i^{-1}(U) \sqle X_i$ being open).

Similarly, that $\@T$ forms a topology on $Y$ follows from $U_{<1} \cap V_{<1} = (U \sqcap V)_{<1}$ and $\bigcup_i (U_i)_{<1} = (\bigsqcup_i U_i)_{<1}$ (and that these operations preserve invariance and are preserved by taking $f_i$-preimage).

Let $U \sqle Y$ be invariant.  If $f_i^{-1}(U) \sqle X_i$ is open for all $i$, then $U$ is $\@T$-open by the second definition of $\@T$.  Conversely, if $U$ is $\@T$-open, then for each $r$, we have $U_{<r} \in \@T$, whence $U_{<r} = V_{<1}$ for some invariant $V \sqle Y$ such that $f_i^{-1}(V) \sqle X_i$ is open for all $i$, whence for all $i$, we have $f_i^{-1}(U)_{<r} = f_i^{-1}(U_{<r}) = f_i^{-1}(V_{<1}) = f_i^{-1}(V)_{<1}$ which is open in $X_i$, i.e., $f_i^{-1}(U) \sqle X_i$ is open.
\end{proof}

For a pseudometric space $X = (X, d)$ equipped with a (\emph{a priori} unrelated) topology $\@T$, we say that $\@T$ is \defn{($d$-)invariant}, or that $X = (X, \@T, d)$ is a \defn{pseudotopometric space}, if $\@T$ is the topometric strong topology induced by the identity map $1_X : (X, \@T) -> (X, d)$.  In other words, every open set $U \subseteq X$ must be $V_{<1}$ for some invariant open grey $V \sqle X$.  If $d$ is a metric, we call $X = (X, \@T, d)$ a \defn{topometric space}.\footnote{%
Strictly speaking, our definition of \defn{topometric space} is incomparable with those in the literature \cite{BY08,BY10,BBM}.  Unlike those definitions, we do not require the \defn{Hausdorff} condition that $d \sqle X^2$ be closed.  We do, however, additionally require the topology to be determined by the invariant open grey sets.  Note that this latter condition automatically holds for \defn{completely regular topometric spaces} in the sense of \cite{BY10}, including the important cases of compact topometric spaces as well as automorphism groups of metric structures.
}

Note that in a pseudotopometric space, the topology is refined by that induced by the pseudometric, since for invariant open $U \sqle X$, we have $U_{<1} = [U]_{<1} = \bigcup_{1 \ge r+s} [U_{<r}]_{<s}$ where each $[U_{<r}]_{<s}$ is a union of metrically open balls of radius $s$.

A \defn{maximal topometric space} is a topometric space $X$ whose metric is discrete, i.e., $d = \Delta_X$.  Maximal topometric spaces can be identified with their underlying topological spaces.  A \defn{minimal topometric space} is a topometric space $X$ whose metric induces its topology, or equivalently $d \sqle X^2$ is open (in analogy with discrete topological spaces, whose diagonals are open).

Continuous maps to a pseudotopometric space can be characterized in terms of invariant open grey sets:

\begin{proposition}
\label{thm:topomet-cts}
A map $f : X -> Y$ from a topological space $X$ to a pseudotopometric space $Y$ is continuous iff for every invariant open $U \sqle Y$, $f^{-1}(U) \sqle X$ is open.
\end{proposition}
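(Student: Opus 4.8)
The plan is to unwind the definition of ``pseudotopometric space'' and reduce the statement to the elementary observation recorded in \cref{sec:grey} that $f$-preimages commute with the sublevel-set operation, i.e.\ $f^{-1}(U)_{<r} = f^{-1}(U_{<r})$ for every grey $U \sqle Y$ and every $r$. Writing $Y = (Y,\@T,d)$, the hypothesis that $Y$ is pseudotopometric means exactly that $\@T$ is the topometric strong topology induced by the identity map $1_Y : (Y,\@T) \to (Y,d)$; by the explicit description in \cref{thm:topomet-strong} applied to this single map, this says precisely that every $\@T$-open subset $W \subseteq Y$ can be written as $W = U_{<1}$ for some $d$-invariant grey set $U \sqle Y$ which is open (in the sense that each $U_{<r}$ is $\@T$-open).

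For the forward implication I would argue directly, without even using that $Y$ is pseudotopometric: if $f$ is continuous and $U \sqle Y$ is invariant and open, then each sublevel set $U_{<r}$ is $\@T$-open, so $f^{-1}(U)_{<r} = f^{-1}(U_{<r})$ is open in $X$; hence $f^{-1}(U) \sqle X$ is open.

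For the converse, I would assume that $f^{-1}(U) \sqle X$ is open for every invariant open $U \sqle Y$, and let $W \subseteq Y$ be $\@T$-open. Using the pseudotopometric hypothesis in the form above, I pick a $d$-invariant open $U \sqle Y$ with $W = U_{<1}$; then $f^{-1}(W) = f^{-1}(U_{<1}) = f^{-1}(U)_{<1}$ is open in $X$ by hypothesis, so $f$ is continuous. There is no real difficulty in the argument; the only step requiring care is to invoke the correct form of the definition of ``pseudotopometric'' — via \cref{thm:topomet-strong} — so as to represent an arbitrary $\@T$-open subset of $Y$ as $U_{<1}$ with $U$ invariant and open, after which both directions are one-line preimage computations.
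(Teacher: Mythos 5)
Your proof is correct and follows essentially the same route as the paper: the paper's entire proof is the one-line observation $f^{-1}(U)_{<r} = f^{-1}(U_{<r})$, combined implicitly with the definition of pseudotopometric space exactly as you unwind it via \cref{thm:topomet-strong}. Your write-up just makes explicit the two preimage computations that the paper leaves to the reader.
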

\begin{proof}
Follows from $f^{-1}(U)_{<r} = f^{-1}(U_{<r})$ for $U \sqle Y$.
\end{proof}

Clearly, any topometric strong topology is invariant.  We have the analog of the usual universal property of the strong topology:

\begin{proposition}
\label{thm:topomet-strong-univ}
In the situation of \cref{thm:topomet-strong}, a Lipschitz map $g : Y -> Z$ to some other pseudotopometric space $Z$ is $\@T$-continuous iff $g \circ f_i : X_i -> Z$ is continuous for each $i \in I$.
\end{proposition}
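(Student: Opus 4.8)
The plan is to derive both implications from \cref{thm:topomet-cts} together with the last clause of \cref{thm:topomet-strong}, which says that an invariant grey set on $Y$ is $\@T$-open precisely when all of its $f_i$-preimages are open.

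For the forward implication, I would first note that each $f_i : X_i -> (Y, \@T)$ is itself continuous: a subbasic $\@T$-open set has the form $U_{<1}$ for some invariant $U \sqle Y$ with $f_j^{-1}(U) \sqle X_j$ open for all $j$, and then $f_i^{-1}(U_{<1}) = f_i^{-1}(U)_{<1}$ is open in $X_i$. Hence if $g : (Y,\@T) -> Z$ is $\@T$-continuous, each composite $g \circ f_i$ is a composition of continuous maps, so continuous.

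For the converse, suppose every $g \circ f_i$ is continuous. By \cref{thm:topomet-cts} it suffices to show that $g^{-1}(W) \sqle Y$ is $\@T$-open for every invariant open $W \sqle Z$. The only step that uses the Lipschitz hypothesis is the claim that $g^{-1}(W)$ is again $d_Y$-invariant: since $d_Z(g(y'),g(y)) \le d_Y(y',y)$ for all $y, y' \in Y$ and $[W] = W$, one computes, for each $y \in Y$,
\[
[g^{-1}(W)](y) \;=\; \bigwedge_{y' \in Y}\bigl(d_Y(y',y) \dotplus W(g(y'))\bigr) \;\ge\; \bigwedge_{z' \in Z}\bigl(d_Z(z',g(y)) \dotplus W(z')\bigr) \;=\; W(g(y)) \;=\; g^{-1}(W)(y),
\]
and since $[A] \le A$ pointwise for every grey $A$, this forces $[g^{-1}(W)] = g^{-1}(W)$. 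Now, for each $i$ we have $f_i^{-1}(g^{-1}(W)) = (g \circ f_i)^{-1}(W) \sqle X_i$, which is open because $g \circ f_i$ is continuous and $W$ is open (continuous preimages of open grey sets are open, as recorded in \cref{sec:greytop}). By the last clause of \cref{thm:topomet-strong}, the invariant grey set $g^{-1}(W)$ is therefore $\@T$-open, and since this holds for every invariant open $W \sqle Z$, \cref{thm:topomet-cts} gives that $g$ is $\@T$-continuous. The argument is almost entirely bookkeeping with the definitions; the one place where anything must genuinely be checked is the invariance of $g^{-1}(W)$, and this is exactly why $g$ must be assumed Lipschitz rather than merely topologically continuous.
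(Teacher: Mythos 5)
Your proof is correct and follows essentially the same route as the paper's: the paper's entire argument is the observation that Lipschitzness of $g$ makes $g^{-1}(W)$ invariant for invariant $W \sqle Z$, followed by combining \cref{thm:topomet-strong} and \cref{thm:topomet-cts}, which is exactly what you do (with the invariance computation written out explicitly and the forward direction handled by the minor variant of noting each $f_i$ is $\@T$-continuous). No gaps.
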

\begin{proof}
Since $g$ is Lipschitz, $g^{-1}(V) \sqle Y$ is invariant for invariant $V \sqle Z$.  Now apply \cref{thm:topomet-strong,thm:topomet-cts}.
\end{proof}

We record the following simple fact, which we will use freely:

\begin{lemma}
\label{lm:topomet-reg}
Let $X$ be a pseudotopometric space.  For any open $U \subseteq X$ and $x \in U$, there is an open $V \ni x$ and $r > 0$ with $[V]_{<r} \subseteq U$.
\end{lemma}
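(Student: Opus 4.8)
The plan is to simply unwind the definition of pseudotopometric space and then run a one-line "triangle inequality" estimate. By definition, the topology $\@T$ of $X$ is the topometric strong topology induced by the identity map $1_X \colon (X,\@T)\to(X,d)$, so the given open set $U$ can be written as $U = W_{<1}$ for some $d$-invariant grey set $W \sqle X$ each of whose sublevel sets $W_{<r}$ (for $r \in \#I$) is $\@T$-open. Fix $x \in U$ and put $c := W(x)$; since $x \in W_{<1}$ we have $c < 1$. I would then choose any $s$ with $c < s < 1$ and propose $V := W_{<s}$ together with $r := 1 - s$: here $V$ is open (a sublevel set of the open grey set $W$), $x \in V$ since $W(x) = c < s$, and $r > 0$.

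It then remains to verify $[V]_{<r} \subseteq U$. Since $V$ is an ordinary subset, $[V] = d(V,-)$, so if $z \in [V]_{<r}$ there is $y \in V$ with $d(y,z) < r$. Using that $W$ is $d$-invariant, i.e.\ $W = d[W]$, and the definition of the relational image, we get $W(z) \le d(z,y) \dotplus W(y) < (1-s)\dotplus s = 1$, hence $z \in W_{<1} = U$, as required. This finishes the proof.

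I do not anticipate any genuine obstacle; the argument is essentially bookkeeping. The only points that need a little care are the direction of the order $\sqle$ (so that "open grey set $W$'' correctly unpacks to "every sublevel set $W_{<r}$ is open'') and the behaviour of truncated addition, namely that $a < 1-s$ and $b < s$ force $a \dotplus b = \min(1, a+b) = a+b < 1$; both are immediate from the conventions set up in \cref{sec:grey,sec:greytop}.
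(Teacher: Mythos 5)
Your proof is correct and follows essentially the same route as the paper's: both write $U = W_{<1}$ for an invariant open grey $W$, take $V := W_{<s}$ for a suitable $s<1$ and $r := 1-s$, and conclude via invariance of $W$; you simply carry out the final inclusion pointwise where the paper uses the set-level identity $[W]_{<1} = \bigcup_{1\ge s+t}[W_{<s}]_{<t}$. No gaps.
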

\begin{proof}
Let $U = U'_{<1}$ for invariant open $U' \sqle X$.  Then $U = U'_{<1} = \bigcup_{s < 1} U'_{<s}$, so $x \in U'_{<s}$ for some $s < 1$; and $[U'_{<s}]_{<1-s} \subseteq \bigcup_{1 \ge s+t} [U'_{<s}]_{<t} = [U']_{<1} = U'_{<1}$.  Put $V := U'_{<s}$ and $r := 1-s$.
\end{proof}

Consider now a topological space $X$ equipped with a pseudometric $d \sqle X^2$, its metric quotient $X/d$ with the quotient map $\pi : X ->> X/d$, as well as the completion $X `-> \^{X/d}$; we continue to call the composition $\pi : X ->> X/d `-> \^{X/d}$ the \defn{quotient map}.  We call the topometric strong topology on $X/d$ (resp., $\^{X/d}$) induced by the original topology on $X$ the \defn{(complete) topometric quotient topology}.  Open sets in $X/d$ are quotients of $U_{<1} \subseteq X$ for $d$-invariant open $U \sqle X$.

More generally, for any isometry $f : X -> Y$ with dense image between pseudometric spaces with a topology on $X$, by abuse of terminology, we call the topometric strong topology on $Y$ induced by $f$ the \defn{topometric quotient topology} on $Y$.  For such a topology, we have a bijection
\begin{align*}
\{\text{invariant open } U \sqle X\} &\cong \{\text{invariant open } V \sqle Y\} \\
f &|-> d_Y[f(U)] \\
f^{-1}(V) &<-| V.
\end{align*}

As in general topology, the case of open quotient maps is of particular interest.  A map $f : X -> Y$ from a topological space $X$ to a topometric space $Y$ is \defn{topometrically open} if for any open $U \subseteq X$, $[f(U)] \sqle Y$ is open.
This implies that the same holds for any open $U \sqle X$, since $[f(U)]_{<r} = \bigcup_{r \ge s+t} [f(U)_{<s}]_{<t} = \bigcup_{r \ge s+t} [f(U_{<s})]_{<t}$.

\begin{lemma}
\label{thm:greyrelopen}
Let $X, Y$ be topological spaces and $R \sqle X \times Y$.  The following are equivalent:
\begin{enumerate}[label=(\roman*)]
\item  For any open $U \subseteq X$ and $r > 0$, $R_{<r}[U] \subseteq Y$ is open (i.e., $R_{<r}$ is relationally open in the sense of \cref{thm:relopen}).
\item  For any open $U \sqle X$, $R[U] \sqle Y$ is open.
\item  For any open $W \sqle X \times Y$, $q(R \dotplus W) \sqle Y$ is open, where $q : X \times Y -> Y$ is the second projection.
\end{enumerate}
\end{lemma}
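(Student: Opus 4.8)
The plan is to run the cycle of implications (iii) $\Rightarrow$ (ii) $\Rightarrow$ (i) $\Rightarrow$ (iii), throughout reducing everything to statements about ordinary open sets by passing to $r$-cuts. I will use freely the facts recorded in \cref{sec:grey,sec:greyrel}: a grey set $B \sqle Y$ is open iff $B_{<r}$ is open for every $r > 0$; $q(A)_{<r} = q(A_{<r})$ for any function $q$ and grey set $A$, and $q(-)$ preserves unions; $(R \dotplus W)_{<r} = \bigcup_{r \ge s+t}(R_{<s} \cap W_{<t})$; and $R[U]_{<r} = R_{<r}[U]$ when $U \subseteq X$ is an ordinary set (where $R[U]$ means $R[\*0_U]$).

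For (iii) $\Rightarrow$ (ii): given an open grey set $U \sqle X$, the cylinder $U \times Y = p^{-1}(U)$, with $p : X \times Y \to X$ the first projection, is open since $p$ is continuous and the preimage of an open grey set under a continuous map is open; and $q(R \dotplus (U \times Y)) = R[U]$ by definition, so $R[U] \sqle Y$ is open by (iii). For (ii) $\Rightarrow$ (i): given an ordinary open $U \subseteq X$, the zero-indicator $\*0_U \sqle X$ is open (each cut $(\*0_U)_{<r}$ equals $U$ for $0 < r \le 1$ and is empty otherwise), so $R[U] = R[\*0_U] \sqle Y$ is open by (ii), whence $R_{<r}[U] = R[U]_{<r}$ is open for every $r > 0$, which is (i).

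The only implication carrying any content is (i) $\Rightarrow$ (iii). Let $W \sqle X \times Y$ be open; I must show $q(R \dotplus W)_{<r} = q\bigl((R \dotplus W)_{<r}\bigr)$ is open for each $r > 0$. Expanding the truncated sum, $(R \dotplus W)_{<r} = \bigcup_{s+t \le r}(R_{<s} \cap W_{<t})$, and since $q(-)$ preserves unions it suffices to show each $q(R_{<s} \cap W_{<t})$ is open; the terms with $s = 0$ or $t = 0$ contribute $\emptyset$ (as $R_{<0} = \emptyset$ and $W_{<0} = \emptyset$) and may be discarded, so assume $s, t > 0$. Now write the open set $W_{<t} \subseteq X \times Y$ as a union $\bigcup_\alpha (U_\alpha \times V_\alpha)$ of basic open rectangles, with $U_\alpha \subseteq X$ and $V_\alpha \subseteq Y$ open; then $q(R_{<s} \cap W_{<t}) = \bigcup_\alpha q\bigl(R_{<s} \cap (U_\alpha \times V_\alpha)\bigr) = \bigcup_\alpha \bigl(R_{<s}[U_\alpha] \cap V_\alpha\bigr)$, and each $R_{<s}[U_\alpha]$ is open by (i) since $U_\alpha$ is open and $s > 0$, so the union is open.

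I do not anticipate a genuine obstacle: the proof is pure cut-chasing. The one step that carries the actual idea is the rectangle decomposition of the product cut $W_{<t}$ in (i) $\Rightarrow$ (iii) — this is precisely where relational openness of $R_{<s}$ (in the sense of \cref{thm:relopen}) gets used, to reduce an assertion about a grey set on $X \times Y$ to assertions about ordinary open subsets of $X$ — together with the routine bookkeeping needed to dispose of the degenerate cuts $R_{<0} = \emptyset$ and $W_{<0} = \emptyset$.
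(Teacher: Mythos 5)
Your proof is correct and follows essentially the same route as the paper: the same cycle of implications, with the content concentrated in (i)$\implies$(iii) via the cut decomposition $(R \dotplus W)_{<r} = \bigcup_{r \ge s+t}(R_{<s} \cap W_{<t})$ and pushing $q$ through the union. The only cosmetic difference is that where the paper concludes by citing the equivalence in \cref{thm:relopen} (openness of $q|R_{<s}$), you inline the rectangle decomposition $q(R_{<s} \cap (U_\alpha \times V_\alpha)) = R_{<s}[U_\alpha] \cap V_\alpha$, which is exactly how that lemma is proved.
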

If these hold, we say that $R$ is \defn{relationally open}.  Note that this definition is compatible with \cref{thm:relopen} when $R \subseteq X \times Y$: in that case, $R_{<r} = R$ (i.e., $(\*0_R)_{<r} = R$) for any $0 < r \le 1$.
\begin{proof}
For $U \subseteq X$, $R[U]_{<r} = R_{<r}[U]$, so (i) says that $R[U] \sqle Y$ is open for all open $U \subseteq X$; clearly this is a special case of (ii).
Similarly, for $W \subseteq X \times Y$, $q(R \dotplus W)_{<r} = q((R \dotplus W)_{<r}) = q(\bigcup_{r \ge s + t} (R_{<s} \cap W_{<t})) = \bigcup_{r \ge s + t} q(R_{<s} \cap W_{<t})$, so (i)$\implies$(iii).  Finally, taking $W := U \times Y$ in (iii) gives (ii).
\end{proof}

When $R$ in \cref{thm:greyrelopen} is a pseudometric $d \sqle X^2$, we have $d[U] = \pi^{-1}([\pi(U)])$ where $\pi : X ->> X/d$ is the quotient map; thus $d$ is relationally open iff $\pi : X ->> X/d$ is topometrically open.
When $d$ is already a metric, we get that $d \sqle X^2$ is relationally open iff the identity $1_X : X -> (X, d)$ is topometrically open.

Following \cite{BYM}, we call a topometric space $X$ \defn{adequate} if $1_X : X -> (X, d)$ is topometrically open, i.e., $d \sqle X^2$ is relationally open, i.e., for every open $U \subseteq X$, $[U] \sqle X$ is open.
This condition may be combined with invariance of the topology as follows: an \defn{adequate topometric space} is, equivalently, a topological space $X$ equipped with a metric such that
\begin{align*}
\{[U]_{<r} \mid U \subseteq X \text{ open} \AND r > 0\}
\end{align*}
forms a basis for the topology (it is enough to take basic open $U$ and rational $r$).  For a topological space $X$ and relationally open pseudometric $d \sqle X^2$, $X/d$ is an adequate topometric space.

Adequacy also interacts well with completion.  For an adequate topometric space $X = (X, d)$, the embedding $X `-> \^X = (\^X, \^d)$ is topometrically open, since for open $U \subseteq X$, we have $\^d[U]|X = d[U] \sqle X$ (by metric density of $X \subseteq \^X$) which is open by adequacy of $X$, whence $\^d[U] \sqle \^X$ is open.  This easily implies that $\^X$ is adequate, with a basis of open sets
\begin{align*}
\{\^d[U]_{<r} \mid U \subseteq X \text{ open} \AND r > 0\}.
\end{align*}
Hence for a topological space $X$ and relationally open pseudometric $d \sqle X^2$, $\^{X/d}$ is an adequate complete topometric space, with a basis of open sets
\begin{align*}
\{[\pi(U)]_{<r} \mid U \subseteq X \text{ open} \AND r > 0\}
\end{align*}
where $\pi : X -> \^{X/d}$ is the quotient map.

Finally, we note that for adequate topometric spaces, topometric quotient respects products:

\begin{lemma}
\label{thm:topomet-product}
Let $X, Y$ be topological spaces equipped with relationally open pseudometrics $d_X \sqle X^2$ and $d_Y \sqle Y^2$.  Then the sum pseudometric $d_{X \times Y}$ on $X \times Y$ is relationally open, and the canonical isometry
\begin{align*}
(X \times Y)/d_{X \times Y} &\cong X/d_X \times Y/d_Y
\end{align*}
(with the sum metric on both sides)
is a homeomorphism between the topometric quotient topology on the left and the product of the topometric quotient topologies on the right.

Thus for adequate topometric spaces $X, Y$, $X \times Y$ (with the sum metric and the product topology) is an adequate topometric space; furthermore, the canonical isometry
\begin{align*}
\^{X \times Y} \cong \^X \times \^Y
\end{align*}
is a homeomorphism between the induced topologies.
\end{lemma}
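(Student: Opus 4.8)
The plan is to prove relational openness of $d_{X\times Y}$ by a direct computation, and then to deduce both homeomorphism assertions by comparing the canonical open bases of the adequate topometric spaces involved, using the universal property \cref{thm:topomet-strong-univ} of the topometric strong topology for one of the two inclusions of topologies in each case.

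First I would check that $d_{X\times Y}$ is relationally open via \cref{thm:greyrelopen}\,(i): since relational images preserve unions it suffices to see that $(d_{X\times Y})_{<r}[U\times V]$ is open for a basic open rectangle $U\times V\subseteq X\times Y$ and each $r>0$, and unwinding the definition of the sum pseudometric this set is
\[
\bigcup_{\substack{s,t>0\\ s+t\le r}}(d_X)_{<s}[U]\times(d_Y)_{<t}[V],
\]
whose factors are open by relational openness of $d_X$, resp.\ $d_Y$. By the remark preceding the lemma that $A/d$ is adequate whenever $d$ is a relationally open pseudometric on a topological space $A$, it follows that $Z:=(X\times Y)/d_{X\times Y}$, with its topometric quotient topology, is adequate, with a basis of open sets of the form $[\pi(U\times V)]_{<r}$ ($\pi:X\times Y\to Z$ the quotient map, $U\times V$ basic open, $r>0$); moreover the canonical isometry identifies the underlying metric space of $Z$ with the sum-metric product $X/d_X\times Y/d_Y$.

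Next I would show these two topologies on $Z$ agree, by double inclusion. For ``product-open $\Rightarrow$ $Z$-open'': the coordinate projections $Z\to X/d_X$ and $Z\to Y/d_Y$ are $1$-Lipschitz, and composing either with $\pi$ yields a continuous map (the projection $X\times Y\to X$ followed by the metric quotient map $X\to X/d_X$, and similarly for $Y$); since the topometric quotient topology on $Z$ is the topometric strong topology induced by $\pi$, \cref{thm:topomet-strong-univ} makes both projections continuous, hence also the identity of $Z$ into the product topology. For the reverse inclusion I would use that the quotient metric on $Z$ is the sum metric to compute
\[
[\pi(U\times V)]_{<r}=\bigcup_{\substack{s,t>0\\ s+t\le r}}[\pi_X(U)]_{<s}\times[\pi_Y(V)]_{<t},
\]
which is a union of products of open sets of $X/d_X$ and $Y/d_Y$ (each $[\pi_X(U)]_{<s}$ is open since $\pi_X$ is topometrically open), hence product-open; as the left-hand sides form a basis of $Z$, every $Z$-open set is product-open.

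For the ``thus'' clause, if $X,Y$ are adequate topometric spaces then $d_X,d_Y$ are relationally open, so $d_{X\times Y}$ is relationally open by the above; being a metric it identifies $X\times Y$ with $(X\times Y)/d_{X\times Y}$, which is adequate by the cited remark and carries the product topology by the previous step, so $X\times Y$ (with the product topology and the sum metric) is an adequate topometric space. For the completions, $X\times Y\hookrightarrow\^X\times\^Y$ is a sum-isometry with dense image into a complete metric space, so canonically $\^{X\times Y}=\^X\times\^Y$ as metric spaces, with $\^{d_X\oplus d_Y}=\^{d_X}\oplus\^{d_Y}$; I would then obtain the homeomorphism of the induced topologies by running the argument of the previous paragraph verbatim, now with the completion maps $X\to\^X$, $Y\to\^Y$, $X\times Y\to\^{X\times Y}$ replacing the metric quotient maps — using the cited description of $\^{A/d}$ (adequate complete with basis $\{[\pi(U)]_{<r}\}$) for the basis of $\^{X\times Y}$, \cref{thm:topomet-strong-univ} for one inclusion, and the sum-metric ball identity for the other. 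I do not expect a genuine obstacle — everything reduces to bookkeeping with invariant open grey sets — but the step to watch is that the topometric quotient (and completion) topology is of ``final'' type, so continuity \emph{out} of it (the inclusion supplied by \cref{thm:topomet-strong-univ}) is the easy direction while continuity \emph{into} it must be read off from the explicit basis of the adequate space; and in the completion step one should be careful that the canonical metric isometry $\^{X\times Y}\cong\^X\times\^Y$ is literally the identity on underlying sets, so that the two bases can be matched term by term.
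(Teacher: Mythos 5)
Your proposal is correct and follows essentially the same route as the paper's proof: the same computation $(d_{X\times Y})_{<r}[U\times V]=\bigcup_{s+t\le r}(d_X)_{<s}[U]\times(d_Y)_{<t}[V]$ establishes relational openness, and the homeomorphism is obtained by the same two-sided comparison, with the identity $[\pi(U\times V)]_{<r}=\bigcup_{s+t\le r}[\pi_X(U)]_{<s}\times[\pi_Y(V)]_{<t}$ handling the nontrivial direction and the adequacy basis plus invariance of the topologies of $X,Y$ yielding the ``thus'' clause and the completion statement. The only cosmetic difference is in the easy direction: the paper exhibits, for invariant open grey $U\sqle X$ and $V\sqle Y$, the $d_{X\times Y}$-invariant open grey set $U\otimes V$ with $(U\otimes V)_{<1}=U_{<1}\times V_{<1}$, whereas you invoke \cref{thm:topomet-strong-univ} applied to the Lipschitz coordinate projections; unwound, these are the same verification.
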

\begin{proof}
We have $(d_{X \times Y})_{<r} \cong \bigcup_{r \ge s+t} ((d_X)_{<s} \times (d_Y)_{<t})$ (identifying $(X \times Y)^2 \cong X^2 \times Y^2$); since in general, $R \subseteq X \times X'$ and $S \subseteq Y \times Y'$ relationally open implies that $R \times S \subseteq (X \times Y) \times (X' \times Y')$ (identifying $(X \times X') \times (Y \times Y') \cong (X \times Y) \times (X' \times Y')$) is relationally open (because for basic open $U \times V \subseteq X \times Y$, we have $(R \times S)[U \times V] = R[U] \times S[V]$), it follows that $d_{X \times Y}$ is relationally open.

For invariant grey open $U \sqle X$ and $V \sqle Y$, we have $U_{<1} \times V_{<1} = (U \otimes V)_{<1}$, where $U \otimes V := (U \times Y) \sqcap (X \times V)$ (so $(U \otimes V)(x, y) = U(x) \vee V(y)$) which is $d_{X \times Y}$-invariant; thus the map $(X \times Y)/d_{X \times Y} -> X/d_X \times Y/d_Y$ is continuous.  Conversely, for a basic open rectangle $U \times V \subseteq X \times Y$, so that $d_{X \times Y}[U \times V]_{<r}$ descends to a basic open set in $(X \times Y)/d_{X \times Y}$, from above, we have $d_{X \times Y}[U \times V]_{<r} = \bigcup_{r \ge s+t} (d_X[U]_{<s} \times d_Y[V]_{<t})$ which descends to an open set in $X/d_X \times Y/d_Y$.

For adequate topometric spaces $X, Y$, the canonical isometry $X \times Y \cong (X \times Y)/d_{X \times Y}$ factors into homeomorphisms $X \times Y \cong X/d_X \times Y/d_Y \cong (X \times Y)_{d_{X \times Y}}$ where the first homeomorphism is by invariance of the topologies of $X, Y$ and the second homeomorphism is by the above; thus $X \times Y$ is already adequate topometric.  Similar calculations as above yield the homeomorphism $\^{X \times Y} \cong \^X \times \^Y$.
\end{proof}

\subsection{Grey subsets of groupoids}
\label{sec:greygpd}

Let $G$ be a groupoid.  For $A, B \sqle G$, define\footnote{%
In \cite{BYM} $A \odot B$ is denoted $A * B$.}
\begin{align*}
A^\ominus &:= \nu(A) \sqle G, \\
A \odot B &:= \mu(A \oplus_{G^0} B)
\end{align*}
where $\nu, \mu$ are the inverse and multiplication maps of $G$ and $A \oplus_{G^0} B$ is the restriction of $A \oplus B \sqle G \times G$ to $G \times_{G^0} G \subseteq G \times G$.  Thus
\begin{align*}
A^\ominus(g) &= A(g^{-1}), \\
(A \odot B)(g) &= \bigwedge_{g = h \cdot k} (A(h) \dotplus B(k)).
\end{align*}
For a topological groupoid $G$ and open $A \sqle G$, $A^\ominus$ is open; and if $G$ is an open groupoid and $A, B \sqle G$ are open, then $A \odot B \sqle G$ is open.

A grey subset $A \sqle G$ is \defn{symmetric} if $A = A^\ominus$, \defn{strictly unital} if $A|G^0$ is $\{0, 1\}$-valued (i.e., $A|G^0$ is (the zero-indicator of) a genuine subset of $G^0$) and $\sigma(A), \tau(A) \sqle A$ (i.e.,
\begin{align*}
A(g) < 1 \implies A(\sigma(g)) = A(\tau(g)) = 0
\end{align*}
for all $g \in G$), and a \defn{strict grey subgroupoid} if $A$ is symmetric, strictly unital, and satisfies $A \odot A \sqle A$, i.e.,
\begin{align*}
A(g \cdot h) &\le A(g) \dotplus A(h)
\end{align*}
for all composable $g, h \in G$.  For $G$ a group, such $A$ is usually called a \defn{norm} or \defn{length function}, and corresponds to a left- (or right-) invariant metric on $G$; see \cref{sec:locpolgpd-metriz} for the analogous fact for groupoids.  Every strictly unital $A \sqle G$ generates a smallest strict grey subgroupoid, namely
\begin{align*}
\ang{A} := \bigsqcup_{n \ge 1} (A \sqcup A^\ominus)^{\odot n}
\end{align*}
(where $B^{\odot n} := B \odot B \odot \dotsb \odot B$).  If $G$ is an open topological groupoid and $A \sqle G$ is open, then so is $\ang{A} \sqle G$.

\begin{remark}
We do not know of a good notion of ``non-strict grey subgroupoid'' $A \sqle G$, i.e., where $A$ is allowed to be grey on unit morphisms.  Such a notion would perhaps be needed in order to generalize \cref{thm:intro-locpolgpd-rep} to non-$\sigma$-locally Polish $G$.
\end{remark}

\section{Continuous infinitary logic}
\label{sec:ctslog}

In this section, we review some conventions regarding continuous infinitary logic.

For general background on (finitary) continuous logic, see \cite{BBHU}.  For background on continuous $\@L_{\omega_1\omega}$, see \cite{BDNT} or \cite{CL}; we will mostly be following conventions of the latter paper.  We will only be concerned with metric structures of \emph{bounded diameter $\le 1$} over \emph{relational} languages, all of whose relations are \emph{(1-)Lipschitz}; according to the usual conventions (see the aforementioned references), this means each relation symbol has associated modulus of continuity given by the identity function.%
\footnote{As before, for $n$-ary relations we consider the sum (not max) metric on the product space.}
Hence, our definition of \defn{(multi-sorted) relational first-order language $\@L$} for continuous logic is the same as in the discrete case (\cref{sec:disclog}).

Given a (multi-sorted relational) language $\@L$, an \defn{$\@L$-structure} $\@M$ consists of:
\begin{itemize}
\item  for each sort $P \in \@L_s$, an underlying complete metric space $P^\@M$ (of diameter $\le 1$; denoted $M$ when $\@L$ is one-sorted);
\item  for each $(P_0, \dotsc, P_{n-1})$-ary relation symbol $R \in \@L_r$, an invariant grey subset $R^\@M \sqle P_0^\@M \times \dotsb \times P_{n-1}^\@M$ (i.e., a 1-Lipschitz map $R^\@M : P_0^\@M \times \dotsb \times P_{n-1}^\@M -> \#I$).
\end{itemize}

Given a single-sorted relational language $\@L$-structure $\@M$ and an isometric bijection $f : M \cong N$ with another complete metric space $N$, we have the \defn{pushforward structure} $f_*(\@M) := (N, f(R^\@M))_{R \in \@L_r}$, exactly as in the discrete case.  For $\@L$ countable and for any fixed Polish metric space $X$, we have the \defn{Polish space of $\@L$-structures on $X$}
\begin{align*}
\Mod_X(\@L) &\cong \prod_{n \in \#N; R \in \@L_r(n)} \Hom(X^n, \#I) \\
\@M = (X, R^\@M)_{R \in \@L_r} &|-> (R^\@M)_{R \in \@L_R}
\end{align*}
where here $\Hom(X^n, \#I)$ denotes the space of Lipschitz maps $X^n -> \#I$ with the pointwise convergence topology.  Pushforward of structures gives the \defn{logic action} $\Iso(X) \curvearrowright \Mod_X(\@L)$ of the isometry group $\Iso(X)$ of $X$ (with the pointwise convergence topology) on $\Mod_X(\@L)$, a continuous action of a Polish group.  The action groupoid
\begin{align*}
\Iso(X) \ltimes \Mod_X(\@L)
\end{align*}
is the (open) \defn{Polish groupoid of $\@L$-structures on $X$}.  We will only be concerned with the case where $X = \#U$ is the \defn{Urysohn sphere} (see \cref{sec:urysohn} below for the definition).

For the definition of $\@L_{\omega_1\omega}$ in the metric setting, see \cite{CL}.  Briefly, formulas are formed from atomic formulas of the form $R(\vec{x})$ (for $R \in \@L_r$) and $d(x, y)$ ($d$ is a symbol denoting the metric) from continuous $n$-ary functions $\#I^n -> \#I$ (representing ``logical connectives''), sup and inf over variables (representing ``quantifiers''), and sup and inf over countable families of formulas which admit a single modulus of uniform continuity.  In particular, an \defn{$\@L_{\omega_1\omega}$-sentence} $\phi$ is such an expression without free variables, which evaluates to a number $\phi^\@M \in \#I$ for each $\@L$-structure $\@M$; we say that $\@M$ \defn{satisfies} $\phi$, denoted $\@M \models \phi$, if $\phi^\@M = 0$.

For an $\@L_{\omega_1\omega}$-sentence $\phi$ and Polish metric space $X$, it is easy to verify that
\begin{align*}
\Mod_X(\@L) &--> \#I \\
\@M &|--> \phi^\@M
\end{align*}
is a Borel $\Iso(X)$-invariant grey subset of $\Mod_X(\@L)$.  The converse when $X = \#U$ is the version of the Lopez-Escobar theorem for continuous logic proved by Coskey--Lupini \cite{CL}:

\begin{theorem}[Coskey--Lupini]
\label{thm:coskey-lupini}
For any $\Iso(\#U)$-invariant Borel $B \sqle \Mod_\#U(\@L)$, there is an $\@L_{\omega_1\omega}$-sentence $\phi$ such that $B(\@M) = \phi^\@M$ for all $\@M \in \Mod_\#U(\@L)$.
\end{theorem}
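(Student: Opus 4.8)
The plan is to mimic the Vaught-transform proof of the discrete Lopez--Escobar theorem (\cref{thm:lopez-escobar}), adapting each step to metric structures. The two genuinely new features are (i) orbits of finite tuples under $\Iso(\#U)$ form a continuum — the metric types of finite tuples — rather than a finite set of equality types, and (ii) every $\@L_{\omega_1\omega}$-formula produced along the way must carry a common modulus of uniform continuity for countable $\sqcup,\sqcap$ to be legal. To handle (ii) I would work throughout with formulas built only from the atomic formulas (which are $1$-Lipschitz by hypothesis), $1$-Lipschitz connectives $\#I^n -> \#I$, the quantifiers $\inf_x,\sup_x$, and countable $\sqcup,\sqcap$: the class of such formulas is $1$-Lipschitz in its free variables and closed under all these operations — in particular countable $\sqcup,\sqcap$, since $|\sup_k g_k(\bar x) - \sup_k g_k(\bar y)| \le \sup_k |g_k(\bar x) - g_k(\bar y)|$ — so every infinitary step is automatically admissible. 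Fix a countable dense $D = \{d_0,d_1,\dots\} \subseteq \#U$; since each $R^\@M$ is $1$-Lipschitz it is determined by its values on $D$, so the countably many continuous grey functions $\@M |-> R^\@M(\bar d)$ (for $R \in \@L_r$ and $\bar d$ a finite tuple from $D$) separate points of the standard Borel space $\Mod_\#U(\@L)$, hence generate its Borel $\sigma$-algebra.

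The core is a transfinite recursion on Borel rank assigning to each Borel grey set $B \sqle \Mod_\#U(\@L)$ and each $n$ an $\@L_{\omega_1\omega}$-formula $\psi^n_B(x_0,\dots,x_{n-1})$, $1$-Lipschitz in its free variables, such that — writing $\bar d^{(n)} := (d_0,\dots,d_{n-1})$ — we have $(\psi^n_B)^\@M(\bar d^{(n)}) = B(\@M)$ for all $\@M$; I would actually phrase the recursion hypothesis in terms of the metric Vaught transform, so that $\psi^n_B$ computes the comeager $\Iso(\#U)$-average of $B$ over pullbacks along isometries extending $\bar d^{(n)}$, which is what makes the clauses close. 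At atomic $B$, $\psi^n_B$ is an atomic grey formula (with constants naming $\bar d^{(n)}$ replaced by $x_0,\dots,x_{n-1}$); the $\sigma$-lattice clauses apply the connective $t |-> 1-t$ and countable $\sqcup,\sqcap$ to the formulas of lower rank. The essential clause raises the arity from $n$ to $n+1$: in the discrete proof this is a finite disjunction/conjunction over how a new element can relate to the already-named ones, whereas here a new element relates to $\{d_0,\dots,d_{n-1}\}$ by a continuum of metric configurations, cut down by Baire category over $\Iso(\#U)$ to a ``comeager-many'' condition, which is exactly captured by the quantifiers $\inf_{x_n},\sup_{x_n}$ ranging over all of $\#U$. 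Justifying this passage is where the special properties of the Urysohn sphere enter: universality of $\#U$ ensures the quantifier sees a realization of every finite metric configuration over $D$, and ultrahomogeneity ensures any two realizations of a given metric type over $\bar d^{(n)}$ differ by an isometry fixing $\bar d^{(n)}$, which is what makes the Baire-category reformulation valid and $\Iso(\#U)$-equivariant.

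Finally I would use $\Iso(\#U)$-invariance of $B$ to remove the parameters. The $n=0$ case yields a parameter-free $\@L_{\omega_1\omega}$-sentence $\phi := \psi^0_B$ computing the Vaught transform $D_\emptyset B$; and for invariant $B$ the standard Baire-category argument — now over the Polish group $\Iso(\#U)$ acting on $\Mod_\#U(\@L)$ — gives $D_\emptyset B = B$ as grey sets (the ``comeager average of $B$ over isometric copies'' is $B$ itself when $B$ is invariant). Hence $\phi^\@M = B(\@M)$ for all $\@M \in \Mod_\#U(\@L)$, as required. The converse, that $\@M |-> \phi^\@M$ is an invariant Borel grey set for every $\@L_{\omega_1\omega}$-sentence $\phi$, is routine by induction on $\phi$.

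The crux is feature (ii): maintaining a single modulus of uniform continuity through the entire recursion — an issue with no counterpart in the discrete case. The device of restricting to $1$-Lipschitz connectives makes this automatic, but one must then verify that this restricted stock of connectives, together with countable $\sqcup,\sqcap$ and the quantifiers, still reaches every invariant Borel grey set — which reduces to the fact that the continuous generators $\@M |-> R^\@M(\bar d)$ separate points and generate the Borel $\sigma$-algebra, plus the usual generation of Borel functions from a countable separating family. The secondary, more structural, difficulty is the one in the second paragraph: replacing the finite combinatorial case-split of the discrete Vaught transform by a metric/topological one, for which the key inputs are the Polish topology on spaces of metric types, Baire category for $\Iso(\#U)$, and — playing exactly the role that universality and ultrahomogeneity of $\#N$ under $\#S_\infty$ play in the discrete proof — the universality and ultrahomogeneity of $\#U$. (One could instead hope to reduce directly to \cref{thm:lopez-escobar} by coding a metric structure on $\#U$ by a discrete structure on $\#N$ recording rational approximations to the $R^\@M(\bar d)$; but since $\Iso(\#U)$ is not non-Archimedean there is no such equivariant coding on the nose, and this route appears to require essentially the same ingredients.)
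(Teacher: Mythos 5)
The paper does not actually prove this statement: it is imported from Coskey--Lupini \cite{CL} and used as a black box, so there is no in-paper proof to compare against. Measured against the argument in \cite{CL}, your architecture is the right one --- an induction on Borel rank through metric Vaught transforms, translation of the Baire-category quantifier over $\Iso(\#U)$ into $\inf_x/\sup_x$ over $\#U$ via the extension property and (approximate) ultrahomogeneity of $\#U$ (equivalently, openness of the evaluation maps $g \mapsto (g(d_0),\dotsc,g(d_{n-1}))$ onto the set of realizations of the type of $\bar d^{(n)}$), and removal of parameters by invariance --- and your identification of the two new difficulties relative to the discrete case is accurate.

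The genuine gap sits exactly at what you call the crux, and the device you propose does not close it. If ``$1$-Lipschitz connective'' means $1$-Lipschitz for the max metric on $\#I^n$, then an easy induction shows that every formula of the resulting fragment, evaluated at any fixed tuple of parameters, is a $1$-Lipschitz functional of $\@M$ for the uniform metric $\rho(\@M,\@N) := \sup_{R,\bar a}\,\abs{R^\@M(\bar a)-R^\@N(\bar a)}$; but the invariant Borel grey set $\@M \mapsto \min(1,\,2\inf_a R^\@M(a))$ jumps by $0.2$ between the constant structures $R \equiv 0.1$ and $R \equiv 0.2$, which are at $\rho$-distance $0.1$, so the fragment is not expressive enough and your reduction ``to the fact that the generators separate points'' cannot succeed. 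If instead ``$1$-Lipschitz'' is taken for the sum metric (so that $\dotplus$ is admitted), then formulas are no longer $1$-Lipschitz in their free variables as soon as a variable is shared between arguments --- $R(x) \dotplus d(x,y)$ is already $2$-Lipschitz in $x$ --- so the claimed closure property fails and the common-modulus problem for countable $\sqcup,\sqcap$ reappears; moreover the formulas produced at the arity-raising step of the Vaught-transform recursion are not $1$-Lipschitz, their moduli degrading in a controlled but nontrivial way. This is precisely why Coskey--Lupini do not work with a Lipschitz fragment: they introduce \emph{weak moduli of uniform continuity} $\Omega$ governing all finite tuples of variables simultaneously, fix a suitable universal $\Omega$ in advance, and carry the invariant ``every constructed formula is $\Omega$-continuous'' through the entire induction. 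Some bookkeeping of that kind is unavoidable; with it, the rest of your outline goes through essentially as you describe.
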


In particular, for any $\Iso(\#U)$-invariant Borel (genuine) subset $B \subseteq \Mod_\#U(\@L)$, there is a \defn{$\{0, 1\}$-valued} $\@L_{\omega_1\omega}$-sentence $\phi$, meaning $\phi^\@M \in \{0, 1\}$ for all $\@M$, such that
\begin{align*}
B = \Mod_\#U(\@L, \phi) := \{\@M \in \Mod_\#U(\@L) \mid \@M \models \phi\}.
\end{align*}
We will only need this special case of \cref{thm:coskey-lupini}.  For such $\phi$, the action groupoid
\begin{align*}
\Iso(\#U) \ltimes \Mod_\#U(\@L, \phi)
\end{align*}
is the \defn{standard Borel groupoid of models of $\phi$ on $\#U$}.

\section{Metric-étale spaces and structures}
\label{sec:metale}

In this section, we study the metric analogs of étale spaces and structures.%
\footnote{A \emph{metric-étale space} over $X$ corresponds to a complete metric \emph{locale} in the sheaf topos $\Sh(X)$; see \cite{Hen}.  The lack of enough continuous sections as explained in \cref{sec:metalesp} can be understood in terms of the fact that spatiality of complete metric locales (see \cite[X~2.2]{PP}) requires the axiom of dependent choice, which need not hold in $\Sh(X)$.}
Informally, a \emph{metric-étale space} $A$ over a topological space $X$ is a ``continuous'' assignment of a complete metric space $A_x$ to each $x \in X$; similarly, a \emph{metric-étale structure} is a ``continuous'' family of metric structures.  We define the \emph{isomorphism groupoid} of a metric-étale structure in analogy with the étale setting, and prove that (when the structure is quasi-Polish) it admits a full and faithful Borel functor to a groupoid of metric structures on the Urysohn sphere $\#U$ (\cref{thm:sbmstr-isogpd-ff}), using Katětov's construction of $\#U$.  This will form half of the proof of \cref{thm:intro-locpolgpd-rep}.

\subsection{Metric-étale spaces}
\label{sec:metalesp}

Let $X$ be a topological space.  A \defn{metric-étale space over $X$} is a complete topometric space $A$ equipped with a (topologically) open continuous projection map $p : A -> X$, such that the metric $d \sqle A \times A$ satisfies $d \sqle A \times_X A$ (meaning $d$ is discrete between points in different fibers), and $d \sqle A \times_X A$ is open (meaning that for all $r > 0$, the $r$-neighborhood of the diagonal
\begin{align*}
d_{<r} = \{(a, b) \in A \times_X A \mid d(a, b) < r\} \subseteq A \times_X A
\end{align*}
is open).  This last condition is analogous to the characterization of étale spaces via \cref{thm:etale-diagopen}, and can be thought of as uniformly witnessing that the subspace topology on each fiber $A_x$ is induced by the metric (since $d_{A_x} = d|A_x \sqle A_x \times A_x$ is open).

\begin{lemma}
\label{lm:metale-open}
Let $p : A -> X$ be an open topological space over a topological space $X$, and $R \sqle A \times_X A$ be a grey relation on $A$ which is discrete between fibers (i.e., $R(a, b) = 1$ for all $p(a) \ne p(b)$).  If $R \sqle A \times_X A$ is open, then $R \sqle A \times A$ is relationally open.
\end{lemma}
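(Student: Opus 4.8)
The plan is to deduce this from the open‑projection characterisation of relational openness in \cref{thm:greyrelopen}: it suffices to verify condition (i) there, namely that for every open $U \subseteq A$ and every real $r > 0$ the ordinary relational image $R_{<r}[U] \subseteq A$ is open. The range $r > 1$ is immediate, since $R$ is $\#I$‑valued and so $R_{<r} = A \times A$, giving $R_{<r}[U] = p^{-1}(p(U))$, which is open because $p$ is open; so the content is the case $0 < r \le 1$.

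For such $r$, the first step is to use fiberwise discreteness: if $p(a) \ne p(b)$ then $R(a,b) = 1 \not< r$, so the cut $R_{<r}$, formed inside $A \times A$, is contained in $A \times_X A$ and there agrees with the level‑$r$ cut of the grey relation $R \sqle A \times_X A$, which is open in $A \times_X A$ by hypothesis. Since $U \times_X A = (U \times A) \cap (A \times_X A)$ is open in $A \times_X A$, so is $R_{<r} \cap (U \times_X A)$, and I would then observe that $R_{<r}[U] = \pi_2\bigl(R_{<r} \cap (U \times_X A)\bigr)$, where $\pi_2 \colon A \times_X A -> A$ is the projection to the second factor.

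To conclude I would invoke the fact recorded in \cref{sec:fiber} that a pullback of an open map is open: $\pi_2 = p \times_X A$ is the pullback of $p$ along $p$, hence open, so $R_{<r}[U]$ is the image under an open map of an open set and is therefore open in $A$, completing the verification of \cref{thm:greyrelopen}(i). I do not anticipate any real obstacle; the lemma is a short bookkeeping argument, and the only subtlety worth flagging is that one is \emph{not} claiming that $R$ is open as a grey subset of $A \times A$ (it generally is not, as $A \times_X A$ need not be open in $A \times A$) — it is precisely the openness of $p$, routed through the fiber‑product projection, that upgrades ``open in $A \times_X A$'' to ``relationally open in $A \times A$''.
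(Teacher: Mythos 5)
Your proof is correct and is essentially the paper's own argument: both hinge on the observation that for $0 < r \le 1$ fiberwise discreteness places the cut $R_{<r}$ openly inside $A \times_X A$, where the projection to the second factor is open because it is a pullback of the open map $p$, so that $R_{<r}[U]$ is the image of an open set under an open map (the paper merely packages this step via \cref{thm:relopen} instead of computing $R_{<r}[U]$ by hand). The only blemish is the throwaway case $r > 1$, where $R_{<r} = A \times A$ gives $R_{<r}[U] = A$ for nonempty $U$ rather than $p^{-1}(p(U))$ --- still open, so nothing is affected.
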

\begin{proof}
Since $p : A -> X$ is open, the second projection $q := A \times_X p : A \times_X A -> A$ is open, whence for any $0 < r < 1$, $q|R_{<r} : d_{<r} -> A$ is open, i.e., $R_{<r} \subseteq A \times A$ is relationally open; thus $R \sqle A \times A$ is relationally open.
\end{proof}

\begin{corollary}
\label{thm:metale-adequate}
A metric-étale space $A$ over $X$ is adequate.  \qed
\end{corollary}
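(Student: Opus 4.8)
The statement is an immediate consequence of \cref{lm:metale-open}, so the only work is to match up the hypotheses. Recall that by definition $A$ is adequate precisely when the metric $d \sqle A \times A$ is relationally open (equivalently, $1_A : A -> (A,d)$ is topometrically open, equivalently $[U] \sqle A$ is open for every open $U \subseteq A$). So the plan is simply to invoke \cref{lm:metale-open} with $R := d$, the metric of $A$.

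\textbf{Key steps.} First I would note that the defining data of a metric-étale space $A$ over $X$ supplies exactly the hypotheses of \cref{lm:metale-open}: the projection $p : A -> X$ is an open continuous map, $A$ is a topological space over $X$ that is open, and the metric $d$ is a grey relation on $A$ which is discrete between fibers (i.e., $d(a,b) = 1$ whenever $p(a) \ne p(b)$) and satisfies the openness condition $d \sqle A \times_X A$ open. Applying \cref{lm:metale-open} with $R := d$ then yields that $d \sqle A \times A$ is relationally open. By the definition of adequacy (and the remark following \cref{thm:greyrelopen} that the grey and genuine-subset notions of ``relationally open'' agree here), this is exactly the assertion that $A$ is adequate.

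\textbf{Main obstacle.} There is essentially none: the corollary is a direct translation of \cref{lm:metale-open} into the language of topometric spaces, once one recalls that ``$A$ adequate'' unwinds to ``$d \sqle A \times A$ relationally open.'' The only point worth a sentence is to make explicit that the openness of $p$ and the two conditions on $d$ built into the definition of a metric-étale space are precisely what the lemma requires.

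\begin{proof}
Let $p : A -> X$ be the projection and $d \sqle A \times A$ the metric of $A$. By definition of a metric-étale space, $p$ is open, $A$ is an open topological space over $X$, and the grey relation $d$ is discrete between fibers with $d \sqle A \times_X A$ open. Applying \cref{lm:metale-open} with $R := d$, we conclude that $d \sqle A \times A$ is relationally open; that is, $1_A : A -> (A, d)$ is topometrically open, so $A$ is adequate.
\end{proof}
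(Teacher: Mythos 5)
Your proof is correct and matches the paper's: the corollary is stated with an immediate \qed precisely because it is the instance $R := d$ of \cref{lm:metale-open}, whose hypotheses (openness of $p$, discreteness of $d$ between fibers, openness of $d \sqle A \times_X A$) are built into the definition of a metric-étale space, and whose conclusion is exactly the definition of adequacy. Nothing to add.
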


In general, given a metric-étale space $p : A -> X$, it is unreasonable to expect there to be a plentiful supply of (partial) continuous sections $s : U -> A$ of $P$ defined on open $U \subseteq X$.  Thus, we must make do with approximate sections.  We call open $S \subseteq A$ \defn{$d_{<r}$-small} or \defn{$r$-small} over $X$ if $S \times_X S \subseteq d_{<r}$.  For each $r > 0$, since $d_{<r} \subseteq A \times_X A$ is open, $A$ has a cover by $r$-small open $S \subseteq A$.  It follows that for any basis $\@S$ of open sets in $A$,
\begin{align*}
\@S_{<r} := \{S \in \@S \mid S \text{ $r$-small}\}
\end{align*}
is still a basis.

Give metric-étale spaces $p : A -> X$ and $q : B -> X$, we always equip the fiber product $A \times_X B -> X$ with the (truncated) sum metric.

\begin{proposition}
\label{thm:metale-sum}
For metric-étale spaces $p : A -> X$ and $q : B -> X$ over $X$, $A \times_X B$ is metric-étale over $X$.
\end{proposition}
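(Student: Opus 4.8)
The plan is to verify the three defining conditions of a metric-étale space for $p\times_X q : A\times_X B \to X$ with the sum metric, using that each of $A,B$ already satisfies them. First, openness of the projection: since $p$ is open, so is the pullback $p\times_X B : A\times_X B\to B$ (as noted in \cref{sec:fiber}), and likewise $A\times_X q$; composing with the open map $q:B\to X$ shows $p\times_X q = q\circ(p\times_X B)$ is open, and $A\times_X B$ is a topological space over $X$ via this continuous open map. Second, I must check that the sum metric $d_{A\times_X B}$ is discrete between fibers of $p\times_X q$: if $(a,b),(a',b')\in A\times_X B$ lie over distinct points of $X$, then either $p(a)\ne p(a')$ or $q(b)\ne q(b')$; in the first case $d_A(a,a')=1$, in the second $d_B(b,b')=1$, and since $d_{A\times_X B}((a,b),(a',b'))=d_A(a,a')\dotplus d_B(b,b')$, truncated addition gives $1$ either way.

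The substantive point is the third condition: that $d_{<r}\subseteq (A\times_X B)\times_X(A\times_X B)$ is open for every $r>0$. Here I would use the identification, under the canonical shuffle of factors,
\begin{align*}
(A\times_X B)\times_X(A\times_X B) \;\cong\; (A\times_X A)\times_X(B\times_X B),
\end{align*}
which is a homeomorphism (it is a restriction of the corresponding shuffle on the plain products, which is a homeomorphism, and both sides carry the subspace topology from the respective products). Under this identification the sum metric satisfies $d_{A\times_X B} \cong d_A \oplus d_B$ (as in \cref{sec:metsp}), so
\begin{align*}
(d_{A\times_X B})_{<r} \;=\; \bigcup_{r\ge s+t}\bigl((d_A)_{<s}\times_X(d_B)_{<t}\bigr),
\end{align*}
and it suffices to take $s,t$ rational, making this a countable union. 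Each $(d_A)_{<s}$ is open in $A\times_X A$ and each $(d_B)_{<t}$ is open in $B\times_X B$ by the metric-étale hypotheses on $A$ and $B$, so each $(d_A)_{<s}\times_X(d_B)_{<t}$ is open in $(A\times_X A)\times_X(B\times_X B)$, hence the union is open. Finally I note $A\times_X B$ is a complete topometric space: the sum metric on a product of complete metric spaces is complete, and the topology is $d$-invariant because by \cref{thm:metale-adequate} and \cref{thm:topomet-product} the product of two adequate topometric spaces is again adequate topometric (this also re-derives that $A\times_X B$ is a topometric space, and that fiberwise-discreteness of the metric is compatible with the topology).

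The main obstacle is bookkeeping rather than conceptual: one must be careful that the various shuffle maps between iterated fiber products are genuine homeomorphisms for the subspace topologies (not merely bijections), and that the "small over $X$" reformulation via $d_{<r}$ interacts correctly with the sum-metric formula $(d_A\oplus d_B)_{<r}=\bigcup_{r\ge s+t}((d_A)_{<s}\times(d_B)_{<t})$; once these identifications are pinned down, each required property reduces immediately to the corresponding property of $A$ and of $B$. No deep new idea is needed beyond the analogue of \cref{thm:etale-props}(d) in the étale setting combined with \cref{thm:topomet-product}.
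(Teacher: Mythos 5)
Your proposal is correct and follows essentially the same route as the paper's proof: openness of $p\times_X q$ from openness of $p$ and $q$, the topometric structure via \cref{thm:metale-adequate} and \cref{thm:topomet-product}, completeness of the sum metric, and openness of $d_{A\times_X B}$ via the identification $(A\times_X B)^2_X\cong A^2_X\times_X B^2_X$ with $d_{A\times_X B}\cong d_A\oplus_X d_B$. The extra bookkeeping you supply (fiberwise discreteness, the explicit union formula for $(d_A\oplus d_B)_{<r}$) is just a spelled-out version of what the paper leaves implicit.
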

\begin{proof}
Since $p, q$ are topologically open continuous maps, so is $p \times_X q : A \times_X B -> X$.  By \cref{thm:metale-adequate,thm:topomet-product} (and the fact that all metrics involved are discrete between fibers), $A \times_X B$ is a topometric space.  It is standard that the sum of complete metrics is complete.  Since $d_{A \times_X B} \cong d_A \oplus_X d_B \sqle A^2_X \times_X B^2_X$ (identifying $(A \times_X B)^2_X \cong A^2_X \times_X B^2_X$), $d_{A \times_X B} \sqle (A \times_X B)^2_X$ is open.
\end{proof}

The following analog to \cref{thm:etale-quotient} describes the construction of metric-étale quotient spaces:

\begin{proposition}
\label{thm:metale-quotient}
Let $p : A -> X$ be an open topological space over $X$, and let $d \sqle A \times_X A$ be an open (in $A \times_X A$) pseudometric.  Then $p$ descends to metric-étale $p' : \^{A/d} -> X$, and the (complete) quotient map $\pi : A -> \^{A/d}$ is topometrically open.
\end{proposition}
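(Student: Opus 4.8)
The plan is to first pin down $\^{A/d}$ concretely, then verify the four clauses in the definition of a metric-étale space for $p'$, together with topometric openness of $\pi$, mostly by transporting across $\pi$ facts we already have about $A$. Since $d$ is discrete between $p$-fibers, the relation $d_{=0}$ refines $\ker(p)$, so $p$ descends to $A/d \to X$; and a Cauchy sequence in $A/d$, being eventually of diameter $<1$, stays within a single fiber $A_x/d_x$, so set-theoretically $\^{A/d} = \bigsqcup_{x \in X} \^{A_x/d_x}$ and $p' : \^{A/d} \to X$ sends $\xi$ to the unique $x$ with $\xi \in \^{A_x/d_x}$, while $\pi : A \to \^{A/d}$ sends $a$ to the image of $a$ in $A_{p(a)}/d_{p(a)} \subseteq \^{A_{p(a)}/d_{p(a)}}$ and is an isometry onto its metrically dense image. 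Equip $\^{A/d}$ with the complete topometric quotient topology $\@T$, i.e.\ the topometric strong topology induced by $\pi$ (\cref{thm:topomet-strong}). Since $p$ is open and $d \sqle A \times_X A$ is open, \cref{lm:metale-open} gives that $d \sqle A \times A$ is relationally open, so by the discussion of adequacy and completion at the end of \cref{sec:greytop}, $(\^{A/d}, \@T, \^d)$ is an adequate complete topometric space with a basis of open sets $\{[\pi(U)]_{<r} \mid U \subseteq A \text{ open},\ r > 0\}$.

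Next I would check topometric openness of $\pi$ and that $p'$ is a continuous open map. For open $U \subseteq A$, since $\pi$ is an isometry onto its image, $\pi^{-1}(\^d[\pi(U)])(a) = \bigwedge_{b \in U} \^d(\pi(b), \pi(a)) = \bigwedge_{b \in U} d(b, a) = d[U](a)$, so $\pi^{-1}([\pi(U)]) = d[U]$, which is open in $A$ by relational openness of $d$ (\cref{thm:greyrelopen}); as $[\pi(U)] = \^d[\pi(U)]$ is $\^d$-invariant, the last clause of \cref{thm:topomet-strong} shows $[\pi(U)] \sqle \^{A/d}$ is $\@T$-open, i.e.\ $\pi$ is topometrically open. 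For continuity of $p'$: for open $V \subseteq X$ the grey set $\*0_{p'^{-1}(V)}$ is $\^d$-invariant (again because $\^d$ is discrete between fibers) and $\pi^{-1}(\*0_{p'^{-1}(V)}) = \*0_{p^{-1}(V)}$ is open, so $p'^{-1}(V) = (\*0_{p'^{-1}(V)})_{<1}$ is $\@T$-open. For openness of $p'$: any $\@T$-open $W$ is metrically open (the topology of a pseudotopometric space is refined by its metric), so $W$ meets the metrically dense $\pi(A_x)$ in every fiber $\^{A_x/d_x}$ it meets, whence $p'(W) = p(\pi^{-1}(W))$, which is open since $\pi$ is $\@T$-continuous and $p$ is open.

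Finally, the defining metric-étale condition that $\^d_{<r} \subseteq \^{A/d} \times_X \^{A/d}$ be open is the main obstacle, and I would prove it by an approximation argument from the hypothesis that $d_{<s} \subseteq A \times_X A$ is open. Given $(\xi, \eta)$ in a common fiber with $\^d(\xi, \eta) < r$, pick $\epsilon > 0$ with $\^d(\xi, \eta) < r - 4\epsilon$ and, using density of $\pi(A_x)$ in $\^{A_x/d_x}$, choose $a, b$ in that fiber of $A$ with $\^d(\pi(a), \xi), \^d(\pi(b), \eta) < \epsilon$; then $d(a, b) = \^d(\pi(a), \pi(b)) < r - 2\epsilon$, so openness of $d_{<r-2\epsilon}$ in $A \times_X A$ yields open $U \ni a$ and $V \ni b$ in $A$ with $U \times_X V \subseteq d_{<r-2\epsilon}$. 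Then $([\pi(U)]_{<\epsilon} \times_X [\pi(V)]_{<\epsilon})$, an open subset of $\^{A/d} \times_X \^{A/d}$ by the basis above, is the desired neighborhood: it contains $(\xi, \eta)$, and for any $(\xi', \eta')$ in a common fiber lying in it, near-optimal $a' \in U$, $b' \in V$ — which lie in that same fiber, as $\^d$ is discrete between fibers — with $\^d(\pi(a'), \xi'), \^d(\pi(b'), \eta') < \epsilon$ give $d(a', b') < r - 2\epsilon$ and hence $\^d(\xi', \eta') < r$ by the triangle inequality. Combined with the evident fact that $\^d$ is discrete between $p'$-fibers, this finishes the verification that $p' : \^{A/d} \to X$ is metric-étale, and topometric openness of $\pi$ was already established above.
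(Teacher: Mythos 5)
Your proof is correct and follows essentially the same route as the paper's: both reduce the topometric structure and the openness of $\pi$ to \cref{lm:metale-open} plus the adequacy-and-completion discussion at the end of \cref{sec:greytop}, and both then check continuity and openness of $p'$ directly (the paper computes $p'^{-1}(U) = [\pi(p^{-1}(U))]_{<1}$ and $p'([\pi(S)]_{<r}) = p(S)$ on basic opens, where you instead use metric density of $\pi(A_x)$ in each fiber to get $p'(W) = p(\pi^{-1}(W))$ for arbitrary open $W$ --- both are fine). The one place you genuinely diverge is the openness of $\^d \sqle \^{A/d} \times_X \^{A/d}$: the paper dispatches this in one line by identifying $\^{A/d} \times_X \^{A/d}$ with the completed topometric quotient of $A \times_X A$ via \cref{thm:topomet-product}, so that $\^d = [\pi(d)]$ is the saturated image of the open grey set $d$ under a topometrically open map; you instead unfold this into a direct $\varepsilon$-approximation argument using the basis $[\pi(U)]_{<\varepsilon}$. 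Your version is more elementary and self-contained (it avoids invoking \cref{thm:topomet-product} at all), at the cost of the explicit triangle-inequality bookkeeping; the paper's version buys brevity by reusing the general machinery already set up for products of adequate topometric spaces. Both arguments are complete and correct.
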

\begin{proof}
By \cref{lm:metale-open}, $d \sqle A \times A$ is relationally open, whence $\^{A/d}$ is an adequate complete topometric space and $\pi$ is topometrically open.  The complete topometric quotient topology on $\^{A/d}$ has basic open sets $[\pi(S)]_{<r}$ for open $S \subseteq A$ and $r > 0$.  For open $U \subseteq X$, we have $p^{\prime-1}(U) = [\pi(p^{-1}(U))]_{<1}$; thus $p'$ is continuous.  For open $S \subseteq A$ and $0 < r \le 1$, we have $p'([\pi(S)]_{<r}) = p(S)$; thus $p'$ is open.  Finally, the completed metric $\^d \sqle \^{A/d} \times_X \^{A/d}$ is $[\pi(d)]$, where $\pi : A \times_X A -> \reallywidehat{(A \times_X A)/d_{A \times_X A}} \cong \^{A/d} \times_X \^{A/d}$ (where $d_{A \times_X A}$ is the sum metric of $d$ and the $\cong$ is by \cref{thm:topomet-product}) is topometrically open; thus $\^d$ is open.
\end{proof}

In the situation of \cref{thm:metale-quotient}, we call open $S \subseteq A$ \defn{$d_{<r}$-small over $X$} if $S \times_X S \subseteq d_{<r}$.  If $T \subseteq \^{A/d}$ is $\^d_{<r}$-small, then $\pi^{-1}(T) \subseteq A$ is $d_{<r}$-small.  If $S \subseteq A$ is $d_{<r}$-small open, then in general $\pi(S) \subseteq \^{A/d}$ might not be open; however, for any $s > 0$, $[\pi(S)]_{<s}$ will be $\^d_{<r+2s}$-small open, and sets of this latter form (for all $S$ belonging to any basis for $A$ and all $s > 0$) are a basis for $\^{A/d}$.

As in the étale case (\cref{thm:etale-ctble}), we will primarily be interested in metric-étale spaces over quasi-Polish spaces, which are furthermore ``uniformly fiberwise separable'':

\begin{proposition}
\label{thm:metale-ctble}
Let $p : A -> X$ be a metric-étale space over a second-countable space $X$.  The following are equivalent:
\begin{enumerate}
\item[(i)]  $A$ is second-countable;
\item[(ii)]  for any $r > 0$, $A$ has a countable cover by $r$-small open $S \subseteq A$;
\item[(iii)]  for any $r > 0$, $A$ has a countable basis consisting of $r$-small open $S \subseteq A$.
\end{enumerate}
\end{proposition}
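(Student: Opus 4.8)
The plan is to mirror the proof of \cref{thm:etale-ctble}, with ``open section'' replaced throughout by ``$r$-small open set'' (it suffices to let $r$ range over $1/n$, $n \ge 1$, since an $r$-small set is $r'$-small for $r' \ge r$). The implication (i)$\implies$(iii) is then immediate: if $A$ has a countable basis $\@S$ of open sets, then for each $r > 0$ the subfamily $\@S_{<r} = \{S \in \@S \mid S \text{ is } r\text{-small}\}$ is, as noted just before the statement, again a basis, is countable, and consists of $r$-small open sets. The implication (iii)$\implies$(ii) is trivial, since a basis is in particular a cover.

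For (ii)$\implies$(i), let $\@U$ be a countable basis of open sets in $X$ (here we use second-countability of $X$), and for each $n \ge 1$ let $\@S^n$ be a countable cover of $A$ by $(1/n)$-small open sets, furnished by (ii). I claim that
\[ \@B := \{\, [S \cap p^{-1}(U)]_{<1/n} \;:\; n \ge 1,\ S \in \@S^n,\ U \in \@U \,\} \]
is a countable basis for $A$; each member is open because $S \cap p^{-1}(U)$ is open and $A$ is adequate by \cref{thm:metale-adequate}. To see that $\@B$ is a basis, fix open $W \subseteq A$ and $a \in W$. By \cref{lm:topomet-reg} there are open $V \ni a$ and $r > 0$ with $[V]_{<r} \subseteq W$. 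Choose $n$ with $2/n < r$; choose $S \in \@S^n$ with $a \in S$; and, since $p$ is open, choose $U \in \@U$ with $p(a) \in U \subseteq p(S \cap V)$. Then $a \in S \cap p^{-1}(U)$, so $a \in [S \cap p^{-1}(U)]_{<1/n}$. Conversely, given $c \in [S \cap p^{-1}(U)]_{<1/n}$, pick $b \in S \cap p^{-1}(U)$ with $d(c,b) < 1/n$ and then $b' \in S \cap V$ with $p(b') = p(b)$ (possible since $p(b) \in U \subseteq p(S \cap V)$); as $b, b'$ lie in $S$ in a common fiber and $S$ is $(1/n)$-small, $d(b,b') < 1/n$, whence $d(c,b') < 2/n < r$ and so $c \in [V]_{<r} \subseteq W$. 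Thus $a \in [S \cap p^{-1}(U)]_{<1/n} \subseteq W$, proving $\@B$ is a basis.

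The one place that needs care in (ii)$\implies$(i) is the step where, given $b \in S$, we pass to a point $b'$ of $S$ in the same fiber that additionally lies in $V$: in the étale case the analogous control on $d(b,b')$ came from injectivity of $p$ on the section $S$, whereas here it comes from $(1/n)$-smallness of $S$, which is why a factor of $2/n$ rather than $1/n$ must be budgeted against $r$. I do not expect any genuine obstacle beyond this bookkeeping.
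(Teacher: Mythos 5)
Your proof is correct and follows essentially the same route as the paper's: the same three implications, with (ii)$\implies$(i) handled by intersecting the countable $1/n$-small covers with $p^{-1}$ of basic opens of $X$ and using smallness of $S$ to control the distance to a point of $S \cap V$ in the same fiber. The only (cosmetic) difference is that you take the basic sets to be the thickenings $[S \cap p^{-1}(U)]_{<1/n}$, which forces you to invoke adequacy for openness and to budget $2/n$ against $r$; the paper uses the unthickened sets $p^{-1}(U) \cap S$ directly, which are obviously open and need only $1/n$.
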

\begin{proof}
(i)$\implies$(iii):  For a countable basis $\@S$, as remarked above, $\@S_{<r}$ will be a countable basis of $r$-small open sets.

(iii)$\implies$(ii):  obvious.

(ii)$\implies$(i):  Let $\@U$ be a countable basis for $X$.  For each $n \in \#N^+$, let $\@S_n$ be a countable cover of $A$ by $1/n$-small open sets.  We claim that
\begin{align*}
\{p^{-1}(U) \cap S \mid U \in \@U \AND S \in \bigcup_n \@S_n\}
\end{align*}
forms a countable basis for $A$.  Indeed, let $T \subseteq A$ be open and $a \in T$, let $T' \ni a$ be open and $n \in \#N^+$ with $[T']_{<1/n} \subseteq T$, and let $a \in S \in \@S_n$.  Then $a \in S \cap T'$, whence $p(a) \in U \subseteq p(S \cap T')$ for some $U \in \@U$.  Then $a \in p^{-1}(U) \cap S \subseteq T$, since for any $b \in p^{-1}(U) \cap S$, we have $p(b) \in U \cap p(S) \subseteq p(S \cap T')$, whence there is $c \in (S \cap T')_{p(b)}$, whence $d(b, c) < 1/n$ because $b, c \in S \in \@S_n$, whence $b \in [T']_{<1/n} \subseteq T$.
\end{proof}


\begin{proposition}
\label{thm:metale-qpol}
Let $p : A -> X$ be a second-countable metric-étale space over a quasi-Polish space $X$.  Then $A$ is quasi-Polish.
\end{proposition}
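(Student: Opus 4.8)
The plan is to embed $A$ homeomorphically onto a $\*\Pi^0_2$ subset of a manifestly quasi-Polish space, in the spirit of \cref{thm:symgpd-qpol}. By \cref{thm:metale-adequate}, $A$ is adequate, so by \cref{thm:metale-ctble} I can fix a countable basis $\@S$ of open subsets of $A$ such that for each $n$ the subfamily $\@S_n := \{S\in\@S\mid S\text{ is }2^{-n}\text{-small over }X\}$ is again a basis (hence an open cover) of $A$; moreover, by adequacy the sets $[S]_{<r}$ with $S\in\@S$ and $r\in\mathbb{Q}^{>0}$ are open and form a basis of $A$. Since $A$ is $T_0$ (two distinct points in a common fibre are separated by a metric ball, as each fibre carries the metric topology; points over distinct base points are separated by the $p$-preimage of a base-open set, as $X$ is $T_0$), the map $e : A \to Y := X\times\#S^{\@S\times\mathbb{Q}^{>0}}$ sending $a\mapsto\bigl(p(a),\{(S,r):a\in[S]_{<r}\}\bigr)$ (identifying $\#S^{\@S\times\mathbb{Q}^{>0}}$ with the set of subsets of $\@S\times\mathbb{Q}^{>0}$ via "the coordinates equal to $1$") is a topological embedding, and $Y$ is quasi-Polish, being a countable product of copies of $\#S$ with the quasi-Polish space $X$. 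So it remains to show that $e(A)$ is $\*\Pi^0_2$ in $Y$.

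Write a point of $Y$ as $(x,u)$ with $u\subseteq\@S\times\mathbb{Q}^{>0}$, and write $u_{S,r}$ for the truth value of "$(S,r)\in u$"; the intention is that $(x,u)$ should encode some $a\in A_x$ via $u_{S,r}=1\iff d(a,S_x)<r$. I claim $e(A)$ is exactly the set of $(x,u)$ satisfying: (i) for each $S$, $\{r:u_{S,r}=1\}$ is upward closed and contains no infimum (i.e.\ $u_{S,r}=1$ implies $u_{S,r'}=1$ for some rational $r'<r$); (ii) if $S\subseteq[T]_{<\delta}$ and $u_{S,r}=1$, then $u_{T,r+\delta}=1$; (iii) if $u_{S,r}=1$ and $u_{T,s}=1$, then $x\in p(S\cap[T]_{<r+s})$; (iv) if $T$ is $\varepsilon$-small over $X$, $u_{T,r}=1$, and $x\in p(T\cap[S]_{<\delta})$, then $u_{S,r+\varepsilon+\delta}=1$; (v) for every $n$ there are $S\in\@S_n$ and rational $r\in(0,2^{-n})$ with $u_{S,r}=1$. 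Each clause is a countable conjunction, over the relevant countably many parameters, of sets of the form $(\text{closed})\cup(\text{open})$: "$u_{S,r}=1$" is open and its negation closed, while $S\cap[T]_{<\delta}\subseteq A$ is open (adequacy) so "$x\in p(S\cap[T]_{<\delta})$" is open in $X$ with closed negation; clause (v) is even a countable intersection of open sets. Hence the whole condition is $\*\Pi^0_2$, and that it holds at every $e(a)$ is a direct computation from the triangle inequality for $d$, using $d$-smallness of the sets in $\@S_n$ and of $T$ in (iv)–(v).

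For the converse, given $(x,u)$ satisfying (i)–(v): by (v) choose $S_n\in\@S_n$ and $r_n\in(0,2^{-n})\cap\mathbb{Q}$ with $u_{S_n,r_n}=1$. Clause (iii) with $S=T=S_n$ gives $(S_n)_x\ne\emptyset$, and with $(S,T)=(S_n,S_m)$ gives a point of $(S_n)_x$ within $r_n+r_m$ of $(S_m)_x$; since $\operatorname{diam}((S_n)_x)<2^{-n}$, any choice $a_n\in(S_n)_x$ forms a Cauchy sequence, with limit $a\in A_x$ by completeness of the fibre. Then $e(a)=(x,u)$: clause (iii) together with $a_n\to a$ forces $d(a,S_x)\le r$ whenever $u_{S,r}=1$; clause (iv) together with (i) forces $u_{S,r}=1$ whenever $r$ is rational with $r>d(a,S_x)$ (approximate $a$ by $a_n$ and $d(a,S_x)$ from above, feeding the result through (iv) with $T=S_n$, $\varepsilon=2^{-n}$, then (i)); and the "no infimum" part of (i) rules out the borderline value $r=d(a,S_x)$, so $\{r:u_{S,r}=1\}=\{r\in\mathbb{Q}^{>0}:d(a,S_x)<r\}$, i.e.\ $u=\{(S,r):a\in[S]_{<r}\}$. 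Thus $e(A)=\{(x,u):\text{(i)--(v)}\}$ is $\*\Pi^0_2$ in the quasi-Polish $Y$, so $A\cong e(A)$ is quasi-Polish.

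I expect the main obstacle to be precisely keeping the image condition at level $\*\Pi^0_2$ while still strong enough to reconstruct a genuine point of $A$. The naive existence requirement — that for each $n$ some $2^{-n}$-small $S$ has $d(a,S_x)=0$ — is a countable intersection of $\*\Sigma^0_2$ sets, hence only $\*\Pi^0_3$; weakening it to the approximate form (v) makes it $G_\delta$, at the price of an $O(2^{-n})$ slack that must be absorbed in the Cauchy-limit argument, which is why the strict-versus-nonstrict distinctions in (i) and (iv) have to be tracked carefully. An alternative and possibly shorter route would be to present $A$ as a metric-étale quotient $\^{A_0/d_0}$ of a second-countable discrete étale space $A_0$ over $X$ via \cref{thm:metale-quotient}, and embed $\^{A_0/d_0}$ into a countable power of the fibrewise lower powerspace $\@F_X(A_0)$ (quasi-Polish by \cref{sec:lowpow}) by sending a point to the sequence of closures of its rational-radius balls; but the coherence and completeness conditions cutting out the image there are of the same character, with the same strict-inequality subtlety.
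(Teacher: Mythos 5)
Your proof is correct and takes essentially the same route as the paper's: embed $A$ into a product of $X$ with a countable power of $\#S$, cut out the image by a $\*\Pi^0_2$ system of coherence, Cauchy, and regularity conditions, and reconstruct a point of the fiber using completeness. The only real difference is the encoding — the paper records the filter $\{S \in \@S \mid a \in S\}$ of basic open neighborhoods and absorbs the strict-inequality issue into the single regularity clause $\forall S \in \@A\, \exists n, T \in \@A\, ([T]_{<1/n} \subseteq S)$, where you record the approximate distances $d(a, S_x) < r$ and handle it via the ``no infimum'' part of your clause (i); both reconstructions then run the same fiberwise-completeness argument.
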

\begin{proof}
The proof is essentially a ``uniform'' version of the proof that Polish spaces are quasi-Polish; see \cite[5.1, Proof~2]{Cqpol}.
Let $\@U, \@S$ be countable bases of open sets in $X, A$ respectively, with $\@S$ closed under binary intersections and $p^{-1}(U) \in \@S$ for all $U \in \@U$.  Let
\begin{align*}
e : A &--> X \times \#S^\@S \\
a &|--> (p(a), \{S \in \@S \mid a \in S\})
\end{align*}
(where we are identifying $\#S^\@S$ with the powerset of $\@S$).  Clearly $e$ is a continuous embedding.  Hence, it suffices to show that for $(x, \@A) \in X \times \#S^\@S$,
\begin{align*}
(x, \@A) \in \im(e) \iff \left(\begin{aligned}
&\forall U \in \@U\, (x \in U \iff p^{-1}(U) \in \@A) \AND \\
&\forall S, T \in \@S\, (S \cap T \in \@A \iff S, T \in \@A) \AND \\
&\forall n \in \#N^+\, \exists S \in \@S_{<1/n}\, (S \in \@A) \AND \\
&\forall S \in \@A\, \exists n \in \#N^+,\, T \in \@A\, ([T]_{<1/n} \subseteq S)
\end{aligned}\right).
\end{align*}
$\Longrightarrow$ is straightforward (for the fourth condition, use that $S$ is a union of $[S']_{<r}$ for open $S' \subseteq A$ and $r > 0$).  For $\Longleftarrow$, let $(x, \@A)$ obey the right-hand side; we must find $a \in A_x$ such that $a \in S \iff S \in \@A$ for all $S \in \@S$.  By the first three conditions on the right-hand side,
$
\@A_x := \{S_x \mid S \in \@S\}
$
is a Cauchy filter base in the complete metric space $A_x$.  Let $a$ be its limit, i.e.,
\begin{align*}
\{a\} = \bigcap_{S \in \@A} \-{S_x}^{A_x}.
\end{align*}
For $S \in \@A$, by the fourth condition on the right-hand side, there are $n \in \#N^+$ and $T \in \@A$ with $[T]_{<1/n} \subseteq S$, whence $a \in \-{T_x} \subseteq [T]_{<1/n} \subseteq S$.  Conversely, for $a \in T \in \@S$, letting $T' \ni a$ be open and $n \in \#N^+$ with $[T']_{<2/n} \subseteq T$, by the third condition, there is some $1/n$-small $S \in \@A$, whence $a \in \-{S_x}$.  Then there is some $a' \in S_x$ with $d(a, a') < 1/n$, whence $a' \in S \cap [T']_{<1/n}$, whence $x = p(a') \in U \subseteq p(S \cap [T']_{<1/n})$ for some $U \in \@U$.  Then $p^{-1}(U) \cap S \in \@A$ by the first and second conditions, and $p^{-1}(U) \cap S \subseteq T$ as in the proof of \cref{thm:metale-ctble}, whence $T \in \@A$ by the second condition.
\end{proof}

\subsection{Isometry groupoids}
\label{sec:istgpd}

Let $p : A -> X$ be a metric-étale space over a topological space $X$.  Define
\begin{align*}
\Hom_X(A) &:= \{(x, y, f) \mid x, y \in X \AND f : A_x -> A_y \text{ (1-)Lipschitz}\}, \\
\Iso_X(A) &:= \{(x, y, f) \in \Hom_X(A) \mid f \text{ an isometry } A_x \cong A_y\}.
\end{align*}
$\Hom_X(A)$ is naturally equipped with the structure of a category, the \defn{Lipschitz category} of $A$, with objects $x \in X$ and morphisms consisting of Lipschitz maps $f : A_x -> A_y$; the category structure maps $\sigma, \tau, \iota, \nu, \mu$ are defined exactly as in the étale case (\cref{sec:symgpd}).  As before, we usually refer to morphisms in $\Hom_X(A)$ as $f : A_x -> A_y$, or simply $f$, instead of $(x, y, f)$.  The core $\Iso_X(A) \subseteq \Hom_X(A)$ is the \defn{isometry groupoid} of $A$.

Equip $\Hom_X(A)$ with the topology generated by $\sigma, \tau : \Hom_X(A) -> X$ as well as the sets $\sqsqbr{S |-> T}$ defined as in \cref{sec:symgpd} for open $S, T \subseteq A$.  As before, it suffices to consider $S, T$ in some basis of open sets in $A$, although we cannot expect them to be sections.  It is easily seen that when $X = 1$, the topology on $\Hom_X(A) = \{\text{Lipschitz } f : A -> A\}$ is just the pointwise convergence topology.  Equip $\Iso_X(A) \subseteq \Hom_X(A)$ with the subspace topology.

\begin{proposition}
\label{thm:istgpd-cts}
$\Hom_X(A)$ (resp., $\Iso_X(A)$) is a topological category (resp., groupoid).
\end{proposition}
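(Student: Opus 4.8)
The plan is to imitate the proof of the étale analogue, Proposition~\ref{thm:symgpd-cts}: verify continuity of each structure map $\sigma,\tau,\iota,\nu,\mu$ by computing preimages of subbasic open sets (the algebraic groupoid axioms being inherited verbatim from ordinary composition and inversion of maps). The maps $\sigma,\tau$ are continuous by definition of the topology. For $\iota\colon X \to \Hom_X(A)$, $x |-> (x,x,1_{A_x})$, one has $\iota^{-1}(\sigma^{-1}(U)) = \iota^{-1}(\tau^{-1}(U)) = U$ and $\iota^{-1}(\sqsqbr{S |-> T}) = p(S \cap T)$, which is open since $p$ is an open map. For $\nu\colon \Iso_X(A) \to \Iso_X(A)$, $(x,y,f) |-> (y,x,f^{-1})$, one has $\nu^{-1}(\sigma^{-1}(U)) = \tau^{-1}(U)$, $\nu^{-1}(\tau^{-1}(U)) = \sigma^{-1}(U)$, and $\nu^{-1}(\sqsqbr{S |-> T}) = \sqsqbr{T |-> S} \cap \Iso_X(A)$, using that $f^{-1}(S_y) \cap T_x \neq \emptyset$ iff $f(T_x) \cap S_y \neq \emptyset$ for a bijection $f$. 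All of this transcribes directly from the étale case.

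The one step requiring a new idea is continuity of $\mu$, since the étale proof inserts an open \emph{section} between $S$ and $T$ and metric-étale spaces need not admit sections. As before $\mu^{-1}(\sigma^{-1}(U)) = \Hom_X(A) \times_X \sigma^{-1}(U)$ and $\mu^{-1}(\tau^{-1}(U)) = \tau^{-1}(U) \times_X \Hom_X(A)$, so the point is that $\mu^{-1}(\sqsqbr{S |-> T})$ is open for open $S,T \subseteq A$. I would replace the section by an $r$-small open set and exploit the Lipschitz condition: suppose $\mu((y,z,g),(x,y,f)) \in \sqsqbr{S |-> T}$, witnessed by $a \in S_x$ with $c := g(f(a)) \in T_z$, and set $b := f(a) \in A_y$. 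Since $A$ is an adequate topometric space (Corollary~\ref{thm:metale-adequate}), Lemma~\ref{lm:topomet-reg} gives an open $T' \ni c$ and $r > 0$ with $[T']_{<r} \subseteq T$; and since $d_{<r} \subseteq A \times_X A$ is open, there is an $r$-small open $R \ni b$. Then $f \in \sqsqbr{S |-> R}$ (via $a |-> b$) and $g \in \sqsqbr{R |-> T'}$ (via $b |-> c$), so $((y,z,g),(x,y,f)) \in \sqsqbr{R |-> T'} \times_X \sqsqbr{S |-> R}$, and I claim this basic open set lies in $\mu^{-1}(\sqsqbr{S |-> T})$. Indeed, given $((y',z',g'),(x',y',f'))$ in it, pick $a' \in S_{x'}$ with $b' := f'(a') \in R_{y'}$ and $b'' \in R_{y'}$ with $c' := g'(b'') \in T'_{z'}$; since $R$ is $r$-small, $d(b',b'') < r$, so $d(g'(b'), c') = d(g'(b'), g'(b'')) < r$ by $1$-Lipschitzness of $g'$, whence $g'(f'(a')) = g'(b') \in [T']_{<r} \subseteq T$; as $g'(b') \in A_{z'}$, this gives $(g' \circ f')(S_{x'}) \cap T_{z'} \neq \emptyset$. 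Thus $\mu^{-1}(\sqsqbr{S |-> T})$ is a union of such rectangles, hence open.

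This argument uses exactly the three features of a metric-étale space: openness of $p$ (for $\iota$), openness of $d_{<r}$ in $A \times_X A$ (to manufacture the small open set $R$), and the topometric structure via Lemma~\ref{lm:topomet-reg} (to find $T'$); the $1$-Lipschitz constraint on morphisms is precisely what lets an $r$-small set play the role of a section. Restricting to $\Iso_X(A) \subseteq \Hom_X(A)$, the maps $\sigma,\tau,\iota,\mu$ remain continuous and land in $\Iso_X(A)$ (a composite of isometries is an isometry), and $\nu$ was dealt with above, so $\Iso_X(A)$ is a topological groupoid. I expect the verification of continuity of $\mu$ to be the only real obstacle; the remaining computations are routine transcriptions of Proposition~\ref{thm:symgpd-cts}.
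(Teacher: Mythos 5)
Your proposal is correct and follows essentially the same route as the paper: the maps $\sigma,\tau,\iota,\nu$ are handled exactly as in \cref{thm:symgpd-cts}, and for $\mu$ the paper likewise replaces the open section of the étale proof by an $r$-small open set around $f(a)$ together with an open $T'\ni g(f(a))$ and $r>0$ with $[T']_{<r}\subseteq T$, using $1$-Lipschitzness to conclude $g'(f'(a'))\in[T']_{<r}\subseteq T$. Your explicit appeals to \cref{thm:metale-adequate}, \cref{lm:topomet-reg}, and openness of $d_{<r}$ just make precise what the paper leaves implicit.
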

\begin{proof}
$\sigma, \tau, \iota, \nu$ are continuous exactly as in \cref{thm:symgpd-cts}.

Continuity of $\mu$: as in \cref{thm:symgpd-cts}, $\mu^{-1}(\sigma^{-1}(U)), \mu^{-1}(\tau^{-1}(U))$ are open for open $U \subseteq X$.  Let $x --->{f} y --->{g} z \in \Hom_X(A)$ (i.e., $f : A_x -> A_y$ and $g : A_y -> A_z$ are Lipschitz maps) and $\sqsqbr{R |-> T} \subseteq \Hom_X(A)$ be a subbasic open set, with $R, T \subseteq A$ open, such that $g \cdot f \in \sqsqbr{R |-> T}$.  So there is $a \in R_x$ with $g(f(a)) \in T_z$.  Let $T' \ni g(f(a))$ be open and $r > 0$ with $[T']_{<r} \subseteq T$, and let $S \ni f(a)$ be open and $r$-small.  Then $(g, f) \in \sqsqbr{S |-> T'} \times_X \sqsqbr{R |-> S} \subseteq \mu^{-1}(\sqsqbr{R |-> T})$, since for any $g' : y' -> z' \in \sqsqbr{S |-> T'}$ and $f' : x' -> y' \in \sqsqbr{R |-> S}$, we have some $a' \in R_{x'}$ (since $x' \in p(R)$ by definition of $\sqsqbr{R |-> S}$) with $f'(a') \in S$, as well as some $b' \in S_{y'}$ with $g'(b') \in T'$, whence $d(g'(f'(a')), g'(b')) \le d(f'(a'), b') < r$, whence $g'(f'(a')) \in [T']_{<r} \subseteq T$, whence $g' \cdot f' \in \sqsqbr{R |-> T}$.
\end{proof}

\begin{proposition}
\label{thm:istgpd-qpol}
If $X, A$ are quasi-Polish, then so are $\Hom_X(A), \Iso_X(A)$.
\end{proposition}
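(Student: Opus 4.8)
The plan is to adapt the proof of \cref{thm:symgpd-qpol}: represent each morphism by the closure of its graph and embed $\Hom_X(A)$ into a fiberwise lower powerspace. Since $X \times X$ and $A \times A$ are quasi-Polish, so is $\@F_{X \times X}(A \times A)$ (applying the construction of \cref{sec:lowpow} to $A \times A$ over $X \times X$ via $p \times p$), and one has a well-defined map
\[
e : \Hom_X(A) -> \@F_{X \times X}(A \times A), \qquad (x, y, f) |-> (x, y, \graph(f)),
\]
well defined because $\graph(f) = \{(a, b) \in A_x \times A_y \mid d(f(a), b) = 0\}$ is closed in $A_x \times A_y$. Exactly as in \cref{thm:symgpd-qpol}, $e$ is injective, $e^{-1}(\@F_{X \times X}(p \times p)^{-1}(U \times V)) = \sigma^{-1}(U) \cap \tau^{-1}(V)$ and $e^{-1}(\Dia_{X \times X}(S \times T)) = \sqsqbr{S |-> T}$ for open $U, V \subseteq X$ and $S, T \subseteq A$; since the topology of $\Hom_X(A)$ is generated by $\sigma$, $\tau$, and the $\sqsqbr{S |-> T}$ with $S, T$ in a fixed countable basis $\@S$ of $A$, this makes $e$ a topological embedding. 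So it suffices to show $\im(e)$ is $\*\Pi^0_2$; the case of $\Iso_X(A)$ then follows by additionally imposing the symmetric conditions below with the roles of $x$ and $y$ exchanged.

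I claim $(x, y, F) \in \@F_{X \times X}(A \times A)$ lies in $\im(e)$ precisely when (\emph{totality}) for every rational $r > 0$ and every $r$-small $S \in \@S$ with $x \in p(S)$ there is $T \in \@S$ with $(x, y, F) \in \Dia_{X \times X}(S \times T)$, and (\emph{$1$-Lipschitzness}) there are no rational $q > 0$ and $S, S', T, T' \in \@S$ with $S \times_X S' \subseteq d_{<q}$, with $(T \times_X T')_y \cap (d_{<q})_y = \emptyset$ (i.e.\ $T_y$ and $T'_y$ are at distance $\ge q$ in $A_y$), and with $(x, y, F) \in \Dia_{X \times X}(S \times T) \cap \Dia_{X \times X}(S' \times T')$. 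The forward direction is routine. Conversely, $1$-Lipschitzness says exactly that $(a, b), (a', b') \in F$ imply $d(b, b') \le d(a, a')$: given such a pair with $d(b, b') > d(a, a')$, one picks a rational $q$ strictly between, a basic box $S \times_X S' \subseteq d_{<q}$ around $(a, a')$ using that $d_{<q}$ is open in $A \times_X A$, and basic $T \ni b$, $T' \ni b'$ with $d > q$ on $T_y \times T'_y$ using that the metric of the fiber $A_y$ is continuous (recall that on a metric-étale space the fiber topology is the metric topology). In particular $F$ is the graph of a single-valued $1$-Lipschitz $f$ (take $a = a'$ above for functionality), and totality together with completeness of $A_y$ and closedness of $F$ makes $f$ total: for $a \in A_x$, choosing $r$-small $S \ni a$ with $r -> 0$ yields $(a_r, b_r) \in F$ with $a_r -> a$, so $(b_r)$ is Cauchy by $1$-Lipschitzness and its limit $b$ satisfies $(a, b) \in F$. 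Thus $(x, y, F) = e(x, y, f)$.

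It remains to check these conditions are $\*\Pi^0_2$. Totality is a countable intersection, over rational $r$ and $r$-small $S \in \@S$, of sets of the form $\{(x, y, F) \mid x \notin p(S)\} \cup \bigcup_{T \in \@S} \Dia_{X \times X}(S \times T)$, each a union of a closed and an open set, hence $\*\Pi^0_2$. For $1$-Lipschitzness, the main obstacle is that although $d_{<q} \subseteq A \times_X A$ is open, $d_{>q}$ need not be (we do not assume $A$ is a Hausdorff topometric space), so ``$d(b, b') \ge q$'' cannot be detected by an open box of $A \times_X A$; the fix is to push this requirement down to the base $X$. For fixed rational $q$ and $S, S', T, T' \in \@S$ with $S \times_X S' \subseteq d_{<q}$ (a condition not involving $(x, y, F)$), the set of $(x, y, F)$ satisfying the remaining clauses of the negation of $1$-Lipschitzness is the open set $\Dia_{X \times X}(S \times T) \cap \Dia_{X \times X}(S' \times T')$ intersected with the preimage, under the continuous projection $\@F_{X \times X}(A \times A) -> X$, $(x, y, F) |-> y$, of $Y_{T, T', q} := \{y \in X \mid (T \times_X T')_y \cap (d_{<q})_y = \emptyset\}$; and $Y_{T, T', q}$ is \emph{closed} in $X$, since its complement is the image of the open set $(T \times_X T') \cap d_{<q} \subseteq A \times_X A$ under the projection $A \times_X A -> X$, which is open (being the composite of the open projection $A \times_X A -> A$ with the open map $p : A -> X$; see \cref{sec:fiber}). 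Hence the negation of $1$-Lipschitzness is a countable union of sets of the form (open) $\cap$ (closed), i.e.\ a $\*\Sigma^0_2$ set, so $1$-Lipschitzness is $\*\Pi^0_2$, and $\im(e)$ — the intersection of the two conditions — is $\*\Pi^0_2$. Running the same argument with the fibers swapped (now also requiring $F^{-1}$ to be total and $1$-Lipschitz, i.e.\ that $F$ is the graph of a surjective isometry) shows that $\Iso_X(A)$ is $\*\Pi^0_2$ in $\@F_{X \times X}(A \times A)$. Therefore $\Hom_X(A)$ and $\Iso_X(A)$ are $\*\Pi^0_2$ subspaces of quasi-Polish spaces, hence quasi-Polish.
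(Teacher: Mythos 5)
Your proof is correct, and its skeleton coincides with the paper's: embed $\Hom_X(A)$ into the fiberwise lower powerspace $\@F_{X \times X}(A \times A)$ via graphs, cut out the image by a totality condition together with a $1$-Lipschitzness condition on the closed relation $F$, and recover a total Lipschitz map from these exactly as you do (functionality from the case $a = a'$, totality from Cauchy sequences plus completeness of the fibers). The one place you genuinely diverge is in how $1$-Lipschitzness is certified to be $\*\Pi^0_2$. The paper encodes it positively, as a statement of the form $\forall(\text{open} \Rightarrow \exists\,\text{open})$ living entirely inside the powerspace topology: whenever $F$ meets $S_1 \times T_1$ and $S_2 \times T_2$ with $S_1 \cup S_2$ $r$-small, the targets can be shrunk to $T_1' \subseteq T_1$ and $T_2' \subseteq T_2$ with $T_1' \cup T_2'$ still $r$-small, and the exact inequality $d(b_1,b_2) \le d(a_1,a_2)$ is then extracted by a limiting argument. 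You instead forbid a violating configuration outright, which forces you to express the non-open clause that $T_y$ and $T'_y$ are $q$-separated; your observation that this clause defines a \emph{closed} subset of $X$ --- its complement being the image of the open set $(T \times_X T') \cap d_{<q}$ under the open projection $A \times_X A \to X$ --- is the ingredient the paper does not need, and it is valid since $p$, hence each fiber-product projection, is open by the definition of a metric-étale space. Both formulations are equivalent to $d(b,b') \le d(a,a')$ for all $(a,b),(a',b') \in F$, and both yield countable Boolean combinations of the form $\bigcap(\text{closed} \cup \text{open})$, hence $\*\Pi^0_2$ sets; yours is somewhat more transparent about what is being asserted, while the paper's stays within the subbasic open sets of the powerspace and never has to exhibit a closed set as the complement of an open image.
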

\begin{proof}
As in \cref{thm:symgpd-qpol}, the embedding into the fiberwise lower powerspace
\begin{align*}
e : \Hom_X(A) &--> \@F_{X \times X}(A \times A) \\
(x, y, f) &|--> (x, y, \graph(f))
\end{align*}
is a continuous embedding; so to show that $\Hom_X(A)$ is quasi-Polish, it suffices to show that $\im(e)$ is $\*\Pi^0_2$.  Let $\@S$ be a countable basis of open sets in $A$.  We claim that for $(x, y, F) \in \@F_{X \times X}(A \times A)$,
\begin{align*}
(x, y, F) \in \im(e) \iff \left(\begin{aligned}
&\forall S \in \@S\, (x \in p(S) \implies \exists T \in \@S\, ((x, y, F) \in \Dia_{X^2}(S \times T))) \AND \\
&\forall S_1, S_2, T_1, T_2 \in \@S,\, r \in \#Q^+ \\
&\left(\begin{aligned}
&S_1 \cup S_2 \text{ $r$-small} \AND (x, y, F) \in \Dia_{X^2}(S_1 \times T_1) \cap \Dia_{X^2}(S_2 \times T_2) \\
&\implies \exists T_1 \supseteq T_1' \in \@S,\, T_2 \supseteq T_2' \in \@S \\
&\qquad\quad (T_1' \cup T_2' \text{ $r$-small} \AND (x, y, F) \in \Dia_{X^2}(S_1 \times T_1') \cap \Dia_{X^2}(S_2 \times T_2'))
\end{aligned}\right)
\end{aligned}\right).
\end{align*}

$\Longrightarrow$: Let $f : x -> y \in \Hom_X(A)$; we must check that $(x, y, \graph(f))$ obeys the right-hand side.  The first condition is straightforward.  To prove the second condition, let $S_1, S_2, T_1, T_2 \in \@S$ and $r \in \#Q^+$ such that $S_1 \cup S_2$ is $r$-small and $(x, y, \graph(f)) \in \Dia_{X^2}(S_1 \times T_1) \cap \Dia_{X^2}(S_2 \times T_2)$, i.e., there are $a_1 \in (S_1)_x$ and $a_2 \in (S_2)_x$ such that $f(a_1) \in T_1$ and $f(a_2) \in T_2$.  Since $S_1 \cup S_2$ is $r$-small, $d(f(a_1), f(a_2)) \le d(a_1, a_2) < r$.  Let $s > 0$ such that $d(f(a_1), f(a_2)) + 2s \le r$, let $T_1' \in \@S$ be $s$-small with $f(a_1) \in T_1' \subseteq T_1$, whence $f(a_2) \in [T_1']_{<d(f(a_1), f(a_2))+s}$, and let $T_2' \in \@S$ be $s$-small with $f(a_2) \in T_2' \subseteq T_2 \cap [T_1']_{<d(f(a_1), f(a_2))+s}$.  Then $T_1' \cup T_2'$ is $r$-small, since two points in the same fiber of $T_1'$ or $T_2'$ are within distance $s < r$, while for $b \in (T_1')_z$ and $c \in (T_2')_z$, we have $d(c, b') < d(f(a_1), f(a_2))+s$ for some $b' \in (T_1')_z$ whence $d(c, b) \le d(c, b') + d(b', b) < d(f(a_1), f(a_2))+s+s \le r$.  So $T_1', T_2'$ witness the second condition.

$\Longleftarrow$: Let $(x, y, F)$ satisfy the right-hand side; we must show that $F = \graph(f)$ for some Lipschitz $f : A_x -> A_y$.  We claim that the second condition on the right-hand side implies
\begin{align*}
\tag{$*$}  \forall (a_1, b_1), (a_2, b_2) \in F \cap (A_x \times A_y)\, (d(a_1, a_2) \ge d(b_1, b_2)).
\end{align*}
Let $(a_1, b_1), (a_2, b_2) \in F \cap (A_x \times A_y)$ and $d(a_1, a_2) < r \in \#Q^+$.  Let $s > 0$ such that $d(a_1, a_2) + 2s \le r$, let $S_1 \in \@S$ be $s$-small with $a_1 \in S_1$, and let $S_2 \in \@S$ be $s$-small with $a_2 \in S_2 \subseteq [T_1]_{<d(a_1, a_2)+s}$.  Then by similar reasoning as in the proof of $\Longrightarrow$ above, $S_1 \cup S_2$ is $r$-small.  Let $T_1, T_2 \in \@S$ be $s$-small with $b_1 \in T_1$ and $b_2 \in T_2$.  Then $(x, y, F) \in \Dia_{X^2}(S_1 \times T_1) \cap \Dia_{X^2}(S_2 \times T_2)$ as witnessed by $(a_1, b_1), (a_2, b_2)$, whence by the second condition, there are $T_1 \supseteq T_1' \in \@S$ and $T_2 \supseteq T_2' \in \@S$ such that $T_1' \cup T_2'$ is $r$-small and $(x, y, F) \in \Dia_{X^2}(S_1 \times T_1') \cap \Dia_{X^2}(S_2 \times T_2')$, i.e., there are $a_1' \in (S_1)_x$, $a_2' \in (S_2)_x$, $b_1' \in (T_1')_x$, and $b_2' \in (T_2')_x$ such that $(a_1', b_1'), (a_2', b_2') \in F$.  Then
\begin{align*}
d(b_1, b_2)
&\le d(b_1, b_1') + d(b_1', b_2') + d(b_2', b_2) \\
&< s + r + s \qquad\text{since $T_1$ $s$-small, $T_1' \cup T_2'$ $r$-small, and $T_2$ $s$-small}.
\end{align*}
Taking $s \searrow 0$ gives $d(b_1, b_2) \le r$, and then taking $r \searrow d(a_1, a_2)$ gives ($*$).

In particular, taking $a_1 = a_2$ in ($*$) shows that $F$ is the graph of a partial function $f : A_x \rightharpoonup A_y$.  To see that $f$ is total, let $a \in A_x$.  For each $n \in \#N^+$, let $S_n \in \@S$ be $1/n$-small with $a \in (S_n)_x$; then by the first condition on the right-hand side, there is some $(a_n, b_n) \in F \cap ((S_n)_x \times A_y)$.  Then $\lim_n a_n = a$, whence $(a_n)_n \subseteq A_x$ is Cauchy, whence so is $(b_n)_n$ by ($*$); let $b = \lim_n b_n \in A_y$.  Since each $(a_n, b_n) \in F$ and $F$ is closed, it follows that $(a, b) \in F$, i.e., $f(a) = b$.  Thus $f$ is a total function $A_x -> A_y$, and is Lipschitz by ($*$).  This completes the proof that $\im(e)$ is $\*\Pi^0_2$.

To show that $\Iso_X(A) \subseteq \Hom_X(A)$ is $\*\Pi^0_2$, add the ``converses'' of the above conditions.
\end{proof}

\subsection{Metric-étale structures}
\label{sec:metalestr}

Let $\@L$ be a (multi-sorted) relational first-order language and $X$ be a topological space.  A \defn{metric-étale $\@L$-structure $\@M$ over $X$} consists of:
\begin{itemize}
\item  for each sort $P \in \@L_s$, an underlying metric-étale space $p : P^\@M -> X$ over $X$ (when $\@L$ is one-sorted, we denote $P^\@M$ for the unique sort $P$ by $M$);
\item  for each $(P_0, \dotsc, P_{n-1})$-ary relation symbol $R \in \@L_r$, an invariant open grey subset $R^\@M \sqle P_0^\@M \times_X \dotsb \times_X P_{n-1}^\@M$ (where $P_0^\@M \times_X \dotsb \times_X P_{n-1}^\@M$ has the sum metric).
\end{itemize}
Given a metric-étale $\@L$-structure $\@M$ over $X$, for each $x \in X$, we have a metric $\@L$-structure
\begin{align*}
\@M_x := (P^\@M_x, R^\@M_x)_{P,R \in \@L}.
\end{align*}

We say that a metric-étale $\@L$-structure $\@M$ over $X$ is \defn{second-countable} if $\@L$ is countable, $X$ is second-countable, and $P^\@M$ is second-countable for each sort $P \in \@L_s$.

The \defn{homomorphism category} $\Hom_X(\@M)$ and \defn{isomorphism groupoid} $\Iso_X(\@M)$ of a metric-étale $\@L$-structure $\@M$ over $X$ are defined exactly as in \cref{sec:etalestr}.  As there, we let
\begin{align*}
\sqsqbr{S |-> T}_P = \{(x, y, f) \in \Hom_X(\@M) \mid f_P(S_x) \cap T_y \ne \emptyset\} \qquad\text{for open $S, T \subseteq P^\@M$}
\end{align*}
denote the subbasic open sets in $\Hom_X(\@M)$, which generate the topology along with the maps $\sigma, \tau$.  We use the same notation for the restrictions of these sets to $\Iso_X(\@M)$.

\begin{proposition}
\label{thm:metalestr-isogpd-qpol}
If $\@M$ is second-countable, then $\Hom_X(\@M)$ (resp., $\Iso_X(\@M)$) is a quasi-Polish category (resp., groupoid).
\end{proposition}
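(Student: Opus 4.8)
The plan is to follow the proof of \cref{thm:etalestr-isogpd-qpol} essentially verbatim, substituting the metric ingredients of \crefrange{sec:metalesp}{sec:istgpd} for their étale counterparts; in particular, assume as there that $X$ is quasi-Polish. Since $\@M$ is second-countable, each sort $P^\@M$ is a second-countable metric-étale space over $X$, hence quasi-Polish by \cref{thm:metale-qpol}; so by \cref{thm:istgpd-qpol} each $\Hom_X(P^\@M)$ (resp.\ $\Iso_X(P^\@M)$) is quasi-Polish, and therefore so is the countable fiber product $\prod_{X \times X} (\Hom_X(P^\@M))_{P \in \@L_s}$ (resp.\ the analogue with $\Iso$). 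It then suffices to show that the subset $\Hom_X(\@M)$ of this product --- consisting of those $(x, y, f) = (x, y, f_P)_P$ for which $f$ is an $\@L$-homomorphism $\@M_x -> \@M_y$, i.e.\ $R^{\@M_y}(f(\vec a)) \le R^{\@M_x}(\vec a)$ for every $(P_0, \dotsc, P_{n-1})$-ary $R \in \@L_r$ and every $\vec a \in (P_0^\@M)_x \times \dotsb \times (P_{n-1}^\@M)_x$ --- is $\*\Pi^0_2$, and likewise for $\Iso_X(\@M)$.

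Fix for each sort $P$ a countable basis $\@S_P$ of open sets in $P^\@M$, recalling that for every $\epsilon > 0$ the $\epsilon$-small members of $\@S_P$ still form a basis (\cref{sec:metalesp}), and note that $R^\@M_{<r} \subseteq P_0^\@M \times_X \dotsb \times_X P_{n-1}^\@M$ is open for every rational $r$ since $R^\@M$ is an open grey set. The heart of the proof is the claim that $f$ preserves a given $R$ iff
\begin{align*}
&\forall \epsilon, r, s \in \#Q^+\; \forall\, \epsilon\text{-small } S_i \in \@S_{P_i}\ (i < n) \\
&\quad \Bigl( x \in p\bigl((S_0 \times_X \dotsb \times_X S_{n-1}) \cap R^\@M_{<r}\bigr) \\
&\qquad \implies \exists\, \epsilon\text{-small } T_i \in \@S_{P_i}\ (i < n)\; \bigl( \forall i\, (f \in \sqsqbr{S_i |-> T_i}_{P_i}) \AND y \in p\bigl((T_0 \times_X \dotsb \times_X T_{n-1}) \cap R^\@M_{<r+s}\bigr) \bigr) \Bigr),
\end{align*}
where $p$ denotes the projection of the relevant fiber product to $X$. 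For the forward implication one takes a witness $\vec a \in \prod_i (S_i)_x$ with $R^{\@M_x}(\vec a) < r$, observes $R^{\@M_y}(f(\vec a)) \le R^{\@M_x}(\vec a) < r + s$, and chooses $\epsilon$-small $T_i \in \@S_{P_i}$ with $f(a_i) \in T_i$. For the reverse implication, given any $\vec a$ over $x$, one applies the condition with $r$ slightly above $R^{\@M_x}(\vec a)$ and with $\epsilon$-small $S_i \ni a_i$, then unwinds the witnesses $a_i' \in (S_i)_x$ with $f(a_i') \in T_i$ and $\vec b \in \prod_i (T_i)_y$ with $R^{\@M_y}(\vec b) < r + s$; using that $f$ is $1$-Lipschitz on fibers and that $R^\@M$ is $1$-Lipschitz for the sum metric, the $\epsilon$-smallness of the $S_i$ and $T_i$ forces $d(f(\vec a), \vec b) < 2n\epsilon$, whence $R^{\@M_y}(f(\vec a)) < r + s + 2n\epsilon$, and letting $\epsilon, s -> 0$ and $r$ decrease to $R^{\@M_x}(\vec a)$ gives the desired inequality. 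This $\epsilon$-small approximation, needed precisely because (unlike in the étale case) open subsets of $P^\@M$ are not sections, is the only genuinely new technical point, and I expect it to be the main obstacle.

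Finally, the displayed condition is manifestly $\*\Pi^0_2$ in $(x, y, f)$: its hypothesis is the $\sigma$-preimage of an open subset of $X$ (note $p$ is open on metric-étale spaces), so its negation is closed; its conclusion is a countable union of finite intersections of the subbasic open sets $\sqsqbr{S_i |-> T_i}_{P_i}$ with the $\tau$-preimage of an open subset of $X$, hence open; and a countable intersection of sets of the form (closed) $\cup$ (open) is $\*\Pi^0_2$, since open and closed sets are $\*\Pi^0_2$ and $\*\Pi^0_2$ is stable under countable intersections and finite unions. Intersecting over the countably many $R \in \@L_r$ (there are no function symbols, $\@L$ being relational in the continuous setting) shows $\Hom_X(\@M)$ is $\*\Pi^0_2$, hence a quasi-Polish category. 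For $\Iso_X(\@M)$ one works instead inside the quasi-Polish $\prod_{X \times X} (\Iso_X(P^\@M))_P$ and additionally imposes the ``converses'' of the above conditions, obtained by interchanging $x$ and $y$ and replacing $f$ by $f^{-1} = \nu(f)$; since $\nu$ is continuous these pull back to $\*\Pi^0_2$ sets, and together with the forward conditions they force $R^{\@M_x}(\vec a) = R^{\@M_y}(f(\vec a))$ for all $\vec a$, i.e.\ that $f$ is an $\@L$-isomorphism.
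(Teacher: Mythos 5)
Your proof is correct and follows essentially the same route as the paper's: reduce to showing that the homomorphism condition cuts out a $\*\Pi^0_2$ subset of the quasi-Polish product $\prod_{X \times X} (\Hom_X(P^\@M))_{P \in \@L_s}$, using the $1$-Lipschitzness of $f$ on fibers and of $R^\@M$ to replace exact preservation by an approximate condition on small basic open sets, and handle $\Iso_X(\@M)$ by adjoining the converse conditions. The only cosmetic difference is that the paper packages the approximation as the fiberwise closure condition $(R^\@M_{<r})_x \subseteq \-{(f_{P_0} \times \dotsb \times f_{P_{n-1}})^{-1}((R^\@M_{<r})_y)}$ for rational $r$ and quotes \cref{lm:lipcat-homom-pi02} for its Borel complexity, whereas you inline the equivalent $\epsilon$-small/$r{+}s$ formulation directly.
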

\begin{proof}
Let $(x, y, f) = (x, y, f_P)_P \in \prod_{X \times X} (\Hom_X(P^\@M))_{P \in \@L_s}$; we must show that $f$ being a homomorphism $\@M_x -> \@M_y$ is a $\*\Pi^0_2$ condition.  For a $(P_0, \dotsc, P_{n-1})$-ary relation symbol $R \in \@L_r$, $f$ preserves $R$ iff for every $r \in \#Q^+$, we have
\begin{align*}
(R^\@M_{<r})_x \subseteq \-{(f_{P_0} \times \dotsb \times f_{P_{n-1}})^{-1}((R^\@M_{<r})_y)}^{(P_0^\@M)_x \times \dotsb \times (P_{n-1}^\@M)_x},
\end{align*}
i.e., every tuple $\vec{a} \in \@M_x$ with $R(\vec{a}) < r$ is a limit of tuples $\vec{b}$ with $R(f(\vec{b})) < r$.  That this condition is $\*\Pi^0_2$ follows from
\begin{lemma}
\label{lm:lipcat-homom-pi02}
Let $p : A -> X$ be a second-countable metric-étale space, and $S, T \subseteq A$ be open.  Then
\begin{align*}
\{f : A_x -> A_y \in \Hom_X(A) \mid S_x \subseteq \-{f^{-1}(T_y)}^{A_x}\}
\end{align*}
is $\*\Pi^0_2$.
\end{lemma}
\begin{proof}
Let $\@S$ be a countable basis of open sets in $A$.  For $f : A_x -> A_y \in \Hom_X(A)$, we have
\begin{align*}
S_x \subseteq \-{f^{-1}(T_y)}^{A_x}
&\iff \forall \text{ basic open } U \subseteq A_x\, (U \cap S_x \ne \emptyset \implies U \cap f^{-1}(T_y) \ne \emptyset) \\
&\iff \forall \text{ basic open } U \subseteq A_x\, (U \cap S_x \ne \emptyset \implies f(U) \cap T_y \ne \emptyset) \\
&\iff \forall R \in \@S\, (x \in p(R \cap S) \implies f \in \sqsqbr{R |-> T}).
\qedhere
\end{align*}
\end{proof}
\noindent
together with the easy fact that the map
\begin{align*}
\prod_{X \times X} (\Hom_X(P^\@M))_{P \in \@L_s} &--> \Hom_X(P_0^\@M \times_X \dotsb \times_X P_{n-1}^\@M) \\
(x, y, f_P)_P &|--> f_{P_0} \times_X \dotsb \times_X f_{P_{n-1}} : (P_0^\@M)_x \times \dotsb \times (P_{n-1}^\@M)_x -> (P_0^\@M)_y \times \dotsb \times (P_{n-1}^\@M)_y
\end{align*}
is continuous.
\end{proof}

\subsection{Fiberwise separable Borel metric spaces and structures}

As in the étale case, we next develop the Borel analogs of the above notions.

Let $X$ be a standard Borel space.  A \defn{fiberwise separable Borel pseudometric space over $X$} is a standard Borel space $p : A -> X$ over $X$ equipped with a Borel pseudometric $d \sqle A \times_X A$, discrete between fibers, such that there exists a fiberwise countable $d$-dense Borel subset $S \subseteq A$.  Equivalently, by Lusin--Novikov, there are countably many \defn{Borel sections} $S \subseteq A$ (i.e., $p|S$ is injective), which we can always assume to be \defn{full}, meaning $p(S) = p(A)$, which enumerate a countable dense set in each fiber $A_x$.  Note that these conditions imply that $p(A) \subseteq X$ is Borel.  We call $A$ a \defn{fiberwise separable Borel complete metric space over $X$} if $d$ is a complete metric (equivalently, each fiber is a complete metric space).

\begin{proposition}
Let $p : A -> X$ be a quasi-Polish metric-étale space over a quasi-Polish space $X$.  Then $A$ is a fiberwise separable Borel complete metric space over $X$.
\end{proposition}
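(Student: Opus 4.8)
The plan is to reduce the statement to the single assertion that $A$ admits a \emph{fiberwise countable $d$-dense Borel subset} $B$; everything else is routine. Indeed, $X$ and $A$ are quasi-Polish — the latter by \cref{thm:metale-qpol} — hence standard Borel, and $p$, being continuous, is Borel, so $p : A -> X$ is a standard Borel space over $X$; the metric $d$ is continuous on $A \times A$, hence Borel, and is discrete between fibers by the definition of a metric-étale space; and $d$ is a complete metric (again by that definition), with each fiber $A_x$ being $d$-clopen in $A$ (the open $d$-ball of radius $1$ about any point of $A_x$ is contained in $A_x$, since points of distinct fibers lie at distance $1$), hence a complete metric space as a closed subspace of $(A,d)$. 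Given such a $B$, each $B \cap A_x$ is a countable $d$-dense subset of $A_x$, which is exactly what is required.

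To construct $B$, I would fix, using \cref{thm:metale-ctble}, a countable family $\@S$ of open subsets of $A$ such that for every $m \ge 1$ the subfamily $\@S_{<1/m}$ of $1/m$-small members of $\@S$ is a basis for $A$ (take a countable $1/m$-small basis for each $m$ and take the union). For each $S \in \@S$ I will produce a Borel partial section $s_S : p(S) -> A$ of $p$ with $s_S(x) \in \-{S_x}^{A_x}$ for all $x \in p(S)$, and put $B := \bigcup_{S \in \@S} \im(s_S)$. Since each $p(S) \subseteq X$ is open (hence Borel, as $p$ is a topologically open map) and each $s_S$ is Borel, the set $\im(s_S) = \{a \in A \mid p(a) \in p(S) \AND s_S(p(a)) = a\}$ is Borel, so $B$ is Borel; $B$ is fiberwise countable because $B \cap A_x = \{s_S(x) \mid S \in \@S,\ x \in p(S)\}$; and $B$ is $d$-dense in each $A_x$ because, given $a \in A_x$ and $m \ge 1$, choosing $S \in \@S_{<1/m}$ with $a \in S$ yields $a, s_S(x) \in \-{S_x}^{A_x}$, a set of diameter $\le 1/m$, so $d(a, s_S(x)) \le 1/m$.

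It remains to construct $s_S$, and here lies the one genuine point: since quasi-Polish spaces need not be regular, one cannot select a point from a fiber by the usual Cauchy-filter argument using shrinking \emph{topological} closures, so I would use shrinking \emph{metric} neighborhoods instead — just as in the proof of \cref{thm:metale-qpol} — the relevant fact being that by adequacy (\cref{thm:metale-adequate}) and \cref{lm:topomet-reg} the sets $[V]_{<r}$ ($V \subseteq A$ open, $r > 0$) form a basis for $A$. Fix $S \in \@S$ and $x \in p(S)$, and recursively choose $R_0 := S$ and, given $R_i \in \@S$ with $x \in p(R_i)$, let $R_{i+1}$ be the member of $\@S$ of least index which is $2^{-(i+1)}$-small, has $x \in p(R_{i+1})$, and satisfies $[R_{i+1}]_{<r} \subseteq R_i$ for some rational $r > 0$; such an $R_{i+1}$ exists because, picking $a \in (R_i)_x$, \cref{lm:topomet-reg} gives open $V \ni a$ and $r > 0$ with $[V]_{<r} \subseteq R_i$, and then any sufficiently small $R_{i+1} \in \@S$ with $a \in R_{i+1} \subseteq V$ works. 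The sets $\-{(R_i)_x}^{A_x}$ are nonempty, complete (closed in $A_x$), of diameter $\to 0$, and nested — from $[R_{i+1}]_{<r} \subseteq R_i$ one gets $\-{(R_{i+1})_x}^{A_x} \subseteq (R_i)_x$, since a point of $A_x$ at $d$-distance $0$ from $(R_{i+1})_x$ lies in $[R_{i+1}]_{<r} \cap A_x \subseteq (R_i)_x$ — so by Cantor's intersection theorem in the complete metric space $A_x$ their intersection is a single point, which I take to be $s_S(x)$; then $s_S(x) \in \-{(R_0)_x}^{A_x} = \-{S_x}^{A_x}$ and $p(s_S(x)) = x$.

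The Borel measurability of $s_S$ is then routine. By induction on $i$, the map sending $x$ to the index of $R_i$ in the enumeration of $\@S$ is Borel, since the least-index choice of $R_{i+1}$ depends on $x$ only through the open condition $x \in p(R_{i+1})$, all the other requirements being fixed relations among $i$ and the indices of $R_i$ and $R_{i+1}$. Consequently $\graph(s_S)$ is Borel, being the set of $(x,a)$ with $p(a) = x$ and $d(a, R_i) = 0$ for every $i$ (where $R_i$ is the Borel function of $x$ chosen above); here $s_S(x)$ is indeed the unique such $a$, because for $a \in A_x$ the condition $d(a, R_i) = 0$ says exactly $a \in \-{(R_i)_x}^{A_x}$, and $\bigcap_i \-{(R_i)_x}^{A_x} = \{s_S(x)\}$. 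Thus each $s_S$ is a Borel section of $p$ of the required kind, $B$ is a fiberwise countable $d$-dense Borel subset of $A$, and the proof is complete. The main obstacle, as indicated, is the non-regularity of quasi-Polish spaces, handled by the metric-neighborhood substitute afforded by adequacy.
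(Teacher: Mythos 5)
Your proof is correct. It differs from the paper's primary argument, which is much shorter: the paper simply observes that for each basic open $S \subseteq A$ the restriction $p|S : S \to p(S)$ is a continuous open surjection between quasi-Polish spaces and invokes a large-section uniformization result (\cite[7.9]{Cqpol}) to obtain a Borel section $s_S$ directly, the fiberwise density then following exactly as in your last step. What you do instead — recursively selecting, by least index, a shrinking chain $R_0 \supseteq [R_1]_{<r_1} \supseteq R_1 \supseteq \cdots$ of $2^{-i}$-small basic open sets whose fiberwise metric closures nest, and extracting the unique point of the intersection via completeness — is in substance exactly the construction the paper itself records in the remark immediately following this proposition as an ``alternative, more explicit'' proof (there phrased via the Baire-class-$k$ index functions $n_k(x)$); the paper notes its advantage is that it yields sections of known Borel complexity, whereas the uniformization route is a black box. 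Your handling of the one genuine subtlety — that quasi-Polish spaces are not regular, so topological closures cannot be used and must be replaced by the metric neighborhoods $[V]_{<r}$ supplied by adequacy and \cref{lm:topomet-reg} — is exactly right, and the rest (Borelness of the index maps by induction, Borelness of $\graph(s_S)$ via the conditions $d(a, R_i) = 0$) checks out. One small inaccuracy: $d$ need not be \emph{continuous} on $A \times A$ as you assert; the definition of metric-étale space only gives that each $d_{<r}$ is open in $A \times_X A$, i.e., $d$ is upper semicontinuous there and $\equiv 1$ off the (Borel, but not necessarily closed) set $A \times_X A$. This still makes $d$ Borel, which is all you use, so nothing breaks.
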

\begin{proof}[Proof]
Let $\@S$ be a countable basis of open sets in $A$.  For each $S \in \@S$, $p|S : S -> X$ is a continuous open map between quasi-Polish spaces, hence by a standard large section uniformization argument, has a Borel section $s_S : p(S) -> S$ (see \cite[7.9]{Cqpol}).  Then $\{\im(s_S) \mid S \in \@S\}$ is a countable family of fiberwise dense Borel sections in $A$.
\end{proof}

\begin{remark}
Here is an alternative, more explicit, construction of the Borel sections $s_S$ in the preceding proof, which has the advantage that the resulting countable family of Borel sections has known Borel complexity.

Let $\@S = \{S_0, S_1, \dotsc\}$.  Fix an $S = S_{n_0} \in \@S$.  For each $x \in p(S)$, let $n_0(x) := n_0$, and inductively for each $k \in \#N$ let
\begin{align*}
n_{k+1}(x) := \text{least $n$ s.t.\ } x \in p(S_n) \AND S_n \text{ is $1/(k+1)$-small} \AND \exists r \in \#Q^+\, ([S_n]_{<r} \subseteq S_{n_k(x)}).
\end{align*}
Then for each $x \in p(S)$, $((S_{n_k(x)})_x)_{k \in \#N}$ is a sequence of nonempty open sets of vanishing diameter in the complete metric space $A_x$, each containing the closure of the next, whence their intersection is a point $s_S(x) \in (S_{n_0(x)})_x = S_x$.

By induction, each $n_k : p(S) -> \#N$ (with the discrete topology on $\#N$) is Baire class $k$.  Hence
\begin{align*}
\im(s_S) &= \{a \in p^{-1}(p(S)) \mid a = s_S(p(a))\} \\
&= \{a \in p^{-1}(p(S)) \mid \forall k\, \exists n\, (n_k(p(a)) = n \AND a \in S_n)\}
\end{align*}
is $\*\Pi^0_\omega$.
\end{remark}

As in the étale case (\cref{rmk:cbsp-etale}), there is a converse to the preceding result:

\begin{proposition}
\label{thm:sbmsp-metale}
Let $p : A -> X$ be a fiberwise separable Borel complete metric space over $X$.  Then there are compatible Polish topologies on $A, X$ turning $p : A -> X$ into a metric-étale space.
\end{proposition}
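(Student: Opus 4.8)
The plan is to imitate the étale construction in \cref{rmk:cbsp-etale}, replacing the Borel sections that cover an étale space by fiberwise \emph{dense} Borel sections and reconstructing each fiber's metric topology from the distances to these sections. We may assume $p$ is surjective (otherwise $p(A) \subseteq X$ is Borel by the hypotheses, so we run the construction over $p(A)$ and re-attach $X \setminus p(A)$ as a clopen summand of $X$ at the end). By definition, $p : A -> X$ admits countably many full Borel sections $s_n : X -> A$ ($n \in \#N$) whose values $(s_n(x))_n$ are $d$-dense in every fiber $A_x$. Each distance function $d_{nm} := (x |-> d(s_n(x), s_m(x))) : X -> \#I$ is then Borel, so by the standard refinement lemma for Polish topologies there is a finer Polish topology $\@T_X$ on $X$, with the same Borel sets, in which every $d_{nm}$ is continuous.

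Now consider
\[ e : A -> X \times \#I^\#N, \qquad e(a) := (p(a), (d(a, s_n(p(a))))_n). \]
Fiberwise density makes $e$ injective, and $e$ is Borel, hence a Borel isomorphism onto its (Borel) image. One checks that $\im(e)$ is exactly the set of $(x, (r_n))$ with $\lvert r_n - r_m\rvert \le d_{nm}(x) \le r_n + r_m$ for all $n, m$ and $\inf_n r_n = 0$: the triangle inequalities are necessary and, since the $d_{nm}$ are now continuous, cut out a closed set; density of $(s_n(x))_n$ forces $\inf_n r_n = 0$, a $G_\delta$ condition; and conversely, given these, any $n_k$ with $r_{n_k} -> 0$ makes $(s_{n_k}(x))_k$ Cauchy, with limit realizing $(r_n)$ by completeness of $A_x$. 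So $\im(e)$ is $\*\Pi^0_2$ in a Polish space, hence Polish, and pulling its topology back along $e$ yields a Polish topology $\@T_A$ on $A$ generating the original Borel structure. Unwinding, $\@T_A$ is generated by the sets $p^{-1}(U)$ ($U \in \@T_X$) together with the balls around sections $B(n,q) := \{a \in A \mid d(a, s_n(p(a))) < q\}$ ($n \in \#N$, $q \in \#Q^+$), since $\{a \mid d(a, s_n(p(a))) > q\}$ equals the union over $m, q' \in \#Q^+$ of $B(m,q') \cap p^{-1}\{x \mid d_{mn}(x) > q + 2q'\}$, which is already of this form.

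It remains to check that $p : (A, \@T_A) -> (X, \@T_X)$ is a metric-étale space. Continuity of $p$ is clear. For openness, the $p$-image of a basic open set $p^{-1}(U) \cap B(n_1, q_1) \cap \dotsb \cap B(n_k, q_k)$ equals $U \cap \bigcup_m \bigcap_{i \le k} \{x \mid d_{m,n_i}(x) < q_i\}$ — one inclusion is immediate, the other replaces an arbitrary point of $A_x$ meeting all the balls by a nearby $s_m(x)$, using density — which is $\@T_X$-open by continuity of the $d_{m,n_i}$; since images commute with unions, $p$ is open. The metric $d$ is discrete between fibers by hypothesis, and $(A, d)$ is complete since a Cauchy sequence is eventually confined to a single fiber; moreover $d(a,b) = \inf_n (d(a, s_n(x)) \dotplus d(b, s_n(x)))$ for $a, b \in A_x$ (again by density), so $d_{<r} = \bigcup_n \bigcup_{q + q' < r} (B(n,q) \times_X B(n,q'))$ is open in $A \times_X A$. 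Finally, $(A, \@T_A, d)$ is a complete topometric space: each grey set $g_n := (a |-> d(a, s_n(p(a)))) \sqle A$ is $d$-invariant (immediate from $d$ being $1$ between distinct fibers and the triangle inequality within a fiber) and grey-open (its sublevel sets are the $B(n,q)$), and each $p^{-1}(U)$ is likewise invariant and grey-open, so every generator of $\@T_A$ has the form $V_{<1}$ with $V$ invariant grey-open; by \cref{thm:topomet-strong} the topometric strong topology induced by $1_A : (A, \@T_A) -> (A, d)$ then both contains and is contained in $\@T_A$, hence equals it. Thus $p : A -> X$ is metric-étale.

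The main obstacle is arranging for $p$ to be open: a naive pullback topology on $A$ gives no control over $p$-images, so one must first refine the topology of $X$ to make the distance functions $d_{nm}$ continuous and then build $\@T_A$ out of the balls around sections, which is precisely what turns the $p$-image of a basic open set into a visibly countable union of open subsets of $X$. The only other delicate point — that $\@T_A$ is a genuine topometric space rather than merely a space carrying an unrelated topology and metric — is handled above via the invariant grey sets $g_n$ and \cref{thm:topomet-strong}.
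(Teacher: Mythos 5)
Your proof is correct, and it takes a genuinely different route from the paper's. The paper runs an $\omega$-iteration, powered by a generalized Kunugui--Novikov uniformization lemma (\cref{thm:sbmsp-kunugui-novikov}), to produce a family $\@S$ of sections and a countable Boolean algebra $\@U$ of Borel subsets of $X$ with enough closure properties that the thickened sections $[S]_{<r}$ by themselves form a basis for the topology on $A$; it then obtains quasi-Polishness of $A$ from \cref{thm:metale-qpol} and upgrades to Polish by a separate regularity argument (showing $\-{[S]_{<r}} \subseteq [S]_{\le r}$). You instead coordinatize $A$ by its distances to a fixed countable dense family of full sections, after refining the topology of $X$ only enough to make the pairwise distance functions $d_{nm}$ continuous, and identify the image of $e$ in $X \times \#I^\#N$ as an explicit $\*\Pi^0_2$ (hence $G_\delta$) set cut out by the triangle inequalities together with $\inf_n r_n = 0$; Polishness of $A$ then comes for free, with no uniformization theorem and no appeal to \cref{thm:metale-qpol}. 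The price is that your topology on $A$ is generated by the balls $B(n,q)$ only together with the sets $p^{-1}(U)$, rather than having the thickened sections alone as a basis, but nothing in the metric-étale verification requires more: your computations of $p$-images of basic open sets, the openness of $d_{<r}$ in $A \times_X A$, and the invariance of the topology via the $1$-Lipschitz grey sets $g_n$ and \cref{thm:topomet-strong} are all sound, as is the reduction to the surjective case at the start.
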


The proof uses the following generalized version of the Kunugui--Novikov uniformization theorem for fiberwise open Borel sets \cite[28.7]{Kcdst}, with essentially the same proof:

\begin{lemma}
\label{thm:sbmsp-kunugui-novikov}
Let $p : A -> X$ be a standard Borel space over $X$ and $\@S$ be a countable family of Borel subsets of $A$.  Then any Borel $R \subseteq A$ which is fiberwise a union of fibers of $S \in \@S$ can be written as a countable union of sets of the form $p^{-1}(B) \cap S$ with $B \subseteq X$ Borel and $S \in \@S$.
\end{lemma}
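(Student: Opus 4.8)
The decomposition to aim for is essentially forced. For each $S \in \@S$ set
\[ B_S := \{x \in X \mid S_x \subseteq R_x\} \subseteq X. \]
I claim $R = \bigcup_{S \in \@S}(p^{-1}(B_S) \cap S)$. The inclusion $\supseteq$ is immediate: if $a \in p^{-1}(B_S) \cap S$, then $p(a) \in B_S$, so $a \in S_{p(a)} \subseteq R_{p(a)} \subseteq R$. For $\subseteq$, let $a \in R$ and put $x := p(a)$; by hypothesis $R_x = \bigcup_{S \in \@T} S_x$ for some $\@T \subseteq \@S$, so $a \in S_x$ for some $S \in \@T$, and then $S_x \subseteq \bigcup_{S' \in \@T} S'_x = R_x$, i.e.\ $x \in B_S$, whence $a \in p^{-1}(B_S) \cap S$. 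Thus the lemma holds \emph{provided} each $B_S$ is Borel.

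Proving that is the whole content, and it is not a formality: \emph{a priori} $B_S$ is only co-analytic, since
\[ X \setminus B_S = \{x \mid (S \setminus R)_x \neq \emptyset\} = p(S \setminus R) \]
is the image of the Borel set $S \setminus R$ under the Borel map $p$, hence $\*\Sigma^1_1$. To see that it is in fact Borel I would reproduce the proof of the classical Kunugui--Novikov theorem \cite[28.7]{Kcdst}. Fix a compatible Polish topology on $A$ and one on $X$ for which $p$ is continuous and every $S \in \@S$ is clopen (a standard change of topology; one may also close $\@S$ under finite intersections first), so that the sets $p^{-1}(U) \cap S$ ($U \subseteq X$ open, $S \in \@S$) form a basis for $A$. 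Then one argues by induction on the least $\xi$ with $R \in \*\Sigma^0_\xi(A)$: at $\xi = 1$, $R$ is open, hence a countable union of such basic sets, which is already of the required form; and at successor and limit stages one decomposes $R$ into pieces of strictly smaller rank and uses the Lusin separation theorem to assemble Borel index sets from them, exactly as in \cite[28.7]{Kcdst}, with $\@S$ playing the role of the basic open cylinders $X \times V_n$ and ``$R$ is fiberwise a union of fibers of members of $\@S$'' that of ``$R$ has open sections''. No idea beyond the classical argument is involved.

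The single real obstacle is thus this inductive upgrade of the $B_S$ from $\*\Sigma^1_1$ to Borel (equivalently, seeing that $p(S\setminus R)$ is Borel); granting the classical Kunugui--Novikov argument, the rest --- in particular the displayed set-theoretic decomposition --- is routine.
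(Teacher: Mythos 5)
The decomposition you write down is correct as a set-theoretic identity, but the plan to complete the proof by showing each $B_S$ is Borel cannot work, because $B_S$ is in general \emph{not} Borel. As you note, $X \setminus B_S = p(S \setminus R)$ is only $\*\Sigma^1_1$, and this is sharp: take $A = C$ a closed subset of $\#N^\#N \times \#N^\#N$ whose first projection $p(C)$ is properly analytic, $\@S = \{C\}$, and $R = \emptyset$ (which is vacuously fiberwise a union of fibers of members of $\@S$); then $B_C = \{x \mid C_x = \emptyset\} = X \setminus p(C)$ is properly co-analytic, even though the conclusion of the lemma holds trivially with $B = \emptyset$. The same phenomenon already occurs in the classical Kunugui--Novikov theorem: for a Borel $A \subseteq X \times Y$ with open sections, the canonical index sets $\{x \mid V_n \subseteq A_x\}$ can be properly co-analytic, and the content of the theorem is precisely that one may choose \emph{smaller} Borel index sets. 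So the decomposition is not ``forced'': the Borel sets in the conclusion must in general be proper subsets of your $B_S$.

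Relatedly, your sketch of how to run the classical argument is not the argument of \cite[28.7]{Kcdst}, and the induction on the Borel rank of $R$ that you propose breaks at the inductive step: writing $R$ as a countable union or intersection of sets of smaller rank destroys the hypothesis that the pieces are fiberwise unions of fibers of members of $\@S$ (just as, classically, the closed pieces of an $F_\sigma$ set with open sections do not have open sections). The proof in the paper instead keeps $X_S := \{x \mid S_x \subseteq R\}$ as a $\*\Pi^1_1$ set, applies the Novikov separation theorem to the $\*\Pi^1_1$ cover $R = \bigcup_S (p^{-1}(X_S) \cap S)$ to extract Borel sets $R_S \subseteq p^{-1}(X_S) \cap S$ still covering $R$, and then applies the Lusin separation theorem to the pair $p(R_S) \subseteq X_S$ ($\*\Sigma^1_1$ inside $\*\Pi^1_1$) to interpolate Borel sets $B_S$ with $p(R_S) \subseteq B_S \subseteq X_S$; these $B_S$, not yours, do the job. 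You would need to replace your rank induction by this (or an equivalent reflection-type) separation argument.
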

\begin{proof}
For every $a \in R_x$, we have $a \in S_x \subseteq R$ for some $S \in \@S$.  So putting $X_S := \{x \in X \mid S_x \subseteq R\}$, we have $R = \bigcup_{S \in \@S} (p^{-1}(X_S) \cap S)$.  Each $X_S$ is $\*\Pi^1_1$, whence so is $p^{-1}(X_S) \cap S$, so by the Novikov separation theorem \cite[28.5]{Kcdst}, there are Borel sets $R_S \subseteq p^{-1}(X_S) \cap S$ with $R = \bigcup_S R_S$.  We have $p(R_S) \subseteq X_S$, so by the Lusin separation theorem, there are Borel sets $B_S \subseteq X$ with $p(R_S) \subseteq B_S \subseteq X_S$.  Then $R = \bigcup_S R_S \subseteq \bigcup_S (p^{-1}(B_S) \cap S) \subseteq \bigcup_S (p^{-1}(X_S) \cap S) = R$.
\end{proof}

\begin{proof}[Proof of \cref{thm:sbmsp-metale}]
By an $\omega$-iteration, we may find a countable family $\@S$ of fiberwise dense Borel sections in $A$ together with a countable Boolean algebra $\@U$ of Borel sets in $X$ such that
\begin{enumerate}[label=(\roman*)]
\item  $\@U$ generates a (zero-dimensional) Polish topology on $X$;
\item  $p(S) \in \@U$ for all $S \in \@S$;
\item  $p^{-1}(U) \cap S \in \@S$ for all $U \in \@U$ and $S \in \@S$;
\item  for all $S, S' \in \@S$ and rational $r, r' > 0$, $[S]_{<r} \cap [S']_{<r'}$ is a countable union of $[S'']_{<r''}$ for $S'' \in \@S$ and rational $r'' > 0$ (using (iii) and \cref{thm:sbmsp-kunugui-novikov}); and $p([S]_{<r} \cap [S']_{<r'}) \in \@U$.
\end{enumerate}
Equip $X$ with the topology generated by $\@U$ and $A$ with the topology generated by $[S]_{<r}$ for $S \in \@S$ and rational $r > 0$; the latter sets form a basis by (iv).  Then $p$ is continuous by (iii) and density of $\@S$, and open by (ii).
To check that $d \sqle A \times_X A$ is open: if $d(a, b) < r$, then there are rational $s, t > 0$ with $d(a, b) \le s < s+2t \le r$ and $S \in \@S$ with $a \in [S]_{<t}$; then it is easily seen that $(a, b) \in [S]_{<t} \times_X [S]_{<s+t} \subseteq d_{<r}$.
To check that the topology on $A$ is $d$-invariant: for a basic open set $[S]_{<r}$, we have $[S]_{<r} = \bigcup_{r \ge s+t} [[S]_{<s}]_{<t}$.
Thus $p : A -> X$ is metric-étale, hence $A$ is quasi-Polish by \cref{thm:metale-qpol}.

Finally, to check that $A$ is Polish: note that the topological closure $\-{[S]_{<r}}$ in $A$ of a basic open set $[S]_{<r}$ is contained in $[S]_{\le r}$.  Indeed, for $a \not\in [S]_{\le r}$, either $x := p(a) \not\in p(S)$ in which case $p^{-1}(X \setminus p(S))$ is an open neighborhood of $a$ disjoint from $[S]_{<r}$, or else letting $b \in S_x$ be the unique element, we have $d(a, b) > r$, so letting $r' > 0$ be rational with $d(a, b) \ge r+2r'$, there is some $S' \in \@S$ with $a \in [S']_{<r'}$, whence $x \not\in p([S]_{<r} \cap [S']_{<r'})$ (since for $c \in ([S]_{<r} \cap [S']_{<r'})_x$ we would have $d(a, b) \le d(a, S'_x) + d(S'_x, c) + d(c, b) < r' + r' + r$), whence by (iv) and closure of $\@U$ under complements, $p^{-1}(X \setminus p([S]_{<r} \cap [S']_{<r'})) \cap [S']_{<r'}$ is an open neighborhood of $a$ disjoint from $[S]_{<r}$.  It now follows from the formula $[S]_{<r} = \bigcup_{s<r} [S]_{<s}$ that $A$ is regular, hence Polish.
\end{proof}

For a fiberwise separable Borel complete metric space $p : A -> X$ over $X$, we define the \defn{Lipschitz category} $\Hom_X(A)$ and \defn{isometry groupoid} $\Iso_X(A)$ to be the underlying standard Borel category (groupoid) of the quasi-Polish category (groupoid) defined in \cref{sec:istgpd}, for any compatible (quasi-)Polish topologies on $A, X$ turning $A$ into a metric-étale space over $X$.  That the Borel structure on $\Hom_X(A), \Iso_X(A)$ does not depend on the chosen topology follows from

\begin{lemma}
Let $p : A -> X$ be a quasi-Polish metric-étale space.  The standard Borel structure on $\Hom_X(A)$ is generated by the maps $\sigma, \tau : \Hom_X(A) -> X$ and either of:
\begin{enumerate}[label=(\roman*)]
\item  the sets $\sqsqbr{S |-> T}$ (as defined in \cref{sec:symgpd}) for $S, T \in \@S$, for any countable family $\@S$ of Borel fiberwise open sets in $A$ restricting to a basis in each fiber; or
\item  the sets
\begin{align*}
\sqsqbr{S |-> [T]_{<r}} = \{f : A_x -> A_y \in \Hom_X(A) \mid x \in p(S) \AND d(f(S_x), T_y) < r\}
\end{align*}
for $S, T \in \@S$ and $r \in \#Q^+$, for any countable family $\@S$ of fiberwise dense Borel sections in $A$.
\end{enumerate}
\end{lemma}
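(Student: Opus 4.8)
The plan is to fix compatible countable bases $\@B_X$ and $\@B_A$ for the quasi-Polish topologies on $X$ and $A$, and to reduce everything to a single transfer principle. By \cref{thm:istgpd-qpol} the space $\Hom_X(A)$ is quasi-Polish, hence second-countable; since the operation $(S,T) |-> \sqsqbr{S |-> T}$ preserves countable unions in each argument (\cref{sec:symgpd}), the topology defined in \cref{sec:istgpd}, and therefore the Borel $\sigma$-algebra, of $\Hom_X(A)$ is already generated by the maps $\sigma,\tau$ together with the sets $\sqsqbr{B |-> B'}$ for $B,B' \in \@B_A$. So for each of (i), (ii) it is enough to show that these sets and the proposed ones generate the same $\sigma$-algebra over $\sigma,\tau$ (where ``over $\sigma,\tau$'' means we are allowed preimages $\sigma^{-1}(C),\tau^{-1}(C)$ of arbitrary Borel $C \subseteq X$).

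The workhorse is the following consequence of the generalized Kunugui--Novikov theorem (\cref{thm:sbmsp-kunugui-novikov}): if $\@D$ is any countable family of Borel subsets of $A$ restricting to a basis of each fiber $A_x$, then every Borel fiberwise-open $S \subseteq A$ can be written $S = \bigcup_n (p^{-1}(C_n) \cap D_n)$ with $C_n \subseteq X$ Borel and $D_n \in \@D$; combining such decompositions of two Borel fiberwise-open $S = \bigcup_n(p^{-1}(C_n)\cap D_n)$, $T = \bigcup_m(p^{-1}(E_m)\cap D'_m)$ with the identity $\sqsqbr{p^{-1}(C)\cap D |-> p^{-1}(E)\cap D'} = \sigma^{-1}(C) \cap \tau^{-1}(E) \cap \sqsqbr{D |-> D'}$ and countable union-preservation gives
\begin{align*}
\sqsqbr{S |-> T} = \bigcup_{n,m} \bigl( \sigma^{-1}(C_n) \cap \tau^{-1}(E_m) \cap \sqsqbr{D_n |-> D'_m} \bigr).
\end{align*}
For (i): since the fiberwise topology on $A$ is the subspace topology, which the metric-étale condition forces to be induced by the metric, $\@B_A$ restricts to a basis of each fiber; applying the above with $\@D := \@B_A$ to $S,T \in \@S$ shows $\sqsqbr{S |-> T}$ is Borel (each $\sqsqbr{B |-> B'}$, $B,B' \in \@B_A$, is open in $\Hom_X(A)$), and applying it with $\@D := \@S$ to $B,B' \in \@B_A$ shows $\sqsqbr{B |-> B'}$ lies in the $\sigma$-algebra generated by $\sigma,\tau$ and $\{\sqsqbr{S |-> T} : S,T \in \@S\}$. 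This settles (i).

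For (ii), set $\@D := \{[S]_{<r} : S \in \@S,\ r \in \#Q^+\}$. Each $[S]_{<r}$ is fiberwise open; as $S,r$ vary these sets are the rational balls about the fiberwise-dense sets $(S_x)_{S \in \@S}$, hence restrict to a basis of each $A_x$; and $[S]_{<r}$ is Borel, being the injective image under the projection $A \times_X A -> A$ of the Borel set $d_{<r} \cap (S \times_X A)$ (injectivity because $S$ is a section). For the inclusion $(\subseteq)$, one verifies from $1$-Lipschitzness of morphisms that $\sqsqbr{S |-> [T]_{<r}} = \bigcup_{r' \in \#Q \cap (0,r)} \bigcap_{s \in \#Q^+} \sqsqbr{[S]_{<s} |-> [T]_{<r'}}$, whose right-hand side is Borel by the case-(i) argument (all entries are Borel fiberwise-open). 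For $(\supseteq)$, the workhorse with this $\@D$ rewrites $\sqsqbr{B |-> B'}$ ($B,B' \in \@B_A$) as a countable Boolean combination of sets $\sigma^{-1}(C),\tau^{-1}(E)$ and $\sqsqbr{[S_1]_{<r_1} |-> [S_2]_{<r_2}}$; and a $1$-Lipschitz-plus-fiberwise-density computation yields
\begin{align*}
\sqsqbr{[S_1]_{<r_1} |-> [S_2]_{<r_2}} = \bigcup \bigl\{ \sigma^{-1}(\{x : d((S_1)_x,(S_3)_x) < q_1\}) \cap \sqsqbr{S_3 |-> [S_2]_{<q_2}} \bigr\},
\end{align*}
the union taken over $S_3 \in \@S$ and rationals $0 < q_1 < r_1$, $0 < q_2 < r_2$; here $\{x : d((S_1)_x,(S_3)_x) < q_1\}$ is Borel in $X$ because $x |-> (S_i)_x$ is a Borel map on the Borel set $p(S_i)$. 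Thus $\sqsqbr{B |-> B'}$ lies in the $\sigma$-algebra generated by $\sigma,\tau$ and the sets of (ii), completing (ii). The corresponding statements for $\Iso_X(A)$ then follow by restriction, $\Iso_X(A) \subseteq \Hom_X(A)$ being Borel.

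I expect the main obstacle to be the $(\supseteq)$ half of (ii), specifically the last displayed identity: one must trade a source-side \emph{open-ball} constraint $[S_1]_{<r_1}$ for a source-side \emph{single-point} (section) constraint $S_3$, which works only because the morphisms are $1$-Lipschitz. The forward inclusion is proved by taking the point $a \in ([S_1]_{<r_1})_x$ witnessing membership, choosing by fiberwise density some $S_3 \in \@S$ with $(S_3)_x$ close to $a$, and tracking the triangle inequalities for $d((S_1)_x,(S_3)_x)$ and $d(f((S_3)_x),(S_2)_y)$; the reverse inclusion by plugging $a := (S_3)_x$ back in. One also needs the (easy but necessary) Borelness of the fiberwise-distance function between two Borel sections. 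Everything else is routine measure-theoretic bookkeeping modulo \cref{thm:sbmsp-kunugui-novikov} and second-countability.
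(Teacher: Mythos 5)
Your proposal is correct and follows essentially the same route as the paper: part (i) via the generalized Kunugui--Novikov lemma together with the union/preimage identities for $\sqsqbr{S |-> T}$, and part (ii) by reducing to part (i) applied to the countable family of balls $[S]_{<r}$, using identities that trade a ball-constraint on the source side for a section plus a Borel condition on $X$ recording the fiberwise distance. The only differences are cosmetic: your $\subseteq$-identity for (ii) uses a union of intersections where the paper uses a single union over $s+t\le r$, and your $\supseteq$-identity records the distance condition as $d((S_1)_x,(S_3)_x)<q_1$ where the paper writes it as $x\in p([S]_{<s}\cap S')$ --- these are equivalent.
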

\begin{proof}
First we prove (i).  Note that the standard Borel structure on $\Hom_X(A)$ as defined in \cref{sec:istgpd} is also of the form (i), by taking $\@S$ to be a countable basis of open sets in $A$.  So it is enough to prove that any two families $\@S, \@S'$ as in (i) yield the same Borel structure.  This follows from \cref{thm:sbmsp-kunugui-novikov} and the identities
\begin{align*}
\sqsqbr{\bigcup_i S_i |-> T} &= \bigcup_i \sqsqbr{S_i |-> T}, &
\sqsqbr{p^{-1}(U) \cap S |-> T} &= \sigma^{-1}(U) \cap \sqsqbr{S |-> T}, \\
\sqsqbr{S |-> \bigcup_i T_i} &= \bigcup_i \sqsqbr{S |-> T_i}, &
\sqsqbr{S |-> p^{-1}(U) \cap T} &= \tau^{-1}(U) \cap \sqsqbr{S |-> T}
\end{align*}
(for Borel $S, T \subseteq A$ and $U \subseteq X$).

Now let $\@S, R$ be as in (ii).  It is straightforward to check that for $S, T \in \@S$ and $r, s, t \in \#Q^+$,
\begin{align*}
\sqsqbr{S |-> [T]_{<r}} &= \bigcup \{\sqsqbr{[S]_{<s} |-> [T]_{<t}} \mid s, t \in \#Q^+ \AND s + t \le r\}, \\
\sqsqbr{[S]_{<s} |-> [T]_{<t}} &= \bigcup \{\sqsqbr{[S]_{<s} \cap S' |-> [T]_{<t}} \mid S' \in \@S\} \\
&= \bigcup \{\sqsqbr{p^{-1}(p([S]_{<s} \cap S')) \cap S' |-> [T]_{<t}} \mid S' \in \@S\} \\
&= \bigcup \{\sigma^{-1}(p([S]_{<s} \cap S')) \cap \sqsqbr{S' |-> [T]_{<t}} \mid S' \in \@S\}.
\end{align*}
Thus, the Borel structure in (ii) coincides with the one in (i) where $\@S$ (in (i)) is taken to consist of $[S]_{<r}$ for $S \in \@S$ and $r \in \#Q^+$.
\end{proof}

For a countable relational language $\@L$ and standard Borel space $X$, a \defn{fiberwise separable Borel metric $\@L$-structure} $\@M = (P^\@M, R^\@M)_{P,R \in \@L}$ over $X$ consists of
\begin{itemize}
\item  for each sort $P \in \@L_s$, an underlying fiberwise separable Borel complete metric space $p : P^\@M -> X$ over $X$ ($P^\@M$ denoted $M$ for one-sorted $\@L$);
\item  for each $(P_0, \dotsc, P_{n-1})$-ary $R \in \@L_r$, a $d$-invariant Borel grey subset $R^\@M \sqle P_0^\@M \times_X \dotsb \times_X P_{n-1}^\@M$.
\end{itemize}
Every second-countable metric-étale $\@L$-structure $\@M$ has an underlying fiberwise separable Borel metric $\@L$-structure.

The \defn{homomorphism category} $\Hom_X(\@M)$ and \defn{isomorphism groupoid} $\Iso_X(\@M)$ of a fiberwise separable Borel metric structure $\@M$ are defined exactly as in \cref{sec:metalestr} (that is, \cref{sec:etalestr}).  By the proof of \cref{thm:metalestr-isogpd-qpol}, with the following analog of \cref{lm:lipcat-homom-pi02}, $\Hom_X(\@M), \Iso_X(\@M)$ are standard Borel:

\begin{lemma}
\label{lm:lipcat-homom-borel}
Let $p : A -> X$ be a fiberwise separable Borel complete metric space, and $S, T \subseteq A$ be Borel and fiberwise open.  Then
\begin{align*}
\{f : A_x -> A_y \in \Hom_X(A) \mid S_x \subseteq \-{f^{-1}(T_y)}^{A_x}\}
\end{align*}
is Borel.
\end{lemma}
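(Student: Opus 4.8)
The plan is to carry the proof of \cref{lm:lipcat-homom-pi02} over to the Borel category verbatim, the only genuinely new issue being to keep the various projections $p(-)$ that occur Borel rather than merely analytic.

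First I would fix, via \cref{thm:sbmsp-metale} (more precisely, via the topology, the countable family $\@S$ of fiberwise dense Borel sections, and the countable Boolean algebra $\@U$ of Borel subsets of $X$ constructed in its proof), compatible Polish topologies on $A$ and $X$ making $p : A -> X$ a metric-étale space, with basic open sets $[S']_{<r}$ for $S' \in \@S$ and $r \in \#Q^+$, and with $p([S']_{<r} \cap [S'']_{<r'}) \in \@U$ for all $S', S'' \in \@S$ and $r, r' \in \#Q^+$. Since $S$ and $T$ are Borel and fiberwise open, each is fiberwise a union of fibers of members of $\{[S']_{<r}\}$, so by the Kunugui--Novikov-type \cref{thm:sbmsp-kunugui-novikov} I may write $S = \bigcup_i (p^{-1}(B_i) \cap [S'_i]_{<r_i})$ and $T = \bigcup_j (p^{-1}(C_j) \cap [T'_j]_{<t_j})$ with $S'_i, T'_j \in \@S$, $r_i, t_j \in \#Q^+$, and $B_i, C_j \subseteq X$ Borel.

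Next I would establish the purely topological reformulation (identical to the computation in the proof of \cref{lm:lipcat-homom-pi02}, using that $S_x$ is open and that the sets $([S']_{<r})_x$ form a basis of $A_x$): for $f : A_x -> A_y \in \Hom_X(A)$, one has $S_x \subseteq \-{f^{-1}(T_y)}^{A_x}$ iff for every $S' \in \@S$ and $r \in \#Q^+$ with $x \in p([S']_{<r} \cap S)$ one has $f(([S']_{<r})_x) \cap T_y \ne \emptyset$. I would then observe that both clauses of this implication are Borel in $f$. On the one hand, from the decomposition of $S$, $p([S']_{<r} \cap S) = \bigcup_i (B_i \cap p([S']_{<r} \cap [S'_i]_{<r_i}))$, a countable union of Borel sets since $p([S']_{<r} \cap [S'_i]_{<r_i}) \in \@U$; hence ``$\sigma(f) \in p([S']_{<r} \cap S)$'' cuts out a Borel subset of $\Hom_X(A)$. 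On the other hand, from the decomposition of $T$, $f(([S']_{<r})_x) \cap T_y \ne \emptyset$ iff there is $j$ with $\tau(f) \in C_j$ and $f \in \sqsqbr{[S']_{<r} |-> [T'_j]_{<t_j}}$, a countable union of intersections of $\tau^{-1}(\text{Borel})$ with the subbasic open, hence Borel, sets $\sqsqbr{[S']_{<r} |-> [T'_j]_{<t_j}}$ of $\Hom_X(A)$. Taking the conjunction over the countably many pairs $(S', r)$ then displays $\{f \mid S_x \subseteq \-{f^{-1}(T_y)}^{A_x}\}$ as a Borel subset of $\Hom_X(A)$.

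The step I expect to be the main obstacle --- and really the only point not shared with \cref{lm:lipcat-homom-pi02} --- is precisely the Borelness of the images $p(-)$: in the metric-étale setting these sets are naturally exhibited only as projections of Borel sets, hence a priori analytic, and it is \cref{thm:sbmsp-kunugui-novikov} together with clause (iv) in the proof of \cref{thm:sbmsp-metale} (closure of $\@U$ under the operations $p([-]_{<r} \cap [-]_{<r'})$) that repair this; granting that, the rest is routine bookkeeping. Exactly as in \cref{thm:metalestr-isogpd-qpol}, this lemma then yields that $\Hom_X(\@M)$ and $\Iso_X(\@M)$ are standard Borel for every fiberwise separable Borel metric $\@L$-structure $\@M$.
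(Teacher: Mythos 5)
Your proposal is correct and is essentially the paper's own argument: the paper's proof of this lemma literally says to repeat the computation of \cref{lm:lipcat-homom-pi02} with $\@S$ replaced by a countable family of Borel fiberwise open sets restricting to a basis in each fiber (such as your $[S']_{<r}$), and your reformulation and Borelness checks are a correct fleshing-out of exactly that. Your identification of the Borelness of the images $p(-)$ as the one genuinely new point is also on target, and your route through \cref{thm:sbmsp-kunugui-novikov} and the construction in \cref{thm:sbmsp-metale} handles it properly.
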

\begin{proof}
Repeat the proof of \cref{lm:lipcat-homom-pi02}, with $\@S$ taken to be a countable family of Borel fiberwise open sets restricting to a basis in each fiber.
\end{proof}

\subsection{Katětov functions}
\label{sec:katetov}

In the discrete setting (\cref{sec:cbstr-unif}), we turned any fiberwise countable Borel structure into a fiberwise countably infinite Borel structure by adjoining $\aleph_0$-many constants to each fiber.  The metric analog of this procedure will be given by Katětov's construction of the Urysohn sphere \cite{Kat}.  In this and the next section, we prove that the construction may be performed in a uniform Borel fashion.  This technique is well-known, and there are several similar applications in the literature (see \cite[2.2]{GK}, \cite[3.2]{Usp}, \cite[2.3]{EFPRTT}); however, there does not appear to be a precise statement and proof with the level of uniformity we need.  For the sake of completeness, we will give the details.

For a (pseudo)metric space $X$ (of diameter $\le 1$), a \defn{Katětov function} on $X$ is a (1-)Lipschitz function $u : X -> \#I$ satisfying
\begin{align*}
u(x) + u(y) \ge d(x, y) \qquad\forall x, y \in X,
\end{align*}
and can be thought of as specifying the distances between points in $X$ and another point in a larger metric space containing $X$ (with the above inequality expressing the triangle inequality).  In the language of grey sets (recall \cref{sec:greyrel}), the above inequality says simply
\begin{align*}
u \oplus u \sqle d.
\end{align*}
Let
\begin{align*}
\@E(X) := \{u \in \#I^X \mid u \text{ is a Katětov function}\},
\end{align*}
equipped with the sup metric.  Note that $\@E(X) \subseteq \ell^\infty(X)$ is closed, hence complete.  We have a canonical isometric map
\begin{align*}
\delta = \delta_X : X &--> \@E(X) \\
x &|--> d(x, -),
\end{align*}
whose image closure is thus (isometric to) the completion of $X$ (or $X/d$, if pseudometric).  The following simple fact (an instance of the \emph{enriched Yoneda lemma}) is key:
\begin{align*}
d_{\@E(X)}(\delta(x), u) = u(x) \qquad\forall x \in X,\, u \in \@E(X).
\end{align*}

A Lipschitz map $f : X -> Y$ extends along $\delta$ to a map
\begin{align*}
\@E(f) : \@E(X) &--> \@E(Y) \\
u &|--> [f(u)]
\end{align*}
(here treating $u \sqle X$ as a grey subset), which is easily seen to be Lipschitz (since if $u \le v \dotplus r$ then $[f(u)] \le [f(v \dotplus r)] = [f(v) \dotplus r] = [f(v)] \dotplus r$, using that taking image of grey subsets preserves $\dotplus r$).
$\@E$ is functorial: $\@E(g \circ f) = \@E(g) \circ \@E(f)$, and $\@E(1_X) = 1_{\@E(X)}$.
If $f$ is isometric, then so is $\@E(f)$, and we then have $\@E(f)(u) \circ f = u$ for all $u \in \@E(X)$; if $f$ furthermore has dense image, then $\@E(f)$ is bijective, and given by $\@E(f)(u) = u \circ f^{-1}$ when $f$ is bijective.  For $f : X \subseteq Y$ an (isometric) inclusion, we denote $\@E(f)$ by $\@E(X \subseteq Y)$ (which is simply $u |-> d_Y[u]$).

\begin{lemma}[{see \cite[1.6]{Kat}, \cite[3.3]{GK}}]
\label{lm:katetov-ff}
Let $X, Y$ be metric spaces, $f : X \cong Y$ be an isometric bijection, and $g : \@E(X) \rightharpoonup \@E(Y)$ be a partial isometry whose domain $\dom(g)$ contains $\im(\delta_X)$.  Then $g = \@E(f)|\dom(g)$ iff $g \circ \delta_X = \delta_Y \circ f$, i.e., the following commutes:
\begin{equation*}
\begin{tikzcd}
X \dar["f"'] \rar[hook, "\delta_X"] & \dom(g) \mathrlap{{}\subseteq \@E(X)} \dar["g"] \\
Y \rar[hook, "\delta_Y"'] & \@E(Y)
\end{tikzcd}
\end{equation*}
\end{lemma}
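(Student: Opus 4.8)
The plan is to note that the forward implication is a direct consequence of the facts about $\@E$ already recorded, and that the converse reduces to a short computation built on the enriched Yoneda lemma $d_{\@E(X)}(\delta_X(x), u) = u(x)$.

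For the direction $(\Rightarrow)$: if $g = \@E(f)|\dom(g)$, then since $f$ is an isometric bijection we have $\@E(f)(u) = u \circ f^{-1}$, so $\@E(f)(\delta_X(x)) = d_X(x, f^{-1}(-)) = d_Y(f(x), -) = \delta_Y(f(x))$, the middle equality holding because $f$ is an isometry and is onto. Restricting to $\im(\delta_X) \subseteq \dom(g)$ then gives $g \circ \delta_X = \@E(f) \circ \delta_X = \delta_Y \circ f$.

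For the direction $(\Leftarrow)$: assume $g \circ \delta_X = \delta_Y \circ f$ and fix $u \in \dom(g)$; it suffices to check that $g(u)(y') = \@E(f)(u)(y')$ for every $y' \in Y$. I would write $y' = f(x)$ using surjectivity of $f$, note that $\delta_X(x) \in \im(\delta_X) \subseteq \dom(g)$ so that $g(\delta_X(x)) = \delta_Y(f(x)) = \delta_Y(y')$, and then compute
\begin{equation*}
g(u)(y') = d_{\@E(Y)}(\delta_Y(y'), g(u)) = d_{\@E(Y)}(g(\delta_X(x)), g(u)) = d_{\@E(X)}(\delta_X(x), u) = u(x),
\end{equation*}
where the outer equalities are the enriched Yoneda lemma (in $\@E(Y)$ and $\@E(X)$ respectively) and the middle one uses that $g$ is distance-preserving on its domain. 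On the other side, $\@E(f)(u)(y') = [f(u)](f(x)) = \bigwedge_{x' \in f^{-1}(f(x))} u(x') = u(x)$ by injectivity of $f$. Comparing the two yields $g(u) = \@E(f)(u)$, and hence $g = \@E(f)|\dom(g)$.

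I do not expect a substantial obstacle here; the only care needed is bookkeeping. The single step that has to be read correctly is the middle equality in the display, which requires interpreting ``partial isometry'' as ``$g$ preserves the sup-metric on all of $\dom(g)$''; and one should keep straight that the hypothesis $\im(\delta_X) \subseteq \dom(g)$ is precisely what makes $g(\delta_X(x))$ defined, and that bijectivity of $f$ is used twice in genuinely different ways — surjectivity to present an arbitrary $y' \in Y$ as $f(x)$, and injectivity to evaluate the pushforward grey set $[f(u)]$ at $f(x)$ as $u(x)$.
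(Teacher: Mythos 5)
Your proof is correct and uses the same essential mechanism as the paper's: the enriched Yoneda lemma $d_{\@E(X)}(\delta_X(x),u)=u(x)$ applied in both $\@E(X)$ and $\@E(Y)$, combined with the fact that $g$ preserves distances on $\dom(g) \supseteq \im(\delta_X)$. The only cosmetic difference is that you handle the easy direction by directly computing $\@E(f)(\delta_X(x)) = \delta_Y(f(x))$ rather than by specializing the general distance identity at $u := \delta_X(f^{-1}(y'))$ as the paper does, which changes nothing of substance.
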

\begin{proof}
For all $u \in \dom(g)$ and $y \in Y$, we have
\begin{align*}
g(u)(y)
&= d_{\@E(Y)}(\delta_Y(y), g(u)), \\
\@E(f)(u)(y)
&= u(f^{-1}(y)) \\
&= d_{\@E(X)}(\delta_X(f^{-1}(y)), u) \\
&= d_{\@E(Y)}(g(\delta_X(f^{-1}(y))), g(u)).
\end{align*}
If $\delta_Y \circ f = g \circ \delta_X$ then clearly these are equal.  Conversely, if these are equal for all $u, y$, then taking $u := \delta_X(f^{-1}(y'))$ for $y' \in Y$ gives
$g(\delta_X(f^{-1}(y')))(y)
= d_{\@E(Y)}(\delta_Y(y), g(\delta_X(f^{-1}(y'))))
= d_{\@E(Y)}(g(\delta_X(f^{-1}(y))), g(\delta_X(f^{-1}(y'))))
= d_Y(y, y')$,
whence $g \circ \delta_X = \delta_Y \circ f$.
\end{proof}


\begin{corollary}
\label{thm:katetov-ff}
Let $X, Y$ be metric spaces and $g : \@E(X) \cong \@E(Y)$ be an isometric bijection such that $g(\im(\delta_X)) = \im(\delta_Y)$.  Then there is a unique isometric bijection $f : X \cong Y$ such that $g = \@E(f)$.
\end{corollary}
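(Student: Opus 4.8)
The plan is to build $f$ by hand as a conjugate of $g$ by the canonical isometric embeddings $\delta_X,\delta_Y$, and then let \cref{lm:katetov-ff} do the work of identifying $g$ with $\@E(f)$. Since $X$ and $Y$ are genuine metric spaces, the enriched Yoneda identity $d_{\@E(X)}(\delta_X(x),u)=u(x)$ (applied with $u=\delta_X(x')$) shows that $\delta_X\colon X\to\im(\delta_X)$ is an isometric bijection, and likewise for $\delta_Y$. By hypothesis $g$ restricts to a bijection $\im(\delta_X)\cong\im(\delta_Y)$, so I would simply set
\begin{align*}
f := \delta_Y^{-1}\circ\bigl(g|\im(\delta_X)\bigr)\circ\delta_X\colon X \longrightarrow Y,
\end{align*}
a composite of isometric bijections, hence itself an isometric bijection, and one that satisfies $\delta_Y\circ f = g\circ\delta_X$ by construction.

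Next I would invoke \cref{lm:katetov-ff} with this $f$ and with the partial isometry $g$ itself, whose domain is all of $\@E(X)$ and in particular contains $\im(\delta_X)$. The commuting square required by that lemma is exactly the identity $g\circ\delta_X=\delta_Y\circ f$ just recorded, so the lemma yields $g=\@E(f)|\@E(X)=\@E(f)$, since $\@E(f)$ is a total map $\@E(X)\to\@E(Y)$. For uniqueness, if $f'\colon X\cong Y$ is another isometric bijection with $g=\@E(f')$, then using that $\@E(f')$ extends $f'$ along the $\delta$'s we get $\delta_Y\circ f'=\@E(f')\circ\delta_X=g\circ\delta_X=\delta_Y\circ f$, whence $f'=f$ by injectivity of $\delta_Y$; alternatively, uniqueness is already contained in the ``only if'' direction of \cref{lm:katetov-ff}.

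I do not expect a real obstacle here, as the substance has been isolated in \cref{lm:katetov-ff}; the only points needing a moment's care are that $\delta_X,\delta_Y$ are bijective onto their images (this is where the metric, as opposed to merely pseudometric, hypothesis on $X,Y$ enters) and that $\@E(f)$ is genuinely total, so that the conclusion $g=\@E(f)|\@E(X)$ of the lemma is literally $g=\@E(f)$.
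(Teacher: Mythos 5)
Your proposal is correct and is essentially the paper's proof: the paper likewise defines $f := \delta_Y^{-1} \circ g \circ \delta_X$ and cites \cref{lm:katetov-ff} for both existence and uniqueness. Your additional remarks (that $\delta_X, \delta_Y$ are isometric bijections onto their images because $X, Y$ are honest metric spaces, and that $\@E(f)$ is total so the lemma's conclusion is literally $g = \@E(f)$) are accurate elaborations of details the paper leaves implicit.
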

\begin{proof}
By \cref{lm:katetov-ff}, the unique $f$ is given by $f := \delta_Y^{-1} \circ g \circ \delta_X$.
\end{proof}

\begin{lemma}
\label{thm:katetov-finsupp-dense}
Let $X, Y \subseteq Z$ be pseudometric spaces.  Then $d_H(\im(\@E(X \subseteq Z)), \im(\@E(Y \subseteq Z))) \le 2d_H(X, Y)$, where $d_H$ is Hausdorff distance.
\end{lemma}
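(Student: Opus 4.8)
The plan is to establish the inequality with $d_H(X,Y)$ replaced by an arbitrary $\epsilon > d_H(X,Y)$ and then let $\epsilon \downarrow d_H(X,Y)$. First I would dispose of the degenerate case $d_H(X,Y) = 1$ (which is trivial, since $\@E(Z) \subseteq \#I^Z$ has diameter $\le 1 \le 2$); so we may assume $d_H(X,Y) < 1$, and in particular $X, Y \ne \emptyset$. By the symmetry between $X$ and $Y$, it then suffices to show that every point of $\im(\@E(X \subseteq Z))$ lies within distance $2\epsilon$ of $\im(\@E(Y \subseteq Z))$ in the sup metric of $\@E(Z)$.

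To this end I would fix $u \in \@E(X)$ and set $w := \@E(X \subseteq Z)(u) = d_Z[u] \in \@E(Z)$, a typical point of the first image. The one genuinely nonobvious point is the choice of a nearby point of the second image: rather than transporting $u$ along a near-isometry $X \to Y$ (which need not exist), I would take $v := w|Y$, which is a Katětov function on $Y$ because the restriction of a Katětov function on $Z$ to any subspace is again Katětov. Then $\@E(Y \subseteq Z)(v) = d_Z[w|Y]$, and the task reduces to bounding $\sup_{z \in Z}\lvert d_Z[w|Y](z) - w(z)\rvert \le 2\epsilon$. One half is free: $w$ is $1$-Lipschitz on $Z$ (being Katětov), so $w(z) \le w(y) \dotplus d_Z(z,y)$ for every $y \in Y$, and taking the infimum over $y \in Y$ gives $w \le d_Z[w|Y]$ pointwise. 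I also record the auxiliary fact that $w(x) \le u(x)$ for $x \in X$ (take the $x' = x$ term in $d_Z[u](x) = \bigwedge_{x' \in X}(u(x') \dotplus d_Z(x,x'))$; in fact $w|X = u$, using $1$-Lipschitzness of $u$, but only the inequality is needed).

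For the remaining inequality $d_Z[w|Y] \le w + 2\epsilon$ I would fix $z \in Z$ and $\delta > 0$ and choose $x \in X$ with $u(x) \dotplus d_Z(z,x) < w(z) + \delta$. If this truncated sum equals $1$, then $w(z) + \delta > 1 \ge d_Z[w|Y](z)$ and we are done; otherwise $u(x) + d_Z(z,x) < w(z) + \delta$, and since $d_H(X,Y) < \epsilon$ we may pick $y \in Y$ with $d_Z(x,y) < \epsilon$, whence
\[
d_Z[w|Y](z) \;\le\; w(y) + d_Z(z,y) \;\le\; w(x) + d_Z(z,x) + 2\,d_Z(x,y) \;\le\; u(x) + d_Z(z,x) + 2\epsilon \;<\; w(z) + \delta + 2\epsilon,
\]
using $1$-Lipschitzness of $w$, the triangle inequality, $w(x) \le u(x)$, and $d_Z(x,y) < \epsilon$; letting $\delta \downarrow 0$ gives $d_Z[w|Y](z) \le w(z) + 2\epsilon$. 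Combined with $w \le d_Z[w|Y]$ this yields the sup-metric bound, hence $d_H(\im(\@E(X \subseteq Z)), \im(\@E(Y \subseteq Z))) \le 2\epsilon$ for every $\epsilon > d_H(X,Y)$, and therefore $\le 2 d_H(X,Y)$. I do not anticipate any real obstacle: the content is the choice $v := (\@E(X \subseteq Z)(u))|Y$, and the rest is bookkeeping — chiefly tracking where the truncated sum $\dotplus$ caps at $1$ (handled by the trivial sub-case above) and keeping the two one-sided estimates straight.
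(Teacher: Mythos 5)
Your proof is correct and follows essentially the same route as the paper's: the key step in both is to approximate $d_Z[u]$ by $d_Z[v]$ with $v := d_Z[u]|Y$, and then to verify the two one-sided estimates $d_Z[u] \le d_Z[v] \le d_Z[u] + 2(d_H(X,Y)+\epsilon)$ by the same triangle-inequality bookkeeping (the paper takes infima over all $x \in X$ at the end rather than choosing a near-optimal $x$ up front, but this is cosmetic).
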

\begin{proof}
Let $u \in \@E(X)$; we show that $d(d_Z[u], \im(\@E(Y \subseteq Z))) \le 2d_H(X, Y)$.  Let $v := d_Z[u]|Y \in \@E(Y)$, $z \in Z$, and $r > 0$.
Then for any $x \in X$ and $y \in Y$, we have
\begin{align*}
d_Z[u](z)
&= \bigwedge_{x' \in X} (d(z, x') \dotplus u(x')) \\
&\le d(z, x) + u(x) \\
&\le d(z, y) + d(y, x) + u(x);
\end{align*}
thus
\begin{align*}
d_Z[u](z)
&\le \bigwedge_{y \in Y} (d(z, y) \dotplus \bigwedge_{x \in X} (d(y, x) \dotplus u(x))) \\
&= \bigwedge_{y \in Y} (d(z, y) \dotplus v(y)) \\
&= d_Z[v](z).
\end{align*}
Conversely, for any $x \in X$, there is $y \in Y$ with $d(x, y) < d_H(X, Y) + r$, whence
\begin{align*}
d_Z[v](z)
&= \bigwedge_{y' \in Y} (d(z, y') \dotplus v(y')) \\
&\le d(z, y) + v(y) \\
&= d(z, y) + \bigwedge_{x' \in X} (d(y, x') \dotplus u(x')) \\
&\le d(z, y) + d(y, x) + u(x) \\
&\le d(z, x) + 2d(x, y) + u(x) \\
&< d(z, x) + u(x) + 2(d_H(X, Y) + r);
\end{align*}
thus
\begin{align*}
d_Z[v](z) &\le \bigwedge_{x \in X} (d(z, x) \dotplus u(x)) + 2(d_H(X, Y) + r) \\
&= d_Z[u](z) + 2(d_H(X, Y) + r).
\end{align*}
So $\abs{d_Z[u](z) - d_Z[v](z)} \le 2d_H(X, Y)$.
\end{proof}

Let\footnote{This is more commonly denoted $\overline{E(X, \omega)}$, e.g., in \cite[\S2C]{GK}.}
\begin{align*}
\@E'(X) := \-{\bigcup_{\text{finite } F \subseteq X} \im(\@E(F \subseteq X))} \subseteq \@E(X).
\end{align*}
Clearly, the canonical isometry $\delta : X -> \@E(X)$ lands in $\@E'(X)$.
Also, clearly, for Lipschitz $f : X -> Y$, $\@E(f)$ restricts to $\@E'(X) -> \@E'(Y)$; call this restriction $\@E'(f)$.
It follows from \cref{thm:katetov-finsupp-dense} that for isometric $f : X -> Y$ with dense image, $\@E'(f)$ is an isometric bijection $\@E'(X) \cong \@E'(Y)$.  Hence for separable $X$, with countable dense $D \subseteq X$, we have $\@E'(X) \cong \@E'(D)$; since $\@E'(D)$ is the closure of the countable union of isometric copies of $\@E(F)$ for finite $F \subseteq D$, each of which is separable, it follows that $\@E'(X) \cong \@E'(D)$ is separable, with a countable dense set given by $\@E(F \subseteq X)(u) = d_X[u]$ for finite $F \subseteq D$ and rational-valued $u \in \@E(F)$.

\begin{lemma}
\label{thm:katetov'-ff}
Let $X, Y$ be metric spaces and $g : \@E'(X) \cong \@E'(Y)$ be an isometric bijection satisfying $g(\im(\delta_X)) = \im(\delta_Y)$.  Then there is a unique isometric bijection $f : X \cong Y$ such that $g = \@E'(f)$.
\end{lemma}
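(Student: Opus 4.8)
The proof will follow the same pattern as \cref{thm:katetov-ff}, which handled the case of $\@E$ in place of $\@E'$: the candidate for $f$ is forced to be $f := \delta_Y^{-1} \circ g \circ \delta_X$, which makes sense because $g$ restricts to a bijection $\im(\delta_X) \cong \im(\delta_Y)$ and $\delta_X, \delta_Y$ are isometric bijections onto their images. So $f$ is a well-defined isometric bijection $X \cong Y$, and uniqueness is immediate since any $f$ with $g = \@E'(f)$ must satisfy $g \circ \delta_X = \delta_Y \circ f$ (as $\@E'(f)$ extends $f$ along $\delta$), hence $f = \delta_Y^{-1} \circ g \circ \delta_X$. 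The content is therefore entirely in showing $g = \@E'(f)$ for this particular $f$.

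First I would invoke \cref{lm:katetov-ff} with $\dom(g) := \@E'(X) \supseteq \im(\delta_X)$: since by construction $g \circ \delta_X = \delta_Y \circ f$ (this is just the definition of $f$ rearranged), \cref{lm:katetov-ff} gives $g = \@E(f)|\@E'(X)$. It remains only to observe that $\@E(f)|\@E'(X) = \@E'(f)$, i.e.\ that the restriction of $\@E(f)$ to $\@E'(X)$ lands in $\@E'(Y)$ and agrees with $\@E'(f)$ there — but this is exactly the definition of $\@E'(f)$ as the restriction of $\@E(f)$ recorded in the paragraph preceding the statement. Hence $g = \@E'(f)$, completing the proof.

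The one point requiring a little care — and the closest thing to an obstacle — is checking that \cref{lm:katetov-ff} genuinely applies: its hypothesis asks for a partial isometry $g : \@E(X) \rightharpoonup \@E(Y)$ with $\im(\delta_X) \subseteq \dom(g)$, and here $\dom(g) = \@E'(X)$, which indeed contains $\im(\delta_X)$ since $\delta_X$ factors through $\@E'(X)$. The conclusion of \cref{lm:katetov-ff} then identifies $g$ with $\@E(f)$ on all of $\@E'(X)$, and since we already know (from the functoriality discussion before the statement) that $\@E(f)$ maps $\@E'(X)$ into $\@E'(Y)$ with restriction denoted $\@E'(f)$, no further argument is needed. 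One could equivalently derive the lemma directly from \cref{thm:katetov-ff}: extend $g$ to an isometric bijection $\bar g : \@E(X) \cong \@E(Y)$? — that is not automatic, so it is cleaner to go through \cref{lm:katetov-ff} as above rather than through \cref{thm:katetov-ff}.

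\begin{proof}
By \cref{thm:katetov-finsupp-dense} applied with $f := \delta_X$ (so $X \cong \im(\delta_X)$ is dense in the closure defining $\@E'(X)$) — or more simply, since $\delta_X$ factors through $\@E'(X)$ — the set $\im(\delta_X)$ is contained in $\dom(g) = \@E'(X)$, and $g|\im(\delta_X)$ is an isometric bijection onto $g(\im(\delta_X)) = \im(\delta_Y)$. Hence
\begin{align*}
f := \delta_Y^{-1} \circ g \circ \delta_X : X \longrightarrow Y
\end{align*}
is a well-defined isometric bijection, and by construction $g \circ \delta_X = \delta_Y \circ f$. Applying \cref{lm:katetov-ff} with $\dom(g) := \@E'(X)$ (which contains $\im(\delta_X)$), we conclude $g = \@E(f)|\@E'(X)$. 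Since $\@E(f)$ maps $\@E'(X)$ into $\@E'(Y)$, with this restriction being by definition $\@E'(f)$, we get $g = \@E'(f)$.

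For uniqueness, if $f' : X \cong Y$ is any isometric bijection with $g = \@E'(f')$, then $\@E'(f')$ extends $f'$ along the canonical isometries $\delta_X, \delta_Y$ (as $\@E'(f')$ is the restriction of $\@E(f')$, and $\@E(f') \circ \delta_X = \delta_Y \circ f'$), so $g \circ \delta_X = \delta_Y \circ f'$, whence $f' = \delta_Y^{-1} \circ g \circ \delta_X = f$.
\end{proof}
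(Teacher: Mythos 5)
Your proposal is correct and is exactly the paper's argument: the paper's proof of this lemma is literally the single line ``By \cref{lm:katetov-ff}'', and you have simply spelled out the details — forcing $f := \delta_Y^{-1} \circ g \circ \delta_X$, verifying $g \circ \delta_X = \delta_Y \circ f$, applying \cref{lm:katetov-ff} with $\dom(g) = \@E'(X)$, and noting that $\@E(f)|\@E'(X) = \@E'(f)$ by definition. The only cosmetic remark is that your opening appeal to \cref{thm:katetov-finsupp-dense} is unnecessary (as you yourself note, $\im(\delta_X) \subseteq \@E'(X)$ holds directly since $\delta_X$ factors through $\@E'(X)$).
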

\begin{proof}
By \cref{lm:katetov-ff}.
\end{proof}

We now perform the above constructions in the fiberwise Borel context.  Let $p : A -> X$ be a fiberwise separable Borel pseudometric space over standard Borel $X$.  Let
\begin{align*}
\@E_X(A) := \bigsqcup_{x \in X} \@E(A_x) := \{(x, u) \mid x \in X \AND u \in \@E(A_x)\},
\end{align*}
equipped with the first projection $\@E_X(p) : \@E_X(A) -> X$; we identify the fibers $\@E_X(A)_x$ with $\@E(A_x)$ and write $u \in \@E_X(A)_x$ instead of $(x, u) \in \@E_X(A)_x$ when $x$ is understood.  We equip $\@E_X(A)$ with a standard Borel structure as follows.  Let $\@S$ be a countable family of fiberwise dense \emph{full} Borel sections in $A$.  By the above, we have a fiberwise isometry $\@E_X(A) \cong \@E_X(\bigcup \@S)$.  We equip $\@E_X(\bigcup \@S)$ with a Borel structure via the injection
\begin{align*}
\@E_X(\bigcup \@S) &`--> p(A) \times \#I^\@S \\
(x, u) &|--> (x, (S |-> u(S_x)))
\end{align*}
whose image is easily seen to be Borel.  For two countable families $\@S \subseteq \@S'$ of fiberwise dense full Borel sections in $A$, the canonical fiberwise isometry $\@E_X(\bigcup \@S') -> \@E_X(\bigcup \@S)$ is easily seen to be Borel (being equivalent, along the above injection, to the projection $\#I^{\@S'} -> \#I^\@S$); thus since the collection of countable families $\@S$ of fiberwise dense full Borel sections in $A$ is directed upwards under $\subseteq$, the spaces $\@E_X(\bigcup \@S)$ are all canonically isomorphic for any choice of $\@S$.  Equip $\@E_X(A)$ with the Borel structure of $\@E_X(\bigcup \@S)$ for any such $\@S$.  Explicitly, the Borel structure is generated by the maps $\@E_X(p) : \@E_X(A) -> X$ along with
\begin{align*}
\@E_X(A) &--> \#I \\
(x, u) &|--> u(S_x)
\end{align*}
for all full Borel sections $S \subseteq A$ (since we may always include $S$ in our choice of $\@S$ above).  The fiberwise metric $d : \@E_X(A) \times_X \@E_X(A) -> \#I$ is easily seen to be Borel, albeit not fiberwise separable in general.  Nonetheless, note that for any countable family $\@T$ of (partial) Borel sections in $\@E_X(A)$, we may define the fiberwise closure $\-{\bigcup \@T} \subseteq \@E_X(A)$ in a Borel way, using $d$.

To define $\@E'_X(A) \subseteq \@E_X(A)$, consisting fiberwise of $\@E'(A_x)$, again fix an $\@S$ as above.  For a finite $\@F \subseteq \@S$, the extension map $\@E_X(\bigcup \@F \subseteq A) : \@E_X(\bigcup \@F) -> \@E_X(A)$ (given fiberwise by $\@E((\bigcup \@F)_x \subseteq A_x)$ as defined above) is an isometric embedding, and is easily seen to be Borel.  For each $\@F$, we may easily find a countable family $\@T_\@F$ of full Borel sections in $\@E_X(\bigcup \@F)$ enumerating in each fiber $\@E((\bigcup \@F)_x)$ all the rational-valued $u \in \@E((\bigcup \@F)_x)$, which is thus fiberwise dense in $\@E_X(\bigcup \@F)$.  It follows that
\begin{align*}
\@E'_X(A) = \-{\bigcup_{\text{finite } \@F \subseteq A} \im(\@E_X(\bigcup \@F \subseteq A))} = \-{\bigcup_{\text{finite } \@F \subseteq \@S} \@E_X(\bigcup \@F \subseteq A)(\bigcup \@T_\@F)} \subseteq \@E_X(A)
\end{align*}
is Borel, and has a countable family of fiberwise dense (full) Borel sections consisting of $\@E_X(\bigcup \@F \subseteq A)(T)$ for all finite $\@F \subseteq \@S$ and $T \in \@T_\@F$, hence is a fiberwise separable Borel complete metric space over $X$.

Finally, it is easily seen that the canonical fiberwise isometry $\delta : A -> \@E_X(A)$ is Borel; and it lands in $\@E'_X(A) \subseteq \@E_X(A)$.  To summarize, we have shown:

\begin{proposition}
\label{thm:sbmsp-katetov}
Let $p : A -> X$ be a fiberwise separable Borel pseudometric space over $X$.  Then
\begin{align*}
\@E'_X(A) := \bigsqcup_{x \in X} \@E'(A_x) := \{(x, u) \mid x \in X \AND u \in \@E'(A_x)\}
\end{align*}
is a standard Borel space, with Borel structure generated by the maps
\begin{align*}
\@E'_X(p) : \@E'_X(A) &--> X, &
\@E'_X(A) &--> \#I \\
(x, u) &|--> x &
(x, u) &|--> u(S_x)
\end{align*}
for any full Borel section $S \subseteq A$; and $\@E'_X(p) : \@E'_X(A) -> X$ with the fiberwise metric $d : \@E'_X(A) \times_X \@E'_X(A) -> \#I$ is a fiberwise separable Borel complete metric space over $X$.  Moreover, the canonical fiberwise isometry $\delta : A -> \@E'_X(A)$ is Borel.  \qed
\end{proposition}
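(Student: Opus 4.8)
The plan is to build the standard Borel structure on $\@E_X(A)$ explicitly from a countable parametrizing family of sections, and then to isolate $\@E'_X(A)$ inside it. First I would use the Lusin--Novikov uniformization theorem to fix a countable family $\@S$ of fiberwise dense \emph{full} Borel sections of $p : A -> X$. Since $\bigcup\@S$ is fiberwise dense in $A$ and a Katětov function extends uniquely along an isometric inclusion with dense image (so that $\@E((\bigcup\@S)_x \subseteq A_x)$ is a bijection), the fiberwise restriction map is an isometry $\@E_X(A) \cong \@E_X(\bigcup\@S)$. I would then transport a Borel structure to $\@E_X(\bigcup\@S)$ along the injection $(x, u) |-> (x, (S |-> u(S_x)))$ into $p(A) \times \#I^\@S$, whose image is the Borel set of those $(x, \phi)$ with $x \in p(A)$ satisfying, for all $S, T \in \@S$, the two closed conditions $\abs{\phi(S) - \phi(T)} \le d(S_x, T_x)$ and $\phi(S) \dotplus \phi(T) \ge d(S_x, T_x)$ (Borel in $(x,\phi)$ since $p|S$, $p|T$, and $d$ are Borel and $\phi(S), \phi(T)$ are coordinate projections); any such $\phi$ restricts to a well-defined Katětov function on $(\bigcup\@S)_x$. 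To see that this does not depend on $\@S$, note that for $\@S \subseteq \@S'$ the canonical restriction $\@E_X(\bigcup\@S') -> \@E_X(\bigcup\@S)$ corresponds under these injections to the coordinate projection $\#I^{\@S'} -> \#I^\@S$, hence is a Borel isomorphism; as the collection of such families is $\subseteq$-directed, all the resulting Borel structures agree, equipping $\@E_X(A)$ with a canonical one, generated by $\@E_X(p)$ together with the maps $(x,u) |-> u(S_x)$ for full Borel sections $S$. The fiberwise metric $d$ on $\@E_X(A)$ is then a supremum of Borel functions over such sections, hence Borel (though not fiberwise separable), and in particular fiberwise closures of countable unions of Borel sections of $\@E_X(A)$ are Borel.

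Next I would carve out $\@E'_X(A)$. For each finite $\@F \subseteq \@S$ the fiber $(\bigcup\@F)_x$ has size $\le \abs{\@F}$, so $\@E((\bigcup\@F)_x)$ sits Borel-inside $\#I^\@F$, with the rational-valued Katětov functions forming a countable fiberwise dense family $\@T_\@F$ of full Borel sections, and the extension map $\@E_X(\bigcup\@F \subseteq A)$ is a Borel isometric embedding with Borel image (using \cref{thm:sbmsp-kunugui-novikov}). Hence $\@E'_X(A) = \-{\bigcup_{\text{finite }\@F \subseteq \@S} \@E_X(\bigcup\@F \subseteq A)(\bigcup\@T_\@F)}$ is a fiberwise closure of a countable union of Borel sections, so it is Borel; the sets $\@E_X(\bigcup\@F \subseteq A)(T)$ for $T \in \@T_\@F$ give a countable fiberwise dense family of Borel sections in it, and since $\@E'(A_x)$ is closed in the complete space $\@E(A_x) \subseteq \ell^\infty(A_x)$, the projection $\@E'_X(p) : \@E'_X(A) -> X$ is a fiberwise separable Borel complete metric space over $X$, with Borel structure generated by the maps in the statement. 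Finally, writing $x := p(a)$, the map $\delta : A -> \@E_X(A)$, $a |-> d(a, -)|A_x$, is a fiberwise isometry; it is Borel because $\delta(a)(S_x) = d(a, S_x)$ is Borel in $a$ for each full Borel section $S$, and it lands in $\@E'_X(A)$ since $\delta(a)$ is the extension of the Katětov function supported on the one-point subset $\{a\}$.

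The step I expect to require the most care is the bookkeeping around the Borel structure on $\@E_X(A)$: verifying that the image of the parametrizing injection is Borel (which forces a uniform check that ``being a Katětov function on a dense subset'' is a Borel condition over the fiber), and that the structure is genuinely independent of $\@S$ — where the fullness of the sections is exactly what makes the restriction maps along $\@S \subseteq \@S'$ everywhere-defined and the directedness argument go through. Once these are settled, the Borelness of $\@E'_X(A)$, the existence of a countable fiberwise dense Borel family in it, and the Borelness of $\delta$ all follow routinely from \cref{thm:sbmsp-kunugui-novikov} and the observation that fiberwise closures of countable unions of Borel sections are Borel.
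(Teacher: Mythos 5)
Your proposal is correct and follows essentially the same route as the paper: fix a countable family of fiberwise dense full Borel sections, transport the Borel structure along the injection into $p(A) \times \#I^{\@S}$, check independence of the choice via directedness, and realize $\@E'_X(A)$ as the fiberwise closure of the countably many Borel sections coming from rational-valued Katětov functions on finite subfamilies. The only addition is that you spell out the Katětov/Lipschitz conditions identifying the image of the parametrizing injection, which the paper leaves as ``easily seen.''
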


\begin{corollary}
\label{thm:sbmsp-compl}
Let $p : A -> X$ be a fiberwise separable Borel pseudometric space over $X$.  There is a unique standard Borel structure on the (fiberwise) completion $\^{A/d}$ making the canonical quotient map $\pi : A -> \^{A/d}$, the projection $p' : \^{A/d} -> X$, and the metric $d : \^{A/d} \times_X \^{A/d} -> \#I$ Borel; and this Borel structure makes $\^{A/d}$ into a fiberwise separable Borel complete metric space over $X$.
\end{corollary}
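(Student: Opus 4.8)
The plan is to realise $\^{A/d}$ concretely as a fiberwise closed Borel subspace of the standard Borel space $\@E'_X(A)$ furnished by \cref{thm:sbmsp-katetov}, and then to derive uniqueness from a separating‑family argument.

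For existence, recall from \cref{thm:sbmsp-katetov} that $\@E'_X(A)$ is a fiberwise separable Borel complete metric space over $X$ and that the canonical fiberwise isometry $\delta : A -> \@E'_X(A)$ is Borel.  Fix a countable family $\@S$ of fiberwise dense \emph{full} Borel sections in $A$.  Since $\delta$ is Borel and fiberwise isometric, hence fiberwise injective, and each $S \in \@S$ meets each fiber in at most one point, the restriction $\delta|S$ is injective, so by Lusin--Souslin \cite[15.1]{Kcdst} $\delta(S) \subseteq \@E'_X(A)$ is a Borel section; moreover in each fiber $\bigcup_{S \in \@S} \delta(S)_x = \delta_{A_x}(\bigcup_S S_x)$ is (isometric to) a dense subset of $A_x/d$.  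By the remark preceding \cref{thm:sbmsp-katetov}, the fiberwise closure
\[
\^{A/d} := \-{\textstyle\bigcup_{S \in \@S} \delta(S)} \subseteq \@E'_X(A)
\]
is Borel; fiberwise it is $\-{\delta_{A_x}(A_x)}$, which is (isometric to) the completion of $A_x/d$, with $\delta_{A_x}$ realising the quotient map.  Being a fiberwise closed Borel subspace of a fiberwise separable Borel complete metric space, $\^{A/d}$ is again one, with $\{\delta(S) \mid S \in \@S\}$ a countable fiberwise dense family of Borel sections.  The quotient map $\pi : A -> \^{A/d}$ is $\delta$ with codomain restricted, hence Borel, and the projection $p'$ and the metric $d$ on $\^{A/d}$ are restrictions of the corresponding Borel maps on $\@E'_X(A)$, hence Borel.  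Transporting this Borel structure along the canonical fiberwise isometry identifying it with the abstract fiberwise completion gives a standard Borel structure with the required properties.

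For uniqueness, let $\@B$ be any standard Borel structure on the set $\^{A/d}$ for which $\pi$, $p'$, and $d$ are Borel.  With $\@S$ as above, for each $S \in \@S$ let $s_S : p(A) -> S$ be the Borel section of $A$ inverse to $p|S$ (Borel by Lusin--Souslin), and put $t_S := \pi \circ s_S : p(A) -> \^{A/d}$, a $\@B$-Borel section of $p'$ (using $p' \circ \pi = p$).  Then the countable family of $\@B$-Borel maps
\[
p' : \^{A/d} -> X, \qquad (w |-> d(t_S(p'(w)), w)) : \^{A/d} -> \#I \quad (S \in \@S)
\]
separates points of $\^{A/d}$: if $p'(w) = p'(w') = x$ and $d(t_S(x), w) = d(t_S(x), w')$ for all $S$, then since $\{t_S(x) \mid S \in \@S\}$ is dense in the fiber $\^{A_x/d}$ (being the $\pi_x$-image of the dense set $\bigcup_S S_x \subseteq A_x$), choosing $S_n$ with $t_{S_n}(x) -> w$ gives $d(w, w') = \lim_n d(t_{S_n}(x), w') = \lim_n d(t_{S_n}(x), w) = 0$, so $w = w'$.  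Hence the induced Borel injection $\^{A/d} -> X \times \#I^\@S$ is, by Lusin--Souslin, a Borel isomorphism onto a Borel subset, so $\@B$ is the $\sigma$-algebra generated by this countable family of maps.  As the maps depend only on the fixed data $\pi, p', d$ and the fixed choice of $\@S$, any two such $\@B$ coincide.  Since the concrete structure above is such a $\@B$, it is the unique one, and we saw it makes $\^{A/d}$ a fiberwise separable Borel complete metric space over $X$.

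The Borel‑measurability verifications are routine, all handled by Lusin--Souslin and the remark before \cref{thm:sbmsp-katetov}; the only point needing a little care is the uniqueness step, where the key observation is that the quotient map $\pi$ carries a fiberwise dense family of Borel sections of $A$ to a fiberwise dense — hence, together with $p'$ and $d$, point‑separating — family of Borel sections of $\^{A/d}$, which pins down the Borel structure.
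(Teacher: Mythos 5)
Your proof is correct and follows essentially the same route as the paper: existence by realizing $\^{A/d}$ as the Borel fiberwise closure of $\delta(\bigcup\@S)$ inside $\@E'_X(A)$, and uniqueness via the fact that a point of the completion is determined by $p'$ together with its distances to the countable fiberwise dense family $\pi(\@S)$. The only (cosmetic) difference is that you phrase uniqueness as "any admissible Borel structure equals the $\sigma$-algebra generated by a canonical countable separating family," whereas the paper exhibits the comparison isometry between two Borel versions and checks its graph is Borel — the same underlying observation.
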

In the above situation, we always equip $\^{A/d}$ with said Borel structure.
\begin{proof}
To define the Borel structure on $\^{A/d}$, identify it with $\-{\im(\delta)} \subseteq \@E'_X(A)$.  Let $\@S$ be a countable family of fiberwise dense full Borel sections in $A$.  Then $\pi(\@S) := \{\pi(S) \mid S \in \@S\}$ is such a family in $\^{A/d}$, whence $\^{A/d}$ is fiberwise separable.  To show uniqueness of the Borel structure on $\^{A/d}$, if $\pi' : A -> \^{A/d}'$ with projection $p'' : \^{A/d}' -> X$ is another version of the completion which is also Borel, the unique isometric bijection $f : \^{A/d} -> \^{A/d}'$ over $X$ such that $f \circ \pi = \pi'$ is defined by
\begin{align*}
f(a) = b \iff p'(a) = p''(b) \AND \forall S \in \@S\, (d_{\^{A/d}}(a, \pi(S)) = d_{\^{A/d}'}(b, \pi'(S)))
\end{align*}
which is Borel.
\end{proof}

\subsection{The Urysohn sphere}
\label{sec:urysohn}

A metric space $X$ (of diameter $\le 1$) has the \defn{Urysohn extension property} if for every finite $F \subseteq X$ and $u \in \@E(F)$, there is some $x \in X$ such that $\delta(x)|F = u$; in other words, every consistent finite description of distances between points in $X$ to another point is already realized by a point in $X$.  The \defn{approximate Urysohn extension property} is defined in the same way, except we only require that such $x$ can be found with $\delta(x)|F$ arbitrarily close to $u$.  The \defn{Urysohn sphere} $\#U$ is the unique-up-to-isometry Polish metric space of diameter $\le 1$ with the Urysohn extension property, or equivalently the approximate Urysohn extension property.  See \cite{Me1}, \cite{Usp} for more on $\#U$ (and its unbounded relative).

In \cite{Kat}, Katětov showed that starting from any separable metric space $X$ (of diameter $\le 1$), the completed direct limit
\begin{align*}
\^{\@E^{\prime\infty}}(X) := \reallywidehat{\injlim_n \@E^{\prime n}(X)}
\end{align*}
of the sequence $X --->{\delta_X} \@E'(X) --->{\delta_{\@E'(X)}} \@E'(\@E'(X)) =: \@E^{\prime2}(X) --->{\delta_{\@E^{\prime2}(X)}} \dotsb$ is an isometric copy of $\#U$.  Indeed, it is easily seen that $\^{\@E^{\prime\infty}}(X)$ satisfies the approximate Urysohn extension property; and it will follow from \cref{thm:sbmsp-urysohn-approx} below that this implies the Urysohn extension property.  Let
\begin{align*}
\delta^n = \delta^n_X : \@E^{\prime n}(X) `-> \^{\@E^{\prime\infty}}(X)
\end{align*}
be the canonical embedding into the direct limit.  Clearly, $\^{\@E^{\prime\infty}}$ extends to a functor with respect to Lipschitz maps $f : X -> Y$, where $\^{\@E^{\prime\infty}}(f) : \^{\@E^{\prime\infty}}(X) -> \^{\@E^{\prime\infty}}(Y)$ is the unique Lipschitz map such that $\^{\@E^{\prime\infty}}(f) \circ \delta^n_X = \delta^n_Y \circ \@E^{\prime n}(f)$ for each $n$, i.e., making the following diagram commute:
\begin{equation*}
\begin{tikzcd}
X \dar["f"'] \rar["\delta_X"] &
\@E'(X) \dar["\@E'(f)"'] \rar["\delta_{\@E'(X)}"] &
\@E^{\prime2}(X) \dar["\@E^{\prime2}(f)"'] \rar["\delta_{\@E^{\prime2}(X)}"] &
\dotsb \rar["\delta^n_X"] &
\^{\@E^{\prime\infty}}(X) \dar["\^{\@E^{\prime\infty}}(f)"] \\
Y \rar["\delta_Y"'] &
\@E'(Y) \rar["\delta_{\@E'(Y)}"'] &
\@E^{\prime2}(Y) \rar["\delta_{\@E^{\prime2}(Y)}"'] &
\dotsb \rar["\delta^n_Y"'] &
\^{\@E^{\prime\infty}}(Y)
\end{tikzcd}
\end{equation*}

\begin{lemma}[see {\cite[3.2]{GK}}]
\label{thm:katetov*-ff}
Let $X, Y$ be metric spaces and $g : \^{\@E^{\prime\infty}}(X) \cong \^{\@E^{\prime\infty}}(Y)$ be an isometric bijection satisfying $g(\im(\delta^n_X)) = \im(\delta^n_Y)$ for all $n$.  Then there is a unique isometric bijection $f : X \cong Y$ such that $g = \^{\@E^{\prime\infty}}(f)$.
\end{lemma}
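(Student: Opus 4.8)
The plan is to imitate the proofs of \cref{thm:katetov-ff}, \cref{thm:katetov'-ff}, and \cref{lm:katetov-ff}, climbing the iterated Katětov tower one level at a time. The key observation is that each transition map $\delta_{\@E^{\prime n}(X)} : \@E^{\prime n}(X) \hookrightarrow \@E^{\prime(n+1)}(X)$ is an isometric inclusion with the property that $d_{\@E^{\prime(n+1)}(X)}(\delta_{\@E^{\prime n}(X)}(v), w) = w(v)$ for $v \in \@E^{\prime n}(X)$ and $w \in \@E^{\prime(n+1)}(X)$ (the enriched Yoneda identity, which holds on all of $\@E$ and hence restricts to $\@E'$). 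Since $g$ restricts to an isometric bijection $\@E^{\prime\infty}(X)_{\mathrm{bdd}} \to \dotsb$ — more precisely, since $g(\im(\delta^n_X)) = \im(\delta^n_Y)$ for every $n$, $g$ maps the dense subspace $\bigcup_n \im(\delta^n_X) \subseteq \^{\@E^{\prime\infty}}(X)$ isometrically onto $\bigcup_n \im(\delta^n_Y) \subseteq \^{\@E^{\prime\infty}}(Y)$, and this restriction is determined by (and determines) a compatible family of isometric bijections $g_n : \@E^{\prime n}(X) \cong \@E^{\prime n}(Y)$ via $\delta^n_Y \circ g_n = g \circ \delta^n_X$.

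First I would extract the family $(g_n)_n$: because $g$ carries $\im(\delta^n_X)$ onto $\im(\delta^n_Y)$ and both $\delta^n_X, \delta^n_Y$ are isometric embeddings, the composite $g_n := (\delta^n_Y)^{-1} \circ g \circ \delta^n_X$ is a well-defined isometric bijection $\@E^{\prime n}(X) \cong \@E^{\prime n}(Y)$, and compatibility $g_{n+1} \circ \delta_{\@E^{\prime n}(X)} = \delta_{\@E^{\prime n}(Y)} \circ g_n$ is immediate from $\delta^{n+1} \circ \delta_{\@E^{\prime n}(-)} = \delta^n$. Second, I would show by induction on $n$ that $g_n = \@E^{\prime n}(f)$ for $f := g_0$. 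The base case $n = 0$ is the definition of $f$. For the inductive step, assume $g_n = \@E^{\prime n}(f)$; I must show $g_{n+1} = \@E^{\prime (n+1)}(f) = \@E'(g_n)$ (using functoriality of $\@E'$ and the inductive hypothesis). Apply \cref{thm:katetov'-ff} with $X \rightsquigarrow \@E^{\prime n}(X)$, $Y \rightsquigarrow \@E^{\prime n}(Y)$, the isometric bijection $g_{n+1} : \@E'(\@E^{\prime n}(X)) \cong \@E'(\@E^{\prime n}(Y))$, and the isometry $g_n$ in the role of the bijection whose $\@E'$ we want $g_{n+1}$ to equal: by the compatibility relation just established, $g_{n+1} \circ \delta_{\@E^{\prime n}(X)} = \delta_{\@E^{\prime n}(Y)} \circ g_n$, which is exactly the hypothesis of \cref{thm:katetov'-ff} (with $g_n$ playing the role of "$f$" there, noting that $\delta$ for $\@E^{\prime n}$ is the map called $\delta_X, \delta_Y$ in that lemma). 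Hence $g_{n+1} = \@E'(g_n) = \@E'(\@E^{\prime n}(f)) = \@E^{\prime(n+1)}(f)$, completing the induction.

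Finally, having $g_n = \@E^{\prime n}(f)$ for all $n$, the defining property $\^{\@E^{\prime\infty}}(f) \circ \delta^n_X = \delta^n_Y \circ \@E^{\prime n}(f) = \delta^n_Y \circ g_n = g \circ \delta^n_X$ shows that $\^{\@E^{\prime\infty}}(f)$ and $g$ agree on the dense subspace $\bigcup_n \im(\delta^n_X)$; since both are continuous (indeed Lipschitz) maps of complete metric spaces, $g = \^{\@E^{\prime\infty}}(f)$. Uniqueness of $f$: any $f'$ with $g = \^{\@E^{\prime\infty}}(f')$ satisfies $\delta^0_Y \circ f' = \^{\@E^{\prime\infty}}(f') \circ \delta^0_X = g \circ \delta^0_X = \delta^0_Y \circ f$, so $f' = f$ since $\delta^0_Y$ is injective. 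I expect the only mild subtlety to be bookkeeping the identifications $\delta^{n+1} \circ \delta_{\@E^{\prime n}(-)} = \delta^n$ and making sure \cref{thm:katetov'-ff} is applied with the correct objects; there is no real analytic obstacle, since all the metric content is already packaged in \cref{lm:katetov-ff} and its corollaries and in the enriched Yoneda identity.
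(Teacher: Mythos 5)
Your proposal is correct and follows essentially the same route as the paper: extract the compatible family $f_n := (\delta^n_Y)^{-1}\circ g\circ\delta^n_X$, set $f := f_0$, and induct using the relation $f_{n+1}\circ\delta_{\@E^{\prime n}(X)} = \delta_{\@E^{\prime n}(Y)}\circ f_n$ together with the Katětov rigidity lemma to get $f_{n+1} = \@E'(f_n)$. The only cosmetic difference is that you invoke \cref{thm:katetov'-ff} where the paper cites \cref{lm:katetov-ff} directly, but the former is just a packaging of the latter.
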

\begin{proof}
By definition of direct limit,
$g = \^{\@E^{\prime\infty}}(f)$
is equivalent to
\begin{align*}
g \circ \delta^n_X
= \^{\@E^{\prime\infty}}(f) \circ \delta^n_X
= \delta^n_Y \circ \@E^{\prime n}(f)
\tag{$*$}
\end{align*}
for each $n$.  For each $n$, since $g(\im(\delta^n_X)) = \im(\delta^n_Y)$, there is a unique $f_n : \@E^{\prime n}(X) \cong \@E^{\prime n}(Y)$ such that
\begin{align*}
g \circ \delta^n_X = \delta^n_Y \circ f_n.
\end{align*}
By ($*$), $f$ must be given by $f := f_0$; we prove by induction that $f_n = \@E^{\prime n}(f)$, whence ($*$) holds, for all $n$.  Suppose $f_n = \@E^{\prime n}(f)$.  Then
$\delta^{n+1}_Y \circ f_{n+1} \circ \delta_{\@E^{\prime n}(X)}
= g \circ \delta^{n+1}_X \circ \delta_{\@E^{\prime n}(X)}
= g \circ \delta^n_X
= \delta^n_Y \circ \@E^{\prime n}(f)
= \delta^{n+1}_Y \circ \delta_{\@E^{\prime n}(Y)} \circ \@E^{\prime n}(f)$,
whence $f_{n+1} \circ \delta_{\@E^{\prime n}(X)} = \delta_{\@E^{\prime n}(Y)} \circ \@E^{\prime n}(f)$ since $\delta^{n+1}_Y$ is injective, whence $f_{n+1} = \@E'(\@E^{\prime n}(f)) = \@E^{\prime n+1}(f)$ by \cref{lm:katetov-ff}.
\end{proof}

For a fiberwise separable Borel metric space $p : A -> X$ over standard Borel $X$, we define
\begin{align*}
\^{\@E_X^{\prime\infty}}(A) := \reallywidehat{\injlim_n \@E_X^{\prime n}(A)}
\end{align*}
for the sequence of Borel injections $A \lhook\joinrel--->{\delta_A} \@E'_X(A) \lhook \joinrel--->{\delta_{\@E'_X(A)}} \dotsb$ (where $\delta_A, \delta_{\@E'_X(A)}, \dotsc$ are given fiberwise).  This is a fiberwise separable Borel complete metric space over $X$, which is fiberwise isometric to $\#U$.

We now prove that the uniqueness of $\#U$ holds in a uniformly Borel way.  This is based on the following uniformly Borel version of the proof that the approximate Urysohn extension property implies the Urysohn extension property in a complete metric space; see e.g., \cite[pp.~80--81]{Gro}.

\begin{lemma}
\label{thm:sbmsp-urysohn-approx}
Let $p : A -> X$ be a fiberwise separable Borel complete metric space over $X$.  Suppose that each fiber $A_x$ satisfies the approximate Urysohn extension property (i.e., is isometric to $\#U$).  Then $A$ satisfies the Urysohn extension property, uniformly across fibers: for any Borel fiberwise finite subset $F \subseteq A$ and Borel fiberwise Katětov function $u : F -> \#I$, there is a full Borel section $T \subseteq A$ such that $u|F_x = \iota(T_x)|F_x$ for each $x \in X$.
\end{lemma}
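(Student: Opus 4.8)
The plan is to run, uniformly over $x \in X$, the classical successive-approximation argument showing that in a complete metric space the approximate Urysohn extension property implies the exact one (see \cite[pp.~80--81]{Gro}), verifying at every stage that the choices involved can be made in a Borel way using the countable family of fiberwise dense full Borel sections witnessing fiberwise separability of $A$. Fix such a family $\@S = \{S_0, S_1, \dotsc\}$, and (by Lusin--Novikov \cite[18.10]{Kcdst}) write $F = \bigcup_j R_j$ as a countable union of Borel sections; then for Borel $\varphi(x,a)$ the condition ``$\forall a \in F_x\ \varphi(x,a)$'' and the map $x \mapsto \max_{a \in F_x} \psi(x,a)$ (for Borel nonnegative $\psi$) are respectively a countable conjunction of Borel conditions and a countable supremum of Borel functions of $x$, using that $u$ is Borel on $F$ and that each $x \mapsto d(S_i|_x, R_j|_x)$ is Borel. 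Set $\epsilon_n := 2^{-n-1}$.

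I would then build recursively Borel sections $x_n : X \to A$ satisfying, for every $x$: (i) $\lvert d(x_n(x),a) - u(a)\rvert < \epsilon_n$ for all $a \in F_x$, and (ii) $d(x_{n+1}(x), x_n(x)) < \epsilon_n + \epsilon_{n+1}$. For the base case, $V_0(x) := \{y \in A_x : \lvert d(y,a) - u(a)\rvert < \epsilon_0\ \text{for all}\ a \in F_x\}$ is open in $A_x$ (a finite intersection of preimages of open balls under the continuous maps $d(-,a)$) and nonempty by the approximate Urysohn extension property of $A_x$ applied to $u|F_x \in \@E(F_x)$ with error $\epsilon_0$; since $\@S$ is fiberwise dense, the least $i$ with $S_i|_x \in V_0(x)$ is a well-defined Borel function of $x$, and we put $x_0(x)$ to be that $S_i|_x$. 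For the recursion step, given the Borel section $x_n$, set $t_n(x) := \max_{a \in F_x} \lvert d(x_n(x),a) - u(a)\rvert$, a Borel function of $x$ with $t_n(x) < \epsilon_n$ by (i), and let $u_n$ be the extension of $u|F_x$ to $F_x \cup \{x_n(x)\}$ with $x_n(x) \mapsto t_n(x)$. An elementary computation shows $u_n$ is Katětov: $t_n(x)$ is the minimal value making the extension Katětov, and such a value exists since $\max_a \lvert d(y,a) - u(a)\rvert \le \min_a (u(a) + d(y,a))$ for any point $y$ and any Katětov $u$ on a finite set. Now $V_{n+1}(x) := \{y \in A_x : \lvert d(y,a) - u(a)\rvert < \epsilon_{n+1}\ \text{for all}\ a \in F_x,\ \text{and}\ \lvert d(y,x_n(x)) - t_n(x)\rvert < \epsilon_{n+1}\}$ is again fiberwise open and, by the approximate Urysohn extension property applied to $u_n$ with error $\epsilon_{n+1}$, nonempty; setting $x_{n+1}(x)$ to be the least $S_i|_x$ lying in it gives a Borel section satisfying (i), and (ii) since $d(x_{n+1}(x), x_n(x)) < t_n(x) + \epsilon_{n+1} < \epsilon_n + \epsilon_{n+1}$.

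Finally, since $\sum_n (\epsilon_n + \epsilon_{n+1}) < \infty$, for each $x$ the sequence $(x_n(x))_n$ is Cauchy in the complete space $A_x$; writing $r_n := \sum_{k \ge n}(\epsilon_k + \epsilon_{k+1})$, its limit $T_x \in A_x$ is the unique point with $d(T_x, x_n(x)) \le r_n$ for all $n$, so $T := \{a \in A : \forall n\ d(a, x_n(p(a))) \le r_n\}$ is a Borel section of $A$ (using that $d$ and each $x_n$ are Borel), and it is full since each $x_n$ is defined on all of $X$. Passing to the limit in (i) gives $d(T_x, a) = u(a)$ for all $a \in F_x$, i.e. $T_x$ realises $u$ over $F_x$, which is the assertion. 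The main obstacle is precisely this bookkeeping, keeping the construction simultaneously a genuine fiberwise Katětov-extension argument and a Borel construction in $x$; the two points that make it work are the explicit formula for $t_n(x)$ (so that ``$u_n$ is Katětov'' requires no choices) and the fact that the approximating sets $V_n(x)$ are fiberwise open and nonempty, so that density of $\@S$ supplies Borel selectors. The remaining verifications — the metric inequalities defining Katětov functions, convergence of $\sum \epsilon_n$, and Borelness of the quantified formulas — are routine.
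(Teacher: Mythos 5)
Your proposal is correct and follows essentially the same route as the paper's proof: iteratively extend $u$ to a Katětov function on $F_x$ plus the current approximation point (assigned the current error as its value, exactly the paper's function $v$), apply the approximate Urysohn extension property fiberwise, select the next approximation as the least member of the countable dense family of Borel sections, and pass to the Cauchy limit. The only differences are cosmetic (your error bounds $\epsilon_n = 2^{-n-1}$ and strict inequalities versus the paper's $2^{-n}$ and non-strict ones, and your nontrivial base case versus the paper's trivial choice $T_0 := S_0$).
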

\begin{proof}
Let $\@S = \{S_0, S_1, \dotsc\}$ be a countable family of fiberwise dense full Borel sections in $A$.  We will find full Borel sections $T_0, T_1, \dotsc \subseteq A$ such that
\begin{enumerate}
\item[(i)]  $\abs{u(a) - d((T_n)_x, a)} \le 2^{-n}$ for all $a \in F_x$;
\item[(ii)]  $d((T_n)_x, (T_{n+1})_x) \le 2^{-n} + 2^{-n-1}$ for all $x \in p(A)$.
\end{enumerate}
Let $T_0 := S_0$.  Given $T_n$, define
\begin{align*}
v : F \cup T_n &--> \#I \\
a &|--> \begin{cases}
u(a) &\text{if $a \in F$}, \\
\bigvee_{b \in F_x} \abs{u(b) - d((T_n)_x, b)} &\text{if $a \in (T_n \setminus F)_x$}.
\end{cases}
\end{align*}
Then $v$ is Borel fiberwise Katětov: it is clearly fiberwise Katětov restricted to $F$; and for $a \in F_x$ and $a' \in (T_n \setminus F)_x$, we have
\begin{align*}
v(a') &= \bigvee_{b \in F_x} \abs{u(b) - d((T_n)_x, b)}
= \bigvee_{b \in F_x} \abs{u(b) - d(a', b)} \\
&\ge \abs{u(a) - d(a', a)} \ge u(a) - d(a', a) = v(a) - d(a', a), \\
v(a') &= \bigvee_{b \in F_x} \abs{u(b) - d(a', b)} \\
&\le u(a) + d(a, a') \qquad(*) \\
&\le v(a) + d(a, a'), \\
v(a) + v(a')
&= u(a) + \bigvee_{b \in F_x} \abs{u(b) - d(a', b)}
\ge u(a) + \abs{u(a) - d(a', a)} \\
&\ge u(a) - (d(a', a) - u(a)) = d(a', a),
\end{align*}
where ($*$) is because
\begin{align*}
\begin{aligned}
u(b)
&\le u(a) + d(a, b) &&\text{because $u$ is Lipschitz} \\
&\le u(a) + d(a, a') + d(a', b), \\
d(a', b)
&\le u(a') + u(b) &&\text{because $u$ is Katětov} \\
&\le u(a) + d(a, a') + u(b) &&\text{because $u$ is Lipschitz}.
\end{aligned}
\end{align*}
Thus, since each $A_x$ satisfies the approximate Urysohn extension property and $\@S$ is fiberwise dense, we may define in a Borel way for each $x \in p(A)$
\begin{align*}
k(x) := \text{the least $k$ s.t.\ } \abs{v(a) - d((S_k)_x, a)} \le 2^{-n-1} \text{ for all $a \in (F \cup T_n)_x$}
\end{align*}
and put $T_{n+1} := \bigcup_{x \in p(A)} (S_{k(x)})_x$.  Then clearly (i) holds for $T_{n+1}$.  To check (ii), we have
\begin{align*}
d((T_n)_x, (T_{n+1})_x)
&= d((T_n)_x, (S_{k(x)})_x) \\
&\le v((T_n)_x) + 2^{-n-1} \qquad\text{by definition of $k(x)$};
\end{align*}
if (the unique element of) $(T_n)_x \in F_x$, then the last line is $u((T_n)_x) + 2^{-n-1} \le 2^{-n} + 2^{-n-1}$ by (i), otherwise it is $\bigvee_{b \in F_x} \abs{u(b) - d((T_n)_x, b)} + 2^{-n-1} \le 2^{-n} + 2^{-n-1}$ again by (i).  This completes the definition of $T_n$ for each $n$.  Now by (ii), $(T_0, T_1, \dotsc)$ forms a Cauchy sequence in each fiber; let $T$ be their limits.  Then (i) ensures that $u|F_x = \iota(T_x)|F_x$ for each $x \in p(A)$.
\end{proof}

\begin{proposition}
\label{thm:sbmsp-urysohn-unique}
Let $p : A -> X$ be a fiberwise separable Borel complete metric space over $X$ which is fiberwise isometric to $\#U$.  Then there is a Borel fiberwise isometry $f : A -> \#U$ (i.e., each $f_x := f|A_x : A_x -> \#U$ is an isometry).
\end{proposition}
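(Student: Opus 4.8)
The plan is to run a Borel back-and-forth argument between $A$ and a fixed copy of $\#U$, producing in each fiber a dense sequence $(T_n)_x$ in $A_x$ and a dense sequence $t_n(x)$ in $\#U$ with $d((T_m)_x,(T_n)_x)=d(t_m(x),t_n(x))$ for all $m,n$, whence the correspondence $(T_n)_x\mapsto t_n(x)$ extends to an isometry $f_x\colon A_x\cong\#U$. The point is to arrange the $T_n$ as full Borel sections of $A$ and the $t_n$ as Borel maps $X\to\#U$, so that the resulting $f$ is Borel. Fix once and for all a countable family $\@S=\{S_0,S_1,\dotsc\}$ of fiberwise dense full Borel sections in $A$ (which exists by fiberwise separability) and a countable dense set $\{u_0,u_1,\dotsc\}\subseteq\#U$.

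The key observation is that \cref{thm:sbmsp-urysohn-approx} applies not only to $A$ but also to the trivial bundle $X\times\#U\to X$ (which is a fiberwise separable Borel complete metric space over $X$ with every fiber $\#U$), and in that case a ``full Borel section of $X\times\#U$'' is exactly a Borel map $X\to\#U$. Thus \emph{exact}, uniformly Borel Urysohn extension is available on both sides of the back-and-forth, and the construction becomes symmetric. I would define $T_n$ and $t_n$ by recursion on $n$: at stage $2k$ set $T_{2k}:=S_k$ and let $w\colon X\times\#U\to\#I$ be the Borel map with $F_x:=\{t_m(x):m<2k\}$ and $w(x,t_m(x)):=d((S_k)_x,(T_m)_x)$; this $w$ is fiberwise Katětov because the partial correspondence built so far is a genuine isometry between the finite sets $\{(T_m)_x:m<2k\}$ and $\{t_m(x):m<2k\}$ (so it is well defined even when two of the $t_m(x)$ coincide, and the triangle/Lipschitz inequalities for $w$ are inherited from $A_x$), and \cref{thm:sbmsp-urysohn-approx} supplies a Borel $t_{2k}\colon X\to\#U$ realizing it. Dually, at stage $2k+1$ set $t_{2k+1}:=$ the constant map $u_k$ and use \cref{thm:sbmsp-urysohn-approx} for $A$ itself to get a full Borel section $T_{2k+1}\subseteq A$ realizing the (fiberwise Katětov, Borel) function $(T_m)_x\mapsto d(u_k,t_m(x))$ on $\{(T_m)_x:m<2k+1\}$. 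An induction on $\max(m,n)$ then gives $d((T_m)_x,(T_n)_x)=d(t_m(x),t_n(x))$ for all $m,n,x$, while $\{(T_{2k})_x\}_k=\{(S_k)_x\}_k$ is dense in $A_x$ and $\{t_{2k+1}(x)\}_k=\{u_k\}_k$ is dense in $\#U$.

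It follows that $(T_n)_x\mapsto t_n(x)$ is a well-defined isometry of a dense subset of $A_x$ onto a dense subset of $\#U$, hence (using completeness of $A_x$ and of $\#U$) extends uniquely to an isometry $f_x\colon A_x\cong\#U$. To see that $f\colon A\to\#U$, $f|A_x:=f_x$, is Borel, I would write $n_j(a):=$ the least $n$ with $d((T_n)_{p(a)},a)<2^{-j}$, which is a Borel function of $a$; then $(T_{n_j(a)})_{p(a)}\to a$, so $f(a)=\lim_j t_{n_j(a)}(p(a))$, a pointwise limit of Borel functions of $a$, hence Borel. I expect the only genuinely delicate step to be making the back-and-forth simultaneously \emph{exact} and \emph{Borel}; this is exactly what the observation that \cref{thm:sbmsp-urysohn-approx} applies to $X\times\#U$ resolves, reducing everything else to the routine verifications that the finite data produced at each stage are Borel and fiberwise Katětov.
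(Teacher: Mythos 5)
Your proposal is correct and takes essentially the same route as the paper: a Borel back-and-forth between countable families of fiberwise dense full Borel sections in $A$ and in the trivial bundle $X \times \#U$, using \cref{thm:sbmsp-urysohn-approx} on both bundles to extend the partial isometry exactly at each stage. Your write-up merely fills in the details (well-definedness and the Katětov property of the finite extension data, and the Borelness of the limit map) that the paper leaves to the reader.
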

\begin{proof}
It is enough to find a Borel fiberwise partial isometry $A \rightharpoonup X \times \#U$ between fiberwise countable dense sets over $X$.  This may be done via the usual back-and-forth construction, using countable families $\@S = \{S_0, S_1, \dotsc\}$ and $\@T = \{T_0, T_1, \dotsc\}$ of fiberwise dense full Borel sections in $A, X \times \#U$ respectively, and using \cref{thm:sbmsp-urysohn-approx} to extend the partial isometry at each stage.
\end{proof}

\subsection{Uniformizing fiberwise metric structures}
\label{sec:sbmstr-unif}

\begin{theorem}
\label{thm:sbmstr-isogpd-ff}
For any countable (multi-sorted) relational language $\@L$, standard Borel space $X$, and fiberwise separable Borel metric $\@L$-structure $\@M$ over $X$, there is a countable single-sorted relational language $\@L'$ and a full and faithful Borel functor $\Iso_X(\@M) -> \Iso(\#U) \ltimes \Mod_\#U(\@L')$.
\end{theorem}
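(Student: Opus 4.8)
The plan is to mirror the two-step strategy of the discrete case (\cref{thm:cbstr-ext} and \cref{thm:cbstr-unif}): first replace $\@M$ by a \emph{single-sorted} fiberwise separable Borel metric structure $\@M'$ whose underlying space is fiberwise isometric to the Urysohn sphere, together with a Borel identity-on-objects isomorphism $\Iso_X(\@M) \cong \Iso_X(\@M')$; then push $\@M'$ forward onto a fixed copy of $\#U$. For the first step I would merge the sorts of $\@L$, setting $M := \bigsqcup_{P \in \@L_s} P^\@M$ with the metric given sortwise on each $P^\@M$ and equal to $1$ between points of distinct sorts; since a Cauchy sequence in a fiber is eventually confined to a single sort, $M$ is a fiberwise separable Borel complete metric space over $X$, so by the construction in \cref{sec:urysohn}, $M' := \^{\@E_X^{\prime\infty}}(M)$ is one as well, with each fiber isometric to $\#U$ and with the Borel fiberwise isometric embeddings $\delta^n : \@E_X^{\prime n}(M) `-> M'$. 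Take $\@L'$ to be single-sorted relational with: a relation symbol $Z_P$ for each sort $P \in \@L_s$, interpreted in $M'_x$ by $a |-> d(a, \delta^0(P^\@M_x))$; a relation symbol $\rho_n$ for each $n \in \#N$, interpreted by $a |-> d(a, \im(\delta^n_{M_x}))$ (legitimate since $\@E^{\prime n}(M_x)$ is complete, so $\im(\delta^n_{M_x})$ is closed); and a relation symbol $R'$ for each relation symbol $R$ of the single-sorted reduct of $\@L$ (all sorts merged, $R$ extended by the value $1$ off the appropriate sub-product, which stays $1$-Lipschitz since distinct sorts lie at distance $1$), interpreted by the $\delta^0$-thickening $R'^{\@M'}(\vec{a}) := \bigwedge_{\vec{b} \in M_x^n}(R^\@M(\vec{b}) \dotplus \sum_i d(a_i, \delta^0(b_i)))$. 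Each interpretation is an invariant $1$-Lipschitz grey relation, and is Borel in $x$ since it can be written as a countable infimum over Borel sections once one plugs in the explicit Borel structures from \cref{sec:katetov}; so $\@M' := (M', Z_P^{\@M'}, \rho_n^{\@M'}, R'^{\@M'})$ is a fiberwise separable Borel metric $\@L'$-structure, fiberwise isometric to $\#U$, and by invariance of $R^\@M$ the relation $R'$ recovers $R^\@M(\vec{c})$ on original points $\delta^0(\vec{c})$, while $Z_P$ recovers the zero-indicator of $P^\@M_x$.

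Next I would identify $\Iso_X(\@M)$ with $\Iso_X(\@M')$. Sending $f = (f_P)_P : \@M_x \cong \@M_y$ to $\bigsqcup_P f_P : M_x \cong M_y$ and then to $\^{\@E^{\prime\infty}}(\bigsqcup_P f_P) : M'_x \cong M'_y$ yields, by naturality of $\delta$, an isometry carrying each $\im(\delta^n_{M_x})$ onto $\im(\delta^n_{M_y})$ and intertwining the $\delta^0$-thickenings, hence preserving every relation of $\@L'$; this defines an identity-on-objects Borel functor $\Iso_X(\@M) -> \Iso_X(\@M')$. Conversely, given a morphism $g : M'_x \cong M'_y$ of $\Iso_X(\@M')$: preservation of $Z_P$ forces $g(\delta^0(P^\@M_x)) = \delta^0(P^\@M_y)$, so $f := (\delta^0_{M_y})^{-1} \circ g|\im(\delta^0) \circ \delta^0_{M_x}$ is a sort-respecting isometry $M_x \cong M_y$, of the form $(f_P)_P$; preservation of the $R'$ makes each $f_P$ part of an $\@L$-isomorphism $\@M_x \cong \@M_y$; and preservation of every $\rho_n$ gives $g(\im(\delta^n_{M_x})) = \im(\delta^n_{M_y})$ for all $n$, whence $g = \^{\@E^{\prime\infty}}(f)$ by \cref{thm:katetov*-ff}. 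The two assignments are mutually inverse Borel maps, giving a Borel identity-on-objects isomorphism $\Iso_X(\@M) \cong \Iso_X(\@M')$.

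For the second step, given any single-sorted fiberwise separable Borel metric $\@L'$-structure $\@M'$ whose fibers are isometric to $\#U$, use \cref{thm:sbmsp-urysohn-unique} to get a Borel fiberwise isometry $f : M' -> \#U$, and define $F$ on objects by $F(x) := (f_x)_*(\@M'_x) \in \Mod_\#U(\@L')$ (Borel in $x$) and on morphisms by $F(g : \@M'_x \cong \@M'_y) := (f_y \circ g \circ f_x^{-1}, F(x))$; this is Borel by expressing $f_y \circ g \circ f_x^{-1}$ through the generating sets $\sqsqbr{S |-> T}$ of the Borel structure on $\Iso_X(\@M')$, exactly as in the proof of \cref{thm:cbstr-unif}, and on each hom-set it restricts to the bijection between isometries $\@M'_x \cong \@M'_y$ and isometries of $\#U$ carrying $(f_x)_*\@M'_x$ to $(f_y)_*\@M'_y$, so it is full and faithful. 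Composing this with the isomorphism of the first step produces the desired full and faithful Borel functor $\Iso_X(\@M) -> \Iso(\#U) \ltimes \Mod_\#U(\@L')$.

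I expect the only real labor to be the bookkeeping of Borel-measurability rather than any genuine obstacle: one must verify that $h |-> \^{\@E^{\prime\infty}}(h)$ is Borel on the isometry groupoids (using that $\@E(h)$ is essentially precomposition with $h^{-1}$ together with the Borel structure on $\@E_X(M)$ from \cref{sec:katetov}), that $Z_P^{\@M'}$, $\rho_n^{\@M'}$, and $R'^{\@M'}$ are Borel in the base point, and that $f_y \circ g \circ f_x^{-1}$ depends Borel-measurably on $g$ — all routine given the explicit descriptions set up earlier. The one genuinely structural input is \cref{thm:katetov*-ff}, which is exactly what makes the countably many ``level'' relations $\rho_n$ pin down precisely those isometries of $\#U$ induced by isometries of the original fibers.
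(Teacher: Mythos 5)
Your proposal is correct and follows essentially the same route as the paper: the paper's \cref{thm:sbmstr-ext} likewise merges the sorts into $M'_0 := \bigsqcup_P P^\@M$, passes to $\^{\@E^{\prime\infty}_X}(M'_0)$, adjoins unary relations interpreted as the saturations $[\delta^0(P^\@M)]$ and $[\im(\delta^i)]$ (your $Z_P$ and $\rho_n$ are exactly these, written as distance functions) together with the saturated grey images of the original relations (your thickening formula), and invokes \cref{thm:katetov*-ff} to identify the isomorphism groupoids; the paper's \cref{thm:sbmstr-unif} then pushes forward along the Borel fiberwise isometry from \cref{thm:sbmsp-urysohn-unique} exactly as you describe. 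No substantive differences.
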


As in \cref{sec:cbstr-unif}, we split the proof into two parts:

\begin{proposition}
\label{thm:sbmstr-ext}
For any countable (multi-sorted) relational language $\@L$, standard Borel space $X$, and fiberwise separable Borel metric $\@L$-structure $\@M$ over $X$, there is a countable single-sorted relational language $\@L'$, a fiberwise separable Borel metric $\@L'$-structure $\@M'$ over $X$ which is fiberwise isometric to $\#U$, and a Borel identity-on-objects isomorphism of groupoids $\Iso_X(\@M) \cong \Iso_X(\@M')$.
\end{proposition}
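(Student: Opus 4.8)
The plan is to carry out the metric analogue of the proof of \cref{thm:cbstr-ext}: first amalgamate the sorts of $\@M$ into a single metric sort, then apply the uniformly Borel Katětov construction of \cref{sec:katetov} to inflate each fiber into a copy of $\#U$, retaining enough marked ``coordinate'' predicates to recover the original structure. For the single-sorting step I would set $N := \bigsqcup_{P \in \@L_s} P^\@M$, the fiberwise disjoint union of the sorts, turned into a fiberwise separable Borel complete metric space over $X$ by declaring $d(a,b) := 1$ whenever $a,b$ lie in different sorts. Let $\@L_1 := \@L_r \sqcup \@L_s$ be the single-sorted relational language in which each $P \in \@L_s$ is now a unary relation symbol, and let $\@N$ be the $\@L_1$-structure over $X$ with universe $N$ interpreting each $P$ as the grey set $d(P^\@M, -) \sqle N$ ($1$-Lipschitz, with zero locus exactly $P^\@M$ since distinct sorts are $1$-apart) and each $n$-ary $R \in \@L_r$ as the grey relation on $N^n_X$ agreeing with $R^\@M$ on $P_0^\@M \times_X \dotsb \times_X P_{n-1}^\@M$ and equal to $1$ elsewhere (again $1$-Lipschitz for the same reason). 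Then an $\@L_1$-isomorphism $\@N_x \cong \@N_y$ is an isometry of $N_x$ onto $N_y$ preserving every $P^\@N$, hence sending each $P^\@M_x$ onto $P^\@M_y$, hence of the form $\bigsqcup_P f_P$ for an $\@L$-isomorphism $(f_P)_P : \@M_x \cong \@M_y$, and conversely; this yields an identity-on-objects Borel isomorphism $\Iso_X(\@M) \cong \Iso_X(\@N)$, so I may replace $\@M$ by $\@N$ and assume henceforth that $\@L = \@L_1$ is single-sorted and relational, with distinguished unary predicates $P$ cutting out the original sorts.

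Next I would apply Katětov. Let $A := \^{\@E_X^{\prime\infty}}(N)$ be the fiberwise Katětov tower over $N$ (\cref{sec:urysohn}, built using \cref{thm:sbmsp-katetov,thm:sbmsp-compl}); this is a fiberwise separable Borel complete metric space over $X$, and each fiber $A_x = \^{\@E^{\prime\infty}}(N_x)$ is isometric to $\#U$ since $N_x$ is separable. Write $\delta^n = \delta^n_X : \@E_X^{\prime n}(N) \hookrightarrow A$ for the canonical Borel embeddings, whose images are fiberwise closed, as is $\delta^0(P^\@M) \subseteq A$. Let $\@L'$ consist of a unary symbol $D_n$ for each $n \in \#N$, a unary symbol $P'$ for each $P \in \@L_s$, and an $n$-ary symbol $R'$ for each $n$-ary $R \in \@L_r$, and let $\@M'$ be the $\@L'$-structure over $X$ with universe $A$ interpreting $D_n^{\@M'} := d(\im(\delta^n), -)$, $P'^{\@M'} := d(\delta^0(P^\@M), -)$, and $R'^{\@M'}(u_0, \dotsc, u_{n-1}) := \inf_{\vec a}\bigl(R^\@M(\vec a) \dotplus \sum_i d(u_i, \delta^0(a_i))\bigr)$, the thickening of $R^\@M$ pushed into $A^n_X$ along $\delta^0$. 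Distance functions and thickenings are automatically $d$-invariant, and Borelness follows by taking the infima over a countable dense family of Borel sections, so $\@M'$ is a fiberwise separable Borel metric $\@L'$-structure which is fiberwise isometric to $\#U$; note that $D_n^{\@M'}$ vanishes exactly on $\im(\delta^n)$, $P'^{\@M'}$ exactly on $\delta^0(P^\@M)$, and $R'^{\@M'}$ restricts along $\delta^0$ to $R^\@M$ (by invariance of $R^\@M$ and isometry of $\delta^0$).

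Finally I would exhibit the Borel isomorphism $\Iso_X(\@N) \cong \Iso_X(\@M')$. Send an $\@L$-isomorphism $f : \@N_x \cong \@N_y$ to $\^{\@E^{\prime\infty}}(f) : A_x \cong A_y$; by functoriality of $\^{\@E^{\prime\infty}}$ (the commuting ladder $\^{\@E^{\prime\infty}}(f) \circ \delta^n = \delta^n \circ \@E^{\prime n}(f)$) this carries $\im(\delta^n_x)$ onto $\im(\delta^n_y)$ and $\delta^0(P^\@M_x)$ onto $\delta^0(P^\@M_y)$, and since $f$ preserves $R^\@M$ it carries the thickening of $R^\@M_x$ onto that of $R^\@M_y$; so it is an $\@L'$-isomorphism. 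Conversely, given an $\@L'$-isomorphism $g : \@M'_x \cong \@M'_y$, preservation of every $D_n^{\@M'}$ forces $g(\im(\delta^n_x)) = \im(\delta^n_y)$ for all $n$, so \cref{thm:katetov*-ff} produces a unique isometry $f : N_x \cong N_y$ with $g = \^{\@E^{\prime\infty}}(f)$; restricting to $\im(\delta^0)$, preservation of $P'^{\@M'}$ and $R'^{\@M'}$ forces $f$ to preserve each $P^\@N$ and $R^\@N$, i.e., $f : \@N_x \cong \@N_y$ is an $\@L$-isomorphism. The two assignments are mutually inverse and identity on objects, and both are Borel (the first because $f \mapsto \^{\@E^{\prime\infty}}(f)$ is assembled from the uniformly Borel operations of \cref{sec:katetov}, the second because its inverse is $g \mapsto (\delta^0_y)^{-1} \circ g \circ \delta^0_x$); composing with $\Iso_X(\@M) \cong \Iso_X(\@N)$ gives the desired identity-on-objects Borel isomorphism $\Iso_X(\@M) \cong \Iso_X(\@M')$.

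I expect the main obstacle to be the backward half of the last step---recovering an honest $\@L$-isomorphism from an arbitrary $\@L'$-isomorphism---which rests entirely on \cref{thm:katetov*-ff} and hence on having marked \emph{all} of the iterated images $\im(\delta^n)$, not just $\im(\delta^0)$. The mild twist over a literal transcription of \cref{thm:cbstr-ext} is that these markers must be realized as distance functions rather than zero-indicators (the zero-indicator of $\im(\delta^n)$ is not $1$-Lipschitz, unlike the constants $\{C_i\}$ of the discrete case), and that the whole Katětov tower must be run uniformly in $x$ inside the Borel category, which is exactly what \cref{thm:sbmsp-katetov,thm:sbmsp-compl} were set up to supply.
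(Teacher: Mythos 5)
Your proposal is correct and follows essentially the same route as the paper: the paper likewise takes the Katětov tower $\^{\@E^{\prime\infty}_X}$ over $\bigsqcup_{P} P^\@M$, marks each $\im(\delta^i)$ and each $\delta^0(P^\@M)$ by its grey saturation (which for a closed set is exactly the distance function you use), interprets each $R$ as the saturated grey image of $R^\@M$ under $\delta^0$, and recovers isomorphisms via \cref{thm:katetov*-ff}. Your only deviation is factoring out the single-sorting as a separate intermediate structure $\@N$, which the paper folds into the one construction.
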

\begin{proof}
Let
\begin{align*}
\@L'_r := \@L_r \sqcup \@L_s \sqcup \{E_0, E_1, \dotsc\}
\end{align*}
where each $P \in \@L_s$ is treated as a unary relation symbol, as is each $E_i$.  The underlying fiberwise separable Borel complete metric space of $\@M'$ is
\begin{align*}
M' &:= \^{\@E^{\prime\infty}_X}(M_0) \\
\text{where}\quad M'_0 &:= \bigsqcup_{P \in \@L_s} P^\@M.
\end{align*}
Let $\delta^i = \delta^i_{M'_0} : \@E^{\prime i}_X(M'_0) `-> M'$ be the canonical isometric embeddings defined fiberwise as in \cref{sec:urysohn}.  Each $(P_0, \dotsc, P_{n-1})$-ary $R \in \@L_r$ is interpreted as the saturated grey image (easily seen to be Borel using a countable family of fiberwise dense Borel sections)
\begin{align*}
R^{\@M'} := [(\delta^0|P_0^\@M \times_X \dotsb \times_X \delta^0|P_{n-1}^\@M)(R^\@M)] \sqle M^{\prime n}_X,
\end{align*}
each $P \in \@L_s$ is interpreted as
\begin{align*}
P^{\@M'} := [\delta^0(P^\@M)] \sqle M',
\end{align*}
and each $E_i$ is interpreted as
\begin{align*}
E_i^{\@M'} := [\im(\delta^i)] \sqle M'.
\end{align*}
Using by \cref{thm:katetov*-ff}, $\@L$-isomorphisms $f = (f_P)_{P \in \@L_s} : \@M_x \cong \@M_y$ are easily seen to be in canonical bijection with $\@L'$-isomorphisms $g := \^{\@E^{\prime\infty}}(\bigsqcup_P f_P) : \@M'_x \cong \@M'_y$ (preservation of the relations $E_i$ amounts to $g(\im(\delta_{(M'_0)_x}^i)) = \im(\delta_{(M'_0)_y}^i)$ as required by \cref{thm:katetov*-ff}).
\end{proof}

\begin{proposition}
\label{thm:sbmstr-unif}
For any countable single-sorted relational language $\@L$, standard Borel space $X$, and fiberwise separable Borel metric $\@L$-structure $\@M$ over $X$ which is fiberwise isometric to $\#U$, there is a full and faithful Borel functor $F : \Iso_X(\@M) -> \Iso(\#U) \ltimes \Mod_\#U(\@L)$.
\end{proposition}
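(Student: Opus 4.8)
The plan is to follow the proof of \cref{thm:cbstr-unif} almost verbatim, with the Borel fiberwise bijection $M \to \#N$ used there replaced by a Borel fiberwise \emph{isometry} $M \to \#U$. Since $\@M$ is fiberwise isometric to $\#U$, every fiber $M_x$ is nonempty, so the projection $p : M \to X$ is surjective; applying \cref{thm:sbmsp-urysohn-unique} to the underlying fiberwise separable Borel complete metric space $M$ of $\@M$ yields a Borel map $f : M \to \#U$ whose fiberwise restrictions $f_x := f|M_x : M_x \to \#U$ are isometric bijections. Then $(p, f) : M \to X \times \#U$ is a Borel bijection between standard Borel spaces, hence a Borel isomorphism, and I write $f^* : X \times \#U \to M$, $f^*(x, -) := f_x^{-1}$, for its (Borel) inverse. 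This $f$ plays exactly the role of the family of bijections $M_x \cong \#N$ in \cref{thm:cbstr-unif}.

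Next I would push each fiber forward along $f_x$. On objects, set $F(x) := (f_x)_*(\@M_x) \in \Mod_\#U(\@L)$, so that $R^{F(x)}(\vec a) = R^\@M(f^*(x, a_0), \dots, f^*(x, a_{n-1}))$ for each $n$-ary $R \in \@L_r$ and $\vec a \in \#U^n$; because $f_x$ is an isometry this is an invariant $1$-Lipschitz grey subset of $\#U^n$, so $F(x)$ is a genuine metric $\@L$-structure on $\#U$, and $x \mapsto F(x)$ is Borel (it suffices to test the Borel maps $x \mapsto R^{F(x)}(\vec a)$ on a countable dense set of tuples $\vec a$, by $1$-Lipschitzness). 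On morphisms, set $F(g : \@M_x \to \@M_y) := (f_y \circ g \circ f_x^{-1}, F(x))$; since $f_x, f_y$ are isometric bijections, $f_y \circ g \circ f_x^{-1} \in \Iso(\#U)$, and a short computation using that an isomorphism $g$ carries $R^\@M_x$ to $R^\@M_y$ shows $(f_y \circ g \circ f_x^{-1})_*(F(x)) = F(y)$, so $F(g)$ is a well-defined morphism $F(x) \to F(y)$ of $\Iso(\#U) \ltimes \Mod_\#U(\@L)$; functoriality (preservation of identities and composition) is then immediate. Fullness and faithfulness will follow because, for fixed $x, y$, conjugation $g \mapsto f_y \circ g \circ f_x^{-1}$ is a bijection from the set of $\@L$-isomorphisms $\@M_x \cong \@M_y$ onto $\{h \in \Iso(\#U) \mid h_*(F(x)) = F(y)\}$, with inverse $h \mapsto f_y^{-1} \circ h \circ f_x$ (relation-preservation transferring correctly in both directions).

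The one point requiring care is the Borelness of $F$ on morphisms. For this I would fix a countable dense $D \subseteq \#U$ and enlarge the given countable family of fiberwise dense full Borel sections of $M$ so that it also contains $\im(f^*(-, a))$ for every $a \in D$. Using that $f_{\tau(g)}$ is an isometry, one has $d_{\#U}\big((f_{\tau(g)} \circ g \circ f_{\sigma(g)}^{-1})(a), b\big) = d_M\big(g(f^*(\sigma(g), a)), f^*(\tau(g), b)\big)$, so for $a, b \in D$ and $r \in \#Q^+$ the set $\{g \mid d_{\#U}((f_{\tau(g)} \circ g \circ f_{\sigma(g)}^{-1})(a), b) < r\}$ is precisely one of the basic Borel sets $\sqsqbr{\im(f^*(-,a)) |-> [\im(f^*(-,b))]_{<r}}$ in the description of the standard Borel structure on $\Hom_X(M)$; since these maps, together with $g \mapsto F(\sigma(g))$, generate the Borel structure of $\Iso(\#U) \ltimes \Mod_\#U(\@L)$, it follows that $F$ is Borel. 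I expect this Borel bookkeeping to be the most tedious---though not the most difficult---part of the argument; the only genuinely new input beyond the discrete case (\cref{thm:cbstr-unif}) is \cref{thm:sbmsp-urysohn-unique}, the uniformly Borel uniqueness of the Urysohn sphere, which takes the place there of the trivial fact that a fiberwise countably infinite Borel space admits a fiberwise bijection to $\#N$.
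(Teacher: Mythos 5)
Your proposal is correct and follows essentially the same route as the paper: invoke \cref{thm:sbmsp-urysohn-unique} to get a Borel fiberwise isometry $f : M \to \#U$, define $F$ by pushforward on objects and conjugation $g \mapsto f_y \circ g \circ f_x^{-1}$ on morphisms, and verify Borelness against the generating sets $\sqsqbr{S \mapsto [T]_{<r}}$ of the Borel structure. The only (cosmetic) difference is in the Borelness bookkeeping: you adjoin the exact sections $x \mapsto f_x^{-1}(a)$ for $a$ in a countable dense subset of $\#U$, whereas the paper approximates $f_x^{-1}(u)$ and $f_y^{-1}(v)$ by members of a fixed fiberwise dense family of sections; both are fine.
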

\begin{proof}
By \cref{thm:sbmsp-urysohn-unique}, there is a Borel fiberwise isometry $f : M -> \#U$.  Define $F$ on objects by
\begin{align*}
F : X &--> \Mod_\#U(\@L) \\
x &|--> (f_x)_*(\@M_x)
\end{align*}
and on morphisms by
\begin{align*}
F : \Iso_X(\@M) &--> \Iso(\#U) \ltimes \Mod_\#U(\@L) \\
(g : \@M_x -> \@M_y) &|--> (f_y \circ g \circ f_x^{-1}, F(x)).
\end{align*}
This is Borel, since letting $\@S$ be a countable family of fiberwise dense full Borel sections in $M$, for any $u, v \in \#U$ and $r > 0$ we have
\begin{align*}
d((f_y \circ g \circ f_x^{-1})(u), v) < r
&\iff \exists S, T \in \@S,\, s, t \in \#Q^+ \left(\begin{aligned}
&s + 2t < r \AND \\
&f_x^{-1}(u) \in [S]_{<t} \AND f_y^{-1}(v) \in [T]_{<t} \AND \\
&g \in \sqsqbr{S |-> [T]_{<r}}
\end{aligned}\right).
\qedhere
\end{align*}
\end{proof}

\begin{proof}[Proof of \cref{thm:sbmstr-isogpd-ff}]
Compose $G : \Iso_X(\@M) \cong \Iso_X(\@M')$ from \cref{thm:sbmstr-ext} with $F : \Iso_X(\@M') -> \Iso(\#U) \ltimes \Mod_\#U(\@L')$ from \cref{thm:sbmstr-unif}.
\end{proof}

\begin{remark}
\label{rmk:sbmstr-isogpd-ff-io}
As in \cref{rmk:cbstr-isogpd-ff-io}, we may arrange for the functor produced by \cref{thm:sbmstr-isogpd-ff} to be not only full and faithful, but also injective on objects.
First, we modify the proof of \cref{thm:sbmstr-ext}, replacing $\@L'$ with $\@L' \sqcup \{C_0, C_1, \dotsc\}$ and $M'_0$ with $M'_0 \sqcup (X \times \#N)$ where $(x, i) \in X \times \#N$ has distance $1$ from all other points and is thought of as a newly added constant, as in \cref{thm:cbstr-ext}, with $C_i$ interpreted in $\@M'$ as $[\delta^0(X \times \{i\})] \sqle M'$.
Let $G, F$ be as above and $f_x$ be as in the proof of \cref{thm:sbmstr-unif} (applied to the modified $\@M'$).  For each $x \in X$, $D_x := \{f_x(\delta^0(x, i))\}_{i \in \#N}$ is a countably infinite, metrically discrete (i.e., distinct points have distance $1$) subset of $\#U$; and the map $x |-> D_x$ is easily seen to be a Borel map $X -> \@F(\#U)$.
Let $u : X -> \#S_\infty$ be an injective Borel map.
By the Kuratowski--Ryll-Nardzewski selection theorem \cite[12.13]{Kcdst}, for each infinite $D \in \@F(\#U)$, there is a map $v_D : \#N -> D$ enumerating a countable dense subset of $D$, with $(D, i) |-> v_D(i)$ Borel; we may easily modify $v_D$ so that it is injective, so in particular $v_D : \#N -> D$ is a bijection whenever $D \subseteq \#U$ is metrically discrete.  Define for each $x \in X$
\begin{align*}
h_x : (M'_0)_x &\cong (M'_0)_x \\
(x, i) &|-> (x, (f_x \circ \delta^0)^{-1}(v_{D_x}(u(x)(i)))) \\
a &|-> a \qquad\text{for all other $a \in (M'_0)_x$},
\end{align*}
and
\begin{align*}
f'_x := f_x \circ \^{\@E^{\prime\infty}}(h_x) : M_x \cong \#U,
\end{align*}
so that
\begin{align*}
f'_x(\delta^0(x, i))
&= f_x(\^{\@E^{\prime\infty}}(h_x)(\delta^0(x, i)) \\
&= f_x(\delta^0(h_x(x, i))) \\
&= v_{D_x}(u(x)(i)).
\end{align*}
Then $F'$ obtained from \cref{thm:sbmstr-unif} by replacing $f_x$ with $f'_x$ is injective on objects, for similar reasons as in \cref{rmk:cbstr-isogpd-ff-io}.
\end{remark}

\section{$\sigma$-locally Polish groupoids}
\label{sec:locpolgpd}

In this section, we show that every open $\sigma$-locally Polish groupoid $G$ is canonically isomorphic to the isomorphism groupoid of a metric-étale structure over $G^0$, completing the proof of \cref{thm:intro-locpolgpd-rep}.

\subsection{Metrization}
\label{sec:locpolgpd-metriz}

Let $G$ be a groupoid.  Recall the definition of \defn{strict grey subgroupoid} $U \sqle G$ from \cref{sec:greygpd}: $U|G^0$ is $\{0, 1\}$-valued, and
\begin{align*}
U(g) < 1 &\implies U(\sigma(g)) = U(\tau(g)) = 0, &
U(g^{-1}) &= U(g), &
U(g \cdot h) &\le U(g) \dotplus U(h).
\end{align*}
Such $U$ determines a fiberwise pseudometric on $\tau : \sigma^{-1}(U_{=0}) -> G^0$ given by
\begin{gather*}
d_U := (\sigma^{-1}(U_{=0}) \times_{G^0} \sigma^{-1}(U_{=0}) --->[(g, h) |-> g^{-1} \cdot h]{} G)^{-1}(U) \sqle \sigma^{-1}(U_{=0}) \times_{G^0} \sigma^{-1}(U_{=0}) \\
d_U(g : y -> x, h : z -> x) = U(g^{-1} \cdot h),
\end{gather*}
which is \defn{left-invariant}: given $g : y -> x$, $h : z -> x$, and $k : x -> x'$ with $y, z \in U_{=0}$, we have $d_U(k \cdot g, k \cdot h) = d_U(g, h)$.  Conversely, given any $V \subseteq G^0$ and left-invariant fiberwise pseudometric $d \sqle \sigma^{-1}(V) \times_{G^0} \sigma^{-1}(V)$, $U_d(g) := d(\tau(g), g)$ is easily seen to define a strict grey subgroupoid $U_d \sqle G$ with $G^0 \cap (U_d)_{=0} = V$; and the operations $U |-> d_U$ and $d |-> U_d$ give inverse bijections
\begin{align*}
\{\text{strict grey subgroupoids } U \sqle G\} \cong \{\text{left-invariant pseudometrics } d \sqle \sigma^{-1}(V) \times_{G^0} \sigma^{-1}(V)\}.
\end{align*}
Note that when $G$ is a topological groupoid and $U \sqle G$ is open, so is $d_U \sqle \sigma^{-1}(U_{=0}) \times_{G^0} \sigma^{-1}(U_{=0})$.

The classical \defn{Birkhoff--Kakutani metrization theorem} states that every topological group admits ``enough'' continuous left-invariant pseudometrics, i.e., enough open grey subgroups.  See e.g., \cite[2.1.1]{Gao}.  A crucial ingredient in the proof is the fact, immediate from continuity of multiplication, that any neighborhood of the identity contains the square of a smaller neighborhood.  Ramsay \cite[pp.~361--362]{Ram} proved a nontrivial analog of this fact for paracompact groupoids $G$, where both neighborhoods must contain all of $G^0$; this allows the groupoid analog of the proof of Birkhoff--Kakutani to be carried out \cite[3.13--14]{Bun}.  Since we are dealing with \emph{$\sigma$-locally} Polish groupoids, we need slight generalizations of these results; we give the proofs for the sake of completeness.

Recall that a Hausdorff space $X$ is \defn{paracompact} if every open cover $\@U$ of $X$ has a \defn{star-refinement}, meaning an open cover $\@V$ such that for every $x \in X$, $\bigcup \{V \in \@V \mid x \in V\}$ is contained in some $U \in \@U$; in particular, any two intersecting sets in $\@V$ are contained in a single set in $\@U$.  Every metrizable (e.g., Polish) space is paracompact.  See e.g., \cite{Kelley}.

\begin{lemma}[Ramsay]
\label{thm:ramsay}
Let $G$ be a topological groupoid.  For any open $U \subseteq G$ and unital open Hausdorff paracompact $V \subseteq G$ with $G^0 \cap V \subseteq U$, there is a symmetric unital open $W \subseteq U$ with $W \cdot W \subseteq U$ and $G^0 \cap W = G^0 \cap V$.
\end{lemma}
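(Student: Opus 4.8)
The plan is to run the classical Birkhoff--Kakutani ``halving'' construction (every neighborhood of the identity contains a symmetric one whose square lies inside) simultaneously at every unit in $G^0 \cap V$, and to glue the local choices together using paracompactness of $V$; this is Ramsay's argument. First I would check that $G^0 \cap V$ is closed in $V$: by unitality $\sigma$ restricts to a continuous self-map of $V$, and $G^0 \cap V = \{g \in V \mid g = \sigma(g)\}$ is the equalizer of this self-map with $1_V$, hence closed because $V$ is Hausdorff. Next, for each $x \in G^0 \cap V$ we have $x \cdot x = x \in G^0 \cap V \subseteq U$, so $(x,x) \in \mu^{-1}(U)$; since $\mu^{-1}(U)$ is open in $G \times_{G^0} G$, it contains a basic rectangle about $(x,x)$, and intersecting with $U \cap V$ produces an open $N_x$ with $x \in N_x \subseteq U \cap V$ and $N_x \cdot N_x \subseteq U$. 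Then $\@N := \{V \setminus (G^0 \cap V)\} \cup \{N_x \mid x \in G^0 \cap V\}$ is an open cover of $V$.

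Since $V$ is Hausdorff paracompact, I would fix an open star-refinement $\@W$ of $\@N$, so that the star $\mathrm{St}(p) := \bigcup\{A \in \@W \mid p \in A\}$ lies inside a single member of $\@N$ for every $p \in V$. The candidate is
\[
W_0 := \{\, g \in V \mid g,\ \sigma(g),\ \tau(g) \text{ all lie in a common member of } \@W \,\}, \qquad W := W_0 \cap W_0^{-1}.
\]
The ``easy'' verifications: $W_0 = \bigcup_{A \in \@W} (A \cap \sigma^{-1}(A) \cap \tau^{-1}(A))$ is open, hence so is $W = W_0 \cap \nu(W_0)$ since $\nu$ is a homeomorphism, and $W$ is symmetric by construction; every unit $x \in G^0 \cap V$ lies in $W$ because $x, \sigma(x), \tau(x)$ all equal $x$ and $x^{-1} = x$. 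For $W \subseteq U \cap V$: given $g \in W_0$, pick $A \in \@W$ with $g, \sigma(g) \in A$; since $\sigma(g) \in G^0 \cap V$ the star $\mathrm{St}(\sigma(g))$ is not contained in $V \setminus (G^0 \cap V)$, so $\mathrm{St}(\sigma(g)) \subseteq N_y$ for some $y$, whence $g \in A \subseteq N_y \subseteq U \cap V$. In particular $G^0 \cap W = G^0 \cap V$, and $W$ is unital because $\sigma(g), \tau(g) \in G^0 \cap V = G^0 \cap W \subseteq W$.

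The only step using the particular shape of $W_0$ is $W \cdot W \subseteq U$, and here composability does the work: if $g, h \in W$ with $\sigma(g) = \tau(h) = b$, choose $A_1 \in \@W$ containing $g, \sigma(g), \tau(g)$ and $A_2 \in \@W$ containing $h, \sigma(h), \tau(h)$; as $\sigma(g) = \tau(h) = b$, both $A_1$ and $A_2$ contain $b$, so $A_1 \cup A_2 \subseteq \mathrm{St}(b) \subseteq N_y$ for some $y$ (again $b \in G^0 \cap V$, so the star avoids the bad piece of $\@N$). Hence $g, h \in N_y$ and $g \cdot h \in N_y \cdot N_y \subseteq U$.

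I expect everything to be routine once the candidate is pinned down, so the one real obstacle is identifying $W_0$ correctly. The naive choice $W = \mathrm{St}(G^0 \cap V, \@W)$ does not obviously satisfy $W \cdot W \subseteq U$, since there is no reason two composable elements of it land in refinement sets sharing a common unit; the fix is to demand that each $g \in W$ sit in a common refinement set with both $\sigma(g)$ and $\tau(g)$, so that composability ($\sigma(g) = \tau(h)$) hands us the shared point $b$ and forces $g$ and $h$ into a single $N_y$. The other small point needing care is the closedness of $G^0 \cap V$ in $V$, which is what makes $\@N$ a legitimate open cover.
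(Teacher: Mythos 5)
Your proof is correct and follows essentially the same route as the paper's: a star-refinement of a cover of $V$ by ``halved'' neighborhoods, followed by restricting to those $g$ that share a refinement member with $\sigma(g)$ and $\tau(g)$, so that composability hands both factors a common unit forcing them into a single cover member. The only minor divergence is the initial cover: the paper covers all of $V$ by open sets $A$ with $A^{-1}\cdot A\subseteq U$ (using that $g^{-1}\cdot g=\sigma(g)\in U$ for every $g\in V$), which avoids your extra step of checking that $G^0\cap V$ is closed in $V$.
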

\begin{proof}
For any $g \in V$, we have $g^{-1} \cdot g = \sigma(g) \in G^0 \cap V \subseteq U$ since $V$ is unital, so by continuity there is an open $A \ni g$ such that $A^{-1} \cdot A \subseteq U$.  Thus $\@A := \{\text{open } A \subseteq V \mid A^{-1} \cdot A \subseteq U\}$ forms an open cover of $V$, hence has a star-refinement $\@B$.  Let
\begin{align*}
\@W := \{\{g \mid g, g^{-1}, \sigma(g), \tau(g) \in U \cap B\} \mid B \in \@B\},
\end{align*}
i.e., $\@W$ consists of the largest symmetric unital subsets of $U \cap B$ for each $B \in \@B$, and put $W := \bigcup \@W$.  Then clearly $W \subseteq U \cap V$, and any $x \in G^0 \cap V$ is in $W$ as witnessed by any $B \in \@B$ with $x \in B$.  For $W_1, W_2 \in \@W$ as witnessed by $B_1, B_2 \in \@B$, we have $W_1 \cdot W_2 \subseteq U$, since if $W_1 \cdot W_2 \ne \emptyset$, then there are $g : y -> z \in W_1$ and $h : x -> y \in W_2$, whence $y \in W_1 \cap W_2 \subseteq B_1 \cap B_2$ since $W_1, W_2$ are unital, whence there is some $A \in \@A$ such that $B_1, B_2 \subseteq A$ since $\@B$ is a star-refinement of $\@A$, whence $W_1 \cdot W_2 = W_1^{-1} \cdot W_2 \subseteq B_1^{-1} \cdot B_2 \subseteq A^{-1} \cdot A \subseteq U$.  Thus $W \cdot W \subseteq U$.
\end{proof}

\begin{proposition}[Birkhoff--Kakutani metrization theorem for locally paracompact groupoids]
\label{thm:birkhoff-kakutani}
Let $G$ be an open topological groupoid.  For any open $U \sqle G$ and unital open Hausdorff paracompact $V \subseteq G$ with $G^0 \cap V \subseteq U_{=0}$, there is an open strict grey subgroupoid $W \sqle U$ with $G^0 \cap W_{=0} = G^0 \cap V$.
\end{proposition}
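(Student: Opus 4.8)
The plan is to carry over the classical proof of the Birkhoff--Kakutani metrization theorem, using \cref{thm:ramsay} in place of the elementary fact that every identity neighbourhood in a topological group contains a symmetric neighbourhood whose cube lies inside it. The fixed paracompact set $V$ will serve as the ``ambient'' neighbourhood at every stage: each auxiliary set produced will lie between $G^0 \cap V$ on units and $V$ itself, and will be fed to \cref{thm:ramsay} as its ``$V$''-input, while the shrinking control on the open grey set $U$ is carried by its sublevel sets $U_{<r}$ (which are open since $U$ is open).

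First I would construct, by recursion on $n$, a decreasing sequence of symmetric unital open sets $W_0 \supseteq W_1 \supseteq W_2 \supseteq \cdots \subseteq G$ such that (1) $G^0 \cap W_n = G^0 \cap V$, (2) $W_n \subseteq U_{<2^{-n-3}}$, and (3) $W_{n+1} \cdot W_{n+1} \cdot W_{n+1} \subseteq W_n$. For $W_0$, apply \cref{thm:ramsay} to the open set $U_{<1/8}$ and to $V$ (noting $G^0 \cap V \subseteq U_{=0} \subseteq U_{<1/8}$). Given $W_n$, put $U' := W_n \cap U_{<2^{-n-4}}$, which is open and contains $G^0 \cap V$; apply \cref{thm:ramsay} to $(U', V)$ to obtain a symmetric unital open $W' \subseteq U'$ with $W' \cdot W' \subseteq U'$ and $G^0 \cap W' = G^0 \cap V$, then apply it again to $(W', V)$ to obtain $W_{n+1} \subseteq W'$ with $W_{n+1} \cdot W_{n+1} \subseteq W'$ and $G^0 \cap W_{n+1} = G^0 \cap V$. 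Since all these sets are unital we have $W_{n+1} \subseteq W_{n+1} \cdot W_{n+1} \subseteq W'$, whence $W_{n+1}^3 \subseteq W_{n+1} \cdot W' \subseteq W' \cdot W' \subseteq W_n$ and $W_{n+1} \subseteq U_{<2^{-n-4}}$, so (1)--(3) persist.

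Next I would set $f : G \to \#I$, $f(g) := \inf(\{2^{-n} \mid g \in W_n\} \cup \{1\})$, and define
\begin{align*}
W(g) := \inf\Bigl\{\, \sum_{i=1}^{k} f(g_i) \;\Big|\; k \ge 1,\ g = g_1 \cdots g_k \text{ a composable chain in } G \,\Bigr\}.
\end{align*}
Because the empty sub-chain evaluates to a unit absorbed by every $W_n$, this is exactly the groupoid form of the classical construction, and the standard inductive estimate goes through verbatim: splitting a composable chain of total $f$-gauge $a>0$ into three consecutive pieces of gauge $\le a$ each (the head $\sum_{i \le j} f(g_i) \le a/2$, the singleton $g_{j+1}$, and the tail $\sum_{i \ge j+2} f(g_i) < a/2$) and noting that each piece then lies in $W_{m+1}$ for the least $m$ with $2^{-m} \le 2a$ (so $a < 2^{-m}$), condition (3) gives that $g$ itself lies in $W_{m+1}^3 \subseteq W_m$, hence $f(g) \le 2^{-m} \le 2a$; minimizing over chains yields the sandwich $\tfrac12 f \le W \le f$. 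All the required properties then follow. $W$ takes values in $\#I$ (single-element chains give $W \le f \le 1$) and is symmetric since the $W_n$ are and chains reverse. $W \odot W \sqle W$ since concatenating composable chains shows $W(g \cdot h) \le W(g) + W(h)$. $W$ is open because for $0 < r \le 1$ one has $W_{<r} = \bigcup \{ W_{n_1} \cdots W_{n_k} \mid \sum_i 2^{-n_i} < r \}$, a union of open sets (products of open sets are open, as $G$ is open), while $W_{<r} = G$ for $r > 1$. For the unital conditions: if $W(g) < 1$, some chain for $g$ has every $f(g_i) < 1$, so $g_1 \in W_0$, hence $\tau(g) = \tau(g_1) \in G^0 \cap W_0 = G^0 \cap V$ and likewise $\sigma(g) \in G^0 \cap V$; thus $W|G^0$ is $\{0,1\}$-valued equal to $\*0_{G^0 \cap V}$ and $\sigma(W), \tau(W) \sqle W$. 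Finally $W \sqle U$: if $W(g) < r \le 1$ then $f(g) \le 2W(g) < 2r$, so $U(g) < r$ by (2) (if $f(g) = 2^{-m}$ then $2^{-m} < 2r$ and $g \in W_m$, so $U(g) < 2^{-m-3} < r$; otherwise $f(g) = 0$ and $U(g) = 0$); and $G^0 \cap W_{=0} = \{x \in G^0 \mid W(x) = 0\} = G^0 \cap V$.

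I expect nothing here to be conceptually novel: the one genuine halving step — extracting, inside a merely locally paracompact groupoid, a symmetric unital open set with controlled cube and the prescribed unit set — is already isolated in \cref{thm:ramsay}. The only points that need real (as opposed to transcriptional) care are the bookkeeping in the three-piece inductive sandwich estimate and the verification that $W$ restricted to the units comes out exactly $\*0_{G^0 \cap V}$; both rely on the groupoid-specific observation that a composable chain representing a morphism $g$ automatically has its source and target pinned to $\sigma(g)$ and $\tau(g)$, and that empty sub-chains evaluate to units lying in every $W_n$.
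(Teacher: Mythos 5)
Your proposal follows essentially the same route as the paper: Ramsay's lemma (\cref{thm:ramsay}) supplies the recursive sequence of symmetric unital open sets with the cube condition and prescribed unit set, and the classical Birkhoff--Kakutani gauge with the three-piece splitting gives the factor-of-two sandwich $\tfrac12 f \le W \le f$; the bookkeeping about intermediate objects (empty sub-chains evaluating to units in $G^0 \cap V$, which holds precisely in the non-trivial case where every $f(g_i) < 1$, the other case being trivial since some term equals $1$) matches what the paper makes explicit.

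One step of your write-up is genuinely incomplete as stated: the final verification that $W \sqle U$. Your case analysis assumes that $f(g)$ is either $0$ or equal to $2^{-m}$ \emph{witnessed by} $g \in W_m$, but there is a third case, $g \notin W_0$, where $f(g) = 1$ comes from the default value in $\inf(\dotso \cup \{1\})$ and no membership is available; there your displayed inequalities only give $W(g) \ge \tfrac12$, while $U(g)$ could a priori be anything up to $1$, so $U(g) \le W(g)$ does not follow. The paper sidesteps this by first normalizing $U(g) \le 1/2$ for all $g$ (replacing $U$ by $U/2$ and doubling $W$ at the end), so that outside $V_0$ one gets $W(g) \ge \tfrac12 \ge U(g)$ for free. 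With your normalization $W_n \subseteq U_{<2^{-n-3}}$ the statement is still true, but you need one more observation to see it: the very induction you run shows that any chain with $\sum_i f(g_i) < 1$ has its product in $W_{m+1}^3 \subseteq W_0$ (taking $m = 0$), so $W(g) < 1$ already forces $g \in W_0$ and the problematic case is vacuous --- equivalently, $W(g) < 2^{-m}$ forces $g \in W_m$, whence $U(g) < 2^{-m-3} \le \tfrac14 W(g)$. Either patch is one line, but as written the parenthetical justification contains a false dichotomy.
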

\begin{proof}
We may assume $U(g) \le 1/2$ for all $g \in G$; otherwise, replace $U$ with $U/2$, find $W$, and replace $W$ with $W \dotplus W$.  Using \cref{thm:ramsay}, find recursively $V_0 \supseteq V_1 \supseteq \dotsb$ symmetric unital open subsets of $V$ with
\begin{align*}
V_{n+1} \cdot V_{n+1} \cdot V_{n+1} \subseteq V_n, &&
G^0 \cap V_n = G^0 \cap V, &&
V_n \subseteq U_{<2^{-(n+1)}}.
\end{align*}
Put (recall the notation from \cref{sec:greygpd})
\begin{align*}
V' &:= \bigsqcup_n (2^{-n} \cdot V_n) \\
V'(g) &= \bigwedge \{2^{-n} \mid V_n \ni g\}, \\
W &:= \ang{V'}
= V' \sqcup (V' \odot V') \sqcup (V' \odot V' \odot V') \sqcup \dotsb \\
W(g) &= \bigwedge_{g = g_0 \cdot \dotsb \cdot g_{m-1}; m \ge 1} (V'(g_0) + \dotsb + V'(g_{m-1})).
\end{align*}
Clearly, $W$ is an open strict grey subgroupoid; and we have $G^0 \cap W_{=0} = G^0 \cap V'_{=0} = G^0 \cap V$, since each $V_n$ is a unital subset of $V$ and $G^0 \cap V_0 = G^0 \cap V$.  It remains to check that $W \sqle U$.  Note that
\begin{align*}
V'(g \cdot h \cdot k) \le 2 \max \{V'(g), V'(h), V'(k)\}  \tag{$*$}
\end{align*}
for all composable $g, h, k \in G$ (where $V'(g) = 1$ if $g \not\in \bigcup_n V_n$), using $V_{n+1} \cdot V_{n+1} \cdot V_{n+1} \subseteq V_n$.  We now prove by induction that for all $m \ge 0$ and all composable sequences $x_m --->{g_{m-1}} \dotsb --->{g_0} x_0 \in G$ with all the $x_i \in V$, we have
\begin{align*}
V'(g_0 \dotsm g_{m-1}) \le 2(V'(g_0) + \dotsb + V'(g_{m-1})).  \tag{$\dagger$}
\end{align*}
When $m = 0$ we get $V'(x_0) = 0$ since $x_0 \in V$.  For $m > 0$, let $k < m$ be greatest such that
\begin{align*}
V'(g_0) + \dotsb + V'(g_{k-1}) \le \frac12(V'(g_0) + \dotsb + V'(g_{m-1}));  \tag{$\ddagger_0$}
\end{align*}
then
\begin{align*}
V'(g_{k+1}) + \dotsb + V'(g_{m-1}) \le \frac12(V'(g_0) + \dotsb + V'(g_{m-1})),  \tag{$\ddagger_1$}
\end{align*}
whence
\begin{align*}
\begin{aligned}
V'(g_0 \dotsm g_{m-1})
&\le 2 \max \{V'(g_0 \dotsm g_{k-1}), V'(g_k), V'(g_{k+1} \dotsm g_{k-1})\} &&\text{by ($*$)} \\
&\le 2 \max \{2(V'(g_0) + \dotsb + V'(g_{k-1})), V'(g_k), 2(V'(g_{k+1}) + \dotsb + V'(g_{k-1}))\} &&\text{by IH} \\
&\le 2 \max \{V'(g_0) + \dotsb + V'(g_{m-1}), V'(g_k), V'(g_0) + \dotsb + V'(g_{m-1})\} &&\text{by ($\ddagger_i$)} \\
&= 2(V'(g_0) + \dotsb + V'(g_{m-1})).
\end{aligned}
\end{align*}
This proves ($\dagger$) when all $x_i \in V$; clearly ($\dagger$) also holds when $m \ge 1$ and some $x_i \not\in V$ (the right-hand side is then $\ge 2$).  So for all $g \in G$,
\begin{align*}
W(g) = \bigwedge_{g = g_0 \cdot \dotsb \cdot g_{m-1}; m \ge 1} (V'(g_0) + \dotsb + V'(g_{m-1}))
\ge \frac12 V'(g).
\end{align*}
If $g \in V_0$, then for any $n \ge 0$ with $V_n \ni g$, we have $g \in V_n \subseteq U_{<2^{-(n+1)}}$, whence $U(g) < \frac12 2^{-n}$; thus $W(g) \ge \frac12 V'(g) = \frac12 \bigwedge \{2^{-n} \mid V_n \ni g\} \ge U(g)$.
Otherwise, $W(g) \ge \frac12 V'(g) = \frac12 \ge U(g)$.
\end{proof}

\subsection{Metric-étale left completions}

Let $G$ be an open topological groupoid.  For an open strict grey subgroupoid $U \sqle G$, by \cref{thm:metale-quotient} we have a metric-étale topometric quotient space
\begin{align*}
\^{G/U} := \reallywidehat{\sigma^{-1}(U_{=0})/d_U}
\end{align*}
over $G^0$, whose projection map (descended along the quotient map $\pi = \pi_U : \sigma^{-1}(U_{=0}) -> \^{G/U}$ from $\tau : \sigma^{-1}(U_{=0}) -> G^0$) we continue to denote by $\tau : \^{G/U} -> G^0$.  When $G$ is a topological group, $\^{G/U}$ is simply its left completion with respect to a left-invariant metric.

Note that the grey saturation $d_U[S]$ of $S \sqle \sigma^{-1}(U_{=0})$ is given by
\begin{align*}
d_U[S](g)
= \bigwedge_{h \in \sigma^{-1}(U_{=0})_{\tau(g)}} (S(h) \dotplus U(h^{-1} \cdot g))
= \bigwedge_{g = h \cdot k} (S(h) \dotplus U(k))
= (S \odot U)(g).
\end{align*}
We call $S$ \defn{$U$-invariant} if it is $d_U$-invariant, i.e., if $S \odot U \sqle S$.  Such $S$ are in bijection with invariant grey subsets of $\^{G/U}$.

We call open $S \subseteq \sigma^{-1}(U_{=0})$ \defn{$U_{<r}$-small} if it is $(d_U)_{<r}$-small over $G^0$, i.e., if $S \times_{G^0} S \subseteq (d_U)_{<r}$, i.e., $S^{-1} \cdot S \subseteq U_{<r}$; such $S$ can be thought of as an approximate open section, and induce a (slightly larger) approximate open section $\^d_U[\pi_U(S)]_{<s}$ in $\^{G/U}$ for each $s > 0$.  We always have at least one genuine continuous (partial) section $\pi = \pi_U : U_{=0}^0 -> \^{G/U}$ of $\tau : \^{G/U} -> G^0$, the \defn{unit section}.

The left multiplication action of $G$ on itself (restricted to $\sigma^{-1}(U_{=0}) \subseteq G$) is $d_U$-isometric, hence descends to an isometric action $\alpha : G \curvearrowright \^{G/U}$ defined on the dense subset $\im(\pi) \subseteq \^{G/U}$ by
\begin{align*}
g \cdot \pi(h) := \pi(g \cdot h).
\end{align*}
This action is continuous: for a $\^d_U$-invariant open grey $S \sqle \^{G/U}$, its preimage $\alpha^{-1}(S) \sqle G \times_{G^0} \^{G/U}$ obeys $(1_G \times_{G^0} \pi)^{-1}(\alpha^{-1}(S)) = \mu^{-1}(\pi^{-1}(S)) \sqle G \times_{G^0} \sigma^{-1}(U_{=0})$, which is open as well as invariant under the sum of the discrete metric $\Delta_G$ on $G$ and $d_U$ on $\sigma^{-1}(U_{=0})$ (by left-invariance of $d_U$), whence $\alpha^{-1}(S) \sqle G \times_{G^0} \^{G/U}$ is invariant open (since $G \times_{G^0} \^{G/U}$ can be identified with the topometric quotient of $G \times_{G^0} \sigma^{-1}(U_{=0})$ by \cref{thm:topomet-product}); by invariance of the topology on $\^{G/U}$, this is enough.

Now consider two open strict grey subgroupoids $U, V \sqle G$.  If $U \sqle V$ (so in particular $\sigma^{-1}(U_{=0}) \subseteq \sigma^{-1}(V_{=0})$), the restriction of the quotient map $\pi_V : \sigma^{-1}(V_{=0}) -> \^{G/V}$ to $\sigma^{-1}(U_{=0})$ descends along $\pi_U : \sigma^{-1}(U_{=0}) -> \^{G/U}$ to a Lipschitz \defn{projection map}
\begin{align*}
\pi_{U,V} : \^{G/U} &--> \^{G/V}
\end{align*}
defined on the dense subset $\im(\pi_U) \subseteq \^{G/U}$ by
\begin{align*}
\pi_{U,V}(\pi_U(g)) := \pi_V(g).
\end{align*}
This map $\pi_{U,V}$ is continuous, since for $\^d_V$-invariant open grey $S \sqle \^{G/V}$, we have $\pi_U^{-1}(\pi_{U,V}^{-1}(S)) = \pi_V^{-1}(S)|\sigma^{-1}(U_{=0}) \sqle \sigma^{-1}(U_{=0})$ which is open, whence $\pi_{U,V}^{-1}(S) \sqle \^{G/U}$ is open.

In general, given $U, V$ as above and open $S \sqle \sigma^{-1}(V_{=0})$, define
\begin{gather*}
R_{U,V,S} := ((g, h) |-> g^{-1} \cdot h)^{-1}(S) \sqle \sigma^{-1}(U_{=0}) \times_{G^0} \sigma^{-1}(V_{=0}) \\
R_{U,V,S}(g, h) = S(g^{-1} \cdot h),
\end{gather*}
thought of as the ``grey graph'' of right multiplication by $S$ (when $S$ is an ordinary subset, we have $(g, h) \in R_{U,V,S} \iff g^{-1} \cdot h \in S \iff h \in g \cdot S$).  $R_{U,V,S}$ is open, and descends to an invariant open $\^R_{U,V,S} \sqle \^{G/U} \times_{G^0} \^{G/V}$ (by \cref{thm:topomet-product}) iff $R_{U,V,S}$ is invariant with respect to the sum of the pseudometrics $d_U, d_V$, which is easily seen
to be equivalent to $U \odot S \odot V \sqle S$; we call such $S$ \defn{$(U,V)$-invariant}.
Note also that $R_{U,V,S}$ only depends on $S|\tau^{-1}(U_{=0})$.

We will regard the grey relations $\^R_{U,V,S} \sqle \^{G/U} \times_{G^0} \^{G/V}$ for suitable $S$ as analogous to the right multiplication maps $f_{U,V,S}$ in the étale setting (\cref{thm:nonarchgpd-rightmult}).  Namely, consider an open $V_{<r}$-small $S \subseteq \sigma^{-1}(V_{=0})$, for some $r \in (0, 1)$, such that
\begin{align*}
U \sqle S \odot V \odot S^{-1}  \tag{$*$}
\end{align*}
(writing as usual $S$ for $\*0_S$).  Note that $S \odot V \odot S^{-1}|G^0 = \tau(S)$ is $\{0,1\}$-valued, hence \cref{thm:birkhoff-kakutani} will guarantee a plentiful supply of $U$ for each given $S$.  Now $U \odot S \odot V \sqle \sigma^{-1}(V_{=0})$ is clearly $(U, V)$-invariant open, whence we have an invariant open $\^R_{U,V,U \odot S \odot V} \sqle \^{G/U} \times_{G^0} \^{G/V}$ as above.  We have
\begin{align*}
\begin{aligned}
S \odot V|\tau^{-1}(U_{=0}) \sqle U \odot S \odot V
&\sqle S \odot V \odot S^{-1} \odot S \odot V &&\text{by ($*$)} \\
&\sqle S \odot V \odot (V \dotminus r) \odot V &&\text{since $S$ is $V_{<r}$-small} \\
&\sqle S \odot V \dotminus r
\end{aligned}
\end{align*}
(the last line using that for any $A, B \sqle G$, we have $(A \odot (B \dotminus r))(g) = \bigwedge_{g = h \cdot k} (A(h) \dotplus (B(k) \dotminus r)) \ge \bigwedge_{g = h \cdot k} (A(h) \dotplus B(k)) \dotminus r = (A \odot B \dotminus r)(g)$, which follows from $a + (b \dotminus r) \ge (a \dotplus b) - r$ for $a, b, r \in \#I$), which gives
\begin{align*}
R_{U,V,S \odot V} \sqle R_{U,V,U \odot S \odot V} \sqle R_{U,V,S \odot V} \dotminus r.  \tag{$\dagger$}
\end{align*}
Now from the definition of $R_{U,V,S \odot V}$, the fact (from above) that $S \odot V = d_V[S]$, and left-invariance of $d_V$, we have
\begin{align*}
R_{U,V,S \odot V}(g, h) = d_V(S, g^{-1} \cdot h) = d_V(g \cdot S, h);
\end{align*}
in other words, ($\dagger$) says that $R_{U,V,U \odot S \odot V}$ is approximately (up to $r$) the $d_V$-distance from the graph of right multiplication by $S$.  This implies a similar estimate for $\^R_{U,V,U \odot S \odot V}$, since the latter is defined by continuously extending $R_{U,V,U \odot S \odot V}$.  To summarize:

\begin{lemma}
\label{thm:locpolgpd-rightmult}
For any open strict grey subgroupoids $U, V \sqle G$ and open $V_{<r}$-small $S \subseteq \sigma^{-1}(V_{=0})$ for some $r > 0$ such that
\begin{align*}
U \sqle S \odot V \odot S^{-1},
\end{align*}
there is an invariant open grey relation $\^R_{U,V,U \odot S \odot V} \sqle \^{G/U} \times_{G^0} \^{G/V}$ which on the dense subset $\im(\pi_U) \times_{G^0} \im(\pi_V) \subseteq \^{G/U} \times_{G^0} \^{G/V}$ satisfies
\begin{align*}
d_V(g \cdot S, h) \ge \^R_{U,V,U \odot S \odot V}(\pi_U(g), \pi_V(h)) \ge d_V(g \cdot S, h) - r
\end{align*}
for all $(g, h) \in \sigma^{-1}(U_{=0}) \times_{G^0}\sigma^{-1}(V_{=0})$.

Thus by density and continuity, we have more generally for all $(a, b) \in \^{G/U} \times_{G^0} \^{G/V}$
\begin{align*}
\olddisplaystyle \liminf_{\pi_U(g) -> a} \^d_V(\pi_V(g \cdot S), b) \ge \^R_{U,V,U \odot S \odot V}(a, b) \ge \limsup_{\pi_U(g) -> a} \^d_V(\pi_V(g \cdot S), b) - r
\end{align*}
where $\liminf_{\pi_U(g) -> a} f(g) := \sup_{r > 0} \inf_{g \in \pi_U^{-1}([a]_{<r})} f(g)$, and similarly for $\limsup$.  \qed
\end{lemma}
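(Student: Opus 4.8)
The plan is to assemble the ingredients developed in the paragraphs immediately preceding the statement; no genuinely new construction is needed. First I would set $T := U \odot S \odot V$, which is an open grey subset of $\sigma^{-1}(V_{=0})$ (as $U$, $S$, $V$ are open and $G$ is an open groupoid) and is $(U,V)$-invariant, since
\[
U \odot T \odot V = (U \odot U) \odot S \odot (V \odot V) \sqle U \odot S \odot V = T
\]
by the strict grey subgroupoid inequalities $U \odot U \sqle U$ and $V \odot V \sqle V$. Hence, by the construction carried out above (which invokes \cref{thm:topomet-product}), $R_{U,V,T}$ is invariant under the sum pseudometric $d_U \oplus d_V$ and descends to an invariant open grey relation $\^R_{U,V,T} = \^R_{U,V,U \odot S \odot V} \sqle \^{G/U} \times_{G^0} \^{G/V}$, as required.

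For the two-sided estimate I would transport the chain $(\dagger)$ through the quotient map. The computational heart is the chain of grey-set inequalities (valid after restriction to $\tau^{-1}(U_{=0})$)
\[
(S \odot V)|\tau^{-1}(U_{=0}) \sqle U \odot S \odot V \sqle S \odot V \odot S^{-1} \odot S \odot V \sqle S \odot V \odot (V \dotminus r) \odot V \sqle (S \odot V) \dotminus r,
\]
whose four steps use, respectively: strictness of $U$ (so that $U(\iota(\tau(g))) = 0$ whenever $\tau(g) \in U_{=0}$); the hypothesis $U \sqle S \odot V \odot S^{-1}$, together with monotonicity of $\odot$; the $V_{<r}$-smallness of $S$, in the form $S^{-1} \odot S = \*0_{S^{-1} \cdot S} \sqle V \dotminus r$; and $V \odot (V \dotminus r) \odot V \sqle V \dotminus r$, which follows from $V \odot V \sqle V$ and the inequality $A \odot (B \dotminus r) \sqle (A \odot B) \dotminus r$ (with its mirror image). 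Applying the operation $S'' \mapsto R_{U,V,S''}$, which is monotone, depends only on $S''|\tau^{-1}(U_{=0})$, and commutes with $(-) \dotminus r$, then yields $R_{U,V,S \odot V} \sqle R_{U,V,U \odot S \odot V} \sqle R_{U,V,S \odot V} \dotminus r$ on $\sigma^{-1}(U_{=0}) \times_{G^0} \sigma^{-1}(V_{=0})$. Since $\pi_U \times_{G^0} \pi_V$ surjects onto the metric quotient, on which $\^R_{U,V,T}$ restricts to (the descent of) $R_{U,V,T}$, we have $\^R_{U,V,T}(\pi_U(g), \pi_V(h)) = R_{U,V,T}(g,h)$; combining this with $R_{U,V,S \odot V}(g,h) = (S \odot V)(g^{-1} \cdot h) = d_V(g \cdot S, h)$ (left-invariance of $d_V$) gives the first displayed estimate on $\im(\pi_U) \times_{G^0} \im(\pi_V)$.

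Finally, for the ``more generally'' clause I would argue by density and continuity: $\^R_{U,V,T}$ and $\^d_V$ are $1$-Lipschitz, the sets $\im(\pi_U)$ and $\im(\pi_V)$ are fiberwise dense in $\^{G/U}$ and $\^{G/V}$, and $\^d_V(\pi_V(g \cdot S), \pi_V(h)) = d_V(g \cdot S, h)$. Fixing $(a,b) \in \^{G/U} \times_{G^0} \^{G/V}$ and passing to the limit in the first estimate along $g$ in the relevant fiber with $\pi_U(g) \to a$ (and, when $b \notin \im(\pi_V)$, also along $\pi_V(h) \to b$), the Lipschitz bounds turn the pointwise estimate into the stated $\liminf$/$\limsup$ inequalities. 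I expect the main thing to watch is the bookkeeping of $\sqle$-directions and of the truncated operations $\dotplus$, $\dotminus$ in the grey-set chain --- where a sign is easy to lose --- together with the check that $\^R_{U,V,T}$ genuinely restricts to the descent of $R_{U,V,T}$ on the dense metric-quotient subspace; but since both are essentially already spelled out in the preceding discussion, I do not anticipate a real obstacle.
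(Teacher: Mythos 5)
Your proposal is correct and follows essentially the same route as the paper, whose proof of this lemma is precisely the discussion in the preceding paragraphs: the $(U,V)$-invariance of $U \odot S \odot V$, the sandwich $R_{U,V,S \odot V} \sqle R_{U,V,U \odot S \odot V} \sqle R_{U,V,S \odot V} \dotminus r$ obtained from the same four-step grey-set chain, the identification $R_{U,V,S\odot V}(g,h) = d_V(g\cdot S, h)$ via left-invariance, and the extension to the completion by density and Lipschitz continuity. Your explicit justifications of the first step of the chain (strictness of $U$ on $\tau^{-1}(U_{=0})$) and of the monotonicity/restriction properties of $S'' \mapsto R_{U,V,S''}$ fill in details the paper leaves implicit, and they are accurate.
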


\subsection{The canonical structure}

From now on, we assume that $G$ is an open $\sigma$-locally Polish groupoid.  Fix a countable family $\@U$ of open strict grey subgroupoids $U \sqle G$ and a countable basis $\@S$ of open sets $S \subseteq G$, such that the following hold:
\begin{enumerate}[leftmargin=3.2em]
\item[(US1)]  For any Polish $S \in \@S$, there is a $U \in \@U$ with $U \sqle S$ and $G^0 \cap U_{=0} = G^0 \cap S$ (using \cref{thm:birkhoff-kakutani}).
\item[(US2)]  For any $V \in \@U$, $S \in \@S$ with $S \subseteq \sigma^{-1}(V_{=0})$, and Polish $T \in \@S$ with $G^0 \cap T \subseteq \tau(S) = G^0 \cap (S \odot V \odot S^{-1})$, there is a $U \in \@U$ with $U \sqle S \odot V \odot S^{-1}$ and $G^0 \cap U_{=0} = G^0 \cap T$ (using \cref{thm:birkhoff-kakutani}).
\item[(US3)]  For any $U, V \in \@U$, $U \dotplus V \in \@U$.
\end{enumerate}
For each $U \in \@U$ and $r > 0$, let
\begin{align*}
\@S_U &:= \{S \in \@S \mid S \subseteq \sigma^{-1}(U_{=0})\}, \\
\@S_{U<r} &:= \{S \in \@S_U \mid S \text{ $U_{<r}$-small}\};
\end{align*}
these are bases of open sets in $\sigma^{-1}(U_{=0})$ (the latter because $d_U \sqle \sigma^{-1}(U_{=0})^2_{G^0}$ is open).

\begin{lemma}
\label{thm:locpolgpd-basis-section}
$\{d_U[S]_{<1} \mid U \in \@U \AND S \in \@S_U\}$ is a basis of open sets in $G$.
\end{lemma}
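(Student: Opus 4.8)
The plan is to adapt the proof of \cref{thm:nonarchgpd-basis-section} (the étale analog) to the present metric setting, with the left cosets $\pi_U(S)$ there replaced by the ``left $U_{<1}$-balls'' $d_U[S]_{<1}$. First I would record that each proposed basic set is open: since $d_U[S] = S \odot U$ and $S$ is an ordinary open set (so $\*0_S$ is $\{0,1\}$-valued), a factorization $g = h \cdot k$ with $\*0_S(h) \dotplus U(k) < 1$ exists iff $h \in S$ and $k \in U_{<1}$; hence $d_U[S]_{<1} = S \cdot U_{<1}$, which is open because $G$ is an open groupoid and $U_{<1} \subseteq G$ is open.

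For the basis property, I would fix an open $V \subseteq G$ and a morphism $g : x \to y \in V$ and produce $U \in \@U$ and $S \in \@S_U$ with $g \in d_U[S]_{<1} \subseteq V$. Applying continuity of $\mu$ at the point $(g, \iota(x))$, which multiplies to $g \in V$, yields open sets $W, N \subseteq G$ with $g \in W$, $x \in N$, and $W \cdot N \subseteq V$. Since $G$ is $\sigma$-locally Polish, $x$ lies in some open Polish $P \subseteq G$; using that $\@S$ is a basis, I would pick $N' \in \@S$ with $x \in N' \subseteq N \cap P$, noting that $N'$ is open in $P$ and hence Polish. Then (US1) applied to $N'$ provides $U \in \@U$ with $U \sqle \*0_{N'}$ — equivalently $U_{<1} \subseteq N'$ — and $G^0 \cap U_{=0} = G^0 \cap N'$; in particular $x \in U_{=0}$, so $g \in \sigma^{-1}(U_{=0})$ (this last set being open, since $U|G^0$ is $\{0,1\}$-valued, so $U_{=0} \cap G^0 = U_{<1} \cap G^0$). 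As $\@S$ is a basis and $W \cap \sigma^{-1}(U_{=0})$ is an open neighborhood of $g$, I would pick $S \in \@S$ with $g \in S \subseteq W \cap \sigma^{-1}(U_{=0})$, so that $S \in \@S_U$. Finally, $d_U[S]_{<1} = S \cdot U_{<1} \subseteq W \cdot N' \subseteq W \cdot N \subseteq V$, and $g = g \cdot \iota(x) \in S \cdot U_{<1}$ since $g \in S$ and $U(\iota(x)) = 0 < 1$.

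I do not expect a serious obstacle: the argument is essentially an unwinding of definitions, parallel to the étale case. The only point requiring some care is the shrinking step, because (US1) is available only for members of $\@S$ that are Polish, so one cannot invoke the basis $\@S$ directly but must first pass to an open Polish piece of $G$ (available by $\sigma$-local Polishness) and squeeze an element of $\@S$ inside it. Everything else is bookkeeping with the definitions of $d_U[S] = S \odot U$, of the order $\sqle$ on grey subsets (so that $U \sqle \*0_{N'}$ reads $U_{<1} \subseteq N'$), and of $\sigma^{-1}(U_{=0})$.
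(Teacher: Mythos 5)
Your proof is correct and follows essentially the same route as the paper's: continuity of multiplication at $(g,\iota(x))$ gives $W \cdot T \subseteq V$ with $T \in \@S$ Polish containing $x$, (US1) then supplies $U$, and one shrinks to $S \in \@S_U$ inside $W$ so that $g \in d_U[S]_{<1} = S \cdot U_{<1} \subseteq W \cdot T \subseteq V$. The extra details you supply (openness of $S \cdot U_{<1}$, passing through an open Polish piece of $G$ to get a Polish member of $\@S$, and openness of $\sigma^{-1}(U_{=0})$) are all correct and are left implicit in the paper's terser argument.
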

\begin{proof}
Let $V \subseteq G$ be open and $g : x -> y \in V$.  By continuity of multiplication, there are open $g \in W \subseteq G$ and Polish $x \in T \in \@S$ such that $W \cdot T \subseteq V$.  By (US1), there is $U \in \@U$ with $U \sqle T$ and $U_{=0} \supseteq G^0 \cap T \ni x$.  Let $g \in S \in \@S_U$ with $S \subseteq W$.  Then $g \in d_U[S]_{<1} = S \cdot U_{<1} \subseteq W \cdot T \subseteq V$.
\end{proof}

Let $\@M$ be the metric-étale structure over $G^0$ in the language $\@L$ consisting of a sort for each $U \in \@U$, interpreted in $\@M$ as $\^{G/U}$, and a binary relation symbol for each $U, V \in \@U$ and $S \in \@S_V$ satisfying $U \sqle S \odot V \odot S^{-1}$ as in \cref{thm:locpolgpd-rightmult}, interpreted in $\@M$ as the grey relation $\^R_{U,V,U \odot S \odot V} \sqle \^{G/U} \times_{G^0} \^{G/V}$ from there.  We call $\@M$ the \defn{canonical structure} of $G$ (with respect to $\@U, \@S$).  Since $G$ is second-countable and $\@U, \@S$ are countable, $\@M$ is second-countable.

\subsection{The homomorphism category}
\label{sec:locpolgpd-hom}

As in the non-Archimedean case (\cref{sec:nonarchgpd-hom}), we now give an explicit description of $\Hom_{G^0}(\@M)$ as the ``left completion'' of $G$.  This is conceptually similar to the non-Archimedean case, but will involve some tedious approximation arguments.

Fix objects $x, y \in G^0$.  Given a homomorphism $h : \@M_x -> \@M_y$, i.e., $h : x -> y \in \Hom_{G^0}(\@M)$, we have for each sort $U \in \@U$ a map $h_U : (\^{G/U})_x -> (\^{G/U})_y$, which when $x \in U_{=0}$ we may evaluate at the unit section $\pi_U(x) \in (\^{G/U})_x$, yielding an element $h_U(\pi_U(x)) \in (\^{G/U})_y$.
For $U \sqle V \in \@U$ with $x \in U_{=0}$, we have the projection map $\pi_{U,V} : (\^{G/U})_y -> (\^{G/V})_y$, functorial in $(U, V)$; so as before, we may form the inverse limit
\begin{align*}
\projlim_{U \in \@U; x \in U_{=0}} (\^{G/U})_y = \{\vec{a} = (a_U)_U \in \prod_{U \in \@U; x \in U_{=0}} (\^{G/U})_y \mid \forall U \sqle V\, (\pi_{U,V}(a_U) = a_V)\}.
\end{align*}

\begin{lemma}
\label{thm:locpolgpd-yoneda}
For all $x, y \in G^0$, the above-described map
\begin{align*}
\Phi : \Hom_{G^0}(\@M)(x, y) &--> \prod_{U \in \@U; x \in U_{=0}} (\^{G/U})_y \\
(h : \@M_x -> \@M_y) &|--> (h_U(\pi_U(x)))_U
\end{align*}
restricts to a bijection
\begin{align*}
\Phi : \Hom_{G^0}(\@M)(x, y) &\cong \projlim_{U \in \@U; x \in U_{=0}} (\^{G/U})_y
\end{align*}
with inverse defined on the dense subsets $\pi_U(\sigma^{-1}(U_{=0})_x) \subseteq (\^{G/U})_x$ by
\begin{align*}
\Psi : \projlim_{U \in \@U; x \in U_{=0}} (\^{G/U})_y &--> \Hom_{G^0}(\@M)(x, y) \\
\vec{a} = (a_U)_U &|--> \left(\begin{aligned}
\Psi(\vec{a})_U : (\^{G/U})_x &-> (\^{G/U})_y \\
\pi_U(g \in \sigma^{-1}(U_{=0})_x) &|-> \olddisplaystyle\lim_{\substack{
    g \in S \in \@S_U \\
    V \in \@U, x \in V_{=0}, V \sqle S \odot U \odot S^{-1} \\
    k \in \sigma^{-1}(V_{=0})_y, \pi_V(k) -> a_V
}} \pi_U(k \cdot S)
\end{aligned}\right)_U  \tag{$*$}
\end{align*}
where
\begin{align*}
\hspace{-1em}
\olddisplaystyle\lim_{\substack{
    g \in S \in \@S_U \\
    V \in \@U, x \in V_{=0}, V \sqle S \odot U \odot S^{-1} \\
    k \in \sigma^{-1}(V_{=0})_y, \pi_V(k) -> a_V
}} \pi_U(k \cdot S)
&:= \text{unique element of}
\olddisplaystyle\bigcap_{\substack{
    g \in S \in \@S_U \\
    V \in \@U, x \in V_{=0}, V \sqle S \odot U \odot S^{-1} \\
    r > 0
}} \-{\pi_U(\pi_V^{-1}([a_V]_{<r}) \cdot S)}.  \tag{$\dagger$}
\end{align*}
\end{lemma}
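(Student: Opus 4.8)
The plan is to imitate the proof of \cref{thm:nonarchgpd-yoneda}, its non\Hyphdash*Archimedean analog, replacing the exact identities there (which came from preservation of the right\Hyphdash*multiplication \emph{functions} $f_{U,V,S}$) by approximate ones governed by the squeeze estimate in \cref{thm:locpolgpd-rightmult}, and then upgrading the resulting approximate equalities to genuine ones at the end by passing to limits using completeness of the fibers of the spaces $\^{G/U}$. Throughout, the conditions (US1)--(US3) on $\@U$ are what guarantee that the various directed index sets below (over shrinking $S$, increasing $V$, and $r \searrow 0$) are cofinal or cofiltered as needed, so that the limits in $(*)$ and $(\dagger)$ make sense. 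I would first record the one consequence of preservation that gets used everywhere: for a homomorphism $h : \@M_x -> \@M_y$, each relation symbol $\^R_{U,V,U \odot S \odot V}$, and all $a \in (\^{G/U})_x$, $b \in (\^{G/V})_x$, one has $\^R_{U,V,U \odot S \odot V}(h_U(a), h_V(b)) \le \^R_{U,V,U \odot S \odot V}(a, b)$.

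Next I would show $\Phi$ lands in the inverse limit. Fixing $h$ and $U \sqle V$ in $\@U$ with $x \in U_{=0}$, the point is that the projection map $\pi_{U,V} : (\^{G/U})_y -> (\^{G/V})_y$ is recovered as a limit of the grey relations $\^R_{U,V,U \odot S \odot V}$ as $S$ ranges over $V_{<r}$-small open neighborhoods of the unit morphisms near $x$ and $r \searrow 0$: by \cref{thm:locpolgpd-rightmult}, $\^R_{U,V,U \odot S \odot V}$ is, uniformly up to $r$, the $\^d_V$-distance from the graph of right multiplication by $S$, which for $S$ near the units is the graph of $\pi_{U,V}$ itself. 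Combining this with preservation and the fact that $h_U$, $h_V$ are Lipschitz (by the metric-étale structure of $\^{G/U}$, $\^{G/V}$ via \cref{thm:metale-quotient}), I would conclude that $h$ intertwines $\pi_{U,V}$ on the dense subsets $\pi_U(\sigma^{-1}(U_{=0})_x)$, hence everywhere; evaluating at the unit coset $\pi_U(x)$ gives $\pi_{U,V}(h_U(\pi_U(x))) = h_V(\pi_V(x))$, so $\Phi(h) \in \projlim_{U \in \@U;\, x \in U_{=0}} (\^{G/U})_y$.

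Then I would establish that $\Psi$ is well-defined. For $\vec a = (a_U)_U$ in the inverse limit, $U \in \@U$, and $g \in \sigma^{-1}(U_{=0})_x$, the closed sets $\-{\pi_U(\pi_V^{-1}([a_V]_{<r}) \cdot S)} \subseteq (\^{G/U})_y$ in $(\dagger)$ form a downward-directed family (as $S \searrow$ through $\@S_{U<\varepsilon}$ containing $g$, $V$ increases, $r \searrow 0$), and a direct computation — using left\Hyphdash*invariance of $d_U$, that two elements of $\pi_V^{-1}([a_V]_{<r})$ are $\^d_V$-within $2r$, and the hypothesis $V \sqle S \odot U \odot S^{-1}$ together with $U_{<\varepsilon}$-smallness of $S$ to convert $d_V$-closeness of $k$'s into $d_U$-closeness of the translates $k \cdot S$ — bounds the diameters of these sets by $O(r + \varepsilon)$. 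Completeness of $(\^{G/U})_y$ then makes the intersection a single point, so $(*)$ defines a map on the dense subset $\pi_U(\sigma^{-1}(U_{=0})_x)$, which is $1$-Lipschitz there since right translation by $S$ is $d_U$-isometric, hence extends uniquely to $\Psi(\vec a)_U : (\^{G/U})_x -> (\^{G/U})_y$. That $\Psi(\vec a) := (\Psi(\vec a)_U)_U$ preserves each $\^R_{U,V,U \odot S \odot V}$ is the metric analog of the ``left\Hyphdash*equivariance of $f_{U,V,S}$'' step and follows by applying the squeeze of \cref{thm:locpolgpd-rightmult} on both the $x$- and $y$-sides, using the identity ``right\Hyphdash*translate by $S$ then by $S'$ $=$ right\Hyphdash*translate by $S \cdot S'$'', and taking the limit.

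Finally, the two composites. For $\Phi \circ \Psi = \mathrm{id}$: evaluating $(*)$ at $g = x$ and shrinking $S$ to the unit at $x$ forces $\pi_U(k \cdot S) -> \pi_{V,U}(\pi_V(k)) -> \pi_{V,U}(a_V) = a_U$ by coherence of $\vec a$, so $\Psi(\vec a)_U(\pi_U(x)) = a_U$. For $\Psi \circ \Phi = \mathrm{id}$: with $\vec a = \Phi(h)$ and $g \in \sigma^{-1}(U_{=0})_x$, apply preservation of $\^R_{V,U,V \odot S \odot U}$ by $h$ to the pair $(\pi_V(x), \pi_U(g))$ — which on the $x$-side has $\^R$-value $0$ — to get that $\^R(a_V, h_U(\pi_U(g))) = 0$, then feed this into the $y$-side squeeze of \cref{thm:locpolgpd-rightmult} to see that $h_U(\pi_U(g))$ is within $O(r)$ of $\pi_U(m \cdot S)$ for some $m$ with $\pi_V(m)$ arbitrarily close to $a_V$; since $\Psi(\vec a)_U(\pi_U(g))$ is by $(\dagger)$ the unique point approximated by such $\pi_U(m \cdot S)$, letting $r \searrow 0$ yields $\Psi(\vec a)_U(\pi_U(g)) = h_U(\pi_U(g))$, and by density and continuity $\Psi(\vec a)_U = h_U$. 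The main obstacle I expect is not any single conceptual point but the bookkeeping of the nested approximations: the error $r$ in \cref{thm:locpolgpd-rightmult}, the $U_{<\varepsilon}$-smallness of the sets $S$, the radius in $[a_V]_{<r}$, and the cofinality of the index sets secured by (US1)--(US3) must be organized so that every limit converges and the approximate identities collapse to exact ones; the diameter estimate for the sets in $(\dagger)$ — where the condition $V \sqle S \odot U \odot S^{-1}$ does its real work — is the key quantitative ingredient.
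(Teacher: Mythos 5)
Your proposal is correct and follows essentially the same route as the paper: the same squeeze estimate from \cref{thm:locpolgpd-rightmult}, the same smallness bound for $\pi_V^{-1}([a_V]_{<r}) \cdot S$ via $V \sqle S \odot U \odot S^{-1}$ and left-invariance of $d_U$, and completeness to resolve the Cauchy filter in $(\dagger)$. The only (immaterial) difference is organizational: the paper proves injectivity of $\Phi$ directly and then only $\Phi \circ \Psi = 1$, whereas you verify both composites, which amounts to the same estimates.
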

\begin{proof}
First, we check that $\Phi$ is injective.  Let $h : \@M_x -> \@M_y$ and $g \in \sigma^{-1}(U_{=0})_x$ for some $U \in \@U$.  Let $r > 0$, let $g \in S \in \@S_{U<r}$, and let $V \in \@U$ with $V \sqle S \odot U \odot S^{-1}$ and $x \in V_{=0}$ (using (US2) and that $x = \tau(g) \in \tau(S)$ is in some Polish $T \in \@S$ with $G^0 \cap T \subseteq \tau(S)$).  Then
\begin{align*}
\begin{aligned}[b]
0
&= d_U(S_x, g) &&\text{since $g \in S$} \\
&\ge \^R_{V,U,V \odot S \odot U}(\pi_V(x), \pi_U(g)) &&\text{by \cref{thm:locpolgpd-rightmult}} \\
&\ge \^R_{V,U,V \odot S \odot U}(h_V(\pi_V(x)), h_U(\pi_U(g))) &&\text{since $h$ is a homomorphism} \\
&\ge \limsup_{\pi_V(k) -> h_V(\pi_V(x))} \^d_U(\pi_U(k \cdot S), h_U(\pi_U(g))) - r  &&\text{by \cref{thm:locpolgpd-rightmult}}.
\end{aligned}
\tag{$\ddagger$}
\end{align*}
Since $S$ is $U_{<r}$-small, each $\pi_U(k \cdot S)$ has diameter $\le r$, so ($\ddagger$) shows that $h_U(\pi_U(g))$ is determined up to diameter $< 4r$ by $(h_V(\pi_V(x)))_V = \Phi(h)$.  Letting $r \searrow 0$ shows that $h_U(\pi_U(g))$ is determined by $\Phi(h)$, for every $g \in \sigma^{-1}(U_{=0})_x$; by density of $\pi_U(\sigma^{-1}(U_{=0})_x) \subseteq (\^{G/U})_x$ and continuity of $h$ (being a homomorphism), this means $h$ is determined by $\Phi(h)$, i.e., $\Phi$ is injective.

Next, we check that $\Phi$ lands in $\projlim_U (\^{G/U})_y$.  Let $h : \@M_x -> \@M_y$, $g \in \sigma^{-1}(U_{=0})_x$, and $U \sqle U' \in \@U$ with $x \in U_{=0}$.  For any $r > 0$, by ($\ddagger$) with $g := x$, $g \in S \in \@S_{U<r}$, and $V \sqle S \odot U \odot S^{-1}$ as above,
\begin{align*}
r
&\ge \limsup_{\pi_V(k) -> h_V(\pi_V(x))} \^d_U(\pi_U(k \cdot S), h_U(\pi_U(x))) \\
&\ge \limsup_{\pi_V(k) -> h_V(\pi_V(x))} \^d_{U'}(\pi_{U,U'}(\pi_U(k \cdot S)), \pi_{U,U'}(h_U(\pi_U(x))))  \quad\text{since $U \sqle U'$} \\
&= \limsup_{\pi_V(k) -> h_V(\pi_V(x))} \^d_{U'}(\pi_{U'}(k \cdot S), \pi_{U,U'}(h_U(\pi_U(x)))).
\end{align*}
On the other hand, by ($\ddagger$) with $U'$ in place of $U$ (and the same $S, V$, again since $U \sqle U'$),
\begin{align*}
r \ge \limsup_{\pi_V(k) -> h_V(\pi_V(x))} \^d_{U'}(\pi_{U'}(k \cdot S), h_{U'}(\pi_{U'}(x))).
\end{align*}
Again since $\pi_{U'}(k \cdot S)$ has diameter $\le r$, taking $r \searrow 0$ gives $\pi_{U,U'}(h_U(\pi_U(x))) = h_{U'}(\pi_{U'}(x))$.

Now we check that $\Psi$ is well-defined.  First, fix $\vec{a} \in \projlim_U (\^{G/U})_y$, $U \in \@U$, and $g \in \sigma^{-1}(U_{=0})_x$; we check that $\Psi(\vec{a})_U(\pi_U(g))$ as given by ($*$) is well-defined.  For this, we need to check that the limit formula ($\dagger$) makes sense, i.e., that
\begin{align*}
\left\{\pi_U(\pi_V^{-1}([a_V]_{<r}) \cdot S) \relmiddle| \substack{
    g \in S \in \@S_U \\
    V \in \@U, x \in V_{=0}, V \sqle S \odot U \odot S^{-1} \\
    r > 0
}\right\}  \tag{$\mathsection$}
\end{align*}
forms a Cauchy filter in $(\^{G/U})_y$.  The set of allowed $(S, V, r)$ is down-directed, using (US2) and (US3).
The sets $\pi_U(\pi_V^{-1}([a_V]_{<r}) \cdot S) \subseteq (\^{G/U})_y$ are clearly monotone in $S, r$;
they are also monotone in $V$, since if $V \sqle V'$ then
$\pi_{V'}^{-1}([a_{V'}]_{<r})
= \pi_V^{-1}(\pi_{V,V'}^{-1}([\pi_{V,V'}(a_V)]_{<r}))
\supseteq \pi_V^{-1}([a_V]_{<r})$
since $\vec{a} \in \projlim_U (\^{G/U})_y$ and $\pi_{V,V'}$ is Lipschitz,
and nonempty, since $\emptyset \ne \pi_V^{-1}([a_V]_{<r}) \subseteq \sigma^{-1}(V_{=0})$ and $V_{=0} \subseteq \tau(S)$ by density of $\im(\pi_V)$ and $V \sqle S \odot U \odot S^{-1}$.
Next, note that
\begin{enumerate}
\item[($\mathparagraph$)]  If $S$ in ($\mathsection$) is $U_{<s}$-small, then $\pi_V^{-1}([a_V]_{<r}) \cdot S$ is $U_{<2r+2s}$-small.
\end{enumerate}
Indeed, for any $k \cdot l, k' \cdot l' \in \pi^{-1}_V([a_V]_{<r}) \cdot S$ with $k, k' \in \pi_V^{-1}([a_V]_{<r})$ and $l, l' \in S$, we have
\begin{align*}
\begin{aligned}
2r
&> d_V(k, k')
= V(k^{-1} \cdot k') &&\text{since $k, k' \in \pi_V^{-1}([a_V]_{<r})$} \\
&\ge \bigwedge_{l'' \in S_{\sigma(k)}, l''' \in S_{\sigma(k')}} U(l^{\prime\prime-1} \cdot k^{-1} \cdot k' \cdot l''') &&\text{since $V \sqle S \odot U \odot S^{-1}$} \\
&= \bigwedge_{l'' \in S_{\sigma(k)}, l''' \in S_{\sigma(k')}} d_U(k \cdot l'', k' \cdot l''') \\
&\ge \bigwedge_{l'' \in S_{\sigma(k)}, l''' \in S_{\sigma(k')}} (d_U(k \cdot l, k' \cdot l') - d_U(l, l'') - d_U(l', l''')) &&\text{using left-invariance of $d_U$} \\
&\ge d_U(k \cdot l, k' \cdot l') - 2s &&\text{since $S$ is $U_{<s}$-small}.
\end{aligned}
\end{align*}
This proves ($\mathparagraph$), which implies that ($\mathsection$) has sets of arbitrarily small diameter, hence is a Cauchy filter, whence $\Psi(\vec{a})_U(\pi_U(g))$ is well-defined.

In order for $\Psi(\vec{a})$ thus defined on the dense subsets $\pi_U(\sigma^{-1}(U_{=0})_x) \subseteq (\^{G/U})_x$ to extend to a homomorphism $\@M_x -> \@M_y$, we must check that $\Psi(\vec{a})$ is Lipschitz and preserves the relations $\^R_{U,V,U \odot S \odot V}$ (on the dense subset).  This will follow from the limit formula ($\dagger$) and left-invariance of the metrics $d_U$ and the relations $R_{U,V,U \odot S \odot V}$.
To check Lipschitzness, let $g, g' \in \sigma^{-1}(U_{=0})_x$ and $r > 0$.
Since $d_U \sqle \sigma^{-1}(U_{=0})^2_{G^0}$ is open, there are $S, S' \in \@S_{U<r}$ such that $(g, g') \in S \times_{G^0} S' \subseteq (d_U)_{<d_U(g, g') + r}$.
By (US2) and (US3), there is $V \in \@U$ such that $x \in V_{=0}$ and $V \sqle S \odot U \odot S^{-1}, S' \odot U \odot S^{\prime-1}$, whence $\pi_U(\pi_V^{-1}([a_V]_{<r}) \cdot S), \pi_U(\pi_V^{-1}([a_V]_{<r}) \cdot S')$ belong to the respective Cauchy filters ($\mathsection$) defining $\Psi(\vec{a})_U(\pi_U(g)), \Psi(\vec{a})_U(\pi_U(g'))$; by ($\mathparagraph$), they are $4r$-small.  Now picking any $k \in \pi_V^{-1}([a_V]_{<r})$, $l \in S_{\sigma(k)}$, and $l' \in S'_{\sigma(k)}$, so that $\pi_U(k \cdot l) \in \pi_U(\pi_V^{-1}([a_V]_{<r}) \cdot S)$ and $\pi_U(k \cdot l') \in \pi_U(\pi_V^{-1}([a_V]_{<r}) \cdot S')$, we have
\begin{align*}
\begin{aligned}
\^d_U(\Psi(\vec{a})_U(\pi_U(g)), \Psi(\vec{a})_U(\pi_U(g')))
&\le \^d_U(\pi_U(k \cdot l), \mathrlap{\pi_U(k \cdot l')) + 8r} \\
&= d_U(l, l') + 8r &&\text{by left-invariance} \\
&< d_U(g, g') + 9r &&\text{since $(l, l') \in S \times_{G^0} S' \subseteq (d_U)_{<d_U(g, g') + r}$}.
\end{aligned}
\end{align*}
Taking $r \searrow 0$ shows that $\Psi(\vec{a})_U$ is Lipschitz on $\pi_U(\sigma^{-1}(U_{=0})_x) \subseteq (\^{G/U})_x$.  A similar argument shows that $\Psi(\vec{a})$ preserves the relations $\^R_{U,V,U \odot S \odot V}$.  Thus $\Psi(\vec{a})$ extends to a homomorphism $\@M_x -> \@M_y$.

It remains only to check that $\Phi(\Psi(\vec{a})) = \vec{a}$.  For each $U \in \@U$ with $x \in U_{=0}$, to compute $\Psi(\vec{a})_U(\pi_U(x))$, we may restrict to (the cofinally many, by (US3)) $V \sqle U$ in ($*$), for which $\pi_V(k) -> a_V$ implies $\pi_U(k) = \pi_{V,U}(\pi_V(k)) -> \pi_{V,U}(a_V) = \pi_U(a_U)$ since $\pi_{V,U}$ is Lipschitz and $\vec{a} \in \projlim_U (\^{G/U})_y$, whence $\pi_U(a_U) \in \pi_U(k \cdot S)$ for eventually all $k$ in ($*$) since $x \in S$; thus $\Psi(\vec{a})_U(\pi_U(x)) = a_U$.  So $\Phi(\Psi(\vec{a})) = (\Psi(\vec{a})_U(\pi_U(x)))_U = (a_U)_U = \vec{a}$, as desired.
\end{proof}

As before, we now use \cref{thm:locpolgpd-yoneda} to simplify the description of the topology on $\Hom_{G^0}(\@M)$:

\begin{lemma}
\label{thm:locpolgpd-yoneda-top}
The topology on $\Hom_{G^0}(\@M)$ is generated by the maps $\sigma, \tau : \Hom_{G^0}(\@M) -> G^0$ together with the sets
\begin{align*}
\sqsqbr{\pi_U(U_{=0}) |-> [\pi_U(T)]_{<r}}_U = \{h : \@M_x -> \@M_y \in \Hom_{G^0}(\@M) \mid x \in U_{=0} \AND \^d_U(h_U(\pi_U(x)), \pi_U(T)) < r\}
\end{align*}
for $U \in \@U$, $T \in \@S_U$, and $r > 0$.
\end{lemma}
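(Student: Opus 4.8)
The plan is to prove two inclusions between the topology $\mathcal T$ on $\Hom_{G^0}(\@M)$ defined in \cref{sec:metalestr} and the topology $\mathcal T'$ generated by $\sigma,\tau$ and the sets $\sqsqbr{\pi_U(U_{=0}) |-> [\pi_U(T)]_{<r}}_U$. Two preliminary remarks: since each $\^{G/U}$ is an adequate complete topometric space with basic opens $[\pi_U(S)]_{<s}$ and $\@S_U$ is a basis for $\sigma^{-1}(U_{=0})$, the topology $\mathcal T$ is already generated by $\sigma,\tau$ together with the subbasic sets $\sqsqbr{[\pi_U(S)]_{<s} |-> [\pi_U(T)]_{<t}}_U$ for $U\in\@U$, $S,T\in\@S_U$, rational $s,t>0$; and since $U\sqle G$ is open grey with $U|G^0$ being $\{0,1\}$-valued, $G^0\cap U_{=0}=U_{<1}\cap G^0$ is open in $G^0$, so $\sigma^{-1}(G^0\cap U_{=0}),\tau^{-1}(G^0\cap U_{=0})\in\mathcal T$ and the displayed formula for $\sqsqbr{\pi_U(U_{=0})|->[\pi_U(T)]_{<r}}_U$ is meaningful.

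For $\mathcal T'\subseteq\mathcal T$, I would show each $N:=\sqsqbr{\pi_U(U_{=0})|->[\pi_U(T)]_{<r}}_U$ is $\mathcal T$-open via the explicit countable union
\[
N=\bigcup\nolimits_{\epsilon\in\#Q\cap(0,r/4),\ S'\in\@S_{U<\epsilon}}\bigl(\sigma^{-1}(S'\cap G^0)\cap\sqsqbr{[\pi_U(S')]_{<\epsilon}|->[\pi_U(T)]_{<r-4\epsilon}}_U\bigr).
\]
The inclusion $\supseteq$ uses that if $\iota(x)\in S'$ with $S'$ $\epsilon$-small then $\pi_U(x)\in\pi_U(S'_x)$ while $[\pi_U(S')]_{<\epsilon}$ has diameter $\le 3\epsilon$ over $x$, so any witness $a$ for the right-hand set lies within $3\epsilon$ of $\pi_U(x)$, whence $\^d_U(h_U(\pi_U(x)),\pi_U(T))\le 3\epsilon+(r-4\epsilon)<r$; and $\subseteq$ holds because for $h\in N$ one picks $\epsilon$ with $\^d_U(h_U(\pi_U(\sigma(h))),\pi_U(T))<r-4\epsilon$, chooses $S'\in\@S_{U<\epsilon}$ with $\iota(\sigma(h))\in S'$, and uses $a:=\pi_U(\sigma(h))$ as witness.

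For $\mathcal T\subseteq\mathcal T'$, I would show each $\sqsqbr{[\pi_U(S)]_{<s}|->[\pi_U(T)]_{<t}}_U$ is $\mathcal T'$-open, mirroring the proof of \cref{thm:nonarchgpd-yoneda-top}. Given $h:\@M_x->\@M_y$ in it, first use density of $\pi_U(\sigma^{-1}(U_{=0})_x)$ in $(\^{G/U})_x$ and continuity of $h_U$ to replace the witness by $\pi_U(g)$ for some $g\in\sigma^{-1}(U_{=0})_x$ with $\^d_U(\pi_U(g),\pi_U(S))<s'<s$ and $\^d_U(h_U(\pi_U(g)),\pi_U(T))=:t'<t$. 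Then, for a small $\epsilon>0$ (with $t'$ plus a bounded multiple of $\epsilon$ still $<t$): pick $\tilde S\in\@S_{U<\epsilon}$ with $g\in\tilde S\subseteq[S]_{<s'}$ (so that membership in $\tilde S$ uniformly forces membership in $[\pi_U(S)]_{<s}$, the metric surrogate of ``$S'\subseteq S$''); pick $V\in\@U$ with $x\in V_{=0}$ and $V\sqle\tilde S\odot U\odot\tilde S^{-1}$ via axiom (US2) and $\sigma$-local Polishness (choosing a Polish $T^*\in\@S$ with $x\in G^0\cap T^*\subseteq\tau(\tilde S)$); and, using \cref{thm:locpolgpd-yoneda} together with estimate $(\ddagger)$ and the smallness estimate from its proof, and \cref{thm:locpolgpd-rightmult}, produce a small $\rho>0$, a point $k_0\in\sigma^{-1}(V_{=0})_y$ with $\pi_V(k_0)$ $\rho$-close to $a_V:=h_V(\pi_V(x))$, and $T'\in\@S_{V<\rho}$ with $k_0\in T'$, so that (via a continuity-of-multiplication thickening of a witnessing product $k_0\cdot q_0\in[T]_{<c}$, $c<t$) the fiber-uniform containment $[T']_{<3\rho}\cdot\tilde S\subseteq[T]_{<c}$ holds. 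I then claim that for suitably small open $O_x\ni x$, $O_y\ni y$ in $G^0$,
\[
\sigma^{-1}(O_x)\cap\tau^{-1}(O_y)\cap\sqsqbr{\pi_V(V_{=0})|->[\pi_V(T')]_{<2\rho}}_V\ \subseteq\ \sqsqbr{[\pi_U(S)]_{<s}|->[\pi_U(T)]_{<t}}_U,
\]
and contains $h$ (as $x\in V_{=0}^0$ and $a_V\in[\pi_V(T')]_{<\rho}$ over $y$). For the inclusion, an $h'$ in the left side with source $x'\in O_x$, target $y'\in O_y$, has $g'\in\tilde S_{x'}$ (nonempty as $x'\in V_{=0}^0\subseteq\tau(\tilde S)$); preservation by $h'$ of the relation $\^R_{V,U,V\odot\tilde S\odot U}\in\@M$ and \cref{thm:locpolgpd-rightmult} give $\^R_{V,U,V\odot\tilde S\odot U}(\pi_V(x'),\pi_U(g'))=0$, hence $\^R_{V,U,V\odot\tilde S\odot U}(h'_V(\pi_V(x')),h'_U(\pi_U(g')))=0$, so $h'_U(\pi_U(g'))$ lies within a small multiple of $\epsilon$ of $\pi_U(k\cdot\tilde S)$ for $k$ with $\pi_V(k)$ near $h'_V(\pi_V(x'))\in[\pi_V(T')]_{<2\rho}$, hence for some $k\in[T']_{<3\rho}$ over $y'$; the fiber-uniform containment then gives $\pi_U(k\cdot\tilde S)\subseteq[\pi_U(T)]_{<c}$ over $y'$, so $h'_U(\pi_U(g'))\in[\pi_U(T)]_{<t}$ while $\pi_U(g')\in[\pi_U(S)]_{<s}$, giving $h'\in\sqsqbr{[\pi_U(S)]_{<s}|->[\pi_U(T)]_{<t}}_U$.

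The main obstacle is precisely the fiber-uniform product containment $[T']_{<3\rho}\cdot\tilde S\subseteq[T]_{<c}$. In the non-Archimedean setting the analogue ``$T'\cdot S'\subseteq T$'' is immediate from continuity of multiplication once a single witnessing product is found, whereas here the product bound extracted from the Yoneda formula holds a priori only over the single fiber $y$ and only for elements $d_V$-close to $k_0$; promoting it to all fibers forces a delicate simultaneous choice — keeping $\tilde S$ containing $g$ and inside $[S]_{<s'}$ while also making it topologically small near the fiber of $\sigma(k_0)$ — and it is here that the restriction to small object-neighborhoods $O_x,O_y$ is needed. All remaining steps are routine approximation arguments as in the non-Archimedean case, using left-invariance of the metrics $d_U$ and of the relations $R_{U,V,U\odot S\odot V}$ throughout.
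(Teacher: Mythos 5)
Your overall skeleton matches the paper's: part one (each new set is open in the original topology) is essentially the paper's argument packaged as an explicit union, and is fine; part two correctly reduces to showing each $\sqsqbr{[\pi_U(S)]_{<s}|->[\pi_U(T)]_{<t}}_U$ is generated by the new sets, via the Cauchy formula of \cref{thm:locpolgpd-yoneda}, a single witnessing product, continuity of multiplication, and left-invariance. The gap is exactly at the step you flag as the main obstacle: the containment $[T']_{<3\rho}\cdot\tilde S\subseteq[T]_{<c}$ with the \emph{full} $\tilde S$ as second factor does not follow from thickening the one product $k_0\cdot q_0$, and your proposed remedy --- restricting to object neighborhoods $O_x,O_y$ --- does not address the difficulty. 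Continuity of multiplication at $(k_0,q_0)$ only controls products $k'\cdot l'$ with $l'$ topologically near $q_0$; for $k'\in T'$ lying over a fiber $y'\ne y$ there is no $d_V$-relation to $k_0$ at all, and shrinking the \emph{target} neighborhood $O_y$ of $y$ does nothing to control which \emph{source} objects $\sigma(k')$ occur, hence nothing to control which elements of $\tilde S$ get multiplied. So as written the key containment is unjustified.

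The repair (which is what the paper does) is a two-set device rather than a restriction on objects: from $\pi_U(k\cdot l)\in[\pi_U(T)]_{<t-4q}$, continuity of $\pi_U\circ\mu$ into the open set $[\pi_U(T)]_{<t-4q}\subseteq\^{G/U}$ yields open $l\in S''\subseteq S'$ and $k\in T'\subseteq\sigma^{-1}(\tau(S''))$ with $\pi_U(T'\cdot S'')\subseteq[\pi_U(T)]_{<t-4q}$; the side condition $T'\subseteq\sigma^{-1}(\tau(S''))$ guarantees that every $k'\in T'_{y'}$, over \emph{any} fiber $y'$, has a composable partner $l'\in S''_{\sigma(k')}$, so the containment is fiber-uniform for free, and the $U_{<q}$-smallness of $S'\supseteq S''$ plus left-invariance of $d_U$ transfers the bound from $S''$ to all of $S'$. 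One then never needs to $d_V$-thicken $T'$ against $\tilde S$ at all: the diameter-$4q$ bound on the Cauchy-filter set $\-{\pi_U(\pi_V^{-1}([h'_V(\pi_V(x'))]_{<r})\cdot S')}$, which contains both $h'_U(\pi_U(g'))$ and $\pi_U(k'\cdot l')$, absorbs all remaining slack, and the resulting neighborhood $\sqsqbr{\pi_V(V_{=0})|->[\pi_V(T')]_{<r}}_V$ works without any $\sigma^{-1}(O_x)\cap\tau^{-1}(O_y)$ factor. With this substitution your argument closes; without it, the crucial inclusion is not established.
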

\begin{proof}
Recall (\cref{sec:greytop}) that since $\^{G/U}$ is an adequate topometric quotient, sets of the form $[\pi_U(T)]_{<r}$ for $T \in \@S_U$ and $r > 0$ form an open basis in $\^{G/U}$.

First, we check that the sets $\sqsqbr{\pi_U(U_{=0}) |-> [\pi_U(T)]_{<r}}_U$ are open.  Let $h : \@M_x -> \@M_y \in \sqsqbr{\pi_U(U_{=0}) |-> [\pi_U(T)]_{<r}}_U$.  Let $\^d_U(h_U(\pi_U(x)), \pi_U(T)) < t < t+2s \le r$, and let $S \subseteq \sigma^{-1}(U_{=0})$ be open $U_{<s}$-small with $x \in S$ and $\tau(S) \subseteq S$ (else replace with $S \cap \tau^{-1}(S)$).  Then it is easily seen that
\begin{align*}
h \in \sqsqbr{[\pi_U(S)]_{<s} |-> [\pi_U(T)]_{<t}}_U \subseteq \sqsqbr{\pi_U(U_{=0}) |-> [\pi_U(T)]_{<r}}_U
\end{align*}
where the middle set is a subbasic open set in $\Hom_{G^0}(\@M)$ (recall \cref{sec:metalestr}).

Now we must show that an arbitrary subbasic open set $\sqsqbr{[\pi_U(S)]_{<s} |-> [\pi_U(T)]_{<t}}_U \subseteq \Hom_{G^0}(\@M)$, where $S, T \in \@S_U$ and $s, t > 0$, can be generated by $\sigma, \tau$ and sets of the above form.  Let $h : \@M_x -> \@M_y \in \sqsqbr{[\pi_U(S)]_{<s} |-> [\pi_U(T)]_{<t}}_U$, i.e., there is $a \in [\pi_U(S)_x]_{<s}$ so that $\^d_U(h_U(a), \pi_U(T)_x) < t$; by density of $\im(\pi_U)$ and continuity of $h_U$, we may assume $a = \pi_U(g)$ for some $g \in \sigma^{-1}(U_{=0})_x$.  Let $q > 0$ with $\^d_U(h_U(a), \pi_U(T)_x) < t-4q$.  Then by the Cauchy limit formula ($\dagger$) in \cref{thm:locpolgpd-yoneda}, there are $g \in S' \in \@S_{U<q}$ with $S' \subseteq d_U[S]_{<s}$, $V \in \@U$ with $x \in V_{=0}$ and $V \sqle S' \odot U \odot S^{\prime-1}$, and $0 < r \le q$ such that
$
\pi_U(\pi_V^{-1}([h_V(\pi_V(x))]_{<r}) \cdot S') \subseteq [\pi_U(T)_x]_{<t-4q}.
$
Pick any $k \in \pi_V^{-1}([h_V(\pi_V(x))]_{<r})$ and $l \in S'_{\sigma(k)}$, so that $\pi_U(k \cdot l) \in [\pi_U(T)]_{<t-4q}$.  By continuity, there are open $l \in S'' \subseteq S'$ and $k \in T' \subseteq \sigma^{-1}(\tau(S''))$ with $T' \in \@S_{V<q}$ such that $\pi_U(T' \cdot S'') \subseteq [\pi_U(T)]_{<t-4q}$.  We claim
\begin{align*}
h \in \sqsqbr{\pi_V(V_{=0}) |-> [\pi_V(T')]_{<r}}_V \subseteq \sqsqbr{[\pi_U(S)]_{<s} |-> [\pi_U(T)]_{<t}}_U.
\end{align*}
$\in$ is because $h_V(\pi_V(x)) \in [\pi_V(k)]_{<r} \subseteq [\pi_V(T')]_{<r}$ (by definition of $k$).
For $\subseteq$, let $h' : \@M_{x'} -> \@M_{y'} \in \sqsqbr{\pi_V(V_{=0}) |-> [\pi_V(T')]_{<r}}_V$.
Then $h'_V(\pi_V(x')) \in [\pi_V(T'_{y'})]_{<r}$, so there is some $k' \in T'_{y'}$ such that $k' \in \pi_V^{-1}([h'_V(\pi_V(x'))]_{<r})$.
Since $T' \subseteq \sigma^{-1}(\tau(S''))$, there is some $l' \in S''_{\sigma(k')}$, whence
\begin{align*}
k' \cdot l' \in \pi_V^{-1}([h'_V(\pi_V(x'))]_{<r}) \cdot S'' \subseteq \pi_V^{-1}([h'_V(\pi_V(x'))]_{<r}) \cdot S'.
\end{align*}
Pick any $g' \in S'_{x'}$.  Then by ($\dagger$) in \cref{thm:locpolgpd-yoneda}, we have
\begin{align*}
h'_U(\pi_U(g')) \in \-{\pi_U(\pi_V^{-1}([h'_V(\pi_V(x'))]_{<r}) \cdot S')}.
\end{align*}
Moreover, since $S' \in \@S_{U<q}$ and $r \le q$, by the estimate ($\mathparagraph$) in the proof of \cref{thm:locpolgpd-yoneda}, the set $\-{\pi_U(\pi_V^{-1}([h'_V(\pi_V(x'))]_{<r}) \cdot S')}$ has diameter $\le 4q$, whence
\begin{align*}
\^d_U(\pi_U(k' \cdot l'), h'_U(\pi_U(g'))) \le 4q.
\end{align*}
But $\pi_U(k' \cdot l') \in \pi_U(T' \cdot S'') \subseteq [\pi_U(T)]_{<t-4q}$, so we get $h'_U(\pi_U(g')) \in [\pi_U(T)]_{<t}$.  Since $g' \in S' \subseteq d_U[S]_{<s}$, we have $\pi_U(g') \in [\pi_U(S)]_{<s}$, so $h' \in \sqsqbr{[\pi_U(S)]_{<s} |-> [\pi_U(T)]_{<t}}_U$, as desired.
\end{proof}

By left-invariance of the metrics $d_U$ and the relations $R_{U,V,U \odot S \odot V}$, left multiplication by each $g : x -> y \in G$ descends to a homomorphism $\@M_x -> \@M_y$, yielding a canonical functor
\begin{align*}
\eta : G &--> \Hom_{G^0}(\@M) \\
(g : x -> y) &|--> \left(\begin{aligned}
\@M_x &-> \@M_y \\
\pi_U(h) &|-> \pi_U(g \cdot h)
\end{aligned}\right).
\end{align*}
Composed with the bijection in \cref{thm:locpolgpd-yoneda}, this becomes for each $x, y \in G^0$
\begin{align*}
\Phi \circ \eta : G(x, y) &--> \projlim_{U \in \@U; x \in U_{=0}} (\^{G/U})_y \\
(g : x -> y) &|--> (\pi_U(g))_U.
\end{align*}

\begin{proposition}
\label{thm:locpolgpd-yoneda-emb-dense}
$\eta : G -> \Hom_{G^0}(\@M)$ is a topological embedding with $\tau$-fiberwise dense image.
\end{proposition}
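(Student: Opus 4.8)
This is the metric analog of \cref{thm:nonarchgpd-yoneda-emb-dense}, and I would prove it the same way, using the description of the topology on $\Hom_{G^0}(\@M)$ from \cref{thm:locpolgpd-yoneda-top} and the basis of $G$ from \cref{thm:locpolgpd-basis-section}. For continuity and openness of $\eta$ onto its image, I would first compute, for $U \in \@U$, $T \in \@S_U$, and $r > 0$,
\[
\eta^{-1}\bigl(\sqsqbr{\pi_U(U_{=0}) |-> [\pi_U(T)]_{<r}}_U\bigr) = d_U[T]_{<r} = T \cdot U_{<r},
\]
using that for $g : z -> w \in G$ with $z \in U_{=0}$ one has $\Phi(\eta(g))_U = \eta(g)_U(\pi_U(z)) = \pi_U(g)$, and that $\^d_U(\pi_U(g), \pi_U(T_w)) = \bigwedge_{h \in T_w} U(g^{-1}\cdot h) = (T \odot U)(g) = d_U[T](g)$ by symmetry of $U$; note $d_U[T](g) < r$ already forces $\sigma(g) \in U_{=0}$. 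Since $G$ is open, $T \cdot U_{<r}$ is open in $G$, and together with $\eta^{-1}(\sigma^{-1}(V)) = \sigma^{-1}(V)$ and $\eta^{-1}(\tau^{-1}(W)) = \tau^{-1}(W)$ this shows $\eta$ is continuous. Conversely, \cref{thm:locpolgpd-basis-section} says the sets $d_U[S]_{<1}$ form a basis for $G$, and each is the $\eta$-preimage of a subbasic open set of $\Hom_{G^0}(\@M)$; hence the topology of $G$ is initial with respect to $\eta$, so $\eta$ is open onto its image. Since $G$ is $\sigma$-locally Polish, hence quasi-Polish, hence $T_0$, distinct points of $G$ are separated by a basic open set, hence by an open set of $\Hom_{G^0}(\@M)$, so $\eta$ is injective; thus $\eta$ is a topological embedding. (This replaces the hypothesis ``$G$ is $T_0$'' needed in \cref{thm:nonarchgpd-yoneda-emb-dense}, which here comes for free.)

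For $\tau$-fiberwise density, fix $y \in G^0$ and a basic open $A = \sigma^{-1}(V) \cap \tau^{-1}(W) \cap \bigcap_{i < n} \sqsqbr{\pi_{U_i}(U_{i,=0}) |-> [\pi_{U_i}(T_i)]_{<r_i}}_{U_i}$ with $A_y \ne \emptyset$, witnessed by some $h : \@M_x -> \@M_y$; thus $y \in W$, $x \in V \cap \bigcap_i U_{i,=0}$, and $\^d_{U_i}(h_{U_i}(\pi_{U_i}(x)), \pi_{U_i}(T_i)) < r_i$ for each $i$, so we may fix $\delta > 0$ with $\^d_{U_i}(h_{U_i}(\pi_{U_i}(x)), \pi_{U_i}(T_i)) + \delta < r_i$ for all $i$. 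Using $\sigma$-local Polishness of $G$, choose a Polish $S \in \@S$ with $x \in S$ and $G^0 \cap S \subseteq V$, and apply (US1) (i.e.\ \cref{thm:birkhoff-kakutani}) to get $U' \in \@U$ with $U' \sqle S$ and $G^0 \cap U'_{=0} = G^0 \cap S$; then set $U := U' \dotplus U_0 \dotplus \dotsb \dotplus U_{n-1} \in \@U$ by (US3). Then $U \sqle U_i$ for all $i$, $x \in U_{=0}$, and $G^0 \cap U_{=0} \subseteq G^0 \cap U'_{=0} = G^0 \cap S \subseteq V$. Since $\im(\pi_U)$ is dense in the (nonempty, as $h_U(\pi_U(x))$ lies in it) fiber $(\^{G/U})_y$, pick $g : z -> y \in \sigma^{-1}(U_{=0})_y$ (so $z \in G^0 \cap U_{=0}$) with $\^d_U(\pi_U(g), h_U(\pi_U(x))) < \delta$. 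Then $z \in V$ gives $\eta(g) \in \sigma^{-1}(V)$; $\tau(g) = y \in W$ gives $\eta(g) \in \tau^{-1}(W)$; and for each $i$, since $\pi_{U,U_i}$ is $1$-Lipschitz, $\pi_{U_i}(g) = \pi_{U,U_i}(\pi_U(g))$, and $h_{U_i}(\pi_{U_i}(x)) = \pi_{U,U_i}(h_U(\pi_U(x)))$ by \cref{thm:locpolgpd-yoneda},
\begin{align*}
\^d_{U_i}(\eta(g)_{U_i}(\pi_{U_i}(z)), \pi_{U_i}(T_i))
&= \^d_{U_i}(\pi_{U_i}(g), \pi_{U_i}(T_i)) \\
&\le \^d_U(\pi_U(g), h_U(\pi_U(x))) + \^d_{U_i}(h_{U_i}(\pi_{U_i}(x)), \pi_{U_i}(T_i)) \\
&< \delta + (r_i - \delta) = r_i,
\end{align*}
so $\eta(g) \in \sqsqbr{\pi_{U_i}(U_{i,=0}) |-> [\pi_{U_i}(T_i)]_{<r_i}}_{U_i}$, i.e.\ $\eta(g) \in A_y$, as desired.

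The main obstacle is the bookkeeping in the density step: one must produce a single open strict grey subgroupoid $U$ that is finer than all the finitely many $U_i$ (via (US3)) while having $G^0 \cap U_{=0}$ trapped inside $V$ (via (US1)/\cref{thm:birkhoff-kakutani}), so that a single approximation carried out at the ``finest'' level $U$ — legitimate by density of $\im(\pi_U)$ — propagates to all the coarser levels $U_i$ at once, because the projections $\pi_{U,U_i}$ are contractions and $(h_U(\pi_U(x)))_U$ is coherent by \cref{thm:locpolgpd-yoneda}. This is the metric counterpart of ``pick $g \in h_U(U_x)$ for $U \subseteq \bigcap_i U_i$'' in the proof of \cref{thm:nonarchgpd-yoneda-emb-dense}, but now requires the $\delta$-slack and the closure properties (US1)--(US3) which have no analog in the discrete case.
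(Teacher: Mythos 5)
Your proof is correct and follows essentially the same route as the paper's: the same computation $\eta^{-1}(\sqsqbr{\pi_U(U_{=0}) |-> [\pi_U(T)]_{<r}}_U) = d_U[T]_{<r}$ combined with \cref{thm:locpolgpd-basis-section} for the embedding, and for density the same construction of a common refinement $U \sqle U_i$ with $G^0 \cap U_{=0} \subseteq V$ via (US1) and (US3), followed by approximating $h_U(\pi_U(x))$ by some $\pi_U(g)$ and pushing forward along the Lipschitz projections $\pi_{U,U_i}$ using the coherence from \cref{thm:locpolgpd-yoneda}. The only (immaterial) difference is that the paper phrases the final approximation topologically, choosing $\pi_U(g)$ in the open set $\bigcap_i \pi_{U,U_i}^{-1}([\pi_{U_i}(T_i)]_{<r_i})$, whereas you use metric density with an explicit $\delta$-slack and the triangle inequality.
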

\begin{proof}
For a subbasic open set in $\Hom_{G^0}(\@M)$ as in \cref{thm:locpolgpd-yoneda-top}, we have
\begin{align*}
\eta^{-1}(\sqsqbr{\pi_U(U_{=0}) |-> [\pi_U(T)]_{<r}}_U)
&= \{g : x -> y \in G \mid x \in U_{=0} \AND \^d_U(\pi_U(g), \pi_U(T)) < r\} \\
&= \{g : x -> y \in G \mid x \in U_{=0} \AND d_U(g, T) < r\} \\
&= d_U[T]_{<r},
\end{align*}
whence by \cref{thm:locpolgpd-basis-section}, $\eta$ is an embedding.

For density, consider a basic open set
$A = \sigma^{-1}(V) \cap \tau^{-1}(W) \cap \bigcap_{i < n} \sqsqbr{\pi_{U_i}((U_i)_{=0}) |-> [\pi_{U_i}(T_i)]_{<r_i}}_{U_i} \subseteq \Hom_{G^0}(\@M)$,
where $V, W \subseteq G^0$ are open, $U_i \in \@U$, $T_i \in \@S_{U_i}$, and $r_i > 0$.  Let $y \in G^0$; we must show that if $A_y \ne \emptyset$, then there is a $g \in G$ such that $\eta(g) \in A_y$.  Let
$h : \@M_x -> \@M_y \in A_y$.
Then $y \in W$, and $x \in V \cap \bigcap_i (U_i)_{=0}$, whence by (US1) and (US3), there is some $U \in \@U$ with $x \in U_{=0}$, $U \sqle U_i$ for each $i$, and $G^0 \cap U_{=0} \subseteq V$.
Then for each $i$, we have $\pi_{U,U_i}(h_U(\pi_U(x))) = h_{U_i}(\pi_{U_i}(x)) \in [\pi_{U_i}(T_i)]_{<r_i}$, and so
$h_U(\pi_U(x)) \in \bigcap_i \pi_{U,U_i}^{-1}([\pi_{U_i}(T_i)]_{<r_i}) \subseteq \^{G/U}$.
By density of $\im(\pi_U) \subseteq \^{G/U}$, there is some $g \in \sigma^{-1}(U_{=0})_y$ such that
$\pi_U(g) \in \bigcap_i \pi_{U,U_i}^{-1}([\pi_{U_i}(T_i)]_{<r_i}) \subseteq \^{G/U}$.
Then $\sigma(g) \in G^0 \cap U_{=0} \subseteq V$, $\tau(g) = y \in W$, and $\pi_{U_i}(g) = \pi_{U,U_i}(\pi_U(g)) \in [\pi_{U_i}(T_i)]_{<r}$ for each $i$, whence $g \in A_y$.
\end{proof}

\subsection{The isomorphism groupoid}
\label{sec:locpolgpd-iso}

\begin{theorem}
\label{thm:locpolgpd-yoneda-iso}
For any open $\sigma$-locally Polish groupoid $G$ and countable $\@U, \@S_U$ satisfying (US1--3) as above, $\eta : G -> \Hom_{G^0}(\@M)$ is an isomorphism of topological groupoids $G \cong \Iso_{G^0}(\@M)$.
\end{theorem}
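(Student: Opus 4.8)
The plan is to argue exactly as in the non-Archimedean case (\cref{thm:nonarchgpd-yoneda-iso}): \cref{thm:locpolgpd-yoneda-emb-dense} already shows $\eta$ is a topological embedding with $\tau$-fiberwise dense image, and a Pettis-type density argument (\cref{thm:subgpd-dense}) upgrades this to a homeomorphism onto $\Iso_{G^0}(\@M)$.

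First I would note that $\eta : G \to \Hom_{G^0}(\@M)$ factors through $\Iso_{G^0}(\@M)$: since $G$ is a groupoid every morphism of $G$ is an isomorphism, and a functor sends isomorphisms to isomorphisms, so each $\eta(g)$ is an isomorphism in $\Hom_{G^0}(\@M)$ with inverse $\eta(g^{-1})$ --- concretely, on each sort $U$ the component $\eta(g)_U$ is the continuous extension of the $d_U$-isometry $\pi_U(h) \mapsto \pi_U(g \cdot h)$, which is inverted by left multiplication by $g^{-1}$. Thus $\eta$ corestricts to a functor $\eta : G \to \Iso_{G^0}(\@M)$, and it is identity-on-objects by construction (the source and target of the morphism $\@M_x \to \@M_y$ are $x$ and $y$).

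Next I would collect the topological inputs and conclude. Since $G$ is second-countable and $\@U, \@S$ are countable, the canonical structure $\@M$ is second-countable, so \cref{thm:metalestr-isogpd-qpol} makes $\Iso_{G^0}(\@M)$ a quasi-Polish groupoid. By \cref{thm:locpolgpd-yoneda-emb-dense} the corestricted $\eta : G \to \Iso_{G^0}(\@M)$ is still a topological embedding with $\tau$-fiberwise dense image; its image $\eta(G)$ is a subgroupoid of $\Iso_{G^0}(\@M)$ (being the image of a functor), and as a subspace it is homeomorphic to $G$, which is $\sigma$-locally Polish and hence quasi-Polish. So $\eta(G) \subseteq \Iso_{G^0}(\@M)$ is a $\tau$-fiberwise dense quasi-Polish subgroupoid, and \cref{thm:subgpd-dense} forces $\eta(G) = \Iso_{G^0}(\@M)$. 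Therefore $\eta : G \to \Iso_{G^0}(\@M)$ is a bijective topological embedding, i.e.\ a homeomorphism, and being a functor with (automatically continuous and functorial) inverse, it is an isomorphism of topological groupoids.

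The real work is already done: the ``topological Yoneda'' identification \cref{thm:locpolgpd-yoneda}, the ensuing description of the topology on $\Hom_{G^0}(\@M)$ (\cref{thm:locpolgpd-yoneda-top}), and the dense-embedding statement \cref{thm:locpolgpd-yoneda-emb-dense} --- all resting on the delicate Cauchy-filter approximation estimates --- are the substance of the argument. For the present statement the only point requiring care is the observation that $\eta$ lands in the \emph{isomorphism} groupoid, so that the fiberwise-Baire hypotheses of \cref{thm:subgpd-dense} are available; the remainder is assembly.
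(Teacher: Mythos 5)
Your proposal is correct and follows exactly the paper's own argument: the paper's proof is precisely the two-line assembly you describe — $\eta$ lands in $\Iso_{G^0}(\@M)$ because $G$ is a groupoid, \cref{thm:locpolgpd-yoneda-emb-dense} gives a $\tau$-fiberwise dense embedding, and \cref{thm:subgpd-dense} upgrades this to a homeomorphism. The extra details you supply (quasi-Polishness of $\Iso_{G^0}(\@M)$ via \cref{thm:metalestr-isogpd-qpol} and of $\eta(G) \cong G$, needed to invoke \cref{thm:subgpd-dense}) are exactly the implicit hypotheses the paper relies on.
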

\begin{proof}
Since $G$ is a groupoid, $\eta$ lands in $\Iso_{G^0}(\@M) \subseteq \Hom_{G^0}(\@M)$.  By \cref{thm:locpolgpd-yoneda-emb-dense}, $\eta : G -> \Iso_{G^0}(\@M)$ is a $\tau$-fiberwise dense embedding, hence by \cref{thm:subgpd-dense} a homeomorphism.
\end{proof}

\begin{theorem}
\label{thm:locpolgpd-rep}
For any open $\sigma$-locally Polish groupoid $G$, there is a countable single-sorted relational language $\@L$, a metric $\{0, 1\}$-valued $\@L_{\omega_1\omega}$-sentence $\phi$, and a Borel equivalence of groupoids $G -> \Iso(\#U) \ltimes \Mod_\#U(\@L, \phi)$.
\end{theorem}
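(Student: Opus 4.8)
The plan is to assemble \cref{thm:locpolgpd-rep} from the pieces developed in \cref{sec:metale,sec:locpolgpd}, exactly paralleling the proof of \cref{thm:nonarchgpd-rep}. The one preliminary point, left implicit in \cref{sec:locpolgpd}, is that countable families $\@U,\@S$ satisfying (US1--3) do exist. Since $G$ is $\sigma$-locally Polish it is quasi-Polish (\cref{sec:qpol}) and admits a countable basis $\@S$ each of whose members is Polish (being open in one of the Polish pieces of a countable open cover). Then a routine $\omega$-iteration produces $\@U$: at each stage one invokes \cref{thm:birkhoff-kakutani} applied to the open grey set $\*0_S$ (to meet (US1)) and to $S\odot V\odot S^{-1}$ (to meet (US2), using that on $G^0$ the latter has zero set $\tau(S)\supseteq G^0\cap T$), in each case taking the required unital open Hausdorff paracompact witness to be the set $S'\cap\sigma^{-1}(G^0\cap S')\cap\tau^{-1}(G^0\cap S')$ for the relevant Polish $S'\in\@S$, which is unital, open, hence Polish; one also closes off under $\dotplus$ (for (US3)). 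Since everything at each stage is countable, the result is countable.

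Fixing such $\@U,\@S$, I would form the canonical metric-étale structure $\@M$ over $G^0$ as in \cref{sec:locpolgpd}; it is second-countable because $G$ is and $\@U,\@S$ are countable. By \cref{thm:locpolgpd-yoneda-iso}, the canonical functor $\eta$ is an isomorphism of topological groupoids $G\cong\Iso_{G^0}(\@M)$, in particular a Borel isomorphism of the underlying standard Borel groupoids. Applying \cref{thm:sbmstr-isogpd-ff} to the underlying fiberwise separable Borel metric structure of $\@M$ yields a countable single-sorted relational language $\@L$ and a full and faithful Borel functor $\Iso_{G^0}(\@M)\to\Iso(\#U)\ltimes\Mod_\#U(\@L)$; composing with $\eta$ gives a full and faithful Borel functor $F\colon G\to\Iso(\#U)\ltimes\Mod_\#U(\@L)$.

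Next, since $G$ is an open quasi-Polish groupoid, \cref{thm:functor-borel-equiv} applies: the essential image $[F(G^0)]_{\Iso(\#U)}\subseteq\Mod_\#U(\@L)$ is a Borel $\Iso(\#U)$-invariant set, and $F$ is a Borel equivalence of $G$ onto $(\Iso(\#U)\ltimes\Mod_\#U(\@L))\,|\,[F(G^0)]_{\Iso(\#U)}$. By the $\{0,1\}$-valued case of \cref{thm:coskey-lupini} noted just after its statement, there is a $\{0,1\}$-valued $\@L_{\omega_1\omega}$-sentence $\phi$ with $[F(G^0)]_{\Iso(\#U)}=\Mod_\#U(\@L,\phi)$, whence $F\colon G\to\Iso(\#U)\ltimes\Mod_\#U(\@L,\phi)$ is the desired Borel equivalence of groupoids.

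I do not expect a genuine obstacle: all of the substantive work — the topometric Yoneda computation behind \cref{thm:locpolgpd-yoneda-iso}, the uniformly Borel Katětov construction behind \cref{thm:sbmstr-isogpd-ff}, the large-section uniformization in \cref{thm:functor-borel-equiv}, and the Coskey--Lupini theorem — is already available, and the proof is pure assembly. The only step demanding any care is the verification that (US1--3)-compliant countable $\@U,\@S$ exist, since this is precisely where the groupoid Birkhoff--Kakutani metrization theorem \cref{thm:birkhoff-kakutani} (and, through it, Ramsay's halving lemma \cref{thm:ramsay}) is consumed; I would spell that iteration out explicitly, as sketched above.
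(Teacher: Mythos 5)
Your proposal is correct and follows essentially the same route as the paper: \cref{thm:locpolgpd-yoneda-iso} plus \cref{thm:sbmstr-isogpd-ff} to get a full and faithful Borel functor into the groupoid of structures on the Urysohn sphere, then \cref{thm:functor-borel-equiv} and the Coskey--Lupini theorem to cut down to a Borel equivalence onto the models of a $\{0,1\}$-valued sentence. The only addition is your explicit $\omega$-iteration verifying that countable families satisfying (US1--3) exist, which the paper leaves as an implicit appeal to \cref{thm:birkhoff-kakutani}.
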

\begin{proof}
Combining \cref{thm:locpolgpd-yoneda-iso} with \cref{thm:sbmstr-isogpd-ff} yields $\@L$ as above and a full and faithful Borel functor $F : G -> \Iso(\#U) \ltimes \Mod_\#U(\@L)$.  By \cref{thm:functor-borel-equiv}, the essential image $[F(G^0)]_{\Iso(\#U)} \subseteq \Mod_\#U(\@L)$ is Borel, hence by \cref{thm:coskey-lupini} equal to $\Mod_\#U(\@L, \phi)$ for some metric $\{0, 1\}$-valued $\@L_{\omega_1\omega}$-sentence $\phi$, whence $F : G -> \Iso(\#U) \ltimes \Mod_\#U(\@L, \phi)$ is a Borel equivalence.
\end{proof}

\begin{corollary}
Up to Borel equivalence, the following classes of standard Borel groupoids coincide:
\begin{enumerate}
\item[(i)]  open $\sigma$-locally Polish groupoids;
\item[(ii)]  action groupoids of Polish group actions;
\item[(iii)]  groupoids of models of metric $\{0, 1\}$-valued $\@L_{\omega_1\omega}$-sentences on $\#U$.
\end{enumerate}
Hence, so do their classes of orbit equivalence relations, up to Borel bireducibility.
\end{corollary}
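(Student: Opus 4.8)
The plan is to prove the cycle of implications (i)$\implies$(iii)$\implies$(ii)$\implies$(i), exactly mirroring the corollary following \cref{thm:nonarchgpd-rep}. The implication (i)$\implies$(iii) is precisely \cref{thm:locpolgpd-rep}: an open $\sigma$-locally Polish groupoid is Borel equivalent to $\Iso(\#U) \ltimes \Mod_\#U(\@L, \phi)$ for a suitable countable single-sorted relational language $\@L$ and metric $\{0,1\}$-valued $\@L_{\omega_1\omega}$-sentence $\phi$, so nothing new is needed. The implication (iii)$\implies$(ii) is immediate, since $\Iso(\#U) \ltimes \Mod_\#U(\@L, \phi)$ is by construction the action groupoid of the Borel logic action of the Polish group $\Iso(\#U)$ on the standard Borel space $\Mod_\#U(\@L, \phi)$.

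For (ii)$\implies$(i), I would argue as follows. Let $G \ltimes X$ be the action groupoid of a Borel action of a Polish group $G$ on a standard Borel space $X$. By the Becker--Kechris theorem \cite[5.2.1]{BK}, $X$ carries a compatible Polish topology for which the action is continuous, and this leaves the action groupoid unchanged up to Borel isomorphism. Then $G \ltimes X$ is a Polish groupoid (both $G$ and $X$ being Polish), and it is open, since a topological group is trivially open as a one-object groupoid and action groupoids of open topological groupoids are open (see \cref{sec:action}); moreover, every Polish groupoid is $\sigma$-locally Polish, being covered by the single open Polish subspace of all its morphisms. So $G \ltimes X$ is Borel equivalent --- indeed Borel isomorphic --- to an open $\sigma$-locally Polish groupoid. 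Finally, the statement about orbit equivalence relations follows because a Borel equivalence of groupoids induces a Borel bireducibility of their orbit equivalence relations, as noted in the Introduction; applying this to the three equivalences above gives the claim.

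The main obstacle: there is essentially no hard step here, all the substantive work having gone into \cref{thm:locpolgpd-rep} (and hence into \cref{thm:locpolgpd-yoneda-iso} and \cref{thm:sbmstr-isogpd-ff}). The only point calling for any care is fixing the meaning of ``Polish group action'' in the standard Borel category and invoking Becker--Kechris to replace it by a continuous action on a Polish space; after that, the openness and $\sigma$-local Polishness of the resulting action groupoid are purely formal.
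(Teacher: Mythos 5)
Your proposal is correct and follows exactly the paper's own proof: (i)$\implies$(iii) by \cref{thm:locpolgpd-rep}, (iii)$\implies$(ii) immediate, and (ii)$\implies$(i) by the Becker--Kechris theorem, with your extra details on openness and $\sigma$-local Polishness of the resulting action groupoid merely spelling out what the paper leaves implicit.
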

\begin{proof}
(i)$\implies$(iii) is by \cref{thm:locpolgpd-rep}, (iii)$\implies$(ii) is immediate, and (ii)$\implies$(i) is by the Becker--Kechris theorem \cite[5.2.1]{BK}.
\end{proof}


\begin{remark}
\label{rmk:locpolgpd-rep-io}
As in \cref{rmk:sbmstr-isogpd-ff-io}, we may arrange for the equivalence of groupoids $G -> \Iso(\#U) \ltimes \Mod_\#U(\@L, \phi)$ in \cref{thm:locpolgpd-rep} to be injective on objects (hence for the reduction between the corresponding orbit equivalence relations to be an embedding).  However, as in \cref{rmk:nonarchgpd-rep-bo}, we cannot further require bijectivity on objects.
\end{remark}

%
%
%
%
%
%

\bigskip\noindent
Department of Mathematics \\
University of Illinois at Urbana--Champaign \\
Urbana, IL 61801

\medskip\noindent
\nolinkurl{ruiyuan@illinois.edu}

\end{document}